\let\mathbb\mathds
\DeclareMathOperator{\MOD}{MOD}
\DeclareMathOperator{\Vect}{Vect}
\DeclareMathOperator{\PAR}{PAR}
\DeclareMathOperator{\Par}{Par}
\DeclareMathOperator{\FPar}{FPar}
\DeclareMathOperator{\EFPar}{EFPar}
\DeclareMathOperator{\Hom}{Hom}
\DeclareMathOperator{\Homb}{\bold{Hom}}
\DeclareMathOperator{\Ext}{Ext}
\DeclareMathOperator{\obj}{obj}
\DeclareMathOperator{\Tot}{Tot} 
\DeclareMathOperator{\SPEC}{\bold{Spec}}   
\DeclareMathOperator{\Sym}{Sym}
\DeclareMathOperator{\gr}{gr}
\DeclareMathOperator{\pr}{pr}
\DeclareMathOperator{\spec}{spec}
\DeclareMathOperator{\coker}{coker}
\DeclareMathOperator{\LC}{LC}
\DeclareMathOperator{\SLC}{SLC}
\DeclareMathOperator{\SLCF}{SLCF}
\DeclareMathOperator{\LCF}{LCF}
\DeclareMathOperator{\Champs}{Champs}
\DeclareMathOperator{\Rev}{Rev}
\DeclareMathOperator{\Cat}{Cat}
\DeclareMathOperator{\Ens}{Ens}
\DeclareMathOperator{\F}{F}
\DeclareMathOperator{\EF}{EF}
\DeclareMathOperator{\RH}{RH}
\DeclareMathOperator{\NS}{NS}
\DeclareMathOperator{\SS_0}{SS_0}
\DeclareMathOperator{\Rep}{Rep}
\DeclareMathOperator{\prof}{prof}
\DeclareMathOperator{\cone}{cone}
\DeclareMathOperator{\Aut}{Aut}
\DeclareMathOperator{\Ind}{Ind}
\DeclareMathOperator{\Pic}{Pic}
\DeclareMathOperator{\Pico}{Pic^0}
\DeclareMathOperator{\Pict}{Pic^{\tau}}
\DeclareMathOperator{\PIC}{\mathcal{P}\it{ic}}
\DeclareMathOperator{\SCPIC}{\bold{Pic}}
\DeclareMathOperator{\rg}{rg}
\newtheorem{thm}{Th\'eor\`eme}
\newtheorem{defi}{D\'efinition}
\newtheorem{cor}{Corollaire}
\newtheorem{lem}{Lemme}
\newtheorem{prop}{Proposition}
\newtheorem{rem}{Remarque}
  \newcommand{\UN}[4][r]{%
    \ar@/^1pc/[#1]^{#2}_*=<0.3pt>{}="HAUT"
    \ar@/_1pc/[#1]_{#3}^*=<0.3pt>{}="BAS"
    \ar @{=>} "HAUT";"BAS" ^{#4}
  }
\newcommand{\DEUX}[4][r]{%
    \ar@/^1pc/[#1]^{#2}_*=<0.3pt>{}="HAUT"
    \ar@/_1pc/[#1]_{#3}^*=<0.3pt>{}="BAS"
    \ar @{=>} "BAS";"HAUT" _{#4}
  }
\begin{document}
 
\bibliographystyle{tocplain} 
\author{Niels Borne}
\title{Sur les représentations du groupe fondamental d'une variété privée d'un diviseur à croisements normaux simples
}
 
 \maketitle

\section{Introduction}
\label{sec:introduction}

\subsection{Une description alternative}
\label{sec:descr:alter}

En l'absence de lacets, la recherche d'une description alternative du groupe fondamental étale (défini dans \cite{SGA1} en termes de revêtements) est une question classique, motivée essentiellement par la volonté de déterminer algébriquement des groupes fondamentaux qui ne sont connus que par voie transcendante. 

L'étude systématique du lien entre revêtements de, disons, une variété algébrique projective $X$, et certain fibrés sur $X$, commence avec Weil (\cite{Weil}). Celui-ci montre qu'un revêtement galoisien non ramifié de surfaces de Riemann $Y\rightarrow X$ permet d'associer à toute représentation $V$ complexe du groupe de Galois $G$ un fibré sur $X$ : on descend le fibré trivial $Y\times V$ sur $Y$ en $E=Y\times V/G$. Weil remarque que cette opération est compatible avec le produit tensoriel, ce qui confère des propriétés remarquables aux fibrés associés : ils sont en particulier \emph{finis}, au sens qu'il existe deux polynômes distincts $f,g$ à coefficients entiers positifs tels que $f(E)\simeq g(E)$. Il voit dans ces fibrés la généralisation des fibrés en droite de torsion, et commence à les caractériser. 

Ce travail trouve son aboutissement dans la formulation de Nori (\cite{NoriRFG}): la catégorie des fibrés finis sur $X$ est tannakienne, et le groupe de Tannaka associée est le groupe fondamental (profini) de $X$. Ceci a l'avantage d'être vérifié pour un schéma $X$ propre, réduit, connexe, sur un corps algébriquement clos de caractéristique $0$. En caractéristique $p$, le groupe de Tannaka de la catégorie des fibrés essentiellement finis (le schéma en groupe fondamental de Nori) se surjecte dans le groupe fondamental de $X$. Toutefois, comme le souligne Nori, cette description algébrique (les fibrés finis ne dépendant, en fait, que de la topologie de Zariski de $X$) du groupe fondamental n'a que peu d'utilité, puisque les fibrés finis de rang plus grand que $1$ semblent très difficiles à construire ex nihilo (i.e. sans utiliser de revêtement).

Partant du problème de la détermination algébrique du groupe fondamental, 
l'étude des courbes ouvertes (par exemple la droite projective moins trois points) apparaît plus abordable que celle des courbes complètes. En effet Nori montre dans \cite{NoriFGS} qu'il existe une équivalence de catégories tannakiennes entre la catégorie des représentations du groupe fondamental de la courbe ouverte et celles des fibrés \emph{paraboliques} (au sens de Seshadri, \cite{Seshadri}) finis. Il semble ardu, mais peut-être pas impossible, de construire algébriquement de tels fibrés.

Cependant, Nori ne fait qu'esquisser une preuve de cette équivalence. Cet article répond au souci d'en donner une démonstration complète et indépendante, suffisamment générale pour être valable en toute dimension. Plus précisément, on définit, donné un schéma $X$ propre, normal, connexe sur un corps $k$, et $\bold D=(D_i)_{i\in I}$ une famille de diviseurs irréductibles à croisements normaux simples sur $X$, la catégorie $\FPar(X,\bold D)$ (resp. $\EFPar(X,\bold D)$) des fibrés paraboliques (modérés) finis (resp. essentiellement finis), et notre résultat principal (théorème \ref{thmfinal}) s'énonce : 

\begin{thm}
	\label{thmfinal2}
 Soit  $D=\cup_{i\in I}D_i$, et $x\in X(k)$ un point rationnel, $x\notin D$.

  \begin{enumerate}
  \item[(i)] La paire $(\EFPar(X,\bold D),x^*)$ est une catégorie tannakienne.
  \item[(ii)] Si $k$ est algébriquement clos de caractéristique $0$, tout fibré parabolique essentiellement fini est fini, et le groupe de Tannaka de 
$(\FPar(X,\bold D),x^*)$ est canoniquement isomorphe au groupe fondamental $\pi_1(X-D,x)$.
 \end{enumerate}
 \end{thm}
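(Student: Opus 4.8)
The plan is to reduce the statement to Nori's Tannakian theory by means of the dictionary between parabolic bundles and vector bundles on root stacks. Concretely, I would first invoke the equivalence established earlier in the paper between $\Par(X,\mathbf D)$ and the filtered $2$-colimit, over the multi-indices $\mathbf n=(n_i)_{i\in I}$ ordered by divisibility, of the categories of vector bundles on the root stacks $\mathcal X_{\mathbf n}=\sqrt[\mathbf n]{\mathbf D/X}$: a parabolic bundle whose weight along $D_i$ has denominator dividing $n_i$ is the same datum as a vector bundle on $\mathcal X_{\mathbf n}$, and the tensor structures correspond. Under this dictionary, finiteness and essential finiteness translate verbatim, so that $\EFPar(X,\mathbf D)\simeq \varinjlim_{\mathbf n}\EF(\mathcal X_{\mathbf n})$ as $\otimes$-categories, and likewise $\FPar(X,\mathbf D)\simeq \varinjlim_{\mathbf n}\F(\mathcal X_{\mathbf n})$.

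For part (i), I would transport Nori's formalism to the proper, connected, reduced Deligne--Mumford stacks $\mathcal X_{\mathbf n}$. Since $x\notin D$ and $\mathcal X_{\mathbf n}$ is canonically isomorphic to $X$ away from $\mathbf D$, the point $x$ lifts uniquely to a $k$-point of each $\mathcal X_{\mathbf n}$, giving the fibre functor $x^{\ast}$. The technical core is to check that $\EF(\mathcal X_{\mathbf n})$ is an abelian, $k$-linear, rigid tensor category on which $x^{\ast}$ is exact, faithful and $\otimes$-compatible with values in finite-dimensional $k$-vector spaces; here one adapts Nori's arguments (semistability of slope $0$, stability of the class under sub-objects and quotients of finite bundles) to the stacky setting, using the slope theory on $\mathcal X_{\mathbf n}$. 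Granting this, each $(\EF(\mathcal X_{\mathbf n}),x^{\ast})$ is neutral Tannakian, and a filtered $2$-colimit of neutral Tannakian categories is again neutral Tannakian, its Tannaka group being the inverse limit $\varprojlim_{\mathbf n}G_{\mathbf n}$ of the groups $G_{\mathbf n}$; this yields assertion (i).

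For part (ii), assume $k$ algebraically closed of characteristic $0$. By Cartier's theorem every finite $k$-group scheme is \'etale, hence reduced, so each $G_{\mathbf n}$ is pro-\'etale. This forces essential finiteness to coincide with finiteness: an essentially finite bundle generates a finite reduced group scheme, whose finitely many irreducible representations yield a polynomial relation $f(V)\simeq g(V)$, so $V$ is finite. It then remains to identify $\varprojlim_{\mathbf n}G_{\mathbf n}$ with $\pi_1(X-D,x)$. Being pro-\'etale, $G_{\mathbf n}$ is the \'etale fundamental group $\pi_1^{\text{\'et}}(\mathcal X_{\mathbf n},x)$; by the theory of tame covers and root stacks, finite \'etale covers of $\mathcal X_{\mathbf n}$ correspond to finite covers of $X$ that are \'etale over $X-D$ and tamely ramified along $\mathbf D$ with ramification indices dividing $\mathbf n$. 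Passing to the limit over all $\mathbf n$, and using that in characteristic $0$ every cover of $X-D$ is automatically tame, the system $\varprojlim_{\mathbf n}\pi_1^{\text{\'et}}(\mathcal X_{\mathbf n},x)$ recovers $\pi_1^{\text{\'et}}(X-D,x)=\pi_1(X-D,x)$, which matches the Tannaka group of $\FPar(X,\mathbf D)$ and gives the canonical isomorphism.

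The main obstacle I anticipate lies in part (i): proving that essentially finite bundles on the root stacks $\mathcal X_{\mathbf n}$ form an abelian category stable under $\otimes$ and that $x^{\ast}$ is exact demands a robust semistability and slope theory on these stacks, together with control of the Harder--Narasimhan and socle filtrations. Once the neutral Tannakian structure is secured at each finite level, the colimit argument and the tameness comparison in part (ii), though delicate, are essentially formal.
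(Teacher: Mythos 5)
Your proposal follows essentially the same route as the paper: the dictionary between parabolic bundles and vector bundles on root stacks (th\'eor\`eme \ref{fibparfibchamp}), Nori's Tannakian formalism transported to these proper reduced Deligne--Mumford stacks via a semistability theory on orbicurves (th\'eor\`eme \ref{eftan}), Cartier's theorem to identify essentially finite with finite bundles in characteristic zero (corollaire \ref{coreftan}), and the identification of the tame fundamental group with the inverse limit of the $\pi_1(\sqrt[\bold r]{\bold D/X},x)$ via Abhyankar's lemma (proposition \ref{grfundmodgrfundchamp}). You also correctly locate the technical core in the abelianness of the category of semistable degree-zero bundles on the root stacks, which is exactly where the paper concentrates its effort.
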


Le premier point permet de proposer, en caractéristique positive, 
une définition du schéma en groupe fondamental modéré $\pi^D(X,x)$ comme groupe de Tannaka de la catégorie $(\EFPar(X,\bold D),x^*)$. Ce schéma en groupe est un hybride du schéma en groupe fondamental de Nori (\cite{NoriRFG}) et du groupe fondamental modéré de Grothendieck-Murre (\cite{GM}).

Un fait marquant est l'omniprésence de certains champs de Deligne-Mumford, les champs des racines, tout au long de cet article. Ils sont construits à partir de la paire $(X,\bold D)$ en ajoutant une structure d'orbifold le long des diviseurs, et sont en ce sens des ``schémas tordus''. Bien qu'il soient absents de l'énoncé de notre résultat principal, ils en sont absolument au coeur, leur présence éclairant d'un jour nouveau d'anciens problèmes. Par exemple le curieux produit dans la catégorie galoisienne des revêtements modérés de \cite{GM} s'avère être un produit fibré usuel sur un tel champ des racines\footnote{voir lemme \ref{norm} \it{(ii)}}.

Pour conclure, on propose une méthode de construction des fibrés (paraboliques) finis, inspirée de la méthode des petits groupes de Wigner et Mackey en théorie des représentations.

\subsection{Organisation de l'article}
\label{sec:organ-de-lart}

Dans la partie \ref{sec:fibr-parab}, on définit les fibrés paraboliques sur un schéma $X$ le long d'une famille régulière de diviseurs $\bold D=(D_i)_{i\in I}$, à poids rationnels à dénominateurs dans une famille d'entiers fixée $\bold r= (r_i)_{i\in I}$. Ceci se fait en deux temps : on définit d'abord (partie \ref{sec:definition}) les faisceaux paraboliques, puis (partie \ref{sec:fibres-paraboliques}) la notion de liberté locale pour un tel faisceau. On montre le caractère récursif de cette condition (proposition \ref{parrec}), qui se simplifie considérablement lorsque la famille est à croisements normaux simples (proposition \ref{parfamdivcroisnormsimpl}). On rappelle ensuite (partie \ref{sec:champ-des-racines}) la notion de champ des racines associé à la donnée de $X$, $\bold D$, et $\bold r$, et la partie se poursuit par le théorème \ref{fibparfibchamp} qui identifie les fibrés paraboliques aux fibrés usuels sur le champ des racines. 

Les deux parties suivantes sont de nature plus technique.

Le résultat essentiel de la partie \ref{sec:groupe-fond-modere} est la proposition qui donne une interprétation du groupe fondamental modéré comme limite projective de groupes fondamentaux de champs des racines. C'est une conséquence à peu près immédiate du lemme d'Abhyankar.

La partie \ref{sec:fibres-finis} étudie le lien entre groupe fondamental d'un champ de Deligne-Mumford convenable et fibrés finis. Ce lien s'exprime par une équivalence de catégories tannakiennes entre systèmes locaux $k$-vectoriels de rang fini sur le champ et fibrés finis donné par un foncteur ``à la Riemann-Hilbert'', voir le corollaire \ref{coreftan}. Le point crucial (théorème \ref{eftan}) est le fait que les fibrés essentiellement finis sur les champs des racines (et un peu plus généralement sur des schémas tordus) propres et réduits sur un corps forment une catégorie tannakienne. 

La partie suivante (partie \ref{sec:theoreme-de-weil}) est une partie de synthèse où l'on assemble les différents éléments pour aboutir au théorème \ref{thmfinal2}.

Enfin la dernière partie (partie \ref{calholres}) est consacrée à l'application du théorème \ref{thmfinal2} au calcul explicite de fibrés paraboliques finis de groupe d'holonomie résoluble.

\subsection{Origines et liens avec des travaux existants}
\label{sec:origines}

La définition des fibrés paraboliques par rapport à une famille régulière de diviseurs $\bold D$ dans la partie \ref{sec:fibr-parab} est inspirée de celle de Maruyama-Yokogawa \cite{MY}. Une différence importante avec ces auteurs est l'emploi d'indices multiples (moralement, autant d'indices que de composantes irréductibles régulières de $\bold D$) ce qui mène naturellement à la définition \ref{faiscpar}, essentiellement équivalente à celle employée par Iyer et Simpson, voir \cite{IyerSimpson}. Ceci complique singulièrement la condition de liberté locale pour un faisceau parabolique : la définition \ref{defpar} en termes d'homologie de complexes associés à des facettes est entièrement originale et semble apporter un éclairage nouveau sur la notion de fibré parabolique ``localement abélien`` employée dans \cite{IyerSimpson} (voir la remarque \ref{locab}).

Les champs des racines ont été introduits par Vistoli (\cite{AGV}) et Cadman (\cite{Cadman}). L'identification des fibrés paraboliques avec les fibrés sur les champs des racines a été initiée dans \cite{Borne} dans la situation à indice unique, sa généralisation à la situation présente ne pose pas de problème particulier. On peut trouver certains précurseurs de ce résultat, en particulier dans le travail de Biswas (\cite{Biswasorb}, \cite{BBN}), mais ces auteurs n'employant que des fibrés paraboliques à indice unique, ces résultats ne nous semblent corrects que dans le cas d'un diviseur régulier.

La preuve donnée du théorème de Nori parabolique (théorème \ref{thmfinal}) est complètement indépendante de la preuve de Nori en dimension $1$ (Nori n'utilisant pas la définition de Seshadri des fibrés paraboliques), mais, grâce à l'utilisation des champs des racines, suit d'assez près la démonstration de la version classique (non parabolique) du théorème : en particulier la démonstration que les fibrés essentiellement finis sur un ``schéma tordu`` forment une catégorie tannakienne (théorème \ref{eftan}) est une adaptation directe de la preuve que Nori donne pour un schéma usuel. Toutefois, l'emploi d'un foncteur de type Riemann-Hilbert (voir \S \ref{sec:foncteur-la-riemann}) pour faire le lien entre système locaux et fibrés finis, bien que naturelle, semble nouvelle dans ce contexte. Les travaux de Grothendieck-Murre (\cite{GM}) sur le groupe fondamental modéré, ainsi que ceux de Noohi (\cite{Noohi}) et Zoonekynd (\cite{Zoon}) sur le groupe fondamental des champs de Deligne-Mumford, sont d'autres ingrédients importants de la preuve.

Élémentaire, mais à priori un peu surprenante, l'idée de construire des fibrés finis par image directe (voir la proposition \ref{imfibrfin} et le lemme \ref{holres}) ne semble pas avoir été déjà employée.

Enfin, pour conclure sur les insuffisances de cet article, il serait naturellement souhaitable d'avoir une version du théorème de Nori parabolique pour un diviseur à croisements normaux généraux, ou sur un corps quelconque.

\subsection{Remerciements}
\label{sec:remerciements}

Ce travail doit beaucoup à Angelo Vistoli, ses contours n'étant apparus nettement qu'à la suite d'une visite à Bologne en janvier 2006. Je l'en remercie chaleureusement. Je tiens également à remercier Alessandro Chiodo,  Michel Emsalem, Boas Erez, Madhav Nori, Martin Olsson et Gabriele Vezzosi pour d'intéressantes discussions sur le sujet.

\section{Fibrés paraboliques le long d'une famille régulière de diviseurs}
\label{sec:fibr-parab}

Dans cette partie, on notera $X$ un champ de Deligne-Mumford localement noethérien, $I$ un ensemble fini, $\bold D=(D_i)_{i\in I}$ une famille de diviseurs de Cartier effectifs sur $X$, $\bold r=(r_i)_{i\in I}$ une famille d'entiers $r_i\geq 1$.
\subsection{Faisceaux paraboliques}
\label{sec:faisc-parab}

\subsubsection{Définition}
\label{sec:definition}

Soient d'abord $\mathcal I,\mathcal C$ deux catégories monoïdales, $\mathcal I$ étant supposée stricte.
Un foncteur monoïdal $F:\mathcal I\rightarrow \mathcal C$ permet de voir $\mathcal C$ comme un $\mathcal I$-module (relâché) sur le monoïde $\mathcal I$ via l'opération

\xymatrix@R=2pt{
&&\mathcal I\times \mathcal C \ar[r]& \mathcal C\\
&&(I,C)  \ar[r] & F(I)\otimes C
}

On considère à présent $\mathcal I=(\mathbb Z ^I)^{op}$,
$\mathcal C_1=(\frac{1}{\bold r}\mathbb Z^I)^{op}$, où  par définition 
$\frac{1}{\bold r}\mathbb Z^I=\prod_{i\in I}\frac{1}{r_i}\mathbb Z$, 
et $\mathcal C_2=\MOD X$, la catégorie des faisceaux de $\mathcal O_X$-modules sur $X$. $\mathcal I$ et $\mathcal C_1$ sont vues comme catégories associées aux ensembles ordonnés correspondants, et munies du produit tensoriel induit par l'addition, quant à $\mathcal C_2$, elle est munie de sa structure monoïdale canonique. Enfin, on dispose du foncteur d'inclusion
$F_1: \mathcal I \rightarrow \mathcal C_1$ et du foncteur  

\xymatrix@R=2pt{
&&F_2: \mathcal I \ar[r] &\mathcal C_2\\
&&\bold l=(l_i)_{i\in I}\ar[r] & \mathcal O_X(-\bold l\bold D)=\mathcal O_X(-\sum_{i\in I}l_iD_i)}
qui permettent de voir $\mathcal C_1$ et $\mathcal C_2$ comme des 
$\mathcal I$-monoïdes. 

\begin{defi}
	\label{faiscpar}
  On définit la \emph{catégorie des faisceaux paraboliques} sur $X$ le long de $\bold D$ à poids multiples de $\frac{1}{\bold r}$ comme la catégorie des morphismes de modules sur le monoïde $(\mathbb Z ^I)^{op}$  :

$$\PAR_{\frac{1}{\bold r}}(X,\bold D)= \Hom_{(\mathbb Z ^I)^{op}}((\frac{1}{\bold r}\mathbb Z^I)^{op}, \MOD X)$$ 
\end{defi}

Plus en détail, un objet de $\PAR_{\frac{1}{\bold r}}(X,\bold D)$ est un 
couple $(\mathcal E_\cdot, j)$, où $\mathcal E_\cdot : (\frac{1}{\bold r}\mathbb Z^I)^{op}\rightarrow \MOD X$ est un foncteur (non nécessairement monoïdal !), et
$j$ est un isomorphisme naturel (dit \emph{isomorphisme des pseudo-périodes}):

\xymatrix@R=12pt{
  &&&&  (\mathbb Z ^I)^{op} \times (\frac{1}{\bold r}\mathbb Z^I)^{op} \ar[rr]^{+} \ar[dd]_{(\mathbb Z ^I)^{op} \times\mathcal E_\cdot} && (\frac{1}{\bold r}\mathbb Z^I)^{op}
   \ar[dd]^{\mathcal E_\cdot}\\
  &&&&&&\\
  &&&& (\mathbb Z ^I)^{op} \times \MOD X \ar[rr]_{\mathcal O_X(-\cdot \bold D)\otimes \cdot} \ar@{=>}[uurr]^{j}&& \MOD X}

On omettra souvent $j$ pour alléger les notations. Un morphisme $(\mathcal E_\cdot, j)\rightarrow (\mathcal E'_\cdot, j')$ est une transformation naturelle :

\xymatrix{\relax
    &&&&&(\frac{1}{\bold r}\mathbb Z^I)^{op} \UN[r]{\mathcal E_\cdot}{\mathcal E'_\cdot}{\alpha} & \MOD X }

  compatible avec $j$ et $j'$, en un sens évident.

\subsubsection{Opérations élémentaires sur les faisceaux paraboliques}
\label{sec:oper-elem-sur}

Pour $\bold l\in \obj \frac{1}{\bold r}\mathbb Z^I$ et $\mathcal E_\cdot \in \obj \PAR_{\frac{1}{\bold r}}(X,\bold D)$ on dispose du \emph{décalage} défini de la manière usuelle 

\xymatrix@R=2pt{&&&
\mathcal E_\cdot[\bold l]:(\frac{1}{\bold r}\mathbb Z^I)^{op}\ar[r]^{+\bold l}&(\frac{1}{\bold r}\mathbb Z^I)^{op}\ar[r]^{\mathcal E_\cdot}&\MOD X}

l'isomorphisme des pseudo-périodes étant induit par celui de $\mathcal E_\cdot$ de la manière évidente.

Passons au \emph{produit tensoriel des faisceaux paraboliques} :
donnés $\mathcal E_\cdot, \mathcal E'_\cdot \in \obj \PAR_{\frac{1}{\bold
    r}}(X,\bold D)$, on dispose, pour tout $\bold l\in \obj \frac{1}{\bold
  r}\mathbb Z^I$, de l'objet $\mathcal E_{\bold l}\otimes \mathcal
E'_\cdot[-\bold l]$ de $\PAR_{\frac{1}{\bold r}}(X,\bold D)$ obtenu comme
produit tensoriel externe de $\mathcal E_{\bold l}\in\obj\MOD X$ par
$\mathcal E'_\cdot[-\bold l]\in\obj\PAR_{\frac{1}{\bold r}}(X,\bold D)$.

Cette quantité étant dinaturelle en $\bold l$, on peut définir $(\mathcal
E_\cdot\otimes\mathcal E'_\cdot)_\cdot$  comme la cofin\footnote{pour des
  détails sur la notion de cofin (coend), voir \cite{MacLane}, ou bien
  \cite{Borne}, Appendice B} du foncteur de
variance mixte correspondant dans $\PAR_{\frac{1}{\bold r}}(X,\bold D)$ (qui est cocomplète vu que $\MOD X$ l'est) : 

$$(\mathcal E_\cdot\otimes\mathcal E'_\cdot)_\cdot=\int^{\bold l\in {\frac{1}{\bold
  r}\mathbb Z^I}} \mathcal E_{\bold l}\otimes \mathcal
E'_\cdot[-\bold l]$$

Ce produit tensoriel est l'adjoint à gauche (enrichi) du foncteur $\Homb$
interne naturel de $\PAR_{\frac{1}{\bold r}}(X,\bold D)$ (dont nous n'aurons pas usage).

Passons au \emph{structures spéciales} : l'inclusion 
$\mathbb Z^I\rightarrow \frac{1}{\bold r}\mathbb Z^I$ admet pour adjoint à gauche le foncteur 

\xymatrix@R=2pt{&&&&& \frac{1}{\bold r}\mathbb Z^I\ar[r]&\mathbb Z^I\\
&&&&&\bold l\ar[r] &-[-\bold l]}

où $[\bold l]=([l_i])_{i\in I}$, $[\cdot]$ désignant la partie entière (on espère que cela n'entraînera pas de confusion avec la notation du décalage).

On en déduit que le foncteur d'oubli (ou d'évaluation en zéro)

$\Hom_{(\mathbb Z ^I)^{op}}((\frac{1}{\bold r}\mathbb Z^I)^{op}, \MOD X)
\rightarrow \Hom_{(\mathbb Z ^I)^{op}}((\mathbb Z^I)^{op}, \MOD X)\simeq \MOD X$

admet un adjoint à gauche, qu'on notera $\mathcal E\rightarrow \underline{\mathcal E}_\cdot$, défini par 

\begin{equation}
\label{strspe}
\underline{\mathcal E}_\cdot=\mathcal E\otimes \mathcal O_X([-\cdot]\bold D)
\end{equation}

On appellera $\underline{\mathcal E}_\cdot$ \emph{le faisceau parabolique à structure spéciale} induit par  $\mathcal E$.

Lorsque $\bold D=\bold r \bold E$, on dispose d'un faisceau parabolique particulier, défini comme foncteur par 

\begin{equation}
\label{parpart}
\bold l \rightarrow \mathcal O_X(-\bold l\bold r\bold E)
\end{equation}

l'isomorphisme des pseudo-périodes étant défini de la manière évidente. On le
notera simplement $\mathcal O_X(-\cdot\bold r\bold E)$.

Enfin il est clair que si $f:X'\rightarrow X$ est un morphisme plat,
la paire de foncteurs adjoints $(f^*,f_*)$ entre les catégories $\MOD X$ et $\MOD X'$ induit une adjonction similaire entre les catégories $\PAR_{\frac{1}{\bold r}}(X,\bold D)$ et $\PAR_{\frac{1}{\bold r}}(X',f^*\bold D)$.

\subsection{Fibrés paraboliques}
\label{sec:fibres-paraboliques}

\subsubsection{Facette}
\label{sec:facette}

On va définir certaines applications dont le but est $\frac{1}{\bold r}\mathbb Z^I=\prod_{i\in I}\frac{1}{r_i}\mathbb Z$, vu comme ensemble ordonné.
\begin{defi}
\label{facettedef}
  Soit $J\subset I$ un sous-ensemble, et $(e_i)_{i\in I}$ la base canonique de $\mathbb Z^I$. On appelle \emph{facette} toute application croissante 

$$F:\{0<1\}^J\rightarrow \frac{1}{\bold r}\mathbb Z^I$$ qui est affine au sens suivant : il existe une famille d'entiers $\boldsymbol{\epsilon}=(\epsilon_i)_{i\in J}$ vérifiant $\forall i\in J\; \;
1\leq \epsilon_i\leq r_i$ telle que :

$$\forall \boldsymbol{\mu}=(\mu_i)_{i\in J}\in  \{0<1\}^J\;\;
F(\boldsymbol{\mu})=F(\bold 0)+\sum_{i\in J} \mu_i\frac{\epsilon_i}{r_i}e_i$$ 
\end{defi}

Par la suite, on identifiera une facette entre deux ensembles ordonnés 
avec le foncteur entre les catégories correspondantes. 

La donnée d'une facette équivaut bien entendu à celle de $J$, $F(\bold 0)$, et
de la famille $\boldsymbol{\epsilon}$. Dans le cas particulier où $F(\bold
0)=\bold 0$ et $\boldsymbol{\epsilon}=\bold{r}_{|J}$, on parlera de la
\emph{facette spéciale} associée à $J$, et on la notera $F_J$, il s'agit
simplement de ``l'inclusion'' $F_J:\{0<1\}^J\rightarrow \frac{1}{\bold
  r}\mathbb Z^I$. Plus généralement, lorsque $F(\bold 0)=\bold 0$, on notera
$F_{\frac{\boldsymbol{\epsilon}}{\bold r}J}$ la facette correspondante.

\subsubsection{Complexe associé à un faisceau parabolique et une facette}
\label{sec:complexe-associe}

On commencer par préciser les conventions  utilisées concernant les \emph{complexes multiples} (essentiellement empruntées à \cite{Weibel}). Soit $J$ un ensemble fini.
Un complexe (de chaînes) multiple à valeurs dans une catégorie abélienne $\mathcal A$ est un foncteur $C.=(\mathbb N^J)^{op} \rightarrow \mathcal A$ tel que si on note $(e_i)_{i\in J}$ la base canonique de $\mathbb N^J $, alors pour tout multi-indice $\boldsymbol{\alpha}=(\alpha_i)_{i\in J}$, et pour tout $j\in J$, les morphismes 
$d^j_{\boldsymbol{\alpha}}:C_{\boldsymbol{\alpha}}\rightarrow C_{\boldsymbol{\alpha}-e_j}$
vérifient $d^j_{\boldsymbol{\alpha}-e_j}\circ d^j_{\boldsymbol{\alpha}}=0$.

A un tel complexe, on associe de la manière son \emph{complexe total} $\Tot(C_\cdot)_\cdot$ de la manière usuelle : on commence par transformer $C_\cdot$ en complexe anti-commutatif en fixant un ordre total arbitraire sur l'ensemble $J$ et en modifiant les différentielles de la manière suivante : on pose pour tout multi-indice $\boldsymbol{\alpha}=(\alpha_i)_{i\in J}$ et pour tout $j\in J$ :

$$\delta^j_{\boldsymbol{\alpha}}=(-1)^{\sum_{i<j}\alpha_i}d^j_{\boldsymbol{\alpha}}$$

L'appellation complexe anti-commutatif est justifiée par le fait qu'alors 
$\delta^j\delta^{j'}=-\delta^{j'}\delta^j$, lorsque ces expressions ont un sens, 
pour tout $j$, $j'$ dans $J$. Le complexe total est alors défini par, pour $n\geq 0$ : 

 $$\Tot(C_\cdot)_n =\oplus_{\boldsymbol{\alpha},|\boldsymbol{\alpha}| =n}C_{\boldsymbol{\alpha}}$$

où $|\boldsymbol{\alpha}|= \sum_{i\in J}\alpha_i$, 
et les différentielles étant définies par
$\delta_n=\oplus_{\boldsymbol{\alpha},|\boldsymbol{\alpha}| =n}\sum_{i\in
  J}\delta^i_{\boldsymbol{\alpha}}$. Le fait qu'on parte d'un complexe
anti-commutatif assure qu'on obtient bien ainsi un complexe (simple).

\begin{defi}
Soient  $\mathcal E_\cdot \in\obj\PAR_{\frac{1}{\bold r}}(X,\bold D)$ un faisceau parabolique et $F:\{0<1\}^J\rightarrow \frac{1}{\bold r}\mathbb Z^I$ une facette. 

On appelle \emph{complexe multiple associé} le prolongement par zéro du foncteur composé $\mathcal E_\cdot\circ F^{op}$ à $(\mathbb N^J)^{op}$, comme dans le diagramme ci-dessous :

\xymatrix@R=2pt{
&&&&(\mathbb N^J)^{op}\ar@{.>}[r]&\MOD X\\
&&&&& \\
&&&&& \\
&&&&(\{0<1\}^J)^{op} \ar@{^{(}->}[uuu]\ar[uuur]_{\mathcal E_\cdot\circ F^{op}}&}
Par abus, on notera encore ce complexe multiple $\mathcal E_\cdot\circ F^{op}$.
\end{defi}

Le sens de ces définitions apparaît dans le cas particulier où $ \mathcal E_\cdot=\underline{\mathcal O_X}_\cdot$, le faisceau structurel muni de la structure parabolique spéciale, et $F=F_J$, la facette spéciale associée à $J$.

\begin{lem}
\label{Koszul}
Pour $i\in I$ on note $(\mathcal O_X(-D_i)\rightarrow \mathcal O_X)$ le complexe de chaînes (simple) concentré en degrés $1$ et $0$. Pour tout $J\subset I$, il existe un isomorphisme naturel de complexes multiples :

$$\underline{\mathcal O_X}_\cdot\circ F_J^{op}\simeq \otimes_{i\in J}(\mathcal O_X(-D_i)\rightarrow \mathcal O_X)$$
\end{lem}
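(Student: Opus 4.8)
The plan is to prove the isomorphism by unwinding both sides and checking that they agree term by term and differential by differential, so that the asserted isomorphism is nothing but the tautological identification of each component. Both objects are multiple complexes indexed by $(\mathbb N^J)^{op}$, obtained as extensions by zero of functors defined on $(\{0<1\}^J)^{op}$; it therefore suffices to work at the multidegrees $\boldsymbol{\mu}\in\{0<1\}^J$. I note at the outset that the right-hand side is exactly the Koszul complex built from the canonical sections cutting out the $D_i$, which is what the name of the lemma records.

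First I would compute the terms on the left. By the definition (\ref{strspe}) of the special structure one has $\underline{\mathcal O_X}_{\bold l}=\mathcal O_X([-\bold l]\bold D)$, while the special facet satisfies $F_J(\boldsymbol{\mu})=\sum_{i\in J}\mu_i e_i$, whose coordinates are the \emph{integers} $\mu_i\in\{0,1\}$. The crucial elementary observation — and the reason one takes $\boldsymbol{\epsilon}=\bold r_{|J}$, so that each $\frac{\epsilon_i}{r_i}=1$ and the facet lands on integral points — is that on such integral arguments the integer part is the identity, $[-F_J(\boldsymbol{\mu})]=-F_J(\boldsymbol{\mu})$. Hence $(\underline{\mathcal O_X}_\cdot\circ F_J^{op})_{\boldsymbol{\mu}}=\mathcal O_X(-\sum_{i\in J}\mu_i D_i)$. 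This is precisely the component at multidegree $\boldsymbol{\mu}$ of $\otimes_{i\in J}(\mathcal O_X(-D_i)\rightarrow \mathcal O_X)$, since the degree-$1$ slot of the $i$-th factor contributes $\mathcal O_X(-D_i)$ and its degree-$0$ slot contributes $\mathcal O_X$. The two multiple complexes thus have canonically identified terms.

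It remains to identify the differentials, which is where the only (modest) care is needed and which I expect to be the main obstacle, though a bookkeeping one rather than a conceptual one. The differential $d^j_{\boldsymbol{\mu}}\colon C_{\boldsymbol{\mu}}\to C_{\boldsymbol{\mu}-e_j}$ lowers the $j$-th coordinate from $1$ to $0$ and is obtained by applying $\underline{\mathcal O_X}_\cdot\circ F_J^{op}$ to the corresponding arrow of the poset. Since $F_J(\boldsymbol{\mu}-e_j)=F_J(\boldsymbol{\mu})-e_j\leq F_J(\boldsymbol{\mu})$, functoriality of the special structure produces exactly the transition map $\mathcal O_X(-\sum_{i\in J}\mu_i D_i)\hookrightarrow \mathcal O_X(-\sum_{i\in J,\,i\neq j}\mu_i D_i)$, namely multiplication by the canonical section of $\mathcal O_X(D_j)$ vanishing along $D_j$. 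This is precisely the map induced in the $j$-th tensor factor of the right-hand side, tensored with the identity on the remaining factors. The two therefore coincide as multiple complexes; at the level of multiple complexes no signs intervene, the sign conventions entering only upon passing to $\Tot$. I would conclude by observing that all these identifications are manifestly compatible and canonical, whence the natural isomorphism of multiple complexes.
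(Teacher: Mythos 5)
Your proof is correct and takes the same route as the paper, whose entire proof is the one-line observation that the lemma follows from the formula $\underline{\mathcal E}_\cdot=\mathcal E\otimes\mathcal O_X([-\cdot]\bold D)$; you have simply spelled out the term-by-term and differential-by-differential check that this formula (together with the integrality of the values of the special facet $F_J$, which kills the integer part) makes tautological. No gap.
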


\begin{proof}
  Cela résulte simplement de l'expression de 
$\underline{\mathcal O_X}_\cdot$ donnée par (\ref{strspe}),\S \ref{sec:oper-elem-sur}.
\end{proof}

Cet exemple est crucial pour définir les fibrés paraboliques, puisque ceux-ci seront localement somme directe finie de faisceaux décalés du faisceau structurel, i.e. du type $\underline{\mathcal O_X}_\cdot[\bold l]$, pour $\bold l\in \obj \frac{1}{\bold r}\mathbb Z^I$ (voir remarque \ref{locab}), ce qui impose des contraintes fortes aux complexes associés à chaque facette.  

De manière similaire, on a :

\begin{lem}
\label{Koszul2} 
Supposons $\bold D=\bold r \bold E$.
Pour tout $J\subset I$, et tout $\boldsymbol{\epsilon}=(\epsilon_j)_{j\in J}$
comme dans \S \ref{sec:facette}, il existe un isomorphisme naturel de complexes multiples :

$$\mathcal O_X(-\cdot\bold r\bold E)\circ F_{\frac{\boldsymbol{\epsilon}}{\bold r}J}^{op}\simeq \otimes_{i\in J}(\mathcal O_X(-\epsilon_i E_i)\rightarrow \mathcal O_X)$$
\end{lem}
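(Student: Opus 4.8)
The plan is to show that the two multiple complexes are, term by term and differential by differential, built from exactly the same canonical inclusions of invertible ideal sheaves, so that the asserted isomorphism is nothing more than the multiplication isomorphisms between these sheaves. As in Lemma \ref{Koszul}, everything will follow from the explicit description (\ref{parpart}) of the parabolic sheaf $\mathcal O_X(-\cdot\bold r\bold E)$ together with the affine formula for the facette in Definition \ref{facettedef}.

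First I would evaluate the left-hand complex on objects. For $\boldsymbol{\mu}=(\mu_i)_{i\in J}\in\{0<1\}^J$, the facette gives $F_{\frac{\boldsymbol{\epsilon}}{\bold r}J}(\boldsymbol{\mu})=\sum_{i\in J}\mu_i\frac{\epsilon_i}{r_i}e_i$, so by (\ref{parpart}) the value of the parabolic sheaf there is $\mathcal O_X(-F_{\frac{\boldsymbol{\epsilon}}{\bold r}J}(\boldsymbol{\mu})\,\bold r\bold E)$. Since $\frac{\epsilon_i}{r_i}\cdot r_i=\epsilon_i$, this equals $\mathcal O_X(-\sum_{i\in J}\mu_i\epsilon_i E_i)$. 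On the other side, the term of $\otimes_{i\in J}(\mathcal O_X(-\epsilon_i E_i)\rightarrow\mathcal O_X)$ in multidegree $\boldsymbol{\mu}$ is $\otimes_{i\in J}\mathcal O_X(-\mu_i\epsilon_i E_i)$, canonically isomorphic to $\mathcal O_X(-\sum_{i\in J}\mu_i\epsilon_i E_i)$ by the multiplication of invertible sheaves. Thus the two complexes have canonically identified terms, and both vanish in multidegrees outside $\{0<1\}^J$, so the two extensions by zero to $(\mathbb N^J)^{op}$ match automatically.

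Next I would check that the differentials agree. Fix $j\in J$ with $\mu_j=1$. The differential in the $j$-th direction of the left-hand complex is the image under $\mathcal E_\cdot$ of the arrow $F_{\frac{\boldsymbol{\epsilon}}{\bold r}J}(\boldsymbol{\mu})\rightarrow F_{\frac{\boldsymbol{\epsilon}}{\bold r}J}(\boldsymbol{\mu}-e_j)$ in $(\frac{1}{\bold r}\mathbb Z^I)^{op}$; by the very definition of $\mathcal O_X(-\cdot\bold r\bold E)$ this is the tautological inclusion $\mathcal O_X(-\sum_{i\in J}\mu_i\epsilon_i E_i)\hookrightarrow\mathcal O_X(-\sum_{i\neq j}\mu_i\epsilon_i E_i)$ obtained by dropping the summand $\epsilon_j E_j$. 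On the tensor-product side the same differential is $\mathrm{id}\otimes\cdots\otimes(\mathcal O_X(-\epsilon_j E_j)\hookrightarrow\mathcal O_X)\otimes\cdots\otimes\mathrm{id}$, which after the canonical identifications of the previous step is precisely that inclusion. Both sides being functors on the poset $(\{0<1\}^J)^{op}$, all squares commute automatically, so the termwise identifications assemble into an isomorphism of multiple complexes.

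There is no serious obstacle here: exactly as for Lemma \ref{Koszul}, the statement reduces to the defining formula (\ref{parpart}) and is essentially bookkeeping. The only point requiring a little care is to confirm that the morphisms produced by $\mathcal E_\cdot$ coincide, under the multiplication isomorphisms $\mathcal O_X(-aE_i)\otimes\mathcal O_X(-bE_j)\cong\mathcal O_X(-aE_i-bE_j)$, with the elementary inclusions of the Koszul factors; this is immediate once one regards all these invertible sheaves as subsheaves of the constant sheaf of rational functions, where every map in sight is the tautological inclusion of ideals.
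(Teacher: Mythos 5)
Your proof is correct and follows exactly the route the paper intends: the paper's own proof is the one-line remark that the lemma is a direct consequence of the defining formula (\ref{parpart}), and your write-up simply carries out that routine verification (terms via $\frac{\epsilon_i}{r_i}\cdot r_i=\epsilon_i$, differentials as tautological inclusions of ideal sheaves).
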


\begin{proof}
 Conséquence directe de la définition \ref{parpart}.
\end{proof}

\subsubsection{Définition des fibrés paraboliques}
\label{sec:defin-des-fibres}

\begin{lem}
\label{support}
  Soit $\mathcal E_\cdot \in\obj\PAR_{\frac{1}{\bold r}}(X,\bold D)$ un faisceau parabolique, et $F:\{0<1\}^J\rightarrow \frac{1}{\bold r}\mathbb Z^I$ une facette.
Il existe un morphisme naturel de complexes multiples

$$\mathcal E_{ F(\bold 0)}\otimes(\underline{\mathcal O_X}_\cdot\circ
F_J^{op})\rightarrow \mathcal E_\cdot\circ F^{op}$$ qui est un isomorphisme en
(multi-)degré $\bold 0$. Supposons de plus $\mathcal E_{ F(\bold 0)}$ localement libre de rang fini, alors si $i_J:\cap_{i\in J}D_i\rightarrow X$ désigne l'immersion fermée canonique, il existe une surjection naturelle 
${i_J}_*i_J^*\mathcal E_{ F(\bold 0)}\twoheadrightarrow H_0(\Tot(\mathcal E_\cdot\circ F^{op}))$.

\end{lem}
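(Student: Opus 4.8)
Le plan est de construire d'abord le morphisme de complexes multiples, d'en vérifier la compatibilité aux différentielles, puis d'en déduire la surjection sur $H_0$. Je commencerais par remarquer que le membre de gauche n'est autre que $\underline{\mathcal E_{F(\bold 0)}}_\cdot\circ F_J^{op}$ : par (\ref{strspe}), sa valeur en $\boldsymbol{\mu}\in\{0<1\}^J$ est $\mathcal E_{F(\bold 0)}\otimes\mathcal O_X(-\sum_{i\in J}\mu_i D_i)$. Pour définir la flèche vers $\mathcal E_{F(\boldsymbol{\mu})}=\mathcal E_{F(\bold 0)+\sum_{i\in J}\mu_i\frac{\epsilon_i}{r_i}e_i}$, j'utiliserais la décomposition $F(\boldsymbol{\mu})=\big(F(\boldsymbol{\mu})-\sum_{i\in J}\mu_i e_i\big)+\sum_{i\in J}\mu_i e_i$, dont le second terme est entier. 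Comme $1\leq\epsilon_i\leq r_i$, on a $F(\boldsymbol{\mu})-\sum_{i\in J}\mu_i e_i\leq F(\bold 0)$, d'où par fonctorialité de $\mathcal E_\cdot$ une flèche $\mathcal E_{F(\bold 0)}\rightarrow\mathcal E_{F(\boldsymbol{\mu})-\sum_{i\in J}\mu_i e_i}$ ; en la tensorisant par $\mathcal O_X(-\sum_{i\in J}\mu_i D_i)$ puis en composant avec l'isomorphisme des pseudo-périodes $\mathcal O_X(-\sum_{i\in J}\mu_i D_i)\otimes\mathcal E_{F(\boldsymbol{\mu})-\sum_{i\in J}\mu_i e_i}\xrightarrow{\sim}\mathcal E_{F(\boldsymbol{\mu})}$ (licite puisque $\sum_{i\in J}\mu_i e_i\in\mathbb Z^I$), j'obtiendrais la flèche cherchée. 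Cette construction est manifestement naturelle, et en multi-degré $\bold 0$ elle se réduit à l'identité de $\mathcal E_{F(\bold 0)}$, donc est un isomorphisme.

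Le point délicat, qui est le c\oe ur de l'argument, est de vérifier que ces flèches commutent aux différentielles, i.e.\ qu'on a bien un morphisme de complexes multiples. Cela revient, pour chaque $j\in J$, à un carré commutatif reliant l'inclusion $\mathcal O_X(-D_j)\hookrightarrow\mathcal O_X$ (différentielle de Koszul du membre de gauche, cf.\ lemme \ref{Koszul}) à la flèche $\mathcal E_{F(\boldsymbol{\mu})}\rightarrow\mathcal E_{F(\boldsymbol{\mu}-e_j)}$ (différentielle du membre de droite). La commutativité résulterait de la naturalité de l'isomorphisme des pseudo-périodes et de sa cohérence vis-à-vis de l'addition dans $(\mathbb Z^I)^{op}$, compte tenu de ce que la différentielle de droite se factorise, via les pseudo-périodes, par l'inclusion $\mathcal O_X(-D_j)\hookrightarrow\mathcal O_X$. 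Les deux complexes étant totalisés avec la même convention de signes, on en déduirait aussitôt un morphisme de complexes totaux.

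Pour la seconde assertion, je passerais à $H_0$ du complexe total. Supposant $\mathcal E_{F(\bold 0)}$ localement libre de rang fini, donc plat, la totalisation du membre de gauche est $\mathcal E_{F(\bold 0)}\otimes\Tot(\underline{\mathcal O_X}_\cdot\circ F_J^{op})$ ; le lemme \ref{Koszul} identifie ce dernier facteur au complexe de Koszul $\otimes_{i\in J}(\mathcal O_X(-D_i)\rightarrow\mathcal O_X)$, dont le $H_0$ est $\mathcal O_X/\sum_{i\in J}\mathcal I_{D_i}={i_J}_*\mathcal O_{\cap_{i\in J}D_i}$, où $\mathcal I_{D_i}$ désigne l'idéal de $D_i$. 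La platitude de $\mathcal E_{F(\bold 0)}$ (qui permet de commuter tensorisation et homologie) et la formule de projection donnent alors
$$H_0\big(\Tot(\mathcal E_{F(\bold 0)}\otimes(\underline{\mathcal O_X}_\cdot\circ F_J^{op}))\big)\simeq {i_J}_*i_J^*\mathcal E_{F(\bold 0)}.$$
Enfin, notant $T_\cdot$ (resp.\ $T'_\cdot$) le complexe total du membre de gauche (resp.\ de droite), on a $H_0(T_\cdot)=\coker(T_1\rightarrow T_0)$ avec $T_0$ la composante de multi-degré $\bold 0$, et de même à droite. Comme la flèche $T_0\rightarrow T'_0$ est l'isomorphisme de multi-degré $\bold 0$, la composée $T_0\rightarrow T'_0\twoheadrightarrow H_0(T'_\cdot)$ est surjective ; se factorisant par $H_0(T_\cdot)$, elle force la surjectivité de $H_0(T_\cdot)\rightarrow H_0(T'_\cdot)$, d'où la surjection annoncée ${i_J}_*i_J^*\mathcal E_{F(\bold 0)}\twoheadrightarrow H_0(\Tot(\mathcal E_\cdot\circ F^{op}))$ via l'identification précédente.
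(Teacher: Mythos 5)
Your proof is correct and essentially coincides with the paper's: the morphism is built from the pointwise inequality $F(\boldsymbol{\mu})\leq F(\bold 0)+F_J(\boldsymbol{\mu})$ (i.e.\ from $\epsilon_i\leq r_i$) together with the pseudo-period isomorphism, and the passage to $H_0$ via the flatness of $\mathcal E_{F(\bold 0)}$, the Koszul complex of the lemma \ref{Koszul}, and the surjectivity of $H_0(T_\cdot)\rightarrow H_0(T'_\cdot)$ induced by the degree-$\bold 0$ isomorphism is the same. The only difference is one of packaging: the paper reads the inequality $F\leq F(\bold 0)+F_J$ as a natural transformation between the corresponding functors on $\{0<1\}^J$ and obtains the morphism of multiple complexes as $\mathcal E_\cdot\circ{\leq}^{op}$ followed by the pseudo-period identification of $\mathcal E_\cdot\circ(F(\bold 0)+F_J)^{op}$ with $\mathcal E_{F(\bold 0)}\otimes(\underline{\mathcal O_X}_\cdot\circ F_J^{op})$, so that the compatibility with the differentials --- the step you rightly single out as delicate but only sketch --- comes for free by naturality rather than requiring a square-by-square verification.
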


On repousse la preuve après la définition suivante. La deuxième assertion
montre que, sous les hypothèses du lemme \ref{support}, $H_0(\Tot(\mathcal
E_\cdot\circ F^{op}))$ est à support dans $\cap_{i\in J}D_i$, ce qui donne un
sens\footnote{cette définition dépend \emph{a priori} du choix d'un ordre
  total sur $I$, mais la proposition \ref{parrec} et la remarque
  \ref{parrecrem} qui s'ensuit montrent qu'il n'en est rien, au moins pour une famille régulière de
  diviseurs (voir \S\ref{sec:famille-reguliere}) qui est le seul cas que nous considérerons} à la :

\begin{defi}
\label{defpar}
Soit $\mathcal E_\cdot \in\obj\PAR_{\frac{1}{\bold r}}(X,\bold D)$ un faisceau parabolique. On dit que c'est un \emph{faisceau parabolique localement libre} ou encore un \emph{fibré parabolique} si pour toute facette $F:\{0<1\}^J\rightarrow \frac{1}{\bold r}\mathbb Z^I$, l'homologie $H_l(\Tot(\mathcal E_\cdot\circ F^{op}))$ du complexe multiple associé est nulle pour $l>0$, et est un faisceau localement libre de rang fini sur $\cap_{i\in J} D_i$ pour $l=0$. On notera $\Par_{\frac{1}{\bold r}}(X,\bold D)$ la catégorie des faisceaux paraboliques localement libres sur $X$ le long de $\bold D$ à poids multiples de $\frac{1}{\bold r}$.
\end{defi}

\begin{proof}[Démonstration du lemme \ref{support}]

La deuxième assertion est une conséquence de la première, puisque celle-ci entraîne l'existence d'un morphisme de complexes simples 
$\Tot(\mathcal E_{ F(\bold 0)}\otimes(\underline{\mathcal O_X}_\cdot\circ F_J^{op}))\rightarrow \Tot(\mathcal E_\cdot\circ F^{op})$ qui est un isomorphisme en degré $0$, d'où un épimorphisme 
$H_0(\Tot(\mathcal E_{ F(\bold 0)}\otimes(\underline{\mathcal O_X}_\cdot\circ F_J^{op})))\twoheadrightarrow H_0(\Tot(\mathcal E_\cdot\circ F^{op}))$. 
L'hypothèse sur $\mathcal E_{ F(\bold 0)}$ donne alors $H_0(\Tot(\mathcal E_{ F(\bold 0)}\otimes(\underline{\mathcal O_X}_\cdot\circ F_J^{op})))\simeq\mathcal E_{ F(\bold 0)}\otimes H_0(\Tot((\underline{\mathcal O_X}_\cdot\circ F_J^{op})))$.
Le lemme \ref{Koszul} montre que $\Tot((\underline{\mathcal O_X}_\cdot\circ F_J^{op}))$ est le complexe de Koszul associé à la famille $(D_i)_{i\in J}$, ce qui permet de conclure.

Reste à montrer la première assertion. L'hypothèse que $F$ est une facette
montre qu'on a en particulier $\forall \boldsymbol{\mu}=(\mu_i)_{i\in J}\in
\{0<1\}^J\;\; F(\boldsymbol{\mu})\leq F(\bold 0)+F_J(\boldsymbol{\mu})
$. Cette inégalité entre applications croissantes peut s'interpréter comme l'existence d'une transformation naturelle $\leq$ entre les foncteurs correspondants :

\xymatrix{\relax
    &&&&&\{0<1\}^J \UN[r]{F}{F(\bold 0)+F_J}{\leq} & \frac{1}{\bold r}\mathbb Z^I}

En passant aux catégories opposées et en composant avec $\mathcal E_\cdot$

\xymatrix{\relax
    &&&&({\{0<1\}^J})^{op} \DEUX[r]{{F}^{op}}{{(F(\bold 0)+F_J)}^{op}}{{\leq}^{op}} & ({\frac{1}{\bold r}\mathbb Z^I})^{op}\ar[r]^{\mathcal E_\cdot}&\MOD X}

on obtient une transformation naturelle :
$\mathcal E_\cdot\circ{\leq}^{op}:\mathcal E_\cdot\circ {(F(\bold 0)+F_J)}^{op}\rightarrow \mathcal E_\cdot\circ {F}^{op}$. 

Le diagramme suivant 

\xymatrix@R=12pt{
&&({\{0<1\}^J)}^{op}\ar[dr]|-{({F_J}^{op},F(\bold 0))}\ar@/^/[drrr]^{{(F(\bold 0)+F_J)}^{op}}\ar@/_/[dddr]_{({F_J}^{op},\mathcal E_{F(\bold 0)})}&&&\\
  &&&  (\mathbb Z ^I)^{op} \times (\frac{1}{\bold r}\mathbb Z^I)^{op} \ar[rr]^{+} \ar[dd]|-{(\mathbb Z ^I)^{op} \times\mathcal E_\cdot} && (\frac{1}{\bold r}\mathbb Z^I)^{op}
   \ar[dd]^{\mathcal E_\cdot}\\
  &&&&&\\
  &&& (\mathbb Z ^I)^{op} \times \MOD X \ar[rr]_{\mathcal O_X(-\cdot \bold D)\otimes \cdot} \ar@{=>}[uurr]^{j}&& \MOD X}

montre que l'isomorphisme de pseudo-périodes $j$ de $\mathcal E_\cdot$ permet d'identifier $\mathcal E_\cdot\circ {(F(\bold 0)+F_J)}^{op}$ avec $\mathcal E_{ F(\bold 0)}\otimes(\underline{\mathcal O_X}_\cdot\circ F_J^{op})$, ce qui conclut la démonstration du lemme.
 
\end{proof}

\subsection{Fibrés paraboliques et revêtements}
\label{sec:fibr-parab-rev}

Pour montrer la pertinence de la définition \ref{defpar}, on doit montrer l'existence de fibrés paraboliques non triviaux, i.e. autre que les sommes directes de décalés de fibrés à structure spéciale (bien qu'ils soient tous localement de ce type). La manière la plus directe de produire de tels fibrés paraboliques est d'utiliser des revêtements de Kummer ramifiés le long d'une famille régulière de diviseurs. On commence par préciser ces notions.

\subsubsection{Famille régulière de diviseurs}
\label{sec:famille-reguliere}

On rappelle le lemme folklorique suivant, ainsi qu'une preuve, repoussée après
la définition \ref{famregdef}, à laquelle il donne un sens.

\begin{lem}
\label{famreglem}
  Soit $X$ un schéma localement noethérien, $I$ un ensemble fini, $\bold D=(D_i)_{i\in I}$ un ensemble de diviseurs de Cartier effectifs sur $X$, et pour tout $i\in I$, $s_i:\mathcal O_X\rightarrow \mathcal O_X(D_i)$ la section canonique.  Les conditions suivantes sont équivalentes :

  \begin{enumerate}

  \item[(i)] En tout point $x$ de $X$, soit une des sections $s_i$ est
    inversible, soit $(s_i)_{i\in I}$ est une suite \footnote{dans un anneau
    local noethérien, la régularité d'une suite ne dépend pas de l'ordre de ses éléments, ce qui permet de parler de famille régulière} 
régulière (au sens de Serre),
\item[(ii)] la section $(s_i)_{i\in I} :  \mathcal O_X\rightarrow\oplus_{i\in I}\mathcal O_X(D_i)$ est régulière (i.e. le complexe de Koszul associé au morphisme dual n'a pas d'homologie en degré supérieur ou égal à $1$), 
\item[(iii)] $\cap_ {i\in I} D_i\rightarrow X$ est une immersion fermée régulière, et si $\mathcal I_I$ est l'idéal engendré par les $(s_i)_{i\in I}$, alors en tout point $x$ de $\cap_ {i\in I} D_i$, les $(s_i)_{i\in I}$ forment un système minimal (pour le cardinal) de générateurs de $\mathcal I_{I,x}$.
  \end{enumerate}
\end{lem}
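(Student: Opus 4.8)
The plan is to check everything on local rings, since all three conditions are local at a point $x\in X$. Writing $A=\mathcal O_{X,x}$ for the noetherian local ring with maximal ideal $\mathfrak m$ and residue field $k$, and trivializing each invertible sheaf $\mathcal O_X(D_i)$ near $x$, the sections $s_i$ become elements $f_i\in A$ which are non-zero-divisors (because the $D_i$ are effective Cartier divisors). The dual of the morphism $\mathcal O_X\to\oplus_{i\in I}\mathcal O_X(D_i)$ then localizes, up to a harmless twist by invertible modules, to the ordinary Koszul complex $K_\bullet(f_1,\dots,f_n)$ on the $f_i$; so condition (ii) becomes the vanishing of $H_l(K_\bullet(f_1,\dots,f_n))$ for all $l\geq 1$.

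First I would dispose of the points where some $s_i$ is invertible. If some $f_i$ is a unit in $A$, then $x\notin\cap_{j\in I}D_j$, so the intersection condition in (iii) is vacuous at $x$; moreover $K_\bullet(f_1,\dots,f_n)$ is then fully acyclic (the factor $K_\bullet(f_i)$ is already acyclic), so (ii) holds. Thus the three conditions trivially agree at such points, and the entire content lies at points of $\cap_{i\in I}D_i$, where all $f_i\in\mathfrak m$; I assume this from now on.

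The equivalence (i)$\Leftrightarrow$(ii) is then exactly the classical Koszul criterion: for elements $f_1,\dots,f_n$ of $\mathfrak m$ in a noetherian local ring, $(f_1,\dots,f_n)$ is a regular sequence if and only if $H_l(K_\bullet(f_1,\dots,f_n))=0$ for all $l\geq 1$ (indeed if and only if $H_1=0$). The delicate direction relies on the rigidity of Koszul homology together with Nakayama's lemma; this criterion also accounts for the order-independence of regularity invoked in the footnote. For (ii)$\Rightarrow$(iii) I would observe that when the Koszul homology vanishes in positive degrees, $K_\bullet(f_1,\dots,f_n)$ is a free resolution of $A/I$ with $I=(f_1,\dots,f_n)$, and since all $f_i\in\mathfrak m$ it is a \emph{minimal} one; its first Betti number gives $\dim_k I/\mathfrak m I=n$, so the $f_i$ form a minimal system of generators, while (i) shows $I$ is cut out by a regular sequence, i.e. $\cap_{i\in I}D_i\to X$ is a regular immersion.

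The one step that requires genuine care — and which I expect to be the main obstacle — is (iii)$\Rightarrow$(ii). The regularity of the immersion only furnishes \emph{some} regular sequence $g_1,\dots,g_m$ generating $I$, a priori unrelated to the $f_i$; the bridge is that both $(f_i)$ and $(g_j)$ are minimal generating families of $I$, so $m=n=\dim_k I/\mathfrak m I$ and the change-of-generators matrix $M$ with $\underline f=M\underline g$ is invertible modulo $\mathfrak m$, hence invertible over $A$ by Nakayama. Functoriality of the Koszul complex under such a $GL_n(A)$-transformation gives an isomorphism $K_\bullet(f_1,\dots,f_n)\simeq K_\bullet(g_1,\dots,g_m)$, and the latter is acyclic in positive degrees since $g_1,\dots,g_m$ is regular; this yields (ii) and closes the cycle.
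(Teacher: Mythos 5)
Your proof is correct, and the logical architecture differs from the paper's in an interesting way. The paper proves the cycle $(i)\Rightarrow(ii)\Rightarrow(iii)\Rightarrow(i)$, and the two implications out of $(ii)$ and $(iii)$ are handled quite differently from yours: for $(ii)\Rightarrow(iii)$ it invokes the isomorphism $\mathcal I_I/\mathcal I_I^2\simeq i_I^*\mathcal E_I^\vee$ furnished by the regularity of the section (SGA~6, VII) to see that $\mathcal I_{I,x}/\mathcal I_{I,x}^2$ is free of rank $\#I$, whence minimality; and for $(iii)\Rightarrow(i)$ it passes through \emph{quasi-regular} sequences in the sense of EGA~IV (0, 15.1.7): the regular immersion gives the isomorphism $\Sym(\mathcal I_I/\mathcal I_I^2)\simeq\gr_{\mathcal I_I}(\mathcal O_X)$, Nakayama identifies the $(s_i)$ with a basis of $\mathcal I_{I,x}/\mathcal I_{I,x}^2$, and one concludes that the $(s_i)$ are quasi-regular, hence regular by EGA~IV (0, 15.1.11). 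You instead close the cycle at $(ii)$ by the $GL_n(A)$-equivariance of the Koszul complex applied to the change-of-generators matrix between the $f_i$ and an arbitrary regular generating sequence $g_j$, with Nakayama guaranteeing invertibility of that matrix; and you obtain $(ii)\Rightarrow(iii)$ from the minimality of the Koszul resolution and its first Betti number. Both arguments hinge on the same pivot (minimal generation forces the comparison map to an honest regular-sequence model to be an isomorphism), but yours stays entirely within standard Koszul-complex theory (depth sensitivity, $GL_n$-invariance, minimal free resolutions), while the paper outsources the hard step to the quasi-regular/graded-algebra machinery of EGA. Your version also yields $(ii)\Rightarrow(i)$ directly, which the paper only obtains by going around the cycle. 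One small point worth keeping explicit, as you do: the reduction to points of $\cap_{i\in I}D_i$ and the remark that a regular sequence contained in $\mathfrak m$ is automatically a minimal generating system of the ideal it generates (via $I/\mathfrak m I\simeq(I/I^2)\otimes k$), which is what makes the matrix square.
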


\begin{defi}
\label{famregdef}
  On dira que $\bold D=(D_i)_{i\in I}$ est \emph{une famille régulière de diviseurs} si pour tout sous-ensemble $J\subset I$, la sous-famille $\bold D_J=(D_i)_{i\in J}$ vérifie les conditions équivalentes du lemme \ref{famreglem}.
\end{defi}

\begin{proof}[Démonstration du lemme \ref{famreglem}]
 
 $(i)\implies(ii)$ : ceci résulte du fait que le complexe de Koszul associé soit à un morphisme surjectif, soit à une suite régulière, sont sans homologie en degré plus grand que $1$.

 $(ii)\implies(iii)$ : le fait que $\mathcal I_I$ soit régulier résulte directement de la définition (\cite{SGA6}, exposé VII, Définition 1.4). De plus en notant $\mathcal E_I= \oplus_{i\in I}\mathcal O_X(D_i)$, et $i_I:\cap_ {i\in I} D_i\rightarrow X$ l'immersion fermée canonique, la régularité de $s_I=(s_i)_{i\in I}$ montre que $\mathcal I_I/\mathcal I_I^2\simeq i_I^* \mathcal E_I^{\vee}$, donc en un point $x$ de $\cap_ {i\in I} D_i$, $\mathcal I_{I,x}/\mathcal I_{I,x}^2$ est libre de rang $\# I$, par conséquent $\mathcal I_{I,x}$ ne saurait être engendré par moins de $\# I$ éléments.

 $(iii)\implies(i)$ $\cap_ {i\in I} D_i\rightarrow X$ est une immersion fermée régulière donc en particulier quasi-régulière (\cite{SGA6}, exposé VII, Proposition 1.3). Autrement dit $\mathcal I_I/\mathcal I_I^2$ est localement libre de rang fini sur $\cap_ {i\in I} D_i$ et l'homomorphisme surjectif canonique 

$$\Sym_{\mathcal O_X/\mathcal I_I}(\mathcal I_I/\mathcal I_I^2)\twoheadrightarrow \gr_{\mathcal I_I}(\mathcal O_X)$$

est un isomorphisme. En un point $x$ de $\cap_ {i\in I} D_i$, le lemme de
Nakayama montre que le rang de $\mathcal I_{I,x}/\mathcal I_{I,x}^2$ sur
$\mathcal O_{X,x}/\mathcal I_{I,x}$ est le nombre minimal de générateurs de
$\mathcal I_{I,x}$ sur $\mathcal O_{X,x}$, à savoir $\#I$. Donc $(s_i)_{i\in
  I}$ est une base de  $\mathcal I_{I,x}/\mathcal I_{I,x}^2$ et par conséquent
les $s_i$ définissent un isomorphisme $\mathcal O_{X,x}/\mathcal I_{I,x}[(S_i)_{i\in I}]\simeq\Sym_{\mathcal O_{X,x}/\mathcal I_{I,x}}(\mathcal I_{I,x}/\mathcal I_{I,x}^2) $, où les $S_i$ sont des indéterminées. On en conclut que 
l'homomorphisme surjectif $\mathcal O_{X,x}/\mathcal I_{I,x}[(S_i)_{i\in I}]\twoheadrightarrow \gr_{\mathcal I_{I,x}}(\mathcal O_{X,x})$ défini par les $(s_i)_{i\in I}$ est un isomorphisme, autrement dit la famille $(s_i)_{i\in I}$ est quasi-régulière au sens de \cite{EGA4.1}, ${\bold 0}$ Définition 15.1.7, donc régulière (\cite{EGA4.1}, ${\bold 0}$ Corollaire 15.1.11).

\end{proof}

\begin{lem}
\label{famregprop}
  \begin{enumerate}
\item[(i)] Toute sous-famille d'une famille régulière l'est également.
  \item[(ii)] Si $(l_i)_{i\in I}$ est un ensemble d'entiers $l_i\geq 1$, alors la famille $(D_i)_{i\in I}$ est régulière si et seulement si la famille $(l_iD_i)_{i\in I}$ l'est.
 \end{enumerate}
\end{lem}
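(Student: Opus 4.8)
Le plan est de s\'eparer les deux assertions, la premi\`ere \'etant purement formelle et la seconde se ramenant \`a un \'enonc\'e local d'alg\`ebre commutative. Pour $(i)$, je reviendrais directement \`a la d\'efinition \ref{famregdef} : soit $\bold D=(D_i)_{i\in I}$ une famille r\'eguli\`ere et $K\subset I$. Montrer que $\bold D_K=(D_i)_{i\in K}$ est r\'eguli\`ere revient \`a v\'erifier que, pour tout $J\subset K$, la sous-famille $(D_i)_{i\in J}$ satisfait les conditions \'equivalentes du lemme \ref{famreglem}. Or un tel $J$ v\'erifie $J\subset K\subset I$, et satisfait donc ces conditions par hypoth\`ese sur $\bold D$. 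L'assertion $(i)$ est ainsi une cons\'equence imm\'ediate de ce que les parties de $K$ sont des parties de $I$.

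Pour $(ii)$, j'observerais d'abord que $\mathcal O_X(l_iD_i)=\mathcal O_X(D_i)^{\otimes l_i}$, la section canonique de $l_iD_i$ \'etant la puissance $l_i$-i\`eme $s_i^{\otimes l_i}$ de celle de $D_i$. Les familles $(D_i)_{i\in I}$ et $(l_iD_i)_{i\in I}$ ayant m\^eme ensemble d'indices, la d\'efinition \ref{famregdef} ram\`ene l'\'equivalence cherch\'ee \`a l'\'enonc\'e suivant, pour chaque $J\subset I$ fix\'e : la sous-famille $(D_i)_{i\in J}$ v\'erifie la condition $(i)$ du lemme \ref{famreglem} si et seulement si $(l_iD_i)_{i\in J}$ la v\'erifie. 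C'est une propri\'et\'e ponctuelle : en un point $x$, trivialisant les $\mathcal O_X(D_i)$ au voisinage de $x$, la section $s_i$ s'identifie \`a une \'equation locale $f_i\in\mathcal O_{X,x}$ de $D_i$ (un \'el\'ement r\'egulier), et $s_i^{\otimes l_i}$ \`a $f_i^{l_i}$. Comme $f_i$ est une unit\'e si et seulement si $f_i^{l_i}$ en est une, le premier terme de la disjonction de la condition $(i)$ (``l'une des sections est inversible'') passe de $(D_i)_{i\in J}$ \`a $(l_iD_i)_{i\in J}$ ; et lorsqu'aucune section n'est inversible, les $f_i$ appartiennent tous \`a l'id\'eal maximal, de sorte que le second terme (``la suite des sections est r\'eguli\`ere'') passe de l'une \`a l'autre d\`es que l'on sait que $(f_i)_{i\in J}$ est r\'eguli\`ere si et seulement si $(f_i^{l_i})_{i\in J}$ l'est. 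Ces deux observations \'etablissent l'\'equivalence des disjonctions, donc $(ii)$.

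Le seul v\'eritable obstacle est donc le fait d'alg\`ebre commutative suivant : dans un anneau local noeth\'erien, une suite $(f_i)$ d'\'el\'ements de l'id\'eal maximal est r\'eguli\`ere (au sens de Serre) si et seulement si la suite des puissances $(f_i^{l_i})$ l'est. Le cas d'un seul \'el\'ement est \'el\'ementaire : $f$ est non-diviseur de z\'ero sur un module $M$ si et seulement si $f^{l}$ l'est. Pour le cas g\'en\'eral, je proc\'ederais en modifiant un exposant \`a la fois : partant d'une suite r\'eguli\`ere, on permute en derni\`ere position l'\'el\'ement dont on veut changer l'exposant -- ce qui est licite, la r\'egularit\'e d'une suite dans un anneau local noeth\'erien ne d\'ependant pas de l'ordre de ses \'el\'ements -- puis on applique l'\'equivalence pour un seul \'el\'ement au quotient correspondant. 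L'\'enonc\'e pour un seul \'el\'ement \'etant une \'equivalence et toutes les permutations s'effectuant \`a l'int\'erieur de suites d\'ej\`a r\'eguli\`eres, ce proc\'ed\'e fournit les deux implications. On pourrait aussi conclure via l'invariance de la profondeur de l'id\'eal par passage au radical (les id\'eaux $(f_i)$ et $(f_i^{l_i})$ ayant m\^eme radical) et la sensibilit\'e en profondeur du complexe de Koszul. C'est cette \'etape locale qui concentre l'essentiel de la preuve, le reste \'etant formel.
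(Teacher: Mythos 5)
Votre démonstration est correcte et suit essentiellement la même voie que celle de l'article : le point $(i)$ est formel, et le point $(ii)$ se ramène, via la caractérisation $(i)$ du lemme \ref{famreglem} et une récurrence modifiant un exposant à la fois (licite grâce à l'indépendance de la régularité vis-à-vis de l'ordre dans un anneau local noethérien), au fait qu'un élément est diviseur de zéro (resp. inversible) si et seulement si une de ses puissances l'est. Le texte de l'article ne fait qu'esquisser cet argument ; vous l'avez explicité fidèlement.
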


\begin{proof} 

 $(i)$ est immédiat. 

$(ii)$ résulte, par récurrence, de la caractérisation $(i)$ du lemme
\ref{famreglem}, puisque qu'un élément est diviseur de zéro (resp. inversible) si et seulement une de ses puissances l'est. 
\end{proof}

La notion de famille régulière de diviseurs s'étend aux champs de Deligne-Mumford localement noethériens, et le lemme \ref{famreglem} est encore valide. On a de plus le résultat utile suivant.

\begin{prop}
\label{parrec}
  Soit $X$ un champ de Deligne-Mumford localement noethérien, $\bold D$ une famille régulière de diviseurs. Alors le faisceau parabolique $\mathcal E_\cdot\in \obj \PAR_{\frac{1}{\bold r}}(X,\bold D)$ est localement libre si et seulement si :
  \begin{enumerate}
  \item[(i)] $\forall \bold l \in \frac{1}{\bold r}\mathbb Z$ le faisceau $\mathcal E_{\bold l}$ est localement libre sur $X$.
 \item[(ii)] $\forall i\in I$ $\forall l_i<l'_i\leq l_i+1 \in \frac{1}{r_i}\mathbb Z$ $\coker((\mathcal E_{l'_i})_\cdot\rightarrow(\mathcal E_{l_i})_\cdot)$ est localement libre vu comme objet de $ \PAR_{(\frac{1}{r_j})_{j\neq i}}(D_i,(D_j\cap D_i)_{j\neq i})$.
  \end{enumerate}
\end{prop}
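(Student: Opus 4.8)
The plan is to reduce the entire equivalence to one homological computation comparing, for a facette $F$ of $\mathcal E_\cdot$ on $X$, the complex $\Tot(\mathcal E_\cdot\circ F^{op})$ with the complex attached to a facette of one dimension less of the cokernel sheaf $\mathcal K_\cdot=\coker((\mathcal E_{l'_i})_\cdot\to(\mathcal E_{l_i})_\cdot)$ living on $D_i$. Once this comparison is available, both implications drop out by applying Definition \ref{defpar} to $\mathcal E_\cdot$ and to $\mathcal K_\cdot$ respectively; in particular no separate induction on $\#I$ is needed.

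First I would record the periodicity built into the pseudo-period isomorphism $j$: for $\bold l\in\frac{1}{\bold r}\mathbb Z^I$ it identifies the structure morphism $\mathcal E_{\bold l}\to\mathcal E_{\bold l-e_i}$ with $\mathcal E_{\bold l}\otimes s_i\colon\mathcal E_{\bold l}\to\mathcal E_{\bold l}\otimes\mathcal O_X(D_i)$, the map induced by the canonical section $s_i$. As $D_i$ is an effective Cartier divisor, $s_i$ is a non-zero-divisor, so when $\mathcal E_{\bold l}$ is locally free this weight-one map is injective. Given $l_i<l'_i\leq l_i+1$, the weight-one map starting at $l'_i$ factors as $\mathcal E_{(l'_i,\cdot)}\xrightarrow{\psi}\mathcal E_{(l_i,\cdot)}\to\mathcal E_{(l'_i-1,\cdot)}$ --- this uses precisely $l'_i-1\leq l_i<l'_i$ --- so under hypothesis (i) the comparison map $\psi\colon(\mathcal E_{l'_i})_\cdot\to(\mathcal E_{l_i})_\cdot$ is injective in every weight, with cokernel the parabolic sheaf $\mathcal K_\cdot$ of (ii).

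The core is then the following. Fix a facette $F$ of direction $J\neq\emptyset$, choose $i\in J$, put $J'=J\setminus\{i\}$, let $F'$ be the facette of direction $J'$ obtained by deleting the $i$-th coordinate, and set $l_i=F(\bold 0)_i$, $l'_i=F(\bold 0)_i+\epsilon_i/r_i$. Slicing $\mathcal E_\cdot\circ F^{op}$ in the $i$-th direction displays it as a two-column double complex, whose sign-twisted total complex (with the anti-commutative convention of \S\ref{sec:complexe-associe}) is exactly the mapping cone of $\phi\colon\Tot((\mathcal E_{l'_i})_\cdot\circ {F'}^{op})\to\Tot((\mathcal E_{l_i})_\cdot\circ {F'}^{op})$ induced by $\psi$. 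Since $\psi$ is injective, under (i) restriction to the vertices of $F'$ followed by totalisation (a finite direct sum, hence exact) yields a short exact sequence of complexes \[0\to\Tot((\mathcal E_{l'_i})_\cdot\circ {F'}^{op})\xrightarrow{\phi}\Tot((\mathcal E_{l_i})_\cdot\circ {F'}^{op})\to\Tot(\mathcal K_\cdot\circ {F'}^{op})\to 0.\] The cone of an injective chain map being quasi-isomorphic to its cokernel, this gives canonical isomorphisms $H_\ell(\Tot(\mathcal E_\cdot\circ F^{op}))\simeq H_\ell(\Tot(\mathcal K_\cdot\circ {F'}^{op}))$ for all $\ell$. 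Granting this, necessity is immediate: (i) is the case $J=\emptyset$ of Definition \ref{defpar}, and every facette $F'$ of $\mathcal K_\cdot$ on $D_i$ is the $i$-slice of a facette $F$ of $\mathcal E_\cdot$ (adjoin direction $i$ with $\epsilon_i=r_i(l'_i-l_i)$), so the isomorphism transports the vanishing of $H_{\ell>0}$ and the local freeness of $H_0$ from $\mathcal E_\cdot$ to $\mathcal K_\cdot$. Sufficiency is equally direct: for $J=\emptyset$ one has $H_0=\mathcal E_{F(\bold 0)}$, locally free by (i); for $J\neq\emptyset$ the isomorphism together with (ii) --- which asserts exactly that $\mathcal K_\cdot$ is a parabolic bundle on $D_i$, hence has vanishing higher facette homology and locally free $H_0$ on $\cap_{j\in J'}(D_j\cap D_i)=\cap_{j\in J}D_j$ --- gives the required conclusion for $\mathcal E_\cdot$.

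The main obstacle is the bookkeeping in the core computation: checking that the sign-twisted total complex of the two-column slice really is the mapping cone, and that forming the cokernel commutes both with restriction to $F'$ and with totalisation. A second delicate point is that the whole reduction only makes sense because $\bold D$ is a regular family: this is what guarantees that $(D_j\cap D_i)_{j\neq i}$ is a family of effective Cartier divisors on $D_i$, so that $\mathcal K_\cdot$ is a genuine object of $\PAR_{(\frac{1}{r_j})_{j\neq i}}(D_i,(D_j\cap D_i)_{j\neq i})$ and Definition \ref{defpar} may be invoked for it; the injectivity of $\psi$, via the pseudo-period factorisation, is what makes the short exact sequence --- and hence the clean cone/cokernel identification --- available in the first place.
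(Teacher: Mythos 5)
Your argument is correct and follows essentially the same route as the paper's proof: reduce to the case where (i) holds, decompose a facette of direction $J\ni i$ into a facette of direction $J\setminus\{i\}$ together with the pair $l_i<l'_i\leq l_i+1$, identify $\Tot(\mathcal E_\cdot\circ F^{op})$ with the mapping cone of the induced injection of total complexes, and invoke the quasi-isomorphism from the cone of an injection to its cokernel (\cite{Weibel} 1.5.8). Your write-up merely makes explicit two points the paper leaves implicit, namely why (i) forces the comparison map to be injective (via the pseudo-period factorisation through multiplication by the regular section $s_i$) and why every facette of the cokernel sheaf on $D_i$ arises as a slice.
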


\begin{proof}

Pour démontrer l'équivalence, on peut supposer que {\it (i)} est vrai.
Soit $J\neq \emptyset$, et $i\in J$. 
La donnée d'une facette $F:\{0<1\}^J\rightarrow  \frac{1}{\bold r}\mathbb Z^I$
équivaut à celle d'un triplet $(\tilde{F},l_i,l'_i)$, où
$\tilde{F}:\{0<1\}^{J-i}\rightarrow \prod_{j\in J-{i}} \frac{1}{r_j}\mathbb Z$
est une facette, et $l_i<l'_i\leq l_i+1$
dans  $\frac{1}{r_i}\mathbb Z$, tels que le diagramme suivant (bi)commute.

\xymatrix@R=12pt{
&&&&\{0<1\}^J \ar[r]^F& \frac{1}{\bold r}\mathbb Z^I\\
&&&&&\\
&&&& \{0<1\}^{J-i} \ar@<1ex>[uu]^0\ar@<-1ex>[uu]_1\ar[r]^{\tilde F}& \prod_{j\in J-{i}} \frac{1}{r_j}\mathbb Z\ar@<1ex>[uu]^{l_i}\ar@<-1ex>[uu]_{l'_i}
}
En notant $F^i_0$ et $F^i_1$ les deux foncteurs $\{0<1\}^{J-i}\rightarrow
\frac{1}{\bold r}\mathbb Z^I$ correspondants, on dispose d'un morphisme de complexes multiples 
$1>0:\mathcal E_\cdot\circ {F^i_1}^{op}\rightarrow \mathcal E_\cdot\circ
{F^i_0}^{op}$ et le fait qu'on suppose {\it (i)} vrai montre que ce morphisme
est injectif. Il en est donc de même pour le morphisme induit 
$\Tot(1>0) :\Tot(\mathcal E_\cdot\circ {F^i_1}^{op})\rightarrow \Tot(\mathcal
E_\cdot\circ {F^i_0}^{op})$, et on vérifie que 
$\Tot(\mathcal E_\cdot\circ F^{op})$ s'identifie canoniquement au cône
$\cone(\Tot(1>0))$ de celui-ci. On dispose donc d'un morphisme $\Tot(\mathcal
E_\cdot\circ F^{op})\rightarrow  \coker\Tot(1>0) $ 
qui est un quasi-isomorphisme (\cite{Weibel} 1.5.8), ce qui permet de conclure.

\end{proof}

\begin{rem}
\label{parrecrem}
  En particulier, le fait, pour un faisceau parabolique donné, d'être
  localement libre, ne dépend pas de l'ordre choisi sur l'ensemble d'indices
  $I$.   
\end{rem}

\subsubsection{Fibrés paraboliques relativement à une famille de diviseurs à croisements normaux simples}
\label{sec:fibr-parab-relat}

\begin{defi}
\label{famdivcroisnormsimpl}
  Une famille $\bold D=(D_i)_{i\in I}$ de diviseurs de Cartier effectifs sur un schéma localement noethérien $X$ est dite à croisements normaux simples si pour tout point $x$ de $\cup_{i\in I}D_i$ on a :

  \begin{enumerate}
  \item[(i)] l'anneau local $\mathcal O_{X,x}$ est régulier, 
  \item[(ii)] si $I_x=\{i\in I / x\in D_i\}$, et $s_i$ est une équation locale de $D_i$ en $x$, alors $\{s_i, i\in I_x\}$ est une partie d'un système régulier de paramètres.
\end{enumerate}
\end{defi}

\begin{rem}
  \begin{enumerate}
  \item C'est une légère adaptation de \cite{GM}, Definition 1.8.2.
  \item Il revient au même (\cite{GM}, Lemme 8.1.4) 
de dire que la famille est à croisements normaux et que chacun des $D_i$ est régulier.
  \item Comme un système régulier de paramètres est (\cite{EGA4.1}, Définition 17.1.6) une famille régulière, une famille de diviseurs à croisements normaux simples est en particulier une famille régulière au sens de la définition \ref{famregdef}. 
  \end{enumerate}
\end{rem}

Comme cette définition est invariante par un changement de base étale, elle a également un sens pour un champ de Deligne-Mumford. On a de plus :

\begin{prop}
\label{parfamdivcroisnormsimpl}
  Soit $X$ un champ de Deligne-Mumford localement noethérien, $\bold D$ une famille de diviseurs à croisements normaux simples. Alors le faisceau parabolique $\mathcal E_\cdot\in \obj \PAR_{\frac{1}{\bold r}}(X,\bold D)$ est localement libre si et seulement si $\forall \bold l \in \frac{1}{\bold r}\mathbb Z$ le faisceau $\mathcal E_{\bold l}$ est localement libre sur $X$.
\end{prop}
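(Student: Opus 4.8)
Le plan est de proc\'eder par r\'ecurrence sur le cardinal $\# I$ en s'appuyant sur la caract\'erisation r\'ecursive de la proposition \ref{parrec}. Pour $\# I=0$ l'\'enonc\'e est tautologique : un faisceau parabolique se r\'eduit \`a un unique $\mathcal O_X$-module, localement libre au sens de la d\'efinition \ref{defpar} (seule la facette vide intervient) si et seulement s'il l'est sur $X=\cap_{\emptyset}D_i$. On supposera donc le r\'esultat acquis pour toute famille \`a croisements normaux simples de cardinal $<\# I$. L'implication directe sera imm\'ediate, puisque la condition $(i)$ de la proposition \ref{parrec} affirme pr\'ecis\'ement que tous les $\mathcal E_{\bold l}$ sont localement libres; tout le travail portera sur la r\'eciproque, \`a savoir que sous l'hypoth\`ese que tous les $\mathcal E_{\bold l}$ sont localement libres sur $X$, la condition $(ii)$ de la proposition \ref{parrec} est automatiquement v\'erifi\'ee.

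Pour cela je fixerais $i\in I$ et $l_i<l'_i\leq l_i+1$ dans $\frac{1}{r_i}\mathbb Z$, et je poserais $\mathcal C_\cdot=\coker((\mathcal E_{l'_i})_\cdot\rightarrow(\mathcal E_{l_i})_\cdot)\in\obj\PAR_{(\frac{1}{r_j})_{j\neq i}}(D_i,(D_j\cap D_i)_{j\neq i})$. La premi\`ere \'etape consiste \`a observer que, $D_i$ \'etant r\'egulier et les \'equations locales des $D_j$ ($j\neq i$) se restreignant en une partie d'un syst\`eme r\'egulier de param\`etres de $D_i$, la famille $(D_j\cap D_i)_{j\neq i}$ est encore \`a croisements normaux simples, et de cardinal $\# I-1$. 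L'hypoth\`ese de r\'ecurrence, appliqu\'ee \`a $D_i$, ram\`enera alors la libert\'e locale de $\mathcal C_\cdot$ \`a la seule v\'erification que, pour tout poids $\bold m\in\prod_{j\neq i}\frac{1}{r_j}\mathbb Z$, le faisceau $\mathcal C_{\bold m}=\coker(\mathcal E_{(l'_i,\bold m)}\rightarrow\mathcal E_{(l_i,\bold m)})$ est localement libre sur $D_i$.

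On se ram\`ene ainsi \`a un \'enonc\'e local d'alg\`ebre commutative. Je poserais $M=\mathcal E_{(l_i,\bold m)}$ et noterais $N$ l'image de $\mathcal E_{(l'_i,\bold m)}$ dans $M$. Via l'isomorphisme des pseudo-p\'eriodes, la compos\'ee de $\mathcal E_{(l'_i,\bold m)}\rightarrow M$ avec $M\rightarrow\mathcal E_{(l'_i-1,\bold m)}$ s'identifie \`a la multiplication par la section canonique $s_i$, injective sur le faisceau localement libre $\mathcal E_{(l'_i,\bold m)}$; donc $\mathcal E_{(l'_i,\bold m)}\rightarrow M$ est injectif et $N\simeq\mathcal E_{(l'_i,\bold m)}$ est localement libre par $(i)$. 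De m\^eme la compos\'ee $\mathcal E_{(l_i+1,\bold m)}\rightarrow M$, qui est la multiplication par $s_i$ sur $M$ et se factorise par $N$, montre que $s_iM\subseteq N$. En un point $x$ de $D_i$, l'anneau $R=\mathcal O_{X,x}$ est r\'egulier et $s_i$ y fait partie d'un syst\`eme r\'egulier de param\`etres; on dispose donc de modules libres $N\subseteq M$ de m\^eme rang avec $s_iM\subseteq N\subseteq M$, et l'on veut montrer que $Q=M/N$ est libre sur $\bar R=R/(s_i)=\mathcal O_{D_i,x}$.

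C'est ce dernier point qui sera l'obstacle principal. La suite exacte $0\rightarrow N\rightarrow M\rightarrow Q\rightarrow 0$, avec $M$ et $N$ libres, fournit une r\'esolution libre de longueur $1$, d'o\`u $\operatorname{pd}_R Q\leq 1$; la formule d'Auslander--Buchsbaum sur $R$ donne $\prof_R Q\geq\dim R-1=\dim\bar R$. Comme $Q$ est un $\bar R$-module, on a $\prof_{\bar R}Q=\prof_R Q$, donc $\prof_{\bar R}Q=\dim\bar R$, et une seconde application d'Auslander--Buchsbaum, cette fois sur l'anneau r\'egulier $\bar R$, livre $\operatorname{pd}_{\bar R}Q=0$, c'est-\`a-dire que $Q$ est libre sur $\bar R$. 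C'est exactement la r\'egularit\'e de $\bar R$, autrement dit l'hypoth\`ese de croisements normaux simples, qui est ici essentielle, et c'est le seul endroit o\`u elle intervient au-del\`a de la proposition \ref{parrec}. Ceci ach\`evera la v\'erification de la condition $(ii)$ et, avec elle, la r\'ecurrence.
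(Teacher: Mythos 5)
Votre preuve est correcte et suit essentiellement la m\^eme strat\'egie que celle du texte : r\'ecurrence sur $\# I$ via la proposition \ref{parrec}, r\'eduction au lemme local $s_iM\subseteq N\subseteq M$ avec $M$, $N$ libres, puis formule d'Auslander--Buchsbaum sur $R$ et sur $R/(s_i)$. La seule variante est cosm\'etique : vous obtenez la minoration $\prof_R(M/N)\geq\dim R-1$ par $\operatorname{pd}_R(M/N)\leq 1$ et Auslander--Buchsbaum, l\`a o\`u le texte la tire de la suite exacte longue des $\Ext_R(k,\cdot)$ -- deux formes du m\^eme argument.
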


\begin{proof}
  Comme, pour tout $i\in I$, la famille $(D_j\cap D_i)_{j\neq i}$ est à
  croisements normaux simples sur $D_i$ (voir \cite{EGA4.1}, preuve de la
  Proposition 17.1.7), on peut raisonner par récurrence sur $\# I$. On conclut
  à l'aide de la proposition \ref{parrec} et du lemme suivant (je remercie Angelo Vistoli pour m'avoir fourni le principe de la preuve) :

  \begin{lem}
    Soit $R$ un anneau local noethérien régulier, d'idéal maximal $\mathfrak m$, $t\in \mathfrak m$, $t\notin \mathfrak m^2$. Soient de plus $M$ et $N$ deux modules libres de rang fini tel que $tM \subset N\subset M$. Alors $M/N$ est libre comme $R/t$-module.
  \end{lem}
  \begin{proof}
   De \cite{EGA4.1}, Corollaire 17.1.8, on déduit que l'anneau local $R/t$ est régulier, et  \cite{EGA4.1}, Proposition 16.3.7 montre qu'il est de dimension $\dim R-1$. La formule d'Auslander-Buchsbaum (\cite{EGA4.1}, Proposition 17.3.4) 
montre que le résultat à démontrer équivaut à $\prof_{R/t}M/N=\dim R-1$. Or
\cite{EGA4.1}, proposition 16.4.8 montre que $\prof_{R/t}M/N=\prof_R M/N$. De
plus, \cite{EGA4.1}, Corollaire 16.4.4 donne pour tout $R$-module de type fini
$P$ : si $k=R/\mathfrak m$ est le corps résiduel, $\prof_R P=\inf\{m\geq 0/\Ext^m_R(k,P)\neq 0\}$.
La suite exacte longue de cohomologie associée au foncteur $\Hom_R(k,\cdot)$ et à la suite exacte courte de $R$-modules 
$0\rightarrow N\rightarrow M\rightarrow M/N\rightarrow 0$, et une nouvelle application de la formule d'Auslander-Buchsbaum, permettent de conclure.

  \end{proof}
\end{proof}

\subsubsection{Revêtements de Kummer}
\label{sec:revet-kumm}

La définition des revêtements de Kummer adoptée ici est celle de \cite{GM}, \S 1 : donné un ensemble fini $I$, $\bold r=(r_i)_{i\in I}$ un ensemble d'entiers $r_i\geq 1$, des schémas localement noethériens $X$ et $Y$, $\bold s= (s_i)_{i\in I}$ un ensemble de sections régulières de $\mathcal O_X$, un morphisme $p:Y\rightarrow X$ est dit \emph{revêtement de Kummer} si $Y$ est muni d'une action du schémas en groupes $\boldsymbol{\mu}_{\bold r}=\prod_{i\in I}\boldsymbol{\mu}_{r_i}$ tel qu'il existe un $X$-isomorphisme $\boldsymbol{\mu}_{\bold r}$-équivariant de $Y$ avec $$\SPEC (\mathcal O_X[(t_i)_{i\in I}]/(t_i^{r_i}-s_i)_{i\in I})$$ 
Alors $p:Y\rightarrow X$ est l'application naturelle vers le quotient schématique.

Pour $i\in I$, on notera $D_i$ (resp. $E_i$) le diviseur de Cartier associé à $s_i$ (resp. $t_i$), et $\bold D=(D_i)_{i\in I}$ (resp. $\bold E=(E_i)_{i\in I}$) la famille correspondante.

\begin{lem}
\label{famregrev}
On suppose que $\bold D=(D_i)_{i\in I}$ est une famille régulière de diviseurs sur $X$. Alors la famille $\bold E=(E_i)_{i\in I}$ est une famille régulière de diviseurs sur $Y$.
\end{lem}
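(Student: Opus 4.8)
La strat\'egie est de se ramener, gr\^ace au lemme \ref{famregprop} {\it (ii)}, \`a la famille image inverse $(p^*D_i)_{i\in I}$, puis d'exploiter la platitude du rev\^etement de Kummer. On remarque d'abord que $p:Y\rightarrow X$ est fini et plat: localement sur $X$, si $A=\mathcal O_{X,x}$, l'alg\`ebre $B=A[(t_i)_{i\in I}]/(t_i^{r_i}-s_i)_{i\in I}$ est un $A$-module libre, de base les mon\^omes $\prod_{i\in I}t_i^{a_i}$ ($0\leq a_i<r_i$), chaque relation $t_i^{r_i}-s_i$ \'etant unitaire en $t_i$. Pour tout point $y$ de $Y$ au-dessus d'un point $x$ de $X$, l'homomorphisme local $\mathcal O_{X,x}\rightarrow \mathcal O_{Y,y}$ est donc plat, donc fid\`element plat. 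Comme de plus $p^*s_i=t_i^{r_i}$, on a l'\'egalit\'e de diviseurs de Cartier $p^*D_i=r_iE_i$.

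Le lemme \ref{famregprop} {\it (ii)}, appliqu\'e sur $Y$ aux diviseurs $E_i$ et aux entiers $r_i$, montre alors que $\bold E=(E_i)_{i\in I}$ est une famille r\'eguli\`ere si et seulement si $(r_iE_i)_{i\in I}=(p^*D_i)_{i\in I}$ en est une. Il suffit donc de prouver que $(p^*D_i)_{i\in I}$ est r\'eguli\`ere sur $Y$, c'est-\`a-dire (d\'efinition \ref{famregdef}) que pour tout $J\subset I$ la sous-famille $(p^*D_i)_{i\in J}$ satisfait la condition {\it (i)} du lemme \ref{famreglem} en tout point de $Y$.

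Je fixerais donc $J\subset I$ et un point $y$ de $Y$ au-dessus de $x$. Si l'une des sections $s_i$, $i\in J$, est inversible en $x$, elle le reste en $y$ puisque $\mathcal O_{X,x}\rightarrow \mathcal O_{Y,y}$ est local, et la condition {\it (i)} est satisfaite. Sinon toutes les $s_i$, $i\in J$, sont dans $\mathfrak m_x$; la r\'egularit\'e de $\bold D$ fournit alors, via le lemme \ref{famreglem} {\it (i)}, une suite r\'eguli\`ere $(s_i)_{i\in J}$ dans $\mathcal O_{X,x}$. Le point d\'elicat est d'en d\'eduire que son image reste une suite r\'eguli\`ere dans $\mathcal O_{Y,y}$, autrement dit la pr\'eservation des suites r\'eguli\`eres par un homomorphisme local plat. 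On l'obtient par r\'ecurrence sur la longueur de la suite: la multiplication par $s_i$ demeure injective apr\`es tensorisation par le module plat $\mathcal O_{Y,y}$, et le quotient $\mathcal O_{Y,y}/(s_i)_{i\in J}$ reste non nul par fid\`ele platitude, ce qui autorise \`a remplacer, \`a chaque \'etape, $\mathcal O_{X,x}\rightarrow \mathcal O_{Y,y}$ par $\mathcal O_{X,x}/(s_i)_{i\in J'}\rightarrow \mathcal O_{Y,y}/(s_i)_{i\in J'}$, qui demeure local et plat. Ceci ach\`eve la v\'erification de la condition {\it (i)} pour toute sous-famille, et donc la preuve.
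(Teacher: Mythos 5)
Votre preuve est correcte et suit essentiellement la m\^eme strat\'egie que celle du texte : on utilise la platitude de $p$ pour voir que $p^*\bold D$ est r\'eguli\`ere, puis l'\'egalit\'e $p^*D_i=r_iE_i$ et le lemme \ref{famregprop} \textit{(ii)} pour conclure. La seule diff\'erence est que le texte invoque la caract\'erisation \textit{(ii)} du lemme \ref{famreglem} (le complexe de Koszul, dont l'acyclicit\'e est pr\'eserv\'ee par image inverse plate), l\`a o\`u vous red\'emontrez \`a la main, via la caract\'erisation \textit{(i)}, la pr\'eservation des suites r\'eguli\`eres par un homomorphisme local plat -- argument correct mais plus d\'etaill\'e que n\'ecessaire.
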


\begin{proof}
  Comme $p$ est plat, la famille  $p^*\bold D=(p^*D_i)_{i\in I}$ est régulière, par exemple d'après le lemme \ref{famreglem} $(ii)$. Or $(p^*D_i)_{i\in I}=(r_iD_i)_{i\in I}$, avec $r_i\geq 1$, et on peut appliquer le lemme \ref{famregprop} $(ii)$.
\end{proof}

\subsubsection{Fibrés paraboliques associés à un revêtement de Kummer}
\label{sec:fibr-parab-assoc}

 Soit $p:Y\rightarrow X$ un revêtement de Kummer. On conserve les notations de la partie \ref{sec:revet-kumm}. On appellera $\boldsymbol{\mu}_{\bold r}$-objet (ou  objet $\boldsymbol{\mu}_{\bold r}$-équivariant) d'un certain type sur $Y$ tout objet du même type sur le champ quotient $[Y|\boldsymbol{\mu}_{\bold r}]$. On notera 
$\boldsymbol{\mu}_{\bold r}\MOD Y$ la catégorie des faisceaux $\boldsymbol{\mu}_{\bold r}$-équivariants sur $Y$, $p_*^{\boldsymbol{\mu}_{\bold r}} : \boldsymbol{\mu}_{\bold r}\MOD Y \rightarrow \MOD X$ l'image directe le long de $[Y|\boldsymbol{\mu}_{\bold r}]\rightarrow X$, et $\boldsymbol{\mu}_{\bold r}\PAR_{\frac{1}{\bold r}}(Y,p^*\bold D)$ la catégorie des $\boldsymbol{\mu}_{\bold r}$-faisceaux paraboliques sur $Y$ le long de $p^*\bold D$ à poids multiples de $\frac{1}{\bold r}$ (ici $p^*\bold D=(p^*D_i)_{i\in I}$ est considérée comme une famille de diviseurs de Cartier effectifs $\boldsymbol{\mu}_{\bold r}$-équivariants sur $Y$ de la manière canonique).

Vu la platitude de $[Y|\boldsymbol{\mu}_{\bold r}]\rightarrow X$ et la
fonctorialité de $\PAR_{\frac{1}{\bold r}}(Y,p^*\bold D)$ en $X$ (\S
\ref{sec:oper-elem-sur}), on dispose d'un foncteur canonique 

$$p_*^{\boldsymbol{\mu}_{\bold r}}:\boldsymbol{\mu}_{\bold r}\PAR_{\frac{1}{\bold r}}(Y,p^*\bold D)
\rightarrow \PAR_{\frac{1}{\bold r}}(X,\bold D)$$

qu'on peut détailler ainsi : donné un objet 
$(\mathcal F_\cdot, k)$ de $\boldsymbol{\mu}_{\bold r}\PAR_{\frac{1}{\bold r}}(Y,p^*\bold D)$ ($k$ désignant l'isomorphisme des pseudo-périodes), 
on lui associe $(\mathcal E_\cdot, j)$, où $\mathcal E_\cdot=p_*^{\boldsymbol{\mu}_{\bold r}}\circ \mathcal F_\cdot$ (encore noté $p_*^{\boldsymbol{\mu}_{\bold r}}(\mathcal F_\cdot)$), et $j$ est donné par la $2$-composition :

\xymatrix@R=12pt{
  &&&&  (\mathbb Z ^I)^{op} \times (\frac{1}{\bold r}\mathbb Z^I)^{op} \ar[rr]^{+} \ar[dd]_{(\mathbb Z ^I)^{op} \times\mathcal F_\cdot} && (\frac{1}{\bold r}\mathbb Z^I)^{op}
   \ar[dd]^{\mathcal F_\cdot}\\
  &&&&&&\\
  &&&& (\mathbb Z ^I)^{op} \times \boldsymbol{\mu}_{\bold r}\MOD Y \ar[rr]_{\mathcal O_Y(-\cdot p^*\bold D)\otimes \cdot} \ar[dd]_{(\mathbb Z ^I)^{op} \times p_*^{\boldsymbol{\mu}_{\bold r}}} \ar@{=>}[uurr]^{k}&& \boldsymbol{\mu}_{\bold r}\MOD Y\ar[dd]^{p_*^{\boldsymbol{\mu}_{\bold r}}}\\
&&&&&&\\
  &&&& (\mathbb Z ^I)^{op} \times \MOD X \ar[rr]_{\mathcal O_X(-\cdot \bold D)\otimes \cdot} \ar@{=>}[uurr]^{proj}&& \MOD X}

la $2$-flèche $proj$ étant donnée par la formule de projection le long de $[Y|\boldsymbol{\mu}_{\bold r}]\rightarrow X$.

En composant ce foncteur avec le foncteur $\boldsymbol{\mu}_{\bold r}\MOD Y \rightarrow 
\boldsymbol{\mu}_{\bold r}\PAR_{\frac{1}{\bold r}}(Y,p^*\bold D)$ donné par 
$\mathcal F \rightarrow \mathcal F\otimes \mathcal O_Y(-\cdot\bold r\bold E)$
(voir \S \ref{sec:oper-elem-sur}) on obtient :

\begin{defi}
  On notera $\widehat{\ }_\cdot$ le foncteur 
$\boldsymbol{\mu}_{\bold r}\MOD Y\rightarrow \PAR_{\frac{1}{\bold r}}(X,\bold D)$ 
donné sur les objets par $\widehat{\mathcal F }_\cdot=p_*^{\boldsymbol{\mu}_{\bold r}}(\mathcal F\otimes \mathcal O_Y(-\cdot\bold r\bold E))$.

\end{defi}

\begin{prop}
\label{revKumfibpar} 
On suppose que $\bold D=(D_i)_{i\in I}$ est une famille régulière de diviseurs sur $X$, et que $\mathcal F$ est un $\boldsymbol{\mu}_{\bold r}$-faisceau localement libre de rang fini sur $Y$. Alors le faisceau parabolique sur $X$ associé $\widehat{\mathcal F }_\cdot$ est un fibré parabolique sur $X$ (au sens de la définition \ref{defpar}).
\end{prop}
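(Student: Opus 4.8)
The plan is to verify the criterion of Definition \ref{defpar} directly: for every facet $F:\{0<1\}^J\rightarrow \frac{1}{\bold r}\mathbb Z^I$ one must show that $H_l(\Tot(\widehat{\mathcal F}_\cdot\circ F^{op}))$ vanishes for $l>0$ and is locally free of finite rank on $\cap_{i\in J}D_i$ for $l=0$. The main tool will be the exactness of the equivariant direct image $p_*^{\boldsymbol{\mu}_{\bold r}}$: since $\boldsymbol{\mu}_{\bold r}=\prod_{i}\boldsymbol{\mu}_{r_i}$ is diagonalizable, any $\boldsymbol{\mu}_{\bold r}$-equivariant sheaf decomposes canonically into its weight components, and $p_*^{\boldsymbol{\mu}_{\bold r}}$ is the weight-$0$ part of the (exact, since $p$ is affine) functor $p_*$. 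Hence $p_*^{\boldsymbol{\mu}_{\bold r}}$ is exact and commutes both with $\Tot$ and with the formation of the complex attached to a facet; note that no hypothesis on the characteristic is needed here, which is the whole point of working with $\boldsymbol{\mu}_{\bold r}$ rather than a constant group.

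Concretely, I would first use that $\widehat{\mathcal F}_\cdot=p_*^{\boldsymbol{\mu}_{\bold r}}(\mathcal F\otimes \mathcal O_Y(-\cdot\bold r\bold E))$ and that, since $p^*\bold D=\bold r\bold E$, the parabolic sheaf $\mathcal O_Y(-\cdot\bold r\bold E)$ on $Y$ is exactly the special one of (\ref{parpart}). Writing $F=F_{\frac{\boldsymbol{\epsilon}}{\bold r}J}$ up to the translation by $F(\bold 0)\in\frac{1}{\bold r}\mathbb Z^I$, and observing that this shift only twists by the genuine line bundle $\mathcal L:=\mathcal F\otimes \mathcal O_Y(-F(\bold 0)\bold r\bold E)$, Lemma \ref{Koszul2} identifies $(\mathcal F\otimes \mathcal O_Y(-\cdot\bold r\bold E))\circ F^{op}$ with $\mathcal L\otimes\bigl(\otimes_{i\in J}(\mathcal O_Y(-\epsilon_i E_i)\xrightarrow{t_i^{\epsilon_i}}\mathcal O_Y)\bigr)$, whose total complex is the Koszul complex of the sections $(t_i^{\epsilon_i})_{i\in J}$ with coefficients in the locally free $\mathcal L$. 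By Lemma \ref{famregrev} the family $\bold E$ is regular on $Y$, and since $\epsilon_i\geq 1$ the family $(\epsilon_i E_i)_{i\in J}$ is regular too by Lemma \ref{famregprop} \emph{(ii)}; the equivalent conditions of Lemma \ref{famreglem} then force this Koszul complex to have homology concentrated in degree $0$, equal to $\mathcal L\otimes \mathcal O_Z$ with $Z=\SPEC(\mathcal O_Y/(t_i^{\epsilon_i}:i\in J))$. Applying the exact functor $p_*^{\boldsymbol{\mu}_{\bold r}}$ then kills the higher homology and gives $H_0(\Tot(\widehat{\mathcal F}_\cdot\circ F^{op}))=p_*^{\boldsymbol{\mu}_{\bold r}}(\mathcal L\otimes \mathcal O_Z)$.

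It then remains to prove that this $H_0$ is locally free of finite rank on $\cap_{i\in J}D_i$. As $\epsilon_i\leq r_i$, the element $s_i=t_i^{r_i}$ lies in $(t_i^{\epsilon_i})$, so $\mathcal O_Z$ is a module over $\mathcal O_X/(s_i:i\in J)=\mathcal O_{\cap_{i\in J}D_i}$; a direct local computation in the free $\mathcal O_X$-basis $(t^a)$ of $\mathcal O_Y$ shows moreover that $\mathcal O_Z$ is \emph{free} over $\mathcal O_{\cap_{i\in J}D_i}$, with basis the monomials $t^a$ satisfying $0\leq a_i<\epsilon_i$ for $i\in J$ and $0\leq a_k<r_k$ for $k\notin J$. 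Thus $\mathcal L\otimes \mathcal O_Z$ is locally free over $Z$ and $Z$ is finite flat over $\cap_{i\in J}D_i$, so its direct image is locally free there, and passing to $\boldsymbol{\mu}_{\bold r}$-invariants only extracts a direct summand, which stays locally free of finite rank over the noetherian base; this is exactly the condition of Definition \ref{defpar}. The genuinely delicate point will be this last step, namely the interaction between the equivariant structure and the thickening $Z$ (of multiplicity $\epsilon_i\leq r_i$ along each $E_i$): it is precisely the regularity of $\bold E$ together with the flatness of $Z\to\cap_{i\in J}D_i$ that secures local freeness \emph{before} taking invariants, the exactness and direct-summand property of $p_*^{\boldsymbol{\mu}_{\bold r}}$ doing the rest. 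As an alternative one could instead argue by induction on $\#I$ via the recursive criterion of Proposition \ref{parrec}, identifying the cokernel in its condition \emph{(ii)} with the $\widehat{\ }$-construction for the restricted Kummer covering $E_i\to D_i$.
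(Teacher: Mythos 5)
Your proof follows essentially the same route as the paper's: reduce via the exactness of $p_*^{\boldsymbol{\mu}_{\bold r}}$ (affineness of $p$ plus diagonalizability of $\boldsymbol{\mu}_{\bold r}$) to the Koszul complex of the regular family $(\epsilon_i E_i)_{i\in J}$ given by Lemmas \ref{Koszul2}, \ref{famregrev}, \ref{famregprop} and \ref{famreglem}, then check that $p_*^{\boldsymbol{\mu}_{\bold r}}(\mathcal F\otimes\mathcal O_{\cap_{i\in J}\epsilon_iE_i})$ is locally free over $\cap_{i\in J}D_i$ by finiteness and flatness of $\cap_{i\in J}\epsilon_iE_i\rightarrow\cap_{i\in J}D_i$ (your explicit monomial basis is exactly the paper's ``$t_i^{\epsilon_i}$ is monic'' argument) and the direct-summand property of invariants. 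The only quibble is that $\mathcal F\otimes\mathcal O_Y(-F(\bold 0)\bold r\bold E)$ is a locally free sheaf of finite rank, not a line bundle, but this changes nothing in the argument.
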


\begin{proof}
  Soit $F:\{0<1\}^J\rightarrow \frac{1}{\bold r}\mathbb Z^I$ une facette 
correspondant à la donnée de $J$, $F(\bold 0)$, et de la famille
$\boldsymbol{\epsilon}$ (voir définition \ref{parpart}). Il s'agit de calculer l'homologie du complexe  
$$\Tot(\widehat{\mathcal F }_\cdot\circ F^{op})
\simeq p_*^{\boldsymbol{\mu}_{\bold r}} (\Tot(\mathcal F\otimes \mathcal O_X(-\cdot\bold r\bold E)\circ F^{op}))\simeq 
p_*^{\boldsymbol{\mu}_{\bold r}} (\mathcal F\otimes\Tot(\mathcal O_X(-\cdot\bold r\bold E)\circ F^{op}))$$

vu que le foncteur $\Tot$ commute à tout foncteur conservant les sommes
directes. Or il est clair d'après la définition \ref{parpart} que $F= F(\bold
0)+ F_{\frac{\boldsymbol{\epsilon}}{\bold r}J}$ et donc : 
$$\mathcal O_X(-\cdot\bold r\bold E)\circ F^{op}\simeq \mathcal O_Y(-\bold r F(\bold 0)\bold E)\otimes (\mathcal O_X(-\cdot\bold r\bold E)\circ F_{\frac{\boldsymbol{\epsilon}}{\bold r}J}^{op})$$

Le lemme \ref{Koszul2} montre qu'on doit calculer l'homologie du complexe

$$p_*^{\boldsymbol{\mu}_{\bold r}} (\mathcal F\otimes \mathcal O_Y(-\bold r F(\bold 0)\bold E)\otimes\Tot(\otimes_{i\in J}(\mathcal O_Y(-\epsilon_i E_i)\rightarrow \mathcal O_Y)))$$

Le foncteur $p_*^{\boldsymbol{\mu}_{\bold r}} (\mathcal F\otimes \mathcal
O_Y(-\bold r F(\bold 0)\bold E))\otimes\cdot)$ étant exact (en effet $p$ est
affine donc $p_*$ est exact [\cite{EGA3.1} 1.3.2], et ${\boldsymbol{\mu}_{\bold r}}$ est diagonalisable), on trouve donc

$$p_*^{\boldsymbol{\mu}_{\bold r}} (\mathcal F\otimes \mathcal O_Y(-\bold r F(\bold 0)\bold E)\otimes H_l(\Tot(\otimes_{i\in J}(\mathcal O_Y(-\epsilon_i E_i)\rightarrow \mathcal O_Y))))$$

Comme $\bold D=(D_i)_{i\in I}$ est une famille régulière de diviseurs sur $X$, il en est de même, d'après le lemme \ref{famregrev}, pour $\bold E=(E_i)_{i\in I}$. La définition \ref{facettedef} d'une facette impose que $\forall i\in J$, $\epsilon_i\geq 1$, et le lemme \ref{famregprop} montre que la famille $\boldsymbol{\epsilon}\bold E_{|J}=(\epsilon_i E_i)_{i\in J}$ est également une famille régulière de diviseurs.

On déduit du lemme \ref{famreglem} {\it(ii)} que $H_l(\Tot(\otimes_{i\in J}(\mathcal O_Y(-\epsilon_i E_i)\rightarrow \mathcal O_Y)))$ est nul pour $l>0$, et est isomorphe à $\mathcal O_{\cap_{i\in J} \epsilon_i E_i}$ pour $l=0$. On peut conclure à l'aide du lemme suivant :

\begin{lem}
 Pour tout $\boldsymbol{\mu}_{\bold r}$-faisceau $\mathcal F$ localement libre de rang fini sur $Y$, le faisceau 
$p_*^{\boldsymbol{\mu}_{\bold r}}(\mathcal F\otimes \mathcal O_{\cap_{i\in J} \epsilon_i E_i})$ est localement libre comme $\mathcal O_{\cap_{i\in J} D_i}$-module.
\end{lem}

\begin{proof}
  Vu la commutativité du diagramme 

\xymatrix@R=12pt{
&&&\cap_{i\in J}\epsilon_i E_i \ar[d]_q\ar[r]^{j_J} & Y\ar[d]^p\\
&&&\cap_{i\in J} D_i\ar[r]_{i_J} & X
}
on a $p_*^{\boldsymbol{\mu}_{\bold r}}(\mathcal F\otimes \mathcal O_{\cap_{i\in J} \epsilon_i E_i})\simeq p_*^{\boldsymbol{\mu}_{\bold r}}{j_J}_*{j_J}^*\mathcal F\simeq {i_J}_*q_*^{\boldsymbol{\mu}_{\bold r}}{j_J}^*\mathcal F$, et il s'agit donc de vérifier que $q_*^{\boldsymbol{\mu}_{\bold r}}{j_J}^*\mathcal F$ est localement libre. Comme $\boldsymbol{\mu}_{\bold r}$ est diagonalisable, c'est un facteur direct de 
$q_*{j_J}^*\mathcal F$, et vu que $\mathcal F$ localement libre de rang fini sur $Y$, il suffit de prouver que $q$ est fini et plat. Il est clairement fini comme composé de l'immersion fermée 
$\cap_{i\in J}\epsilon_i E_i\rightarrow p^*(\cap_{i\in J} D_i)$ (on rappelle que pour tout $i$ on a $\epsilon_i\leq r_i$) et de $p^*(\cap_{i\in J} D_i)\rightarrow \cap_{i\in J} D_i$ (fini car $p$ l'est). Pour la platitude on peut clairement supposer $X$ affine, soit $X=\spec R$. Mais alors 
$\cap_{i\in J}\epsilon_i E_i=\spec(\otimes_{i \in J} \frac{R}{s_i}\frac{[t_i]}{t_i^{\epsilon_i}}\otimes_R\otimes_{i \notin J}\frac{R[t_i]}{t_i^{r_i}-s_i})$. Or pour $i\in J$ (resp. $i\notin J$)  $t_i^{\epsilon_i}$ (resp. $t_i^{r_i}-s_i$) est unitaire, 
donc $\frac{R}{s_i}\frac{[t_i]}{t_i^{\epsilon_i}}$ (resp. $\frac{R[t_i]}{t_i^{r_i}-s_i}$) est plat sur $\frac{R}{s_i}$ (resp. sur $R$), donc $\otimes_{i \in J} \frac{R}{s_i}\frac{[t_i]}{t_i^{\epsilon_i}}\otimes_R\otimes_{i \notin J}\frac{R[t_i]}{t_i^{r_i}-s_i}$ est plat sur $\otimes_{i \in J} \frac{R}{s_i}$, qui est l'anneau définissant $\cap_{i\in J} D_i$.

\end{proof}

\end{proof}

\subsection{Fibrés paraboliques et champ des racines}
\label{sec:fibr-parab-champs}
 
Soit $S$ un schéma, et $X\rightarrow S$ un champ de Deligne-Mumford, qu'on supposera toujours localement noethérien.
 
\subsubsection{Champ des racines}

\label{sec:champ-des-racines}

Soit $r$ un entier supérieur ou égal à $1$, inversible dans $S$. 

\begin{defi}[\cite{AGV},\cite{Cadman}]
  \begin{enumerate}
  \item[(i)] Soit un couple $(\mathcal L,s)$ constitué d'un faisceau inversible sur $X$ et d'une section de ce faisceau. Soit $\mathcal U=[\mathbb A^1|\mathbb G_m]$ le champ classifiant les faisceaux inversibles muni d'une section. On appelle champ des racines $r$-ièmes de $(\mathcal L,s)$ le champ

$$\sqrt[r]{(\mathcal L,s)/X}=X\times_\mathcal U \mathcal U$$

où le produit fibré est pris par rapport aux morphismes  $(\mathcal L,s): X\rightarrow \mathcal U$, et l'élévation à la puissance $r$ : $\cdot^{\otimes r}:\mathcal U\rightarrow \mathcal U$. 
\item[(ii)] Soit $D$ un diviseur de Cartier effectif sur $X$, $s_D$ la section canonique de $\mathcal O_X(D)$. On note $\sqrt[r]{D/X}$ le champ 
$\sqrt[r]{(\mathcal O_X(D),s_D)/X}$.
\end{enumerate}
\end{defi}

Soit $I$ un ensemble fini, $\bold r=(r_i)_{i\in I}$ un ensemble d'entiers $r_i\geq 1$, inversibles dans $S$.

\begin{defi}

\begin{enumerate}
\item 
Soit $(\bold {\mathcal L},\bold s)=(\mathcal L_i,s_i)_{i\in I}$ un ensemble de faisceaux inversibles sur $X$ muni chacun d'une section. On note 
$\sqrt[\bold r]{(\bold {\mathcal L},\bold s)/X}$ le champ 
$\times_{i\in I}\sqrt[r_i]{(\mathcal L_i,s_i)/X}$.

 \item Soit
 $\bold D=(D_i)_{i\in I}$ un ensemble de diviseurs de Cartier effectifs sur $X$. On note  $\sqrt[\bold r]{\bold D/X}$ le champ $\times_{i\in I}\sqrt[r_i]{D_i/X}$.
\end{enumerate}

\end{defi}

\begin{prop}
\label{champracchampquot}
Soit $(\bold {\mathcal L},\bold s)=(\mathcal L_i,s_i)_{i\in I}$ un ensemble de faisceaux inversibles sur $X$ munis de sections.
  Supposons qu'on dispose sur $X$ de faisceaux inversibles $\mathcal N_i$ et 
d'isomorphismes $\psi_i:\mathcal N_i^{\otimes r_i}\simeq  \mathcal L_i$.
Il existe alors un isomorphisme naturel de champs sur $X$ :

$$\sqrt[\bold r]{(\bold {\mathcal L},\bold s)/X}
\simeq [ \SPEC (\frac{\Sym(\oplus_{i\in I}\mathcal N_i^{\vee})}{(\mathcal N_i^{\otimes r_i}\simeq  \mathcal L_i)_{i\in I}})|\boldsymbol{\mu}_{\bold r} ]$$
\end{prop}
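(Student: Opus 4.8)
The plan is to reduce to the case of a single index and then realise the relative affine scheme as a $\boldsymbol{\mu}_{\bold r}$-torsor over the root stack. First I would reduce to $\# I=1$. By definition $\sqrt[\bold r]{(\bold{\mathcal L},\bold s)/X}=\times_{i\in I}\sqrt[r_i]{(\mathcal L_i,s_i)/X}$ (fiber product over $X$), while $\Sym(\oplus_{i\in I}\mathcal N_i^\vee)=\bigotimes_{i\in I}\Sym(\mathcal N_i^\vee)$ and the relations are imposed one index at a time; hence the relative spectrum is the fiber product over $X$ of the individual $Z_i:=\SPEC(\Sym(\mathcal N_i^\vee)/(\mathcal N_i^{\otimes r_i}\simeq\mathcal L_i))$, and $\boldsymbol{\mu}_{\bold r}=\prod_i\boldsymbol{\mu}_{r_i}$, so that $[\,\times_{i,X}Z_i\mid\prod_i\boldsymbol{\mu}_{r_i}\,]\simeq\times_{i\in I,X}[Z_i\mid\boldsymbol{\mu}_{r_i}]$. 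Thus it suffices to treat one index, with datum $(\mathcal L,s,\mathcal N,\psi,r)$ and $Z=\SPEC(\Sym(\mathcal N^\vee)/(\mathcal N^{\otimes r}\simeq\mathcal L))$. Here $Z$ is the closed subscheme of the total space $V(\mathcal N)=\SPEC\Sym(\mathcal N^\vee)$ cut out by $\tau^{\otimes r}=\psi^{-1}(\pi^*s)$, where $\tau$ is the tautological section of $\pi^*\mathcal N$ on $V(\mathcal N)$ (with $\pi:V(\mathcal N)\to X$); this is exactly the Kummer-type equation of \S\ref{sec:revet-kumm}, and $\boldsymbol{\mu}_r$ acts on $Z$ by scaling $\tau$.

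Next I would construct a canonical morphism $q:Z\to\mathcal X$, where $\mathcal X:=\sqrt[r]{(\mathcal L,s)/X}=X\times_{\mathcal U}\mathcal U$. The pair $(\pi^*\mathcal N,\tau)$ classifies a morphism $Z\to\mathcal U$, and composing with the $r$-th power map $\theta_r:\mathcal U\to\mathcal U$ produces $(\pi^*\mathcal N^{\otimes r},\tau^{\otimes r})$, which via $\psi$ and the defining equation $\tau^{\otimes r}=\psi^{-1}(\pi^*s)$ is canonically $2$-isomorphic to the pullback of $(\mathcal L,s)$. This $2$-isomorphism is precisely the datum of a lift of $Z$ to the fiber product $X\times_{\mathcal U}\mathcal U$, i.e. the morphism $q$. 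Multiplying $\tau$ by $\zeta\in\boldsymbol{\mu}_r$ changes $(\pi^*\mathcal N,\tau,\psi)$ into an isomorphic datum (the scalar $\zeta$ is an automorphism of $\pi^*\mathcal N$ carrying $\tau$ to $\zeta\tau$, and $\zeta^r=1$ leaves $\psi$ unchanged); hence $q$ is $\boldsymbol{\mu}_r$-invariant up to $2$-isomorphism and descends to $\bar q:[Z\mid\boldsymbol{\mu}_r]\to\mathcal X$.

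Finally I would prove that $\bar q$ is an equivalence by checking full faithfulness and fppf-local essential surjectivity. For full faithfulness one computes $Z\times_{\mathcal X}Z$: a $T$-point consists of two sections $\tau_1,\tau_2$ over a common $g:T\to X$ together with a $2$-isomorphism of their images, that is an automorphism $u$ of $g^*\mathcal N$ with $u^r=1$ and $u\tau_1=\tau_2$; this identifies $Z\times_{\mathcal X}Z$ with $\boldsymbol{\mu}_r\times Z$ compatibly with the two projections and the action, which is exactly the assertion that $\bar q$ is fully faithful. For essential surjectivity, given a $T$-point $(g,\mathcal M,t,\phi)$ of $\mathcal X$, the isomorphism $(g^*\psi)^{-1}\circ\phi$ exhibits $(\mathcal M\otimes g^*\mathcal N^{-1})^{\otimes r}\simeq\mathcal O_T$, so $\mathcal M\otimes g^*\mathcal N^{-1}$ defines a $\boldsymbol{\mu}_r$-torsor on $T$; after pulling back along it one trivialises this line bundle compatibly with its $r$-th power, whence $\mathcal M\simeq g^*\mathcal N$, $\phi=g^*\psi$, and $t$ becomes a section $\tau$ of $g^*\mathcal N$ with $\tau^{\otimes r}=\psi^{-1}(g^*s)$, i.e. a lift to $Z$. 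Since both sides are stacks for the fppf topology, full faithfulness together with fppf-local essential surjectivity yields the asserted isomorphism. The main obstacle is this last step: one must extract an $r$-th root of the comparison isomorphism of line bundles, and it is precisely here that the hypothesis that each $r_i$ is invertible on $S$ is indispensable, since it guarantees that the relevant $\boldsymbol{\mu}_r$-torsors split étale-locally.
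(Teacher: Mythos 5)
Your proof is correct and takes essentially the same route as the paper: the paper reduces to the case $\#I=1$ (observing that both sides decompose as fibre products over $X$) and then simply cites Cadman, Proposition 3.2 (version 1) and \cite{Borne}, théorème 4, for that case. You perform the same reduction and then write out in full the standard argument behind those references — realising $Z=\SPEC(\Sym(\mathcal N^{\vee})/(\mathcal N^{\otimes r}\simeq\mathcal L))$ as a $\boldsymbol{\mu}_r$-cover of the root stack and checking full faithfulness plus local essential surjectivity, the latter using invertibility of $r$ — so you are supplying the details the paper delegates to the literature rather than doing anything genuinely different.
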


\begin{proof}
  Le cas où $\#I =1$ est traité dans \cite{Cadman}, version 1, Proposition 3.2, (voir aussi \cite{Borne}, théorème 4), et le cas général en résulte immédiatement. 
\end{proof}

\begin{cor}
\label{champracchampquot2}
 Soit  $\bold s= (s_i)_{i\in I}$ un ensemble de sections régulières de $\mathcal O_X$, $D_i=(s_i)$ les diviseurs de Cartier correspondants. Alors il existe un isomorphisme naturel de champs sur $X$ :

$$  \sqrt[\bold r]{\bold D/X}     \simeq [\SPEC (\mathcal O_X[(t_i)_{i\in I}]/(t_i^{r_i}-s_i)_{i\in I}) |\boldsymbol{\mu}_{\bold r} ]$$                
\end{cor}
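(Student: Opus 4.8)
The plan is to deduce this directly from Proposition \ref{champracchampquot}, by exhibiting in the present situation the data of invertible sheaves $\mathcal N_i$ and isomorphisms $\psi_i:\mathcal N_i^{\otimes r_i}\simeq\mathcal L_i$ demanded by that proposition. First I would unwind the definitions: by construction of the root stack one has $\sqrt[\bold r]{\bold D/X}=\times_{i\in I}\sqrt[r_i]{(\mathcal O_X(D_i),s_{D_i})/X}$, so we are exactly in the framework of Proposition \ref{champracchampquot} with $\mathcal L_i=\mathcal O_X(D_i)$ and section the canonical one $s_{D_i}$.

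The key observation is then that each $\mathcal L_i$ is canonically trivial. Since $s_i$ is a \emph{regular} section of $\mathcal O_X$, multiplication by $s_i$ is injective, so the ideal sheaf $\mathcal I_{D_i}=s_i\mathcal O_X$ is invertible and $\mathcal O_X\xrightarrow{\cdot s_i}\mathcal I_{D_i}$ is an isomorphism; dualizing yields a trivialization $\mathcal O_X\simeq\mathcal O_X(D_i)$ sending $1$ to the canonical section $s_{D_i}$. I would therefore take $\mathcal N_i=\mathcal O_X$ and let $\psi_i:\mathcal N_i^{\otimes r_i}=\mathcal O_X\xrightarrow{\sim}\mathcal O_X(D_i)=\mathcal L_i$ be this trivialization.

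Applying Proposition \ref{champracchampquot} with these choices, one has $\mathcal N_i^{\vee}=\mathcal O_X$, whence $\oplus_{i\in I}\mathcal N_i^{\vee}=\mathcal O_X^{\oplus I}$ and $\Sym(\oplus_{i\in I}\mathcal N_i^{\vee})=\mathcal O_X[(t_i)_{i\in I}]$, with $t_i$ the coordinate on the $i$-th summand. The relation imposed by $(\mathcal N_i^{\otimes r_i}\simeq\mathcal L_i)$ together with the section identifies $t_i^{r_i}$ with $s_i$, so the quotient algebra is precisely $\mathcal O_X[(t_i)_{i\in I}]/(t_i^{r_i}-s_i)_{i\in I}$, and the $\boldsymbol{\mu}_{\bold r}$-action furnished by the proposition is the standard one on the coordinates $t_i$. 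This gives the asserted isomorphism of stacks over $X$.

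The only point requiring care — the main, if modest, obstacle — is verifying that the abstract relation $\mathcal N_i^{\otimes r_i}\simeq\mathcal L_i$ of Proposition \ref{champracchampquot} really specializes, under $\psi_i$, to the monic relation $t_i^{r_i}-s_i$, and that the weights of the $\boldsymbol{\mu}_{\bold r}$-action coincide with the stated ones. This amounts to tracing through the construction of the quotient in the proof of the proposition rather than to any genuine difficulty.
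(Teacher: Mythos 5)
Your proposal is correct and takes exactly the route of the paper's own (one-line) proof: the corollary is deduced directly from Proposition \ref{champracchampquot} by taking $\mathcal N_i=\mathcal O_X$ with the trivialization $\psi_i:\mathcal O_X^{\otimes r_i}\simeq\mathcal O_X(D_i)$ induced by the regular section $s_i$. One small normalization to watch in your second paragraph: the trivialization $\mathcal O_X\simeq\mathcal O_X(D_i)$ dual to $\cdot s_i:\mathcal O_X\simeq\mathcal I_{D_i}$ carries $s_i$ (not $1$) to the canonical section $s_{D_i}$ --- this is precisely what makes the relation specialize to $t_i^{r_i}-s_i$ rather than $t_i^{r_i}-1$, as you verify in your final step.
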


\begin{proof}
 Découle directement de la proposition \ref{champracchampquot}.
\end{proof}
\subsubsection{La correspondance : énoncé}
\label{sec:la-correspondance-enonce}

Soit $(\bold {\mathcal L},\bold s)=(\mathcal L_i,s_i)_{i\in I}$ un ensemble de faisceaux inversibles sur $X$ munis de sections.
Sur $\sqrt[\bold r]{(\bold {\mathcal L},\bold s)/X}$, on dispose d'une racine $\bold r$-ième canonique $(\bold {\mathcal N},\bold t)=(\mathcal N_i,t_i)_{i\in I}$ de $(\bold {\mathcal L},\bold s)$.

On note $\pi:\sqrt[\bold r]{(\bold {\mathcal L},\bold s)/X}\rightarrow X$ le morphisme canonique.

Dans le cas particulier où les couples $(\mathcal L_i,s_i)$ sont associés à des diviseurs effectifs $D_i$ sur $X$ (i.e. $(\mathcal L_i,s_i)=(\mathcal O_X(D_i),s_{D_i})$ pour tout $i\in I$), les relations  $\pi^*s_i=t_i^{\otimes r_i}$, et le fait que $\pi$ est plat, montrent que les $t_i:\mathcal O_{ \sqrt[\bold r]{\bold D/X} }\rightarrow \mathcal N_i$ sont des monomorphismes (i.e. les $t_i$ sont des sections régulières), si bien que les couples $(\mathcal N_i,t_i)$ sont associés à des diviseurs de Cartier effectifs $E_i$ sur $\sqrt[\bold r]{\bold D/X}$.

En imitant la construction du \S \ref{sec:fibr-parab-assoc} on obtient un foncteur :

\xymatrix@R=2pt{
&\widehat{\ }_\cdot:&\MOD(\sqrt[\bold r]{\bold D/X})\ar[r]& \PAR_{\frac{1}{\bold r}}(X,\bold D)\\
&&\mathcal F \ar[r]& \widehat{\mathcal F}_\cdot=\pi_*(\mathcal F\otimes  \mathcal O_{ \sqrt[\bold r]{\bold D/X} }(-\cdot\bold r\bold E))
}

\begin{thm}
  \label{fibparfibchamp}
On suppose que $\bold D$ est une famille régulière de diviseurs sur le champ de Deligne-Mumford $X$. Alors le foncteur $\widehat{\ }_\cdot$ induit une équivalence de catégories tensorielles entre $\Vect(\sqrt[\bold r]{\bold D/X})$ et $\Par_{\frac{1}{\bold r}}(X,\bold D)$.
\end{thm}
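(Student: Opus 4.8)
La stratégie que je suivrais consiste à ramener l'énoncé, par descente étale sur $X$, au cas local d'un revêtement de Kummer, où la correspondance devient un exercice d'algèbre linéaire graduée déjà traité (pour l'essentiel) dans \cite{Borne}. Je commencerais par observer que les deux membres sont des champs sur le petit site étale de $X$ et que le foncteur $\widehat{\ }_\cdot$ commute au changement de base plat : du côté parabolique par la fonctorialité décrite à la fin du \S\ref{sec:oper-elem-sur}, du côté du champ des racines par le changement de base plat pour $\pi_*$. Étale-localement sur $X$ on peut trivialiser chaque $\mathcal L_i$ et en choisir une racine $\bold r$-ième $\mathcal N_i$ triviale ; la proposition \ref{champracchampquot} (ou, lorsque $\bold D$ provient de sections régulières, le corollaire \ref{champracchampquot2}) identifie alors $\sqrt[\bold r]{\bold D/X}$ au champ quotient $[Y|\boldsymbol{\mu}_{\bold r}]$ d'un revêtement de Kummer $p:Y\rightarrow X$, et le foncteur $\widehat{\ }_\cdot$ au foncteur étudié à la proposition \ref{revKumfibpar}. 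Cette dernière montre déjà que $\widehat{\ }_\cdot$ prend ses valeurs dans $\Par_{\frac{1}{\bold r}}(X,\bold D)$, et il ne reste qu'à établir localement que c'est une équivalence tensorielle.

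Dans ce cadre local, un fibré sur $[Y|\boldsymbol{\mu}_{\bold r}]$ n'est autre qu'un faisceau localement libre de rang fini $\boldsymbol{\mu}_{\bold r}$-équivariant sur $Y$, c'est-à-dire un $\mathcal O_Y$-module gradué par $\prod_{i\in I}\mathbb Z/r_i$, compatible à la graduation de $\mathcal O_Y$ dans laquelle $t_i$ est de degré $e_i$. Je construirais le quasi-inverse de manière explicite : à un faisceau parabolique $\mathcal E_\cdot$ on associe le faisceau gradué dont les composantes sont les $\mathcal E_{\bold l}$, la multiplication par $t_i$ étant donnée par les morphismes structuraux $\mathcal E_{\bold l+\frac{1}{r_i}e_i}\rightarrow \mathcal E_{\bold l}$ du foncteur $\mathcal E_\cdot$, et la structure de $\mathcal O_Y$-module (en particulier la relation $t_i^{r_i}=s_i$) étant codée par l'isomorphisme des pseudo-périodes $j$. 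On vérifie alors que l'unité et la counité sont des isomorphismes : c'est une identification formelle entre la catégorie $\boldsymbol{\mu}_{\bold r}\MOD Y$ de tous les faisceaux équivariants et la catégorie $\PAR_{\frac{1}{\bold r}}(X,\bold D)$ de tous les faisceaux paraboliques, le passage aux indices multiples ne modifiant pas l'argument du cas à indice unique de \cite{Borne}.

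Il reste à restreindre cette équivalence formelle aux sous-catégories voulues, c'est-à-dire à montrer que, sous l'hypothèse que $\bold D$ est régulière, un faisceau équivariant est localement libre sur $Y$ si et seulement si le faisceau parabolique associé est un fibré parabolique au sens de la définition \ref{defpar}. Le sens direct est précisément la proposition \ref{revKumfibpar}. Pour la réciproque, j'utiliserais le caractère récursif de la condition de liberté locale : la proposition \ref{parrec} (et, dans le cas à croisements normaux simples, la proposition \ref{parfamdivcroisnormsimpl}) ramène l'annulation de l'homologie $H_l(\Tot(\mathcal E_\cdot\circ F^{op}))$ pour $l>0$ et la liberté locale de $H_0$ à des énoncés sur les composantes $\mathcal E_{\bold l}$ et sur les conoyaux le long des $D_i$, que l'on confronte à la liberté locale de $\mathcal F$ via les lemmes de Koszul \ref{Koszul} et \ref{Koszul2}. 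Enfin la tensorialité résulte de la formule de projection le long de $\pi$ combinée à un calcul de cofins identifiant le produit tensoriel parabolique du \S\ref{sec:oper-elem-sur} au produit tensoriel usuel sur le champ, et les équivalences locales se recollent par descente puisque tous les foncteurs en jeu commutent au changement de base étale. L'obstacle principal me semble être précisément cette mise en correspondance des notions de liberté locale : montrer que la condition cohomologique sur les complexes de facettes équivaut exactement à la liberté locale sur le champ des racines est le point technique central, et c'est là que la régularité de la famille $\bold D$ (via les complexes de Koszul et le lemme \ref{famreglem}) est indispensable.
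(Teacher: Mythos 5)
Votre plan est globalement fid\`ele \`a l'esprit de la preuve du texte, mais il est organis\'e diff\'eremment, et le point que vous d\'esignez vous-m\^eme comme \emph{le point technique central} n'est pas r\'esolu. Le texte ne localise pas : il construit un quasi-inverse global par la cofin $\int^{\frac{1}{\bold r}\mathbb Z}\pi^*\mathcal E_\cdot\otimes\mathcal N^{\otimes\bold r\cdot}$ et proc\`ede par r\'ecurrence sur $\#I$ en ne d\'etachant qu'un indice \`a la fois, au moyen de l'adjonction $L_i\dashv R_i$ du lemme \ref{adjonction}, qui fait transiter par le champ interm\'ediaire $\sqrt[r_i]{D_i/X}$ ; le cas initial $\#I=1$ est d\'el\'egu\'e \`a \cite{Borne}, et un lemme de descente pour les colimites de fibr\'es sur un champ permet de s'y ramener. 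Votre variante (r\'eduction \'etale-locale au rev\^etement de Kummer, identification des deux grosses cat\'egories avec celle des $\mathcal O_Y$-modules gradu\'es, puis recollement par descente gr\^ace \`a la pleine fid\'elit\'e) est l\'egitime : localement, $\boldsymbol{\mu}_{\bold r}\MOD Y$ et $\PAR_{\frac{1}{\bold r}}(X,\bold D)$ sont bien formellement les m\^emes donn\'ees, et le foncteur na\"{\i}f des pi\`eces gradu\'ees co\"{\i}ncide avec $\widehat{\ }_\cdot$ puisque $\mathcal O_Y(-\bold l\bold r\bold E)$ est inversible et ne fait que d\'ecaler la graduation (v\'erification \`a expliciter, mais imm\'ediate).

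La lacune r\'eelle est la r\'eciproque : montrer que si $\mathcal E_\cdot$ v\'erifie les conditions homologiques de la d\'efinition \ref{defpar}, le module gradu\'e $M=\oplus_{\bold l}\mathcal E_{\bold l}$ ($\bold l$ parcourant un domaine fondamental) est localement libre sur $\mathcal A=\mathcal O_X[(t_i)]/(t_i^{r_i}-s_i)$. Vous renvoyez pour cela aux lemmes \ref{Koszul} et \ref{Koszul2}, mais ceux-ci servent au sens direct (proposition \ref{revKumfibpar}) et ne donnent pas la r\'eciproque. L'argument manquant est une r\'ecurrence sur $\#I$ : la condition {\it (ii)} de la proposition \ref{parrec} identifie $M/t_iM$ au module gradu\'e du conoyau parabolique sur $D_i$, donc localement libre sur $\mathcal A/t_i$ par hypoth\`ese de r\'ecurrence ; la libert\'e locale des composantes et la r\'egularit\'e de $s_i=t_i^{r_i}$ donnent l'injectivit\'e de $t_i$ sur $M$ ; on conclut par le crit\`ere local de platitude. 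C'est exactement ce que le texte encapsule dans les parties \ref{sec:equiv-recipr} et \ref{sec:preuve-de-leq}, et c'est pr\'ecis\'ement la difficult\'e que votre phrase \emph{le passage aux indices multiples ne modifiant pas l'argument du cas \`a indice unique} escamote : c'est elle qui motive la d\'efinition \ref{defpar} en termes de facettes (cf. remarque \ref{locab}).
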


\subsubsection{Bonne définition}
\label{sec:bonne-definition}

Il s'agit de voir que si $\mathcal F\in\obj \Vect(\sqrt[\bold r]{\bold D/X})$, alors le faisceau parabolique $\widehat{\mathcal F}_\cdot$ est localement libre. Comme il s'agit d'une question locale pour la topologie étale sur $X$, on peut, quitte à prendre un atlas étale, supposer que $X$ est un schéma. Quitte à localiser encore de façon à trivialiser chacun des diviseurs de la famille $\bold D$, le corollaire \ref{champracchampquot2} montre qu'on peut supposer que 
$\sqrt[\bold r]{\bold D/X}\rightarrow X$ est du type $[Y|\boldsymbol{\mu}_{\bold r}]\rightarrow X$, où $Y\rightarrow X$ est un revêtement de Kummer ramifié le long de $\bold D$. Mais alors la proposition \ref{revKumfibpar} permet de conclure.

\subsubsection{Équivalence réciproque}
\label{sec:equiv-recipr}

Soit $\mathcal E_\cdot \in\obj\Par_{\frac{1}{\bold r}}(X,\bold D)$. On pose 

$$\widehat{\mathcal E_\cdot}=\int^{\frac{1}{\bold r}\mathbb Z}\pi^*\mathcal E_\cdot \otimes\mathcal N^{\otimes\bold r\cdot}$$

où $\int^{\frac{1}{\bold r}\mathbb Z}$ désigne la cofin (coend), voir \cite{MacLane}.
C'est à priori un élément de $\MOD (\sqrt[\bold r]{\bold D/X})$, mais on va voir que c'est en fait un faisceau localement libre.

\begin{lem}
\label{adjonction}
On fixe $i\in I$.
  Soient les foncteurs

\xymatrix@R=2pt{
\PAR_{\frac{1}{\bold r}}(X,\bold D) &\PAR_{(\frac{1}{r_j})_{j\neq i}}(
\sqrt[r_i]{D_i/X},(\pi_i^*D_j)_{j\neq i})\\
L_i:\mathcal E_\cdot \ar[r] &L_i\mathcal E_\cdot=\int^{l_i\in \frac{1}{r_i}\mathbb Z} \pi_i^*(\mathcal E_{l_i})_\cdot \otimes\mathcal N_i^{\otimes r_il_i}\\
R_i\mathcal F_\cdot :(l_i\rightarrow {\pi_i}_*(\mathcal F_\cdot\otimes \mathcal N_i^{\otimes{-l_ir_i}}))& \mathcal F_\cdot : R_i \ar[l] }

où $\pi_i :\sqrt[r_i]{D_i/X}\rightarrow X$ désigne la projection canonique, $\mathcal N_i$ la racine $r_i$-ème canonique de $\mathcal O_X(D_i)$ sur 
$\sqrt[r_i]{D_i/X}$, et $(\mathcal E_{l_i})_\cdot$ la restriction de $\mathcal E_\cdot$ via le foncteur $\prod_{j\neq i}\frac{1}{r_j}\mathbb Z\rightarrow \frac{1}{\bold r}\mathbb Z$ induit par $l_i$.

Ces foncteurs sont adjoints, $L_i$ étant adjoint à gauche et $R_i$ adjoint à droite.
\end{lem}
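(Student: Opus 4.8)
The plan is to construct a bijection, natural in both arguments,
$$\Hom(L_i\mathcal E_\cdot,\mathcal F_\cdot)\simeq \Hom(\mathcal E_\cdot,R_i\mathcal F_\cdot)$$
by combining the universal property of the coend defining $L_i$ with a single elementary adjunction applied $l_i$ by $l_i$. The guiding identity is that a morphism out of a coend is an end of morphisms: writing the coend as in \cite{Borne}, Appendice B (or \cite{MacLane}), one has canonically
$$\Hom(L_i\mathcal E_\cdot,\mathcal F_\cdot)=\Hom\Big(\int^{l_i}\pi_i^*(\mathcal E_{l_i})_\cdot\otimes\mathcal N_i^{\otimes r_il_i},\,\mathcal F_\cdot\Big)\simeq \int_{l_i}\Hom\big(\pi_i^*(\mathcal E_{l_i})_\cdot\otimes\mathcal N_i^{\otimes r_il_i},\,\mathcal F_\cdot\big),$$
the end being taken over the mixed-variance diagram in $l_i\in\frac{1}{r_i}\mathbb Z$.

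The first thing I would record is the pointwise adjunction. For a fixed $l_i\in\frac{1}{r_i}\mathbb Z$, the functor $\mathcal G\mapsto\pi_i^*\mathcal G\otimes\mathcal N_i^{\otimes r_il_i}$ from $\MOD X$ to $\MOD(\sqrt[r_i]{D_i/X})$ is left adjoint to $\mathcal H\mapsto\pi_{i*}(\mathcal H\otimes\mathcal N_i^{\otimes -r_il_i})$, being the composite of the adjunction $(\pi_i^*,\pi_{i*})$ with the tensor--hom adjunction for the invertible sheaf $\mathcal N_i^{\otimes r_il_i}$. Since $\pi_i^*$, $\pi_{i*}$ and tensoring by a fixed line bundle on $\sqrt[r_i]{D_i/X}$ all respect the parabolic structure along the remaining divisors and the pseudo-periods (\S\ref{sec:oper-elem-sur}, $\pi_i$ being flat), this adjunction lifts to the parabolic categories along $(D_j)_{j\neq i}$, giving for each $l_i$ a bijection
$$\Hom\big(\pi_i^*(\mathcal E_{l_i})_\cdot\otimes\mathcal N_i^{\otimes r_il_i},\,\mathcal F_\cdot\big)\simeq \Hom\big((\mathcal E_{l_i})_\cdot,\,\pi_{i*}(\mathcal F_\cdot\otimes\mathcal N_i^{\otimes -r_il_i})\big)=\Hom\big((\mathcal E_{l_i})_\cdot,\,(R_i\mathcal F_\cdot)_{l_i}\big).$$

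It then remains to assemble these pointwise bijections into an isomorphism of ends. The right-hand side above is the $l_i$-component of the pointwise Hom, and the end over $l_i$ of these components is, by the very definition of a morphism of parabolic sheaves in the $i$-direction, exactly $\Hom(\mathcal E_\cdot,R_i\mathcal F_\cdot)$; composing with the displayed coend identity yields the desired bijection. The main obstacle, and the only genuinely non-formal point, is to check that the elementary adjunction is \emph{dinatural} in $l_i$, so that it carries the end diagram of the source to the end diagram of the target: one must verify that the transition maps of the coend (induced, for an elementary step $l_i<l_i'\leq l_i+1$, by the order relation together with the canonical identification $\mathcal N_i^{\otimes r_i}\simeq\pi_i^*\mathcal O_X(D_i)$ and the relation $\pi_i^*s_i=t_i^{\otimes r_i}$) are converted, under the adjunction, into the transition maps of $R_i\mathcal F_\cdot$ and into the gluing imposed by the pseudo-period isomorphism $j$ of $\mathcal E_\cdot$. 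I would confirm this by writing out the single dinaturality square for such an elementary step, where the twist by $\mathcal N_i^{\otimes r_i}$ on the root stack is matched with the pseudo-period shift by $\mathcal O_X(-D_i)$ on $X$; naturality of the final bijection in $\mathcal E_\cdot$ and $\mathcal F_\cdot$ is then automatic, every identification used being itself natural.
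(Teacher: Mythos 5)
Your argument is correct and is precisely the expansion of the paper's one-line proof, which simply states that the adjunction follows from the definition of coends: you unwind that into the identity $\Hom(\int^{l_i}(-),\mathcal F_\cdot)\simeq\int_{l_i}\Hom(-,\mathcal F_\cdot)$, the pointwise adjunction for $\pi_i^*(-)\otimes\mathcal N_i^{\otimes r_il_i}$, and the dinaturality check matching the twist by $\mathcal N_i^{\otimes r_i}$ with the pseudo-period shift by $\mathcal O_X(-D_i)$. Same approach, just written out in full.
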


\begin{proof}
  Cela résulte de la définition des cofins.
\end{proof}

\begin{lem}
  Le foncteur $L_i$ envoie $\Par_{\frac{1}{\bold r}}(X,\bold D)$ sur $\Par_{(\frac{1}{r_j})_{j\neq i}}(\sqrt[r_i]{D_i/X},(\pi_i^*D_j)_{j\neq i})$.
\end{lem}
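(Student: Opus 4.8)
Le plan est de tester la libert\'e locale de $L_i\mathcal E_\cdot$ au moyen de la proposition \ref{parrec}, appliqu\'ee cette fois au champ de Deligne--Mumford $\sqrt[r_i]{D_i/X}$ et \`a la famille $(\pi_i^*D_j)_{j\neq i}$ ; celle-ci est bien r\'eguli\`ere puisque $\pi_i$ est plat et $\bold D$ r\'eguli\`ere (lemme \ref{famregprop}). La libert\'e locale \'etant une question \'etale-locale sur $X$, on peut supposer $X$ affine, de sorte que $\sqrt[r_i]{D_i/X}$ devienne un champ quotient $[Y_i|\boldsymbol{\mu}_{r_i}]$ associ\'e \`a un rev\^etement de Kummer. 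On raisonne par r\'ecurrence sur $\#I$, le cas $\#I=1$ (o\`u il ne reste aucune direction parabolique) n'\'etant autre que l'\'equivalence r\'eciproque du cas d'un diviseur unique, \'etablie dans \cite{Borne}.

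V\'erifions d'abord la condition \textit{(i)} de \ref{parrec}, \`a savoir que chaque composante $(L_i\mathcal E_\cdot)_{\bold l'}$ (pour $\bold l'\in \prod_{j\neq i}\frac{1}{r_j}\mathbb Z$) est localement libre. Comme les colimites se calculent ponctuellement, on a $(L_i\mathcal E_\cdot)_{\bold l'}=\int^{l_i}\pi_i^*\mathcal E_{(l_i,\bold l')}\otimes\mathcal N_i^{\otimes r_il_i}$, c'est-\`a-dire pr\'ecis\'ement l'image, par l'\'equivalence r\'eciproque du cas d'un diviseur unique (donn\'ee dans \cite{Borne} par cette m\^eme formule), de la tranche $l_i\mapsto\mathcal E_{(l_i,\bold l')}$. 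Or cette tranche est un fibr\'e parabolique le long du seul diviseur $D_i$ : la proposition \ref{parrec} appliqu\'ee \`a $\mathcal E_\cdot$ fournit en effet la libert\'e locale de chaque $\mathcal E_{(l_i,\bold l')}$ (condition \textit{(i)}) et, via la condition \textit{(ii)}, celle des conoyaux $\coker(\mathcal E_{(l'_i,\bold l')}\to\mathcal E_{(l_i,\bold l')})$ sur $D_i$, qui sont les composantes de poids $\bold l'$ du conoyau parabolique localement libre $\coker((\mathcal E_{l'_i})_\cdot\to(\mathcal E_{l_i})_\cdot)$. Ainsi $(L_i\mathcal E_\cdot)_{\bold l'}$ est un fibr\'e vectoriel sur $\sqrt[r_i]{D_i/X}$.

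Pour la condition \textit{(ii)} de \ref{parrec}, fixons $j\neq i$ et $l_j<l'_j\leq l_j+1$. Le foncteur $L_i$ \'etant un coend, il est exact \`a droite et commute \`a la formation du conoyau dans la direction $j$ ; par ailleurs la platitude de $\pi_i$ et le carr\'e $2$-cart\'esien $\sqrt[r_i]{(D_i\cap D_j)/D_j}=\sqrt[r_i]{D_i/X}\times_X D_j$ permettent, par changement de base plat, d'identifier le faisceau $\coker((L_i\mathcal E_\cdot)_{l'_j}\to(L_i\mathcal E_\cdot)_{l_j})$, port\'e par $\pi_i^*D_j$, avec $L_i^{D_j}\mathcal C_\cdot$, o\`u $\mathcal C_\cdot=\coker((\mathcal E_{l'_j})_\cdot\to(\mathcal E_{l_j})_\cdot)$ et $L_i^{D_j}$ d\'esigne l'analogue de $L_i$ relatif \`a $D_j$. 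Par hypoth\`ese (proposition \ref{parrec} appliqu\'ee \`a $\mathcal E_\cdot$), $\mathcal C_\cdot$ est un fibr\'e parabolique sur $D_j$ le long des $\#I-1$ diviseurs $(D_k\cap D_j)_{k\neq j}$ ; l'hypoth\`ese de r\'ecurrence, appliqu\'ee sur $D_j$, montre alors que $L_i^{D_j}\mathcal C_\cdot$ est un fibr\'e parabolique, soit exactement la condition \textit{(ii)} recherch\'ee.

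Le point le plus d\'elicat est l'identification de changement de base ci-dessus : il faut \'etablir la commutation $\pi_i^*\,{i_j}_*\simeq {i'_j}_*\,{\pi'_i}^*$ (o\`u $i_j:D_j\hookrightarrow X$, et $i'_j$, $\pi'_i$ sont les morphismes induits), qui repose sur la platitude de $\pi_i$ et sur la compatibilit\'e du champ des racines au changement de base par $i_j$ --- le diviseur $D_i$ se restreignant en $D_i\cap D_j$, r\'egulier sur $D_j$ gr\^ace \`a la r\'egularit\'e de $\bold D$ ---, puis v\'erifier que le coend d\'efinissant $L_i$ se transporte le long de ${i'_j}_*$ par exactitude \`a droite. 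Une fois cette commutation acquise, la r\'ecurrence conclut.
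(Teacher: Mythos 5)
Your proof is correct and takes essentially the same route as the paper's: induction on $\#I$ with the base case drawn from \cite{Borne}, then verification of the two conditions of Proposition \ref{parrec}, using for condition \textit{(i)} the pointwise (in the remaining weights) computation of the coend reducing to the single-divisor case, and for condition \textit{(ii)} the right-exactness of $L_i$ (as a left adjoint/coend) together with the identification $\pi_i^*D_j\simeq\sqrt[r_i]{D_i\cap D_j/D_j}$ and the induction hypothesis. The additional details you spell out (the flat base change along $D_j\hookrightarrow X$, the local freeness of the slices via the parabolic cokernel) are exactly what the paper's terser phrasing implicitly relies on.
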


\begin{proof}
  On procède par récurrence sur $\# I$.
On commence par le cas où $\# I=1$. Le cas où $X$ est un schéma est traité dans \cite{Borne}. Le cas général s'y ramène grâce au lemme suivant :

\begin{lem}
Soit $X$ un champ de Deligne-Mumford, $\mathcal E : K \rightarrow \Vect X$ un diagramme, $p:X_0\rightarrow X$ un atlas étale.
Si $\lim_{\stackrel{\rightarrow}{k\in K}} p^*\mathcal E_k$ existe dans
$\Vect X_0$, alors $\lim_{\stackrel{\rightarrow}{k\in K}} {\mathcal E_k}$
  existe dans $\Vect X$.
\end{lem}

\begin{proof}
C'est à peu près direct à partir de la description suivante des objets de  
$\Vect X$ : soit $X_1=X_0\times_X X_0$, et $s,b:X_1\rightrightarrows X_0$ le groupoïde correspondant. La catégorie $\Vect X$ est équivalente à la catégorie des couples $(\mathcal E_0,\alpha)$, où $\mathcal E_0\in \obj \Vect X_0$, et $\alpha :s^* \mathcal E_0\simeq b^*\mathcal E_0$ est une donnée de descente, i.e. vérifie la condition de descente : $m^*\alpha = pr_1^*\alpha\circ pr_2^* \alpha$, où $pr_1, pr_2,  m : X_1\times_{X_0}X_1\rightarrow X_1$ désignent respectivement, les projections et la multiplication dans le groupoïde.

\end{proof}

On revient au cas général ($\# I$ quelconque) : 
pour montrer que $\int^{l_i\in \frac{1}{r_i}\mathbb Z}
\pi_i^*(\mathcal E_{l_i})_\cdot \otimes\mathcal N_i^{\otimes r_il_i}$ est un
faisceau parabolique localement libre, le plus simple est d'appliquer la proposition \ref{parrec}. La partie $(i)$ du critère résulte du cas $\# I=1$. Pour vérifier la partie $(ii)$ de ce critère, on fixe $j\neq i$, et $l'_j<l_j\leq l_j+1 \in \frac{1}{r_j}\mathbb Z$. Il s'agit de voir que

$$\coker(\int^{l_i\in \frac{1}{r_i}\mathbb Z}
\pi_i^*(\mathcal E_{l_il_j'})_\cdot \otimes\mathcal N_i^{\otimes r_il_i}
\rightarrow \int^{l_i\in \frac{1}{r_i}\mathbb Z}
\pi_i^*(\mathcal E_{l_il_j})_\cdot \otimes\mathcal N_i^{\otimes r_il_i})$$ 

est un objet de $\Par_{(\frac{1}{r_k})_{k\neq i,j}}(
\pi_i^*D_j,(\pi_i^*(D_k\cap D_j))_{k\neq i,j})$. Mais comme le foncteur 
$L_i$ est adjoint à gauche (lemme \ref{adjonction}), 
donc exact à droite, et $\pi_i^*D_j=\sqrt[r_i]{D_j\cap D_i/D_j}$, cela résulte de l'hypothèse de récurrence. 

\end{proof}

\subsubsection{Preuve de l'équivalence}
\label{sec:preuve-de-leq}

\begin{lem}
  Le foncteur $R_i$ envoie $\Par_{(\frac{1}{r_j})_{j\neq i}}(\sqrt[r_i]{D_i/X},(\pi_i^*D_j)_{j\neq i})$ sur $\Par_{\frac{1}{\bold r}}(X,\bold D)$, et est une équivalence réciproque de la restriction de $L_i$ à $\Par_{\frac{1}{\bold r}}(X,\bold D)$.
\end{lem}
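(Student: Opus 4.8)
Le plan est de reconna\^itre dans la paire $(L_i,R_i)$ la version relative, \`a coefficients paraboliques dans les directions $j\neq i$, de l'\'equivalence \`a indice unique \'etablie dans \cite{Borne}, puis de v\'erifier que cette \'equivalence persiste lorsqu'on transporte les param\`etres suppl\'ementaires. Comme la libert\'e locale, ainsi que le fait qu'un morphisme soit un isomorphisme, sont des questions locales pour la topologie \'etale sur $X$, et comme $L_i$ et $R_i$, construits \`a partir de $\pi_i^*$ et ${\pi_i}_*$, commutent au changement de base \'etale gr\^ace \`a la platitude de $\pi_i$ et \`a la formule de projection, on se ram\`enerait d'abord au cas o\`u $X=\spec R$ est affine et o\`u, apr\`es trivialisation de $D_i$, on a $\sqrt[r_i]{D_i/X}\simeq[Y|\boldsymbol{\mu}_{r_i}]$ pour un rev\^etement de Kummer $Y\rightarrow X$ ramifi\'e le long du seul diviseur $D_i$ (corollaire \ref{champracchampquot2}).

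On montrerait ensuite que $R_i$ pr\'eserve la libert\'e locale en appliquant \`a $R_i\mathcal F_\cdot$ le crit\`ere de la proposition \ref{parrec} relativement \`a $\bold D$. La condition $(i)$ revient \`a la libert\'e locale sur $X$ des faisceaux ${\pi_i}_*(\mathcal F_\cdot\otimes\mathcal N_i^{\otimes-l_ir_i})$, qui s'obtient comme dans le lemme interne \`a la preuve de la proposition \ref{revKumfibpar} : $\boldsymbol{\mu}_{r_i}$ \'etant diagonalisable, l'image directe \'equivariante est un facteur direct de ${\pi_i}_*$, et $Y\rightarrow X$ est fini et plat. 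La condition $(ii)$, portant sur un conoyau dans une direction $j\neq i$, se ram\`enerait par restriction \`a $D_i$ \`a l'hypoth\`ese que $\mathcal F_\cdot$ est parabolique dans les directions $j\neq i$, en utilisant l'identit\'e $\pi_i^*D_j=\sqrt[r_i]{D_j\cap D_i/D_j}$ d\'ej\`a exploit\'ee au lemme pr\'ec\'edent.

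Resterait \`a voir que l'unit\'e et la co-unit\'e de l'adjonction $(L_i,R_i)$ (lemme \ref{adjonction}) sont des isomorphismes sur les fibr\'es paraboliques. Dans le cadre affine ci-dessus, la formule de projection et la commutation de la cofin \`a ${\pi_i}_*$ permettent d'\'ecrire la composante en $l_i$ de $R_iL_i\mathcal E_\cdot$ sous la forme $\int^{m_i\in\frac{1}{r_i}\mathbb Z}(\mathcal E_{m_i})_\cdot\otimes{\pi_i}_*(\mathcal N_i^{\otimes r_i(m_i-l_i)})$. Le calcul de ${\pi_i}_*\mathcal N_i^{\otimes k}$ sur le champ des racines \`a indice unique, effectu\'e dans \cite{Borne}, montrerait que cette cofin se r\'eduit \`a $(\mathcal E_{l_i})_\cdot$, ce qui est exactement l'\'enonc\'e \`a indice unique, les directions $j\neq i$ \'etant simplement transport\'ees par les foncteurs $\pi_i^*$ et ${\pi_i}_*$~; le m\^eme calcul appliqu\'e \`a $L_iR_i\mathcal F_\cdot$ donnerait la co-unit\'e. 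Joint au fait, \'etabli au lemme pr\'ec\'edent, que $L_i$ envoie les fibr\'es paraboliques sur des fibr\'es paraboliques, et \`a l'\'etape pr\'ec\'edente pour $R_i$, ceci \'etablirait que $L_i$ et $R_i$ se restreignent en des \'equivalences quasi-inverses l'une de l'autre. La difficult\'e principale sera pr\'ecis\'ement l'effondrement de cette cofin, c'est-\`a-dire le calcul de l'image directe des puissances de la racine canonique, qui constitue le c\oe ur du cas \`a indice unique.
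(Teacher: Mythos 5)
Votre plan comporte une lacune pr\'ecise : la v\'erification de la condition {\it (ii)} de la proposition \ref{parrec} pour $R_i\mathcal F_\cdot$ \emph{dans la direction $i$ elle-m\^eme}. Cette condition est quantifi\'ee sur tous les indices de $I$, et les facettes dont le support contient $i$ --- \`a commencer par $J=\{i\}$ --- exigent que les quotients successifs ${\pi_i}_*(\mathcal F_\cdot\otimes\mathcal N_i^{\otimes -l_ir_i})/{\pi_i}_*(\mathcal F_\cdot\otimes\mathcal N_i^{\otimes -l'_ir_i})$ soient localement libres \emph{comme faisceaux paraboliques sur $D_i$ dans les directions $j\neq i$}. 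C'est pr\'ecis\'ement l\`a que r\'eside le contenu non trivial de la correspondance \`a indice unique (ici \`a coefficients paraboliques), et vous ne traitez que les directions $j\neq i$ ; la condition {\it (i)} (libert\'e locale des composantes sur $X$) ne suffit pas \`a couvrir ces facettes. Par ailleurs, votre argument pour les directions $j\neq i$ --- le conoyau en direction $j$ de $R_i\mathcal F_\cdot$ est l'image par $R_i$, pris au-dessus de $D_j$, du conoyau correspondant de $\mathcal F_\cdot$ --- invoque en r\'ealit\'e l'\'enonc\'e m\^eme que vous d\'emontrez, pour $D_j$ et un indice de moins : il faut donc monter explicitement une r\'ecurrence sur $\# I$, ce que votre r\'edaction ne fait pas.

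Le texte contourne ces deux difficult\'es par une seule r\'ecurrence sur $\# I$ : l'hypoth\`ese de r\'ecurrence identifie $\Par_{(\frac{1}{r_j})_{j\neq i}}(\sqrt[r_i]{D_i/X},(\pi_i^*D_j)_{j\neq i})$ \`a $\Vect(\sqrt[\bold r]{\bold D/X})$, de sorte que $R_i$ devient le foncteur global $\widehat{\ }_\cdot$, dont la partie \ref{sec:bonne-definition} (via la proposition \ref{revKumfibpar} et le complexe de Koszul du rev\^etement de Kummer) a d\'ej\`a \'etabli qu'il aboutit dans $\Par_{\frac{1}{\bold r}}(X,\bold D)$, toutes facettes confondues, y compris celles qui m\'elangent la direction $i$ aux autres. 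La seconde assertion de votre proposition (effondrement de la cofin pour l'unit\'e et la co-unit\'e via le calcul de ${\pi_i}_*\mathcal N_i^{\otimes k}$) est en revanche correcte et co\"incide, une fois explicit\'ee, avec la r\'eduction du texte au cas $\#I=1$ sur un sch\'ema, renvoy\'ee \`a \cite{Borne}. Pour que votre d\'emarche aboutisse, il faut donc compl\'eter la direction $i$ par l'argument de la proposition \ref{revKumfibpar} (ou un calcul direct de l'image directe le long du rev\^etement de Kummer) et organiser le tout en r\'ecurrence sur $\# I$.
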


\begin{proof}

On commence par remarquer que si la première assertion est vérifiée, la seconde à un sens, et est vraie : en effet on peut se contenter de vérifier que $L_i$ et $R_i$ sont des isomorphismes après évaluation des variables, et on se ramène donc au cas où $\#I=1$. C'est de nouveau un problème local pour la topologie étale, et on peut donc se ramener au cas où $X$ est un schéma. Pour ce dernier cas, on renvoie à \cite{Borne}.

On montre l'ensemble des deux assertions par récurrence sur $\#I$. 
Pour le cas où $\#I=1$ : la première assertion résulte de la partie \ref{sec:bonne-definition}, la seconde s'ensuit.
Pour $I$ quelconque on applique l'hypothèse de récurrence qui permet de dire que $\Vect \sqrt[\bold r]{\bold D/X} \simeq \Par_{(\frac{1}{r_j})_{j\neq i}}(\sqrt[r_i]{D_i/X},(\pi_i^*D_j)_{j\neq i})$, et à nouveau la partie \ref{sec:bonne-definition} permet de conclure à la validité de la première assertion, et donc de la seconde.
\end{proof}

\subsubsection{Preuve du caractère tensoriel}
\label{sec:preuve-du-car}
Vu l'expression du produit tensoriel donnée \S \ref{sec:oper-elem-sur}, le fait que l'équivalence soit compatible au produit tensoriel résulte de la formule de Fubini pour les cofins (voir \cite{MacLane}, et \cite{Borne} pour le détail dans le cas où $\# I=1$).

\subsubsection{Structure locale des fibrés paraboliques}

\begin{cor}
	\label{strulocfibpar}
	Avec les notations du théorème \ref{fibparfibchamp}, pour tout fibré parabolique $\mathcal E_\cdot\in \obj(\Par_{\frac{1}{\bold r}}(X,\bold D))$, et tout point $x\in X$, il existe un voisinage étale $U\rightarrow X$ de $x$ tel que ${\mathcal E_\cdot}_{|U}$ soit une somme directe finie de fibrés paraboliques inversibles. 
\end{cor}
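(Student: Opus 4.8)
The plan is to read the statement on the root stack by means of the tensor equivalence of Theorem~\ref{fibparfibchamp}. Write $\mathcal X=\sqrt[\bold r]{\bold D/X}$ and $\pi:\mathcal X\rightarrow X$ for the structural morphism, and let $\mathcal F\in\obj\Vect(\mathcal X)$ be the vector bundle with $\widehat{\mathcal F}_\cdot\simeq\mathcal E_\cdot$. Since $\widehat{\ }_\cdot$ is an equivalence of additive categories it preserves finite direct sums, and being tensor it carries line bundles on $\mathcal X$ to invertible parabolic bundles; it therefore suffices to prove that, \'etale-locally on $X$ around $x$, the bundle $\mathcal F$ is a finite direct sum of line bundles on $\mathcal X$.

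The assertion being \'etale-local on $X$, I would first shrink $X$ to an affine \'etale neighbourhood $\operatorname{Spec}R$ of $x$, with $R$ local, on which every $\mathcal O_X(D_i)$ is trivialised; by Corollary~\ref{champracchampquot2} we then have $\mathcal X\simeq[Y|\boldsymbol{\mu}_{\bold r}]$ for a Kummer cover $Y\rightarrow X$, and only the $D_i$ passing through $x$ carry stacky structure. Over the local base, $\Pic(\mathcal X)$ is generated by the canonical roots $\mathcal N_i$ subject to $\mathcal N_i^{\otimes r_i}\simeq\mathcal O_{\mathcal X}$, so the line bundles on $\mathcal X$ are exactly the $\mathcal N^{\chi}:=\bigotimes_i\mathcal N_i^{\otimes a_i}$ indexed by characters $\chi=(a_i)\in\widehat{\boldsymbol{\mu}_{\bold r}}$, and $\mathcal N^{\chi}$ restricts to the character $\chi$ on the residual gerbe $\mathcal G_x$ of $x$. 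As $\mathcal G_x$ is banded by the diagonalisable, hence linearly reductive, group $\boldsymbol{\mu}_{\bold r}$, the restriction $\mathcal F_{|\mathcal G_x}$ splits into its isotypic components, yielding an isomorphism $\mathcal F_{|\mathcal G_x}\simeq\bigoplus_{\chi}(\mathcal N^{\chi}_{|\mathcal G_x})^{\oplus m_\chi}$ with $m_\chi=\dim_{\kappa(x)}(\mathcal F_{|\mathcal G_x})_\chi$.

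The heart of the argument is to lift this fibrewise splitting to a neighbourhood. Setting $\mathcal F'=\bigoplus_{\chi}(\mathcal N^{\chi})^{\oplus m_\chi}$, I would consider the coherent sheaf $\mathcal H$ on $X$ of $\boldsymbol{\mu}_{\bold r}$-equivariant homomorphisms, that is the invariant part of $\pi_*\mathcal Hom_{\mathcal X}(\mathcal F',\mathcal F)$. Because $\pi$ is finite and flat and $\boldsymbol{\mu}_{\bold r}$ is linearly reductive, the formation of invariants commutes with base change, so the isomorphism chosen over $\mathcal G_x$ is the reduction of a class in $\mathcal H\otimes\kappa(x)$; lifting it by Nakayama to a section $\varphi$ of $\mathcal H$ over $R$, the locus where $\varphi$ is an isomorphism is open and contains $x$, whence after a last shrinking $\varphi:\mathcal F'\xrightarrow{\sim}\mathcal F$. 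This exhibits $\mathcal F$ as the finite direct sum $\bigoplus_\chi(\mathcal N^{\chi})^{\oplus m_\chi}$ of line bundles, and concludes.

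I expect this lifting step to be the main obstacle: it is exactly here that one uses the linear reductivity of $\boldsymbol{\mu}_{\bold r}$ (its diagonalisability) together with the affineness and flatness of $\pi$, for otherwise the isotypic splitting over $\mathcal G_x$ would carry no information about a neighbourhood. Equivalently, and more concretely on the parabolic side, the same content can be extracted directly: the simple normal crossings hypothesis forces the filtrations of $\mathcal E_{\bold 0}$ cut out along the various $D_i$ to be simultaneously compatible, so that a basis of the fibre at $x$ adapted to all of them lifts by Nakayama, the facet conditions of Definition~\ref{defpar} ensuring that each basis vector spans an invertible parabolic sub-bundle with $\mathcal E_\cdot$ their direct sum. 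The root-stack formulation seems the cleaner to write up.
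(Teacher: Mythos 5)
Your argument is correct and takes essentially the same route as the paper: both reduce, via the tensor equivalence of Theorem~\ref{fibparfibchamp}, to showing that a vector bundle on the root stack splits \'etale-locally as a finite direct sum of line bundles. The paper delegates that stacky statement to \cite{Borne}, Proposition 3.2 (asserting that its proof adapts immediately to the multi-index setting), whereas you write out the underlying argument --- isotypic decomposition over the residual gerbe for the diagonalisable group $\boldsymbol{\mu}_{\bold r}$, then lifting by Nakayama.
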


\begin{proof}
	Vu le théorème \ref{fibparfibchamp}, il suffit de montrer la propriété correspondante pour les fibrés champêtres, mais alors la preuve de \cite{Borne}, Proposition 3.2, s'adapte immédiatement.
\end{proof}

\begin{rem}
	\label{locab}
\begin{enumerate}
	\item Si $X$ est de plus un schéma, on peut imposer à $U\rightarrow X$ d'être un ouvert pour la topologie de Zariski.
	\item La preuve montre qu'on peut choisir les fibrés paraboliques de la forme $\underline{\mathcal O_X}_\cdot[\bold l]$, pour $\bold l\in \obj \frac{1}{\bold r}\mathbb Z^I$. En termes champêtres, si $X=\spec R$, où $R$ est un anneau local, alors $\Pic (\sqrt[\bold r]{\bold D|X})\simeq \frac{\frac{1}{\bold r}\mathbb Z^I}{\mathbb Z^I}$, voir \S \ref{grpicchamprac}.
	\item 		On suppose de plus que $\bold D$ une famille de diviseurs à croisements normaux simples. Alors la proposition \ref{parfamdivcroisnormsimpl} et le corollaire \ref{strulocfibpar} montrent que si les composantes du faisceau parabolique $\mathcal E_\cdot\in \obj \PAR_{\frac{1}{\bold r}}(X,\bold D)$ sont localement libres, il est localement abélien au sens de \cite{IyerSimpson}, Definition 2.2.

	\end{enumerate}
\end{rem}

\subsubsection{Groupe de Picard des champs des racines}
\label{grpicchamprac}
Le corollaire suivant est énoncé, dans le cas particulier des courbes tordues, dans \cite{Brochard}. Le théorème \ref{fibparfibchamp} n'est pas indispensable pour le démontrer (on peut aussi utiliser \cite{Cadman}, Corollary 3.1.2), mais en fournit une preuve commode.

\begin{cor}
	\label{picchamprac}
	Avec les notations du théorème \ref{fibparfibchamp}, on a une suite exacte naturelle
\begin{displaymath}
	0 \rightarrow \Pic X \rightarrow \Pic (\sqrt[\bold r]{\bold D|X})\rightarrow \prod_{i\in I} H^0(D_i, \frac{\mathbb Z}{r_i})\rightarrow 0
\end{displaymath}
\end{cor}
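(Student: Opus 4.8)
The plan is to read the sequence off the tensor equivalence of Theorem \ref{fibparfibchamp}, which identifies $\Pic(\sqrt[\bold r]{\bold D/X})$ with the group of isomorphism classes of \emph{invertible} objects of $\Par_{\frac{1}{\bold r}}(X,\bold D)$ under the parabolic tensor product; everything then reduces to an analysis of parabolic line bundles. By Corollary \ref{strulocfibpar} and Remark \ref{locab}, such an object $\mathcal E_\cdot$ is \'etale-locally of the form $\underline{\mathcal O_X}_\cdot[\bold l]$, so each $\mathcal E_{\bold l}$ is an honest line bundle on $X$ and the structure maps $\mathcal E_{\bold l'}\to\mathcal E_{\bold l}$ ($\bold l\leq\bold l'$) are injective, with pseudo-period $\mathcal E_{\bold l+e_i}=\mathcal E_{\bold l}(-D_i)$.

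First I would attach to $\mathcal E_\cdot$ its weight datum. Fixing $i\in I$ and letting $l_i$ run over $\frac{1}{r_i}\mathbb Z$ (other coordinates fixed), Proposition \ref{parrec} shows the successive cokernels are locally free on $D_i$; since over one period they sum to the line bundle $\mathcal E_{\bold 0}/\mathcal E_{\bold 0}(-D_i)=\mathcal E_{\bold 0}|_{D_i}$, exactly one of them is nonzero on each connected component, giving a single jump whose position is an element of $\frac{1}{r_i}\mathbb Z/\mathbb Z\cong\frac{\mathbb Z}{r_i}$, locally constant on $D_i$. This defines $w_i(\mathcal E_\cdot)\in H^0(D_i,\frac{\mathbb Z}{r_i})$ and hence the right-hand map $w=(w_i)_{i\in I}$.

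Next I would check $w$ is a group homomorphism: by the coend formula of \S\ref{sec:oper-elem-sur} the parabolic tensor product adds weights, so the jump of $\mathcal E_\cdot\otimes\mathcal E'_\cdot$ along a component of $D_i$ sits at the sum, taken in $\frac{\mathbb Z}{r_i}$, of the jumps of the factors. Under the equivalence $\pi^*L$ corresponds to the special structure $\underline{L}_\cdot$ of (\ref{strspe}), whose only jumps are at weight $0$, so $\pi^*\Pic X\subset\ker w$; conversely a parabolic line bundle all of whose jumps occur at weight $0$ is the special structure $\underline{\mathcal E_{\bold 0}}_\cdot$ on $\mathcal E_{\bold 0}$, whence $\ker w=\pi^*\Pic X$. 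Injectivity of $\pi^*$ is the standard fact that $\pi_*\mathcal O_{\sqrt[\bold r]{\bold D/X}}=\mathcal O_X$ together with the projection formula $\pi_*\pi^*L\simeq L$, which also yields exactness at $\Pic X$.

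The main work is surjectivity of $w$. The canonical root $\mathcal N_i$ has $w_i(\mathcal N_i)=1$ (constant) and $w_j(\mathcal N_i)=0$ for $j\neq i$, while $\mathcal N_i^{\otimes r_i}\simeq\pi^*\mathcal O_X(D_i)$ confirms it has order $r_i$ in the quotient. To realize a non-constant datum $\phi_i$ I would decompose each $D_i$ into the open-and-closed pieces on which $\phi_i$ is constant; each such piece is again an effective Cartier divisor on $X$, its canonical $r_i$-th root is an invertible object of $\Par_{\frac{1}{\bold r}}(X,\bold D)$ carrying the prescribed constant jump on that piece and $0$ elsewhere, and a finite tensor product of suitable powers of these realizes $(\phi_i)_{i\in I}$. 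The delicate point, which I would handle through the recursive criterion of Proposition \ref{parrec} exactly as in the proof of Proposition \ref{revKumfibpar}, is to confirm that these componentwise constructions are genuine \emph{fibr\'es paraboliques} and that their weights add as claimed; naturality in $X$ is then immediate from the functoriality of all the constructions involved.
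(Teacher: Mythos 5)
Your proposal is correct and follows essentially the same route as the paper: both translate via Theorem \ref{fibparfibchamp} to invertible parabolic sheaves and exploit the fact that the graded pieces of the filtration over one period sum to a rank-one bundle on the divisor, forcing exactly one jump per connected component, with surjectivity realized by (roots of) the canonical $\mathcal N_i$. The only difference is organizational --- the paper first reduces to connected $D_i$ and then to $\#I=1$ by induction, whereas you define the weight homomorphism globally and decompose $D_i$ into clopen pieces only for surjectivity.
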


\begin{proof}
On peut supposer les $D_i$ connexes (en effet si $D$ et $D'$ sont deux diviseurs de Cartier effectifs à supports disjoints, et $r\geq 1$ est un entier, alors 
$\sqrt[r]{D+D'|X}\simeq \sqrt[r]{D|X}\times_X\sqrt[r]{D'|X}$).

On note $\mathcal N_i$ la racine $r_i$-ème canonique de $\mathcal O_X(D_i)$ sur $\sqrt[\bold r]{\bold D|X}$, et $\pi:\sqrt[\bold r]{\bold D|X}\rightarrow X$ le morphisme naturel. On va montrer plus précisément : il existe un unique morphisme surjectif $\Pic (\sqrt[\bold r]{\bold D|X})\rightarrow \prod_{i\in I}\frac{\mathbb Z}{r_i}$ envoyant $[\mathcal N_i]$ sur le générateur canonique de la $i$-ème composante, et dont le noyau est $\pi^*: \Pic X \rightarrow \Pic (\sqrt[\bold r]{\bold D|X})$.

Il suffit de le montrer pour $\# I =1$. En effet en supposant cette propriété vérifiée dans ce cas, le morphisme naturel

\begin{displaymath}
	\frac{\Pic (\sqrt[\bold r]{\bold D|X})}{\Pic X}\rightarrow 
	\prod_{i\in I}	\frac{\Pic \left(\sqrt[\bold r]{\bold D|X}\right)}{\Pic\left(\sqrt[(r_j)_{j\neq i}]{(D_j)_{j\neq i}|X}\right)}
\end{displaymath}

envoie  $[\mathcal N_i]$ sur le générateur canonique de la $i$-ème composante, et est donc surjectif. Comme une récurrence immédiate donne l'égalité des cardinaux, c'est un isomorphisme.

Reste à voir le cas où $\#I=1$. Cela résulte immédiatement de l'assertion : pour tout faisceau inversible $\mathcal K$ sur $\sqrt[r]{D|X}$, il existe un unique entier $l$ dans $\{ 0,\cdots, r-1\}$ tel qu'il existe $\mathcal M$ faisceau inversible sur $X$ tel que $\mathcal K\otimes \mathcal N^{\otimes l}\simeq \pi^* \mathcal M$. En traduisant en termes de fibrés paraboliques grâce au théorème \ref{fibparfibchamp}\footnote{en fait la version à indice unique montrée dans \cite{Borne}.} on voit qu'il faut montrer : pour tout faisceau parabolique inversible $\mathcal K_\cdot$ à poids dans $\frac{1}{r}\mathbb Z$, il existe un unique entier $l$ dans $\{ 0,\cdots, r-1\}$ tel qu'il existe $\mathcal M$ faisceau inversible sur $X$ tel que  $\mathcal K_\cdot[\frac{l}{r}]\simeq \underline{\mathcal M}_\cdot$ (le faisceau parabolique à structure spéciale associé à $\mathcal M$, voir \S \ref{sec:oper-elem-sur}). La donnée de $\mathcal K_\cdot$ équivaut à celle d'une filtration

\begin{displaymath}
	\mathcal K_0\supset\mathcal K_{\frac{1}{r}}\supset \cdots \supset \mathcal K_{1-\frac{1}{r}}\supset \mathcal K_1\simeq \mathcal K_0\otimes_{\mathcal O_X}\mathcal O_X(-D)
\end{displaymath}
telle que pour $l\leq l'$, le faisceau $\mathcal K_{\frac{l}{r}}/\mathcal K_{\frac{l'}{r}}$ est localement libre sur $\mathcal O_D$. Ceci implique l'égalité des rangs :

\begin{displaymath}
	\rg({\mathcal K_0}_{|D})=  \sum_{l=0}^{r-1}\rg\left(\frac{\mathcal K_{\frac{l}{r}}}{\mathcal K_{\frac{l+1}{r}}}\right)\end{displaymath}

	mais comme $\mathcal K_0$ est inversible et $D$ est connexe, il existe un unique entier $l$ dans $\{ 0,\cdots, r-1\}$ tel que $\mathcal K_{\frac{l}{r}}\neq \mathcal K_{\frac{l+1}{r}}$, et pour cet entier  $\mathcal K_\cdot[\frac{l}{r}]\simeq \underline{\mathcal K_0}_\cdot$.

\end{proof}

\subsubsection{Image directe de fibrés paraboliques}

\label{imdirfibpar}
On se donne $S$ un schéma de base, $X\rightarrow S$ (respectivement $Y\rightarrow S$) un $S$-champ de Deligne-Mumford localement noethérien, $\bold D=(D_i)_{i\in I}$ (respectivement $\bold E=(E_j)_{j\in J}$) une famille régulière de diviseurs sur $X$ (respectivement sur $Y$), et $\bold r=(r_i)_{i\in I}$ (respectivement $\bold s=(s_j)_{j\in J}$) une famille d'entiers (supérieurs à $1$, inversibles dans $S$).
On fixe de plus $p:Y\rightarrow X$ un $S$-morphisme représentable fini et plat, $\alpha :J\rightarrow I$ une application, vérifiant 

\begin{enumerate}
	\item $\forall j\in J \;\; s_j|r_{\alpha(j)}$
	\item $\forall i\in I \;\; p^*D_i=\sum_{j\in\alpha^{-1}(i)}\frac{r_i}{s_j}E_j$
\end{enumerate}

Enfin, on note $q: \sqrt[\bold s]{\bold E/Y} \rightarrow \sqrt[\bold r]{\bold D/X}$ le morphisme naturel.

\begin{defi}
	Sous les conditions ci-dessus, on définit l'image directe d'un fibré parabolique par la formule :

	\xymatrix@R=2pt{
&p_*:& \Par_{\frac{1}{\bold s}}(Y,\bold E) \ar[r]& \Par_{\frac{1}{\bold r}}(X,\bold D)\\
&&\mathcal E_\cdot \ar[r]& (p_*{\mathcal E_\cdot})_\cdot=(\frac{\bold l}{\bold r}\rightarrow p_*(\mathcal E_{\frac{\bold l\circ\alpha}{\bold s}}))
}

\end{defi}

\begin{prop}
Les foncteurs d'images directes champêtre et parabolique sont compatibles : on a un isomorphisme fonctoriel en $\mathcal E_\cdot\in \obj \Par_{\frac{1}{\bold s}}(Y,\bold E)$ : $\widehat{(p_*{\mathcal E_\cdot})_\cdot}\simeq q_*(\widehat{\mathcal E_\cdot})$.
\end{prop}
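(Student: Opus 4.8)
Write $\mathcal N_i$ (resp. $\mathcal M_j$) for the canonical $r_i$-th (resp. $s_j$-th) root of $\mathcal O_X(D_i)$ (resp. $\mathcal O_Y(E_j)$) on $\sqrt[\bold r]{\bold D/X}$ (resp. $\sqrt[\bold s]{\bold E/Y}$), and $\pi_X,\pi_Y$ for the projections to $X,Y$; recall that the functor of the correspondence reads $\widehat{\mathcal G}_{\frac{\bold l}{\bold r}}=\pi_{X*}(\mathcal G\otimes\otimes_{i\in I}\mathcal N_i^{\otimes(-l_i)})$ for $\bold l\in\mathbb Z^I$. By Theorem \ref{fibparfibchamp} the functor $\widehat{\ }$ of \S\ref{sec:equiv-recipr} (from parabolic bundles to bundles on the root stack) is an equivalence, quasi-inverse to $\widehat{\ }_\cdot$. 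Both members of the desired isomorphism are objects of $\Vect(\sqrt[\bold r]{\bold D/X})$, so it holds iff its image under $\widehat{\ }_\cdot$ does; using $\widehat{\ }_\cdot\circ\widehat{\ }\simeq\mathrm{id}$ on parabolic bundles and setting $\mathcal F=\widehat{\mathcal E_\cdot}$ (so $\mathcal E_\cdot\simeq\widehat{\mathcal F}_\cdot$, and $\mathcal F$ runs over all of $\Vect(\sqrt[\bold s]{\bold E/Y})$), the statement reduces to the isomorphism, functorial in $\mathcal F$:
$$\widehat{(q_*\mathcal F)}_\cdot\simeq(p_*\widehat{\mathcal F}_\cdot)_\cdot.$$

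The whole argument then rests on the commutative square $\pi_X\circ q=p\circ\pi_Y$ and on the identity
$$q^*\mathcal N_i\simeq\otimes_{j\in\alpha^{-1}(i)}\mathcal M_j.$$
To get the latter, recall that $\sqrt[\bold r]{\bold D/X}$ represents the functor sending $T$ to the datum, for each $i$, of an $r_i$-th root $(M_i,\sigma_i)$ of $(\mathcal O_X(D_i),s_{D_i})_{|T}$, with $\mathcal N_i$ the universal one. Hypotheses (1) and (2) make the integers $r_i/s_j$ (for $j\in\alpha^{-1}(i)$) well defined and show that $\otimes_{j\in\alpha^{-1}(i)}\mathcal M_j$, with section $\otimes_j t_j$, is such a root pulled back to $\sqrt[\bold s]{\bold E/Y}$:
$$\Big(\otimes_{j\in\alpha^{-1}(i)}\mathcal M_j\Big)^{\otimes r_i}\simeq\otimes_j(\mathcal M_j^{\otimes s_j})^{\otimes r_i/s_j}\simeq\otimes_j(\pi_Y^*\mathcal O_Y(E_j))^{\otimes r_i/s_j}\simeq\pi_Y^*\mathcal O_Y(p^*D_i)\simeq q^*\pi_X^*\mathcal O_X(D_i),$$
compatibly with sections. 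Since this datum is exactly what defines the natural morphism $q$, pulling back the universal root gives the displayed identity.

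With this in hand the computation is routine. For $\bold l\in\mathbb Z^I$ the weight-$\frac{\bold l}{\bold r}$ component of the left side is $\pi_{X*}(q_*\mathcal F\otimes\otimes_{i\in I}\mathcal N_i^{\otimes(-l_i)})$; as $\otimes_i\mathcal N_i^{\otimes(-l_i)}$ is invertible, the projection formula along $q$ followed by $\pi_{X*}q_*\simeq(\pi_X q)_*=(p\pi_Y)_*\simeq p_*\pi_{Y*}$ gives
$$\widehat{(q_*\mathcal F)}_{\frac{\bold l}{\bold r}}\simeq p_*\pi_{Y*}\Big(\mathcal F\otimes q^*\otimes_{i\in I}\mathcal N_i^{\otimes(-l_i)}\Big).$$
Since each $j\in J$ lies in the single fibre $\alpha^{-1}(\alpha(j))$, the identity above yields $q^*\otimes_i\mathcal N_i^{\otimes(-l_i)}\simeq\otimes_{j\in J}\mathcal M_j^{\otimes(-l_{\alpha(j)})}$, so the right side equals $p_*\pi_{Y*}(\mathcal F\otimes\otimes_j\mathcal M_j^{\otimes(-l_{\alpha(j)})})=p_*(\widehat{\mathcal F}_{\frac{\bold l\circ\alpha}{\bold s}})=(p_*\widehat{\mathcal F}_\cdot)_{\frac{\bold l}{\bold r}}$, as wanted.

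What remains is to check that these component-wise isomorphisms are compatible with the transition maps and the pseudo-period isomorphisms of the two parabolic structures, hence glue into an isomorphism of parabolic bundles functorial in $\mathcal F$; this is tedious but formal, flowing from the naturality of the projection formula and of $\pi_{X*}q_*\simeq p_*\pi_{Y*}$ under twisting by the $\mathcal N_i$. I expect the only genuine difficulty to be the modular identification of $q$ giving $q^*\mathcal N_i\simeq\otimes_{j\in\alpha^{-1}(i)}\mathcal M_j$: once that is secured, the projection formula and the functoriality of pushforward reduce the whole proposition to the display above.
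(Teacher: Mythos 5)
Your proof is correct and follows essentially the same route as the paper's: both rest on the identity $q^*$(universal $r_i$-th root on the $X$-side)$\simeq\otimes_{j\in\alpha^{-1}(i)}$(universal $s_j$-th roots on the $Y$-side), the projection formula along $q$, and the exchange $\pi_{X*}q_*\simeq p_*\pi_{Y*}$ coming from the $2$-commutative square, evaluated weight by weight. The only (harmless) difference is presentational: the paper computes directly with the coend expression for $\widehat{\mathcal E_\cdot}$, whereas you transpose the statement through the equivalence of Theorem \ref{fibparfibchamp} and work with $\mathcal F=\widehat{\mathcal E_\cdot}$; you also spell out, via the modular description of the root stack and hypotheses (1)--(2), the identification of $q^*\mathcal N_i$ that the paper simply takes as the definition of $q$.
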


\begin{proof}
	On note $\pi : \sqrt[\bold r]{\bold D/X} \rightarrow X$ et $\varpi : \sqrt[\bold s]{\bold E/Y} \rightarrow Y$ les morphismes canoniques. On a un $2$-isomorphisme naturel $\pi\circ q\simeq p\circ \varpi$.

	Pour tout $i$ dans $I$ (respectivement $j$ dans $J$), soit $\mathcal M_i$ (respectivement $\mathcal N_j)$ la racine $r_i$-ème (respectivement $s_j$-ème) canonique de $\mathcal O_X(D_i)$ (respectivement de $\mathcal O_Y(E_j)$) sur $\sqrt[\bold r]{\bold D/X}$ (respectivement sur $\sqrt[\bold s]{\bold E/Y})$, elle est muni de sa section canonique. Le morphisme $q$ est défini par la condition : pour tout $i$ dans $I$, $q^*(\mathcal M_i)=\otimes_{j\in \alpha^{-1}(i)}\mathcal N_j$, et la condition correspondante évidente sur les sections.

	On fixe $\mathcal E_\cdot\in \obj \Par_{\frac{1}{\bold s}}(Y,\bold E)$ et $(\frac{\bold l}{\bold r})=(\frac{l_i}{r_i})_{i\in I} \in \obj(\frac{1}{\bold r}\mathbb Z)$, et on calcule :
	\begin{align*}
		\pi_*(q_*(\widehat{\mathcal E_\cdot})\otimes_{i\in I}\mathcal M_i^{\otimes -l_i}) & \simeq 
		\pi_*\left(q_*\left(\left(\int^{\frac{1}{\bold s}\mathbb Z}\varpi^*\mathcal E_\cdot \otimes\mathcal N^{\otimes\bold s\cdot}\right)\otimes_{i\in I}\otimes_{j\in \alpha^{-1}(i)}\mathcal N_j^{\otimes -l_i}\right)\right)\\
		& \simeq p_*\left(\varpi_*\left(\int^{\frac{1}{\bold s}\mathbb Z}\varpi^*\mathcal E_\cdot \otimes\mathcal N^{\otimes\bold s\cdot -\bold l\circ\alpha}\right)\right)\\
		& \simeq p_*\left(\varpi_*\left(\int^{\frac{1}{\bold s}\mathbb Z}\varpi^*\mathcal E_\cdot[\frac{\bold l\circ\alpha]}{\bold s}] \otimes\mathcal N^{\otimes\bold s\cdot}\right)\right)\\
		& \simeq p_*(\mathcal E_{\frac{\bold l\circ\alpha}{\bold s}})
\end{align*}

d'où la conclusion.

\end{proof}

\section{Groupe fondamental modéré comme groupe fondamental champêtre}
\label{sec:groupe-fond-modere}

\subsection{Groupe fondamental champêtre}
\label{sec:groupe-fond-champ}

Noohi (\cite{Noohi}) et Zoonekynd (\cite{Zoon}) ont étendu la théorie classique du groupe fondamental profini de \cite{SGA1} du cas d'un schéma à celui d'un champ de Deligne-Mumford. On rappelle brièvement leur définition. 

\begin{defi}[\cite{SGA1},\cite{Noohi},\cite{Zoon}]
   Soit $X$ un champ de Deligne-Mumford. On note $\Rev X$ la $2$-sous-catégorie pleine de la $2$-catégorie $\Champs X$ des champs sur $X$ dont les objets sont les morphismes $Y\rightarrow X$ représentables étales finis. On note $\Cat\Rev X$ la catégorie associée (i.e. la catégorie dont les morphismes sont les classes de $2$-isomorphisme de $1$-morphismes  dans $\Rev X$).
\end{defi}

\begin{thm}[\cite{Noohi} Theorem 4.2, \cite{Zoon} \S 3]
  Si $X$ est un champ de Deligne-Mumford connexe, et $x:\spec \Omega \rightarrow X$ un point géométrique, la paire $(\Cat\Rev X, x^*)$ est une catégorie galoisienne au sens de \cite{SGA1}. \end{thm}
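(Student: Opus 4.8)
The plan is to verify directly Grothendieck's six axioms for a cat\'egorie galoisienne (\cite{SGA1}, Expos\'e V), with category $\Cat\Rev X$ and fibre functor $F=x^*$. The guiding principle throughout is that representable, finite, \'etale morphisms are stable under base change and composition, and that each construction involved is local for the \'etale topology on $X$; the genuinely stacky points can therefore be reduced to the case of schemes by descent along an \'etale atlas $U\to X$, presenting $X$ as a groupoid $U_1\rightrightarrows U_0$.

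I would begin by checking that $F$ takes values in finite sets. For $Y\to X$ representable, finite and \'etale, the base change $x^*Y=Y\times_X\spec\Omega$ is a finite \'etale scheme over the separably closed field $\Omega$, hence a finite discrete set; a $1$-morphism of $\Rev X$, taken up to $2$-isomorphism, induces a well-defined map of these fibres, so $F\colon\Cat\Rev X\to\mathbf{FinSet}$ is a functor. The axioms (G1)--(G2) then follow from the stability properties above: the terminal object is $\mathrm{id}_X$, fibre products are the $Y\times_X Z$, the initial object is the empty cover, and finite coproducts are disjoint unions. For the quotient of $Y$ by a finite group $G\subset\Aut_X(Y)$ I would form $Y/G$ and check it is again representable, finite and \'etale over $X$: pulling back to the atlas, $Y$ becomes an ordinary finite \'etale cover equipped with a descent datum, and one invokes the construction of such quotients for schemes together with effectivity of descent for affine (hence finite \'etale) morphisms. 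Axiom (G3) likewise reduces to schemes: for a morphism $f\colon Y\to Z$ of covers the image is open and closed in $Z$ (finite \'etale morphisms are open and $Y$ is finite over $X$), hence a direct summand of $Z$, and $f$ factors as a strict epimorphism onto this summand followed by the inclusion.

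Axioms (G4)--(G5) --- left exactness of $F$, compatibility with finite coproducts and with quotients by finite groups, and preservation of strict (i.e. surjective) epimorphisms --- all follow because base change along $x$ commutes with fibre products, disjoint unions and finite quotients, and carries surjections to surjections.

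The main obstacle is (G6), the conservativity of $F$: one must show that if $F(f)$ is bijective then $f$ is an isomorphism, and it is here that connectedness of $X$ is indispensable. Since $X$ is connected, the degrees of $Y\to X$ and $Z\to X$ are constant, and their equality follows from the bijectivity of $F(f)=x^*f$ at the single point $x$. I would then consider the subset $X'\subseteq X$ of points over which $f$ is fibrewise bijective; using that $f$ is itself finite \'etale (an \'etale morphism between objects \'etale over $X$ is \'etale, and it is finite since $Y\to X$ is finite and $Z\to X$ separated) and that images and non-images of finite \'etale morphisms are open and closed, one shows that $X'$ is open and closed in $X$. As $X'$ contains the image of $x$, connectedness of $X$ forces $X'=X$, so $f$ is fibrewise bijective, hence --- being finite \'etale --- an isomorphism. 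The delicate points, once more handled by descent to $U_0$, are to give precise meaning to ``connected'', to ``degree'' and to the open-closed locus for the Deligne--Mumford stack $X$: one verifies that the relevant open-closed subset of $U_0$ is $U_1$-invariant and so descends to an open-closed substack of $X$, taking care that it is the connectedness of $X$ --- and not that of $U_0$, which may well be disconnected --- that makes the invariant locus exhaust $X$.
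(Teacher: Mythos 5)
The paper does not prove this statement: it is recalled verbatim from the cited references (Noohi, Theorem 4.2, and Zoonekynd, \S 3), so there is no internal proof to compare against. Your direct verification of Grothendieck's axioms --- reducing the stacky points to schemes via an \'etale atlas, and invoking the connectedness of $X$ (rather than of the atlas) only for the conservativity of $x^*$ --- is sound and is essentially the argument carried out in those references.
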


\begin{defi}
\label{defgrfundchamp}
  Avec les notations du théorème, on notera $\pi_1(X,x)$ le groupe fondamental de la catégorie galoisienne  $(\Cat\Rev X, x^*)$.
\end{defi}

\subsection{Groupe fondamental modéré}
\label{sec:groupe-fond-modere-1}

On rappelle le résultat suivant :

\begin{thm}[\cite{GM} Theorem 2.4.2]
\label{grfundmod}
Soit $X$ un schéma localement noethérien, normal, connexe, $D$ un diviseur à croisements normaux, et $x:\spec \Omega\rightarrow X$ un point géométrique, $x\notin D$.
 La catégorie $\Rev^D (X)$ des revêtements de $X$ modérément ramifiés le long de $X$ est une catégorie galoisienne, dont on note $\pi_1^D(X,x)$ le groupe fondamental. \end{thm}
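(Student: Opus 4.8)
The plan is to realize $\Rev^D(X)$ as a full subcategory of the classical Galois category of finite étale covers of the open complement $U = X\setminus D$, and then to invoke the general principle that a full subcategory of a Galois category which is stable under the relevant operations is again Galois. Throughout, the fibre functor is $x^*$, the fibre over the geometric point $x$; since $x\notin D$, every tame cover is étale in a neighbourhood of $x$, so $x^*$ lands in finite sets and, after restriction to $U$, coincides with the usual fibre functor on $\Rev U$. Note that $U$ is normal and connected (an open of a normal connected, hence irreducible, scheme), so $(\Rev U, x^*)$ is a Galois category by \cite{SGA1}.

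First I would set up an equivalence between $\Rev^D(X)$ and the full subcategory $\mathcal C^D\subset\Rev U$ consisting of the finite étale covers of $U$ that are tamely ramified along $D$: in one direction by restriction $\bar Y\mapsto \bar Y\times_X U$, in the other by normalization of $X$ in a cover of $U$. Full faithfulness rests on the fact that a tame cover $\bar Y$ is normal, hence is canonically the normalization of $X$ in its own generic fibre and is thus recovered from $\bar Y\times_X U$; essential surjectivity and the finiteness of the normalization use the normality (and excellence) of $X$ assumed in the statement. This reduces the theorem to showing that $\mathcal C^D$ is a Galois subcategory of $\Rev U$.

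The technical core is then the stability of $\mathcal C^D$ under the Galois-category operations inside $\Rev U$: direct summands, fibre products, finite coproducts, and quotients by finite groups of automorphisms. Stability is checked by \emph{Zariski--Nagata purity}, which allows one to test tameness only at the codimension-one points of $X$, i.e.\ at the generic points of the components of $D$; there the strict henselization reduces matters to a standard Kummer picture. \emph{Abhyankar's lemma} supplies the decisive input: the prime-to-$p$ Kummer covers $t_i\mapsto t_i^{n_i}$ are dominated by a common one after a suitable tame base change, which becomes étale. From this local description the stability of the tame condition under products, summands and quotients follows formally. Once this is established, the correspondence of \cite{SGA1}, Exp.~V, between Galois subcategories and quotients of the fundamental group shows that $(\Rev^D(X), x^*)$ is a Galois category, its group $\pi_1^D(X,x)$ being a canonical quotient of $\pi_1(U,x)$.

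I expect the main obstacle to be precisely this stability of tameness under the categorical operations, together with its reduction to codimension one: purity guarantees that no \emph{new} branching appears away from $D$, while Abhyankar's lemma is what makes the prime-to-$p$ condition behave well under base change, and hence under fibre products and quotients. The finiteness of the normalization, required for the equivalence with $\mathcal C^D$, is a secondary but genuine point that genuinely relies on the normality and excellence hypotheses on $X$.
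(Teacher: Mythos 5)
This theorem is not proved in the paper at all: it is quoted verbatim as \cite{GM}, Theorem 2.4.2 (``On rappelle le r\'esultat suivant''), so there is no internal proof to compare yours against. Judged on its own, your sketch is a faithful reconstruction of the Grothendieck--Murre argument: the same three ingredients (finiteness of normalization over a normal noetherian base, Zariski--Nagata purity to reduce tameness to the codimension-one points of $D$, and Abhyankar's lemma to control the prime-to-$p$ Kummer picture under base change) are exactly what \cite{GM} uses, and they reappear later in this paper in the proofs of Lemmas \ref{abh} and \ref{norm}. The one presentational difference worth noting is that Grothendieck--Murre work directly in $\Rev^D(X)$ and must therefore \emph{define} the fibre product there as the normalization of the schematic fibre product (the ``curieux produit'' alluded to in the introduction and in Lemma \ref{norm}~\textit{(ii)}), verifying the Galois axioms for that operation; you instead transport everything to the full subcategory $\mathcal C^D\subset \Rev U$, where the fibre product is the ordinary one, and the two points of view are identified precisely by the restriction/normalization equivalence you set up first. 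Two small corrections: excellence of $X$ is neither assumed nor needed --- tameness forces the extension of function fields to be separable, and a normal noetherian domain has finite integral closure in a finite \emph{separable} extension (this is the reference to Bourbaki used in Lemma \ref{norm}); and when you invoke the SGA~1 correspondence between Galois subcategories and quotients of $\pi_1(U,x)$, you should make explicit that $\mathcal C^D$ is strictly full and closed under subobjects, quotients, finite sums and fibre products, the last point being where Abhyankar's lemma (inertia of a compositum injects into the product of inertias, hence stays prime to $p$) actually does its work.
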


On va se restreindre à étudier le cas d'une famille de diviseurs à croisements normaux simples (définition \ref{famdivcroisnormsimpl}). Le but de ce paragraphe est de démontrer :

\begin{prop}
\label{grfundmodgrfundchamp}
Avec les hypothèses du théorème \ref{grfundmod}, si on suppose de plus $X$ défini sur un corps $k$, et $D$ est la réunion d'une famille $\bold D=(D_i)_{i\in I}$ de diviseurs irréductibles à croisements normaux simples, alors il existe un isomorphisme naturel :

$$\pi_1^D(X,x)\simeq \varprojlim_{\bold r}\pi_1(\sqrt[\bold r]{\bold D/X},x)$$

où la limite est prise sur les multi-indices $\bold r=(r_i)_{i\in I}$ d'entiers non divisibles par la caractéristique $p$ de $k$.
\end{prop}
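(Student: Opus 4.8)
\emph{Plan de démonstration.} L'idée est d'identifier le terme de droite au groupe fondamental modéré dont on a borné les indices de ramification. Pour chaque multi-indice $\bold r$ premier à $p$, notons $\Rev^{D,\bold r}(X)$ la sous-catégorie pleine de $\Rev^D(X)$ formée des revêtements dont l'indice de ramification le long de $D_i$ divise $r_i$, et $\pi_1^{D,\bold r}(X,x)$ son groupe fondamental. Comme tout revêtement modéré a une ramification finie et première à $p$, la catégorie $\Rev^D(X)$ est la réunion filtrante des $\Rev^{D,\bold r}(X)$ lorsque $\bold r$ croît pour la divisibilité, d'où formellement $\pi_1^D(X,x)\simeq \varprojlim_{\bold r}\pi_1^{D,\bold r}(X,x)$. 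Il suffira donc d'établir, pour chaque $\bold r$ fixé et de façon compatible aux morphismes de transition, un isomorphisme $\pi_1(\sqrt[\bold r]{\bold D/X},x)\simeq \pi_1^{D,\bold r}(X,x)$.

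Pour cela je construirais une équivalence de catégories galoisiennes entre $\Cat\Rev(\sqrt[\bold r]{\bold D/X})$ et $\Rev^{D,\bold r}(X)$. Le foncteur $\Phi$ associe à un revêtement représentable étale fini $\mathcal Y\rightarrow \sqrt[\bold r]{\bold D/X}$ la normalisation de $X$ dans le revêtement étale fini induit $\mathcal Y_{|U}\rightarrow U$ au-dessus de $U=X-D$ (où le champ des racines s'identifie à $U$). Le foncteur réciproque $\Psi$ envoie un revêtement modéré $Z\rightarrow X$ d'indices divisant $\bold r$ sur son tiré en arrière au champ des racines. Le point crucial est que ce tiré en arrière est bien un morphisme \emph{représentable étale fini} : en utilisant la présentation locale $\sqrt[\bold r]{\bold D/X}\simeq [Y|\boldsymbol{\mu}_{\bold r}]$ du corollaire \ref{champracchampquot2}, où $Y\rightarrow X$ est un revêtement de Kummer d'indices $\bold r$, le produit fibré $Z\times_X Y\rightarrow Y$ est précisément rendu étale par le lemme d'Abhyankar (les indices de $Z$ divisant ceux du revêtement de Kummer), et son équivariance naturelle sous $\boldsymbol{\mu}_{\bold r}$ permet de le redescendre en un revêtement étale du champ quotient.

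Il resterait à vérifier que $\Phi$ et $\Psi$ sont quasi-inverses et compatibles aux foncteurs fibres. Cette dernière compatibilité est immédiate : puisque $x\notin D$, le champ des racines est un schéma isomorphe à $U$ au voisinage de $x$, et les deux foncteurs fibres coïncident avec la fibre en $x$ de la restriction à $U$. Les isomorphismes $\Phi\circ\Psi\simeq \mathrm{id}$ et $\Psi\circ\Phi\simeq \mathrm{id}$ se ramènent, par descente étale sur $X$, à la présentation locale de Kummer, où ils découlent à nouveau du lemme d'Abhyankar et de l'unicité de la normalisation. Le principal obstacle technique sera justement ce va-et-vient local-global : il faut contrôler que $\Phi$ produit un revêtement \emph{modéré} d'indices effectivement bornés par $\bold r$, et réciproquement que l'étalitude obtenue par Abhyankar sur l'atlas de Kummer descend bien en une étalitude représentable sur le champ. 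Ces points acquis, le passage à la limite projective sur $\bold r$ fournit l'énoncé.
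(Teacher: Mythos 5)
Your proposal follows essentially the same route as the paper's: one direction is normalization of $X$ in the total ring of fractions of the \'etale cover of the root stack (the paper's functor $M$, lemme \ref{norm}), and the other rests on Abhyankar's lemma together with the presentation $\sqrt[\bold r]{\bold D/X}\simeq[Y|\boldsymbol{\mu}_{\bold r}]$ (the paper's lemme \ref{abh} and functor $C$). The only organizational difference is that you first bound the ramification, writing $\pi_1^D(X,x)\simeq\varprojlim_{\bold r}\pi_1^{D,\bold r}(X,x)$ and proving a levelwise equivalence $\Cat\Rev(\sqrt[\bold r]{\bold D/X})\simeq\Rev^{D,\bold r}(X)$, whereas the paper proves directly $\Rev^D(X)\simeq\varinjlim_{\bold r}\Cat\Rev(\sqrt[\bold r]{\bold D/X})$; these amount to the same thing, and your levelwise statement is implicit in lemme \ref{norm} \emph{(ii)}--\emph{(iii)}.

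One step of your $\Psi$ is wrong as written: the scheme-theoretic fibre product $Z\times_X Y$ is \emph{not} made \'etale over $Y$ by Abhyankar's lemma. Take $X=\spec k[x]$, $D=(x)$, $Z=\spec k[x][t]/(t^2-x)$ and $Y=\spec k[x][s]/(s^2-x)$, the characteristic of $k$ being different from $2$: then $Z\times_X Y=\spec k[s,t]/(t^2-s^2)$ has a node over $s=0$, hence is not \'etale over $Y$. Abhyankar's lemma asserts that the \emph{normalization} of $Z\times_X Y$ (equivalently, the integral closure of $Y$ in the generic fibre) is \'etale over $Y$; so $\Psi$ must be the normalized pullback, and one must then check that this normalization glues into a representable \'etale cover of the stack and that it is quasi-inverse to $\Phi$ --- this is exactly the content of the cartesian square of lemme \ref{norm} \emph{(iii)}. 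Note that the paper sidesteps the issue in this direction by defining $C$ only through a Galois cover $Z$ dominating the given one, for which $[Z|G]\simeq\sqrt[\bold r]{\bold D/X}$ directly. With ``pullback'' replaced by ``normalized pullback'' your argument goes through; the verifications you defer (tameness of $\Phi(\mathcal Y)$ and the bound on its indices, compatibility with the fibre functors) are precisely those carried out in lemme \ref{norm} \emph{(i)}--\emph{(ii)}.
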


Dans le reste de ce paragraphe \ref{sec:groupe-fond-modere}, on conservera les hypothèses de la proposition \ref{grfundmodgrfundchamp}. On va montrer que l'on a une équivalence naturelle de catégories galoisiennes\footnote{la notion de $2$-limite filtrée de catégories utilisée ici et par la suite est précisée dans l'appendice \ref{2limfil}}:

$$\Rev^D (X) \simeq \varinjlim_{\bold r}\Cat \Rev (\sqrt[\bold r]{\bold D/X})$$ 

La compatibilité de cet isomorphisme aux foncteurs fibres induits par $x$ impliquera bien la proposition \ref{grfundmodgrfundchamp}\footnote{Comme $\sqrt[\bold r]{\bold D/X}$ admet $X$ pour espace des modules, $x$ définit aussi un point géométrique de ce champ}.

\subsection{Le foncteur $C$}
\label{sec:le-foncteur-c}

\begin{lem}
\label{abh}
Soit $\pi:Y\rightarrow X$ dans $\obj \Rev^D (X)$ galoisien de groupe $G$ (au sens de \cite{GM} 2.4.5), de multi-indice de ramification $\bold r$. Alors le morphisme naturel de champs $[Y|G]\rightarrow \sqrt[\bold r]{\bold D/X}$ est un isomorphisme.
\end{lem}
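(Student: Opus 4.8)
The plan is to deduce the statement from Abhyankar's lemma by showing that the canonical morphism $Y\rightarrow \sqrt[\bold r]{\bold D/X}$ (of which $f:[Y|G]\rightarrow \sqrt[\bold r]{\bold D/X}$ is the $G$-quotient) is a $G$-torsor; once this is known, $[Y|G]\simeq \sqrt[\bold r]{\bold D/X}$ follows at once, since the quotient of a $G$-torsor by $G$ recovers the base. The morphism $Y\rightarrow \sqrt[\bold r]{\bold D/X}$ itself comes from the universal property of the root stack: the tameness of $\pi$ with ramification index $r_i$ along $D_i$ means $\pi^*D_i=r_iE_i$ for the reduced preimage $E_i$, so that $(\mathcal O_Y(E_i),t_{E_i})$ is a canonical $r_i$-th root of $(\pi^*\mathcal O_X(D_i),\pi^*s_i)$, and the $G$-action permutes these roots compatibly.

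Since being a $G$-torsor is local for the fppf topology on the base, I would first reduce to the case where $X=\spec R$ is \'etale-local, each $\mathcal O_X(D_i)$ is trivialized with $D_i=(s_i)$ and the $(s_i)$ part of a regular system of parameters. By Corollary \ref{champracchampquot2} the root stack is then presented as $\sqrt[\bold r]{\bold D/X}\simeq [W|\boldsymbol{\mu}_{\bold r}]$, where $W=\spec(\mathcal O_X[(t_i)_{i\in I}]/(t_i^{r_i}-s_i)_{i\in I})$ is the standard Kummer cover and $W\rightarrow \sqrt[\bold r]{\bold D/X}$ is a $\boldsymbol{\mu}_{\bold r}$-torsor atlas. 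It then suffices to verify the torsor property after base change along this atlas.

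The key identification is that the stacky fibre product $Y\times_{\sqrt[\bold r]{\bold D/X}}W$ is canonically isomorphic to the honest Kummer base change $Y\times_X W$. Indeed, on $Y\times_X W$ one has two $r_i$-th roots of $\pi^*(\mathcal O_X(D_i),s_i)$: the tautological one $t_i$ pulled back from $W$, and the ramification one $t_{E_i}$ pulled back from $Y$; their ratio satisfies a unit equation, hence is an isomorphism of roots, so the two projections $Y\times_X W\rightrightarrows \sqrt[\bold r]{\bold D/X}$ become $2$-isomorphic, $Y\times_X W$ factors through the diagonal, and one obtains the comparison. This is exactly where Abhyankar's lemma (\cite{GM}) enters: it guarantees that $Y\times_X W\rightarrow W$ is finite \'etale, which is equivalent to those ratios being units.

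Finally, the $G$-action inherited from the $Y$-factor makes $Y\times_X W\rightarrow W$ a $G$-torsor: it is finite \'etale of degree $\#G=\deg(Y/X)$, and since $Y/G=X$ (Galois, \cite{GM} 2.4.5) flat base change along $W\rightarrow X$ gives $(Y\times_X W)/G=W$, so $G$ acts freely with quotient $W$. Descending along the fppf cover $W\rightarrow \sqrt[\bold r]{\bold D/X}$ shows that $Y\rightarrow \sqrt[\bold r]{\bold D/X}$ is a $G$-torsor, whence $f$ is an isomorphism. I expect the main obstacle to be the middle step: pinning down the identification $Y\times_{\sqrt[\bold r]{\bold D/X}}W\simeq Y\times_X W$ and checking that Abhyankar's lemma applies in precisely the form needed to force \'etaleness, and freeness of $G$, along the whole ramification locus rather than merely over $X-D$.
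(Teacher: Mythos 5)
Votre strat\'egie globale (montrer que $Y\rightarrow \sqrt[\bold r]{\bold D/X}$ est un $G$-torseur en se ramenant \`a l'atlas de Kummer $W$) est viable et proche de l'esprit de la preuve du texte, mais l'\'etape centrale telle que vous l'\'enoncez est fausse. L'identification $Y\times_{\sqrt[\bold r]{\bold D/X}}W\simeq Y\times_X W$ n'a pas lieu : le produit fibr\'e champ\^etre est la \emph{normalisation} du produit fibr\'e sch\'ematique, et non ce produit lui-m\^eme. L'exemple le plus simple le montre d\'ej\`a : pour $Y=W$, on a $W\times_{\sqrt[\bold r]{\bold D/X}}W\simeq W\times\boldsymbol{\mu}_{\bold r}$ (union disjointe de copies de $W$, puisque $W\rightarrow\sqrt[\bold r]{\bold D/X}$ est un $\boldsymbol{\mu}_{\bold r}$-torseur), tandis que $W\times_X W=\SPEC(\mathcal O_W[t']/(t'^{r}-t^{r}))$ a ses composantes qui se croisent le long de $t=0$ et n'est ni normal ni \'etale sur $W$. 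De m\^eme, le \og rapport \fg{} $t_i/t_{E_i}$ n'est une unit\'e qu'en dehors des diviseurs ; il ne se prolonge qu'apr\`es normalisation. En cons\'equence, l'affirmation \og le lemme d'Abhyankar garantit que $Y\times_X W\rightarrow W$ est fini \'etale \fg{} est incorrecte : le lemme d'Abhyankar porte sur le \emph{normalis\'e} du produit fibr\'e, et ne donne que la non-ramification en codimension $1$. Pour passer de l\`a \`a l'\'etalitude partout (en dimension $>1$), il faut en outre le th\'eor\`eme de puret\'e de Zariski--Nagata, que votre proposition n'invoque pas ; c'est pr\'ecis\'ement l'ingr\'edient qui traite le lieu de ramification \og en entier \fg{} que vous signalez comme point d\'elicat \`a la fin.

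La preuve du texte contourne ces deux \'ecueils ainsi : on se ram\`ene au cas o\`u $X=\spec R$ avec $R$ strictement hens\'elien r\'egulier, on forme le rev\^etement de Kummer standard $X'=\spec(R[(T_i)]/(T_i^{r_i}-s_i))$, puis on consid\`ere $Y'$, la \emph{normalisation} de $Y\times_X X'$ (c'est le produit fibr\'e dans $\Rev^D(X)$). La puret\'e de Zariski--Nagata combin\'ee au lemme d'Abhyankar montre que $Y'\rightarrow X'$ est \'etale, donc trivial puisque $X'$ est strictement local ; la section $Y\rightarrow Y'$ induite par le choix des $t_i$ identifie alors $Y$ \`a $X'$, d'o\`u $G\simeq\Aut_X X'=\boldsymbol{\mu}_{\bold r}$ et la conclusion via le corollaire \ref{champracchampquot2}. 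Votre argument peut \^etre r\'epar\'e en rempla\c{c}ant partout $Y\times_X W$ par sa normalisation et en ajoutant l'argument de puret\'e, mais tel quel il comporte un trou r\'eel.
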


\begin{proof}
  Cela résulte, essentiellement, de l'hypothèse que $D$ est à croisements normaux simples, et du lemme d'Abhyankar. 

En détail : on précise d'abord la définition du morphisme naturel $[Y|G]\rightarrow \sqrt[\bold r]{\bold D/X}$. Soit $\bold E=(E_i)_{i\in I}$ la famille de diviseurs sur $Y$ défini par $\forall i\in I\; E_i=(\pi^*D_i)_{red}$, si bien que $\forall i\in I\; \pi^*D_i=r_i E_i$. 

Soit $S$ un schéma, et $(f,p)$ un objet de 
$[Y|G](S)$, i.e. une paire constituée d'un $G$-torseur $p:T\rightarrow S$ et d'un morphisme $G$-équivariant $f: T\rightarrow Y$. On a les quotients schématiques $S=T/G$ et $X=Y/G$, si bien qu'il existe un unique morphisme $g: S\rightarrow X$ tel que $g\circ p=\pi \circ f$. Pour tout $i\in I$, le faisceau $\mathcal O_T(f^* E_i)$ définit une racine $r_i$-ème de $D_i$ sur $T$, comme ce faisceau est $G$-équivariant, on peut le descendre canoniquement le long du $G$-torseur  $p:T\rightarrow S$, et les racines $r_i$-ièmes $p_*^G(\mathcal O_T(f^* E_i))$ de $D_i$ sur $S$ définissent un objet de $\sqrt[\bold r]{\bold D/X}(S)$.

Pour montrer que ce morphisme $[Y|G]\rightarrow \sqrt[\bold r]{\bold D/X}$ est
un isomorphisme, il suffit de le vérifier sur les fibres géométriques, et on
peut donc supposer que $X=\spec R$, où $R$ est un anneau local noethérien strictement hensélien. Comme c'est évident en dehors du support de $\bold D$,
on peut supposer de plus $R$ régulier.

On choisit pour tout $i\in I$ une équation locale $s_i$ de $D_i$. 
On pose $R'=\frac{R[(T_i)_{i\in I}]}{(T_i^{r_i}-s_i)_{i\in I}}$, où les
$(T_i)_{i\in I}$ sont des indéterminées, et $X'=\spec R'$.  Le morphisme  $X'\rightarrow X$ est modérément ramifié (\cite{GM} Example 2.2.4). Comme les $s_i$ sont tous dans l'idéal maximal $\mathfrak m$ de $R$, $R'$ est un anneau local. Vu que la famille $(D_i)_{i\in I}$ est à croisements normaux simples, $R'$ est même régulier (\cite{GM}, Proposition 1.8.5).

On a encore $Y=\spec S$, où $S$ est une $R$-algèbre finie, donc un produit fini d'anneaux locaux strictement henséliens (en effet l'hypothèse de modération impose que les extensions résiduelles sont séparables [\cite{GM} Definition 2.1.2], donc ici triviales). On réduit facilement le problème au cas où $Y$ est irréductible. Vu que les $r_i$ sont non nuls résiduellement, on peut choisir des équations $t_i$ des $E_i$ telles que $t_i^{r_i}=s_i$ dans $S$. Celles-ci définissent un $X$-morphisme  $Y\rightarrow X'$.

 Soit $Y'$ le produit fibré de $Y$ et $X$ sur $X'$ dans $\Rev^D (X)$, c'est à dire la normalisation du produit fibré schématique $Y\times_{X}X'$. On a donc $Y=\spec S'$, où $S'$ est la clôture intégrale de $R'$ dans l'extension des anneaux  de fractions totaux $R(Y)/R(X)$. Comme celle-ci est finie étale d'après l'hypothèse de modération, le morphisme $Y'\rightarrow X'$ est fini. Comme $Y'\rightarrow X$ est modéré, toute composante irréductible de $Y'$ domine $X$, et donc toute composante irréductible de $Y'$ domine $X'$. On peut donc appliquer le théorème de pureté de Zariski-Nagata (\cite{SGA1}, X Théorème 3.1) pour voir que le lieu où $Y'\rightarrow X'$ est ramifié est de pure codimension $1$. Mais le lemme d'Abhyankar (\cite{SGA1}, X Lemme 3.6) montre que ce morphisme n'est pas non plus ramifié en codimension $1$. Il est donc étale, et comme $X'$ est strictement local, c'est un revêtement trivial de $X'$. 

 Comme $Y'\rightarrow Y$ est séparé (car affine), la section définie par le morphisme $Y\rightarrow X'$ ci-dessus est une immersion fermée. L'égalité des dimensions, et le fait que $Y$ et $Y'$ sont réduits (car normaux), impliquent que cette section identifie $Y$ à une composante irréductible de $Y'$. Donc le morphisme $Y\rightarrow X'$ construit au départ est en fait un isomorphisme au dessus de $X$. Le groupe $G=\Aut_X Y$ s'identifie canoniquement à $\Aut_X X'={\boldsymbol \mu_{\bold r}}$, et on conclut grâce au corollaire \ref{champracchampquot2} (ou plutôt, grâce à sa preuve, qui donne une version explicite de l'isomorphisme).

\end{proof}

\begin{lem}
\label{defc}
 Soit $Y\rightarrow X$ dans un objet de $\Rev^D (X)$, $Z\rightarrow X$ un objet galoisien de $\Rev^D (X)$, de groupe $G$, dominant $Y\rightarrow X$, $\bold r$ les indices de ramification de $Z\rightarrow X$, $H$ le groupe de Galois de $Z\rightarrow Y$. Le morphisme de champs $[Z| H]\rightarrow [Z| G]$ est étale, et composé avec l'isomorphisme canonique $[Z| G]\simeq \sqrt[\bold r]{\bold D/X}$ défini dans la proposition \ref{abh} il définit un objet de $\varinjlim_{\bold r}\Cat \Rev (\sqrt[\bold r]{\bold D/X})$, qui est, à isomorphisme près, indépendant du choix de $Z$.  
\end{lem}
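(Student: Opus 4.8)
The plan is to establish the three assertions in order: the étaleness of $[Z|H]\to[Z|G]$, the fact that it yields an object of the $2$-colimit, and finally the independence of the choice of $Z$. For the étaleness I would first reduce to a formal statement about classifying stacks. For a finite group $G$ acting on $Z$ and a subgroup $H\le G$ there is a canonical isomorphism $[Z|H]\simeq [Z|G]\times_{BG}BH$, where $BG,BH$ denote the classifying stacks and $BH\to BG$ is induced by the inclusion: indeed an object of $[Z|G]\times_{BG}BH$ over $S$ is a $G$-torsor $T\to S$ with an equivariant map $T\to Z$ together with an $H$-torsor $T'$ and an isomorphism $T'\times^H G\simeq T$, which is exactly an $H$-torsor $T'$ equipped with an $H$-equivariant map $T'\to Z$. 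Since $BH\to BG$ is representable finite étale (its pullback along an atlas $\spec k\to BG$ is the finite constant scheme $G/H$), the base change $[Z|H]\to[Z|G]$ is representable finite étale of degree $[G:H]$; representability alternatively follows from the injectivity of $H\hookrightarrow G$ on stabilizers.

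Composing with the canonical isomorphism $[Z|G]\simeq\sqrt[\bold r]{\bold D/X}$ of \ref{abh} then gives a representable finite étale morphism $[Z|H]\to\sqrt[\bold r]{\bold D/X}$, that is, an object of $\Rev(\sqrt[\bold r]{\bold D/X})$, hence of $\Cat\Rev(\sqrt[\bold r]{\bold D/X})$ and of the $2$-colimit $\varinjlim_{\bold r}\Cat\Rev(\sqrt[\bold r]{\bold D/X})$. This settles the second assertion, the degree of this cover being $\deg(Y\to X)=[G:H]$.

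For independence I would argue as follows. Since $\Rev^D(X)$ is a Galois category (Théorème \ref{grfundmod}), any two Galois objects dominating $Y$ are dominated by a common Galois object, so it suffices to compare $Z$ with a Galois object $Z'$ dominating $Z$ (hence $Y$). Write $G'=\Gal(Z'/X)$ with ramification indices $\bold r'$ (so $\bold r\mid\bold r'$ and there is a transition morphism $\sqrt[\bold r']{\bold D/X}\to\sqrt[\bold r]{\bold D/X}$), $N=\Gal(Z'/Z)=\ker(G'\twoheadrightarrow G)$, and $H'=\Gal(Z'/Y)$. The key group-theoretic point is that $H'$ is the preimage of $H$ under $G'\to G$, i.e. $H'=G'\times_G H$. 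Granting the naturality of \ref{abh}, under which the transition morphism corresponds to the map $[Z'|G']\to[Z|G]$ induced by the $(G'\to G)$-equivariant morphism $Z'\to Z$, the transition functor (pullback along $\sqrt[\bold r']{\bold D/X}\to\sqrt[\bold r]{\bold D/X}$) sends $[Z|H]$ to
\begin{align*}
[Z'|G']\times_{[Z|G]}[Z|H] &\simeq [Z'|G']\times_{BG}BH\\
&\simeq [Z'|G']\times_{BG'}\bigl(BG'\times_{BG}BH\bigr)\\
&\simeq [Z'|G']\times_{BG'}BH'\simeq [Z'|H'],
\end{align*}
where the isomorphism $BG'\times_{BG}BH\simeq BH'$ holds precisely because $G'\to G$ is surjective with $H'=G'\times_G H$ (a direct check on torsors shows the unique isomorphism class has automorphism group $H'$). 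Thus the object produced from $Z'$ coincides, at level $\bold r'$, with the image of the object produced from $Z$, so the two agree in the $2$-colimit.

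The main obstacle I anticipate is precisely this last compatibility: verifying the naturality of the isomorphism of \ref{abh} with respect to refinements $Z'\to Z$ and transition morphisms of root stacks, together with the identification $H'=G'\times_G H$ of ramification subgroups. The étaleness (first paragraph) is essentially formal, whereas pinning down that the transition morphism of root stacks really corresponds to $[Z'|G']\to[Z|G]$ — and in particular that the ramification indices stack up so that $\bold r\mid\bold r'$ — is where the geometric input from the Abhyankar argument behind \ref{abh} must be used carefully.
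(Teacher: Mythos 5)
The paper gives no proof of this lemma at all: it is stated and immediately used to define the functor $C$, the whole content being regarded as an immediate consequence of the lemme \ref{abh}. Your argument is correct and supplies precisely the details that are left implicit. The identification $[Z|H]\simeq[Z|G]\times_{BG}BH$ together with the fact that $BH\to BG$ is representable fini \'etale (fibre $G/H$ over an atlas) gives the first two assertions; the reduction to a common Galois refinement $Z'$ of two choices, the identification of $H'=\Gal(Z'/Y)$ with the preimage of $H$ under $G'\twoheadrightarrow G$, and the computation $BG'\times_{BG}BH\simeq BH'$ (valid exactly because $G'\to G$ is surjective and $H'=G'\times_G H$) give the independence. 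The one step you explicitly ``grant'' --- that the transition morphism $\sqrt[\bold r']{\bold D/X}\to\sqrt[\bold r]{\bold D/X}$ corresponds, under the isomorphisms of the lemme \ref{abh}, to the natural morphism $[Z'|G']\to[Z|G]$ --- is indeed the only non-formal compatibility, and you are right to single it out; it follows from the explicit description of the isomorphism given in the proof of \ref{abh} (the $r_i$-th root attached to $(f,p)$ is $p_*^G(\mathcal O_T(f^*E_i))$ with $E_i=(\pi^*D_i)_{red}$) combined with the relation between $E_i$ restricted to $Z'$ and $E'_i$, namely that the former is $\frac{r'_i}{r_i}E'_i$, which also yields the divisibility $\bold r\mid\bold r'$ needed for the transition morphism to exist. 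So your proof is complete modulo that routine verification, and is in fact more explicit than anything in the paper.
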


\begin{defi}
  On notera $C(Y\rightarrow X)$ l'objet de $\varinjlim_{\bold r}\Cat \Rev (\sqrt[\bold r]{\bold D/X})$ défini dans le lemme \ref{defc}.
\end{defi}

Il est clair qu'on a en fait défini un foncteur 

$$C:\Rev^D (X) \rightarrow \varinjlim_{\bold r}\Cat \Rev (\sqrt[\bold r]{\bold D/X})$$

\subsection{Le foncteur $M$}
\label{sec:le-foncteur-m}

\begin{lem}
\label{norm}
Soit  $\bold r=(r_i)_{i\in I}$ une famille 
d'entiers non divisibles par la caractéristique $p$ de $k$
  et $T\rightarrow \sqrt[\bold r]{\bold D/X}$ un revêtement étale.
Soit $N(T)$ la fermeture intégrale de $X$ dans $R(T)/R(\sqrt[\bold r]{\bold D/X})=R(X)$.
\begin{enumerate}
\item[(i)] Il existe un unique morphisme de champs $T\rightarrow N(T)$ faisant commuter le diagramme

\xymatrix@R=2pt{
&&& T\ar@{.>}[r] \ar[dd]& N(T)\ar[dd]\\
&&& &\\
&&&  \sqrt[\bold r]{\bold D/X}\ar[r]&  X       }

et ce morphisme est surjectif.

\item[(ii)] Le morphisme canonique $N(T)\rightarrow X$ est un revêtement 
modérément ramifié de $X$ le long de $D$, et le foncteur obtenu 

\xymatrix@R=2pt{
&&& \Rev (\sqrt[\bold r]{\bold D/X})\ar[r]&\Rev^D (X)}

commute au produit fibré.

\item[(iii)] Si de plus  $T\rightarrow \sqrt[\bold r]{\bold D/X}$ est galoisien de groupe $G$, $N(T)\rightarrow X$ l'est également, de multi-indice de ramification $\bold r'$ divisant $\bold r$, et le morphisme 
$N(T)\rightarrow [N(T)|G]\simeq \sqrt[\bold r']{\bold D/X}$ défini dans le lemme \ref{abh} s'inscrit dans un diagramme cartésien :

\xymatrix@R=2pt{
&&& T\ar[r] \ar[dd]\ar@{}[rdd]|{\Box}& N(T)\ar[dd]\\
&&& &\\
&&&  \sqrt[\bold r]{\bold D/X}\ar[r]&  \sqrt[\bold r']{\bold D/X}    }
\end{enumerate}
\end{lem}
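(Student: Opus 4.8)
The plan is to identify $N(T)$ with the coarse moduli space of the stack $T$ and to deduce each assertion from the explicit local model of the root stack. By Corollary~\ref{champracchampquot2}, after an \'etale base change on $X$ one may write $\sqrt[\bold r]{\bold D/X}\simeq[Y|\boldsymbol{\mu}_{\bold r}]$, where $Y\to X$ is a Kummer covering ramified along $\bold D$; since $X$ is regular and $\bold D$ has simple normal crossings (Definition~\ref{famdivcroisnormsimpl}), $Y$ is regular, exactly as in the proof of Lemma~\ref{abh}. An \'etale covering $T\to[Y|\boldsymbol{\mu}_{\bold r}]$ is then the same datum as a $\boldsymbol{\mu}_{\bold r}$-equivariant finite \'etale cover $\tilde Y\to Y$, so $T\simeq[\tilde Y|\boldsymbol{\mu}_{\bold r}]$ with $\tilde Y$ regular, hence normal. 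Locally the coarse space of $T$ is the quotient scheme $\tilde Y/\boldsymbol{\mu}_{\bold r}$, which is normal; these glue to a normal scheme, finite over $X$ (a finite normalization, as in the definition of $\Rev^D(X)$, see \cite{GM}), whose ring of total fractions is the finite \'etale $R(X)$-algebra $R(T)$. By uniqueness of the normalization this coarse space is precisely $N(T)$, and the canonical morphism to the coarse space furnishes the arrow $T\to N(T)$ of part~(i); it is surjective, and it is the unique morphism over $X$ because $N(T)$ is separated and $X\setminus D$ is dense.

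For part~(ii), tameness is checked at the generic points of the $D_i$ after reducing to $X=\spec R$ with $R$ strictly henselian regular local. There $\tilde Y\to Y\to X$ is a composite of an \'etale morphism with a Kummer covering whose exponents $r_i$ are prime to $p$, so $\tilde Y\to X$ is tamely ramified along $\bold D$; passing to the $\boldsymbol{\mu}_{\bold r}$-quotient only decreases ramification, whence $N(T)\to X$ is a tame covering in the sense of Theorem~\ref{grfundmod} and indeed lands in $\Rev^D(X)$. Compatibility with fibre products is then formal: restriction to $X\setminus D$, over which $\sqrt[\bold r]{\bold D/X}\simeq X\setminus D$, identifies $N(T)$ with the normalization of $X$ in the finite \'etale cover $T|_{X\setminus D}\to X\setminus D$, and both $N(T_1\times_{\sqrt[\bold r]{\bold D/X}}T_2)$ and the fibre product $N(T_1)\times_X N(T_2)$ computed in $\Rev^D(X)$ are, by construction, the normalization of $X$ in $T_1|_{X\setminus D}\times_{X\setminus D}T_2|_{X\setminus D}$.

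For part~(iii), suppose $T\to\sqrt[\bold r]{\bold D/X}$ is a $G$-torsor. The $G$-action descends to the coarse space $N(T)$, and since $N(T)\to X$ has the same generic fibre as $T$, this action is faithful with quotient $X$, so $N(T)\to X$ is Galois of group $G$. Representability of $T\to\sqrt[\bold r]{\bold D/X}$ forces the stabilizer of a point of $T$ above the generic point of $D_i$ to be a subgroup $\boldsymbol{\mu}_{r'_i}\subset\boldsymbol{\mu}_{r_i}$, whence the ramification multi-index $\bold r'$ of $N(T)\to X$ divides $\bold r$. Applying Lemma~\ref{abh} to the Galois tame covering $N(T)\to X$ gives $[N(T)|G]\simeq\sqrt[\bold r']{\bold D/X}$, hence a map $N(T)\to\sqrt[\bold r']{\bold D/X}$ which is itself a $G$-torsor; combined with the natural morphism $\sqrt[\bold r]{\bold D/X}\to\sqrt[\bold r']{\bold D/X}$ attached to $\bold r'\mid\bold r$, this produces the square of~(iii). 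To see it is Cartesian, I would observe that the induced comparison $T\to\sqrt[\bold r]{\bold D/X}\times_{\sqrt[\bold r']{\bold D/X}}N(T)$ is a morphism of $G$-torsors over $\sqrt[\bold r]{\bold D/X}$, hence an isomorphism.

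The main obstacle is the tameness statement in~(ii): everything else is either coarse-space formalism (for~(i)) or torsor descent combined with Lemma~\ref{abh} (for~(iii)), whereas controlling the ramification of $N(T)\to X$ at the generic points of the $D_i$ is exactly where the simple normal crossings hypothesis, the Kummer local model and Abhyankar's lemma become indispensable. I would therefore invest the most care in the reduction to the strictly henselian regular local case and in the verification that the $\boldsymbol{\mu}_{\bold r}$-quotient of a tame cover remains tame.
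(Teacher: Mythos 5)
Your proof is correct in substance but follows a genuinely different route from the paper's. You identify $N(T)$ with the coarse moduli space of $T$ via the local presentation $T\simeq[\tilde Y|\boldsymbol{\mu}_{\bold r}]$, where $\tilde Y\to Y$ is a $\boldsymbol{\mu}_{\bold r}$-equivariant finite \'etale cover of the Kummer cover; normality, finiteness and surjectivity of $T\to N(T)$ then come for free from the quotient description $\tilde Y/\boldsymbol{\mu}_{\bold r}$. The paper instead works with an arbitrary \'etale groupoid presentation of $T$, obtains existence and uniqueness from the universal property of the integral closure (\cite{SGA2} 6.3.9), and proves surjectivity by a separate Cohen--Seidenberg argument; it never names the coarse space. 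For tameness both arguments reduce to the same fact --- a covering dominated by a tame covering is tame (\cite{GM} Lemma 2.2.5), applied by you to $\tilde Y\twoheadrightarrow N(T)$ and by the paper to the atlas $T'\twoheadrightarrow N(T)$ --- so you should cite that lemma rather than the informal ``passing to the quotient only decreases ramification.'' For the fibre products you identify both sides with the normalization of $X$ in the generic fibre product, whereas the paper exhibits an integral birational morphism onto the schematic fibre product; for the Cartesian square in \emph{(iii)} your observation that a morphism of $G$-torsors over $\sqrt[\bold r]{\bold D/X}$ is automatically an isomorphism is slicker than the paper's appeal to \cite{EGA4.4} 18.10.12 (birational morphism of \'etale covers of a normal base). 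What your route buys is concreteness and a nearly formal \emph{(iii)}; what it costs is the invocation of coarse moduli spaces and their compatibility with \'etale base change, which the paper's intrinsic argument avoids. Two small slips to fix: $X$ is only assumed normal (regular along $D$), not regular, so $\tilde Y$ should be claimed normal rather than regular away from $D$; and the generic stabilizer of $T$ over $D_i$ is $\boldsymbol{\mu}_{r_i/r'_i}$, not $\boldsymbol{\mu}_{r'_i}$ --- the divisibility $r'_i\mid r_i$ is unaffected, and in any case your torsor argument does not use this computation.
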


\begin{proof}

\begin{enumerate}
\item[\it{(i)}] Pour l'existence et l'unicité du morphisme : soit $T_0\rightarrow T$ un atlas étale, et  $s,b:T_1\rightrightarrows T_0$ le groupoïde correspondant. Il résulte de \cite{GM} Proposition 1.8.5 et \cite{SGA1} Exposé I, Corollaire 9.10, que $T_1$ et $T_0$ sont normaux. L'affirmation résulte alors de la propriété universelle de la fermeture intégrale (\cite{SGA2} 6.3.9).

	Pour montrer la surjectivité, on peut supposer $X=\spec R$, où $R$ est un anneau. On pose $R'=\frac{R[(T_i)_{i\in I}]}{(T_i^{r_i}-s_i)_{i\in I}}$, où les $(T_i)_{i\in I}$ sont des indéterminées, et $X'=\spec R'$ (quitte à renommer $I$, on peut supposer qu'aucun des $s_i$ n'est inversible). D'après le corollaire \ref{champracchampquot2} il existe un isomorphisme naturel de champs sur $X$ : $\sqrt[\bold r]{\bold D/X}     \simeq [X'|\boldsymbol{\mu}_{\bold r} ]$. 
Posons $T'=X'\times_{[X'|\boldsymbol{\mu}_{\bold r} ]}T$, c'est un atlas étale de $T$. De plus $T'\rightarrow X$ est fini (car $T'\rightarrow T$ et $T\rightarrow X$ le sont) et $N(T)\rightarrow X$ est séparé (car affine), et donc $T'\rightarrow N(T)$ est fini, et en particulier entier. Comme ce morphisme est de plus dominant (car $T'\rightarrow T$ est surjectif, et $T\rightarrow N(T)$ est birationnel), il est surjectif, d'après le théorème de Cohen-Seidenberg.

\item[\it{(ii)}]
$N(T)\rightarrow X$ est modérément ramifié le long de $D$ : il faut vérifier les cinq conditions de la Définition 2.2.2 de \cite{GM}.

\begin{enumerate}
\item[1)] $N(T)\rightarrow X$ est fini : ça résulte de la finitude de la fermeture intégrale d'un anneau noethérien normal dans une extension séparable finie (\cite{BourbAlgComV}, V, Proposition 18, Corollaire 1).
\item[2)] $N(T)\rightarrow X$ est étale au dessus de $U=X-D$ : posons $V=T\times_{\sqrt[\bold r]{\bold D/X}}U$, c'est un revêtement étale de $U$, et comme $U$ est normal, $V$ s'identifie d'après \cite{EGA4.4}, Corollaire 18.10.12.à la fermeture intégrale de $U$ dans $R(V)=R(T)$, qui n'est autre que $N(T)\times_X U$. 

\item[3)] Toute composante irréductible de $N(T)$ domine $X$ : 
les composantes irréductibles de $N(T)$ sont les mêmes que celle de son ouvert dense $V$, or $V\rightarrow U$ étant étale, toute composante irréductible de $V$ domine $U$.
\item[4)] $N(T)$ est normal : clair d'après \cite{SGA2} Proposition 6.3.7, ou simplement d'après la propriété universelle définissant $N(T)$. 
\item[5)] Pour tout point générique $x$ de $D$, $N(T)$ est modérément ramifié au dessus de $\mathcal O_{X,x}$ : c'est également clair d'après $\it{(i)}$, qui permet d'appliquer \cite{GM} Lemma 2.2.5, vu qu'un revêtement de Kummer est modérément ramifié (\cite{GM} Example 2.2.4). 
	On en tire également la propriété sur les indices de ramification du $\it{(iii)}$.
\end{enumerate}

	Le foncteur obtenu commute au produit fibré : donnés $T\rightarrow \sqrt[\bold r]{\bold D/X}$,  $T'\rightarrow \sqrt[\bold r]{\bold D/X}$ deux revêtements étales, on a un morphisme canonique $N(T\times_{\sqrt[\bold r]{\bold D/X}}T')\rightarrow N(T)\times_X N(T')$ (produit fibré schématique), comme ce morphisme est entier et birationnel, et $N(T\times_{\sqrt[\bold r]{\bold D/X}}T')$ est normal, il identifie $N(T\times_{\sqrt[\bold r]{\bold D/X}}T')$ à la normalisation de $N(T)\times_X N(T')$, qui par définition est le produit fibré de $N(T)$ et $N(T')$ sur $X$ dans $\Rev_D(X)$.

\item[\it{(iii)}]

	La première assertion résulte de la seconde affirmation de $\it{(ii)}$.
Pour la seconde assertion, il suffit de remarquer que le morphisme naturel
$T\rightarrow \sqrt[\bold r]{\bold D/X}\times_{\sqrt[\bold r']{\bold D/X}}N(T)$ est un morphisme birationnel de revêtements étales de $\sqrt[\bold r]{\bold D/X}$, c'est donc un isomorphisme d'après \cite{EGA4.4}, Corollaire 18.10.12.

\end{enumerate}
\end{proof}

Le lemme \ref{norm} permet de poser :
\begin{defi}
  On définit un foncteur $M: \varinjlim_{\bold r}\Cat \Rev (\sqrt[\bold r]{\bold D/X})\rightarrow \Rev^D (X)$ sur les objets en posant, pour une famille $\bold r=(r_i)_{i\in I}$ d'entiers non divisibles par la caractéristique $p$ de $k$ : $M(T\rightarrow\sqrt[\bold r]{\bold D/X})=N(T)\rightarrow X$. 
\end{defi}

\subsection{Conclusion}
\label{sec:conclusion}

\begin{proof}[Preuve de la proposition \ref{grfundmodgrfundchamp}]
  
Il suffit de montrer que le foncteur $C$ est une équivalence. 

Le fait qu'un revêtement modéré est par définition normal permet de définir une transformation naturelle $1\rightarrow MC$, et la propriété universelle de la normalisation montre que c'est un isomorphisme.

Le lemme \ref{norm} {\it(ii)} permet à son tour de définir une transformation naturelle $ (T\rightarrow \sqrt[\bold r]{\bold D/X})\rightarrow CM(T\rightarrow \sqrt[\bold r]{\bold D/X})$ dans $\varinjlim_{\bold r}\Cat \Rev (\sqrt[\bold r]{\bold D/X})$, et le point {\it(iii)} montre que c'est un isomorphisme pour $T\rightarrow \sqrt[\bold r]{\bold D/X}$ galoisien, et on en déduit que c'est vrai pour tout objet. Donc $C$ et $M$ sont des équivalences de catégories réciproques.

\end{proof}

\section{Faisceaux localement constants et fibrés finis sur un champ de Deligne-Mumford}
\label{sec:fibres-finis}

Dans cette partie, on fixe à nouveau un champ de Deligne-Mumford localement noethérien $X$.

\subsection{Topologies}
\label{sec:topologies}

La définition suivante est une adaptation\footnote{je remercie l'auteur pour avoir fourni le fichier source de son texte, me permettant ainsi de reproduire les diagrammes} de \cite{Zoon}, Lemme 1.2.

\begin{defi}
  On définit le \emph{site étale} $X_{et}$ (resp. le site étale fini $X_{etf}$) 
du champ $X$ comme le site dont la catégorie sous-jacente a pour objets les morphismes représentables étales $f:T\rightarrow X$ d'un champ de Deligne-Mumford vers $X$, a pour flèches les classes d'isomorphismes de couples $(\phi, \alpha)$

$$ \shorthandoff{;:!?}
  \xymatrix{ 
    T \ar[rr]^\phi \ar[rd]_f|*{}="A"
    && T' \ar@{=>}"A"_-\alpha \ar[ld]^{f'} \\ & X
    }$$
où $\phi$ est un morphisme représentable, $\alpha$ un $2$-isomorphisme,
et dont les recouvrements sont les familles épimorphiques (resp. les familles épimorphiques $(T_i\rightarrow T)_{i\in I}$ constituée de morphismes \emph{finis}). 

\end{defi}

On parlera aussi de topologie étale pour $X_{et}$, et de topologie étale finie globale pour $X_{etf}$.

\begin{rem}[\cite{Zoon} Lemme 1.2]
\label{remtop}
  On obtient une topologie équivalente à $X_{et}$ si dans la définition des objets, on impose à $T$ d'être un schéma.
\end{rem}

\subsection{Systèmes locaux ensemblistes et groupe fondamental}
\label{sec:syst-loca-ensembl}

Zoonekynd (\cite{Zoon}) a remarqué que l'on pouvait interpréter la définition 
\ref{defgrfundchamp} comme un cas particulier du groupe fondamental d'un topos donnée par Leroy (\cite{Leroy}). 

\subsubsection{Systèmes locaux ensemblistes}
\label{sec:syst-loca-ensembl-1}

\begin{defi}[\cite{Leroy},\cite{Zoon}]
 Donné un topos $\mathcal T$, on définit la sous-catégorie $\LC(\mathcal T)$
 (resp. $\LCF(\mathcal T)$) comme celle des objets localement constants
 (resp. localement constants finis) et $\SLC(\mathcal T)$
 (resp. $\SLCF(\mathcal T)$) comme celle des unions disjointes d'objets de 
$\LC(\mathcal T)$ (resp. de $\LCF(\mathcal T)$).
  \end{defi}

$\SLC(\mathcal T)$ (resp. $\SLCF(\mathcal T)$) est un topos galoisien
(resp. un topos galoisien fini).
Si on suppose $\mathcal T$ connexe, et que l'on fixe un point géométrique 
$x$ de $\mathcal T$, on associe canoniquement à cette catégorie un progroupe strict $\pi_1(\mathcal T,x)$ (resp. un groupe profini $\hat{\pi_1}(\mathcal T,x)$), vérifiant $\SLC(\mathcal T)\simeq \pi_1(\mathcal T,x)-\Ens$ 
(resp. $\SLCF(\mathcal T)\simeq \hat{\pi_1}(\mathcal T,x)-\Ens$), les ensembles étant munis d'action continues des pro-groupes considérés. De plus $(\LCF(\mathcal T),x^*)$ est une catégorie galoisienne dont le groupe fondamental s'identifie à $\hat{\pi_1}(\mathcal T,x)-\Ens$.

\subsubsection{Systèmes locaux ensemblistes et revêtements}
\label{sec:syst-loca-ensembl-2}

On dispose d'un foncteur naturel $\Cat\Rev X\rightarrow \LCF(\widetilde{X_{et}})$ donné sur les objets par $(Y\rightarrow X)\rightarrow \Hom_X(\cdot, Y)$, où 
$\Hom_X(\cdot, Y)$ est donné comme foncteur sur les objets par $(T\rightarrow X)\rightarrow \Hom_T(T,T\times_Y X)$. Pour voir que cela définit bien un préfaisceau d'ensembles, on peut, en utilisant la remarque \ref{remtop}, supposer que $T$ est un schéma, mais ça résulte alors du fait que $Y\rightarrow X$ est représentable, donc que  $T\times_Y X$ est également un schéma. Le fait que  $\Hom_X(\cdot, Y)$ est effectivement un faisceau sur $X_{et}$ découle immédiatement du fait que c'est vrai lorsque $X$ est un schéma (\cite{SGA4}, VII.2, \cite{VistDesc}).

On dispose également d'un foncteur naturel dans la direction opposée $\LCF(\widetilde{X_{et}})\rightarrow \Cat\Rev X$. En effet, soit $p:X_0\rightarrow X$ un atlas étale, $X_1=X_0\times_X X_0$, et $s,b:X_1\rightrightarrows X_0$ le groupoïde correspondant. Soit $\bold E\in \obj\LCF(\widetilde{X_{et}})$, on peut l'interpréter (\cite{VistDesc}, Example 4.11) comme un faisceau localement constant fini équivariant sur $X_0$, c'est à dire un couple $(\bold F
, \psi)$, où $\bold F \in \obj\LCF(\widetilde{{X_0}_{et}})$, et $\phi : s^*\bold F\rightarrow b^*\bold F$ est un isomorphisme vérifiant la condition de cocycle habituelle. L'équivalence de catégories usuelle $\LCF(\widetilde{{X_i}_{et}})\simeq \Rev X_i$ pour $i\in \{0,1\}$, permet d'interpréter cette donnée comme un revêtement étale $Y_0\rightarrow X_0$, muni d'un isomorphisme $\phi:{X_1}_{\searrow s} \times_{X_0}Y_0 \simeq {X_1}_{\searrow b} \times_{X_0}Y_0$. En posant $Y_1= {X_1}_{\searrow s} \times_{X_0}Y_0$, on obtient un nouveau groupoïde $(pr_2, pr_2\circ\phi:Y_1\rightrightarrows Y_0)$ et l'on obtient ainsi un champ $Y=[Y_1\rightrightarrows Y_0]$ et même en fait un objet de $\Cat\Rev X$.

\begin{thm}[\cite{Zoon}, Théorème 3.1]
\label{revsysloc}
  Les foncteurs ci-dessus définissent des équivalence de catégories réciproques
$\Cat\Rev X\simeq \LCF(\widetilde{X_{et}})$.
\end{thm}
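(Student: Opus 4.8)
The plan is to deduce the equivalence from the classical one over schemes by a descent argument along an \'etale atlas, in exactly the same bookkeeping as the description of $\Vect X$ by descent data used in the lemma of \S\ref{sec:equiv-recipr}. Fix an \'etale atlas $p:X_0\rightarrow X$, set $X_1=X_0\times_X X_0$ with groupoid $s,b:X_1\rightrightarrows X_0$, and let $pr_1,pr_2,m:X_1\times_{X_0}X_1\rightarrow X_1$ be the projections and multiplication. Over a scheme $U$ one has the classical equivalence $\LCF(\widetilde{U_{et}})\simeq\Rev U$ (the one already invoked in the construction of the backward functor, resting on \cite{SGA1} together with the sheaf-theoretic reformulation of \cite{SGA4} and \cite{VistDesc}); the essential point is that this equivalence is natural in $U$, hence compatible with the pullback functors attached to $s$, $b$, $pr_1$, $pr_2$ and $m$.

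First I would record the two descent descriptions. On the cover side, descent for stacks presents $\Cat\Rev X$ as the category of pairs $(Y_0\rightarrow X_0,\phi)$ with $Y_0\rightarrow X_0$ in $\Rev X_0$ and $\phi:s^*Y_0\simeq b^*Y_0$ a descent datum satisfying $m^*\phi=pr_1^*\phi\circ pr_2^*\phi$ --- the very bookkeeping recalled for $\Vect X$. On the sheaf side, the topos-theoretic descent used to build the backward functor (cf. \cite{VistDesc}, Example 4.11) presents $\LCF(\widetilde{X_{et}})$ as the category of pairs $(\mathbf F,\psi)$ with $\mathbf F\in\obj\LCF(\widetilde{{X_0}_{et}})$ and $\psi:s^*\mathbf F\simeq b^*\mathbf F$ a descent datum with the same cocycle condition.

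Then I would transport. Applying the scheme-level equivalence over $X_0$ turns $\mathbf F$ into an object $Y_0\rightarrow X_0$ of $\Rev X_0$; its naturality over $X_1$ (compatibility with $s^*$ and $b^*$) turns $\psi$ into a descent datum $\phi$ on $Y_0$; and its naturality over $X_1\times_{X_0}X_1$ (compatibility with $pr_1^*,pr_2^*,m^*$) matches the two cocycle conditions term by term. Thus the scheme-level equivalence induces an equivalence between the two descent categories. It then remains to check that this induced functor is canonically isomorphic to the constructed functor $\LCF(\widetilde{X_{et}})\rightarrow\Cat\Rev X$, with quasi-inverse canonically isomorphic to $(Y\rightarrow X)\mapsto\Hom_X(\cdot,Y)$; the units and counits are assembled from those of the scheme-level equivalence, invoking Remark \ref{remtop} to reduce the \'etale site to schemes each time a representability or sheaf condition must be verified pointwise.

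The main obstacle I anticipate is purely $2$-categorical coherence rather than any new geometric input: one must check that the scheme-level equivalence is natural enough to carry descent data and cocycle identities across correctly, and in particular that the representability requirement on morphisms of $\Rev X$ corresponds \emph{exactly} to ``locally constant finite'' on the sheaf side. It is precisely here that one uses representability of $Y\rightarrow X$, which makes $T\times_Y X$ a scheme for $T$ a scheme, so that $\Hom_X(\cdot,Y)$ is genuinely an object of $\LCF(\widetilde{X_{et}})$ and not merely a constructible sheaf. Keeping the various pullback compatibilities aligned so that the cocycle conditions agree on the nose is the only delicate step.
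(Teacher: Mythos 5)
Your descent argument is essentially the route the paper takes: the two functors of \S\ref{sec:syst-loca-ensembl-2} are built by exactly this bookkeeping (an object of $\LCF(\widetilde{X_{et}})$ is read as an equivariant sheaf on the atlas via \cite{VistDesc}, the scheme-level equivalence is applied over $X_0$ and $X_1$, and the result is reassembled as a groupoid), the paper then simply citing \cite{Zoon} for the fact that the two functors are quasi-inverse. Your completion by transporting descent data through the pseudo-natural scheme-level equivalence and matching the cocycle conditions is correct; the only point left implicit is effectivity of descent for finite \'etale (hence affine) morphisms, which is what guarantees that the descended object $[Y_1\rightrightarrows Y_0]$ is indeed a representable finite \'etale cover of $X$.
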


\begin{cor} 
Si $X$ est un champ de Deligne-Mumford connexe, et $x:\spec \Omega \rightarrow X$ un point géométrique, on a un isomorphisme naturel
  $$\hat{\pi_1}(\widetilde{X_{et}},x)\simeq \pi_1(X,x)$$
\end{cor}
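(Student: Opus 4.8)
Le plan est de déduire l'énoncé du théorème \ref{revsysloc} en observant que les deux groupes en présence sont les groupes fondamentaux de catégories galoisiennes qui se trouvent être équivalentes. D'une part, par la définition \ref{defgrfundchamp}, $\pi_1(X,x)$ est le groupe fondamental de la catégorie galoisienne $(\Cat\Rev X, x^*)$. D'autre part, d'après le \S\ref{sec:syst-loca-ensembl-1}, $\hat{\pi_1}(\widetilde{X_{et}},x)$ est le groupe fondamental de la catégorie galoisienne $(\LCF(\widetilde{X_{et}}), x^*)$. Le théorème \ref{revsysloc} fournit déjà une équivalence entre les catégories sous-jacentes, et il ne reste donc qu'à vérifier que cette équivalence transporte l'un des foncteurs fibres sur l'autre.

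D'abord, je décris explicitement les deux foncteurs fibres. Sur $\Cat\Rev X$, le foncteur $x^*$ envoie un revêtement $(Y\rightarrow X)$ sur l'ensemble fini $\Hom_X(x,Y)$ de ses points géométriques au-dessus de $x$. Sur $\LCF(\widetilde{X_{et}})$, le foncteur $x^*$ est la prise de fibre (tige) en le point géométrique $x$ du topos $\widetilde{X_{et}}$. Le point clé est alors de calculer la tige en $x$ du faisceau $\Hom_X(\cdot,Y)$ associé à $(Y\rightarrow X)$ par le foncteur du théorème \ref{revsysloc} : comme $Y\rightarrow X$ est représentable, $x\times_X Y$ est (le schéma associé à) un ensemble fini de points géométriques, et, quitte à se ramener à des schémas via la remarque \ref{remtop}, la tige s'identifie canoniquement à $\Hom_X(x,Y)$. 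On obtient ainsi un isomorphisme naturel entre le foncteur fibre de $\LCF(\widetilde{X_{et}})$ composé avec l'équivalence et le foncteur fibre de $\Cat\Rev X$.

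Je conclus enfin à l'aide du formalisme de \cite{SGA1} : le groupe fondamental d'une catégorie galoisienne $(\mathcal C, F)$ n'est autre que le groupe profini $\Aut(F)$ des automorphismes du foncteur fibre, et une équivalence $\Phi : \mathcal C\rightarrow \mathcal C'$ munie d'un isomorphisme $F'\circ\Phi\simeq F$ induit un isomorphisme canonique $\Aut(F)\simeq \Aut(F')$. Appliqué à l'équivalence du théorème \ref{revsysloc} et à la compatibilité des foncteurs fibres établie ci-dessus, ceci donne l'isomorphisme naturel $\hat{\pi_1}(\widetilde{X_{et}},x)\simeq \pi_1(X,x)$ recherché.

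L'essentiel de la difficulté réside dans cette compatibilité des foncteurs fibres, c'est-à-dire dans l'identification de la tige en $x$ de $\Hom_X(\cdot,Y)$ avec la fibre géométrique $\Hom_X(x,Y)$ : c'est là qu'intervient de façon cruciale la représentabilité du morphisme $Y\rightarrow X$, le reste de l'argument étant purement formel une fois l'équivalence de catégories acquise.
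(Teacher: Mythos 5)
Votre démonstration est correcte et suit exactement la voie que le papier laisse implicite (le corollaire y est énoncé sans preuve, comme conséquence directe du théorème \ref{revsysloc}) : les deux groupes sont les groupes fondamentaux des catégories galoisiennes $(\Cat\Rev X, x^*)$ et $(\LCF(\widetilde{X_{et}}), x^*)$, et l'équivalence de Zoonekynd, une fois vérifiée la compatibilité des foncteurs fibres via l'identification de la tige de $\Hom_X(\cdot,Y)$ en $x$ avec $\Hom_X(x,Y)$, induit l'isomorphisme voulu. Votre rédaction explicite utilement le point que le papier passe sous silence, à savoir cette compatibilité des foncteurs fibres.
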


\begin{rem}
 $ {\pi_1}(\widetilde{X_{et}},x)$ porte le nom de groupe fondamental élargi de $X$ (voir \cite{SGA3.2} X 7.6 pour le cas d'un schéma). 
\end{rem}

\subsubsection{Interprétation à l'aide de la topologie étale finie globale}
\label{sec:interpr-laide-de}
\begin{prop}
\label{systloc}
 Soit $X$ champ de Deligne-Mumford connexe. Le foncteur
 $\SLC(\widetilde{X_{etf}})\rightarrow\SLC(\widetilde{X_{et}})$ induit un
 isomorphisme de $\SLC(\widetilde{X_{etf}})$
 sur $\SLCF(\widetilde{X_{et}})$. En particulier, si $x$ est un point géométrique, on a un isomorphisme naturel 
$\hat{\pi_1}(\widetilde{X_{et}},x)\simeq \pi_1(\widetilde{X_{etf}},x)$.
\end{prop}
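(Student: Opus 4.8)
Le plan est de réaliser le foncteur de l'énoncé comme la faisceautisation $a$ pour la topologie (plus fine) $X_{et}$, et d'en construire un quasi-inverse à l'aide de l'inclusion $i:\widetilde{X_{et}}\hookrightarrow \widetilde{X_{etf}}$. Comme la topologie $X_{etf}$ est moins fine que $X_{et}$, tout faisceau étale est un faisceau étale fini, d'où cette inclusion, laquelle admet pour adjoint à gauche la faisceautisation $a:\widetilde{X_{etf}}\to \widetilde{X_{et}}$, avec $a\circ i\simeq \id$. Le foncteur naturel $\SLC(\widetilde{X_{etf}})\to \SLC(\widetilde{X_{et}})$ de l'énoncé se réalise comme la restriction de $a$ ; étant exact à gauche et commutant aux sommes directes, il envoie sommes directes d'objets localement constants sur sommes directes d'objets localement constants.

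Le point essentiel, et la source de la finitude, est le suivant : un objet $F\in \obj\LC(\widetilde{X_{etf}})$ devient constant, de valeur un ensemble $S$, après image réciproque le long d'un revêtement \emph{fini} étale $T\to X$ ; quitte à passer à la clôture galoisienne (qui reste finie étale), on peut supposer $T\to X$ galoisien de groupe \emph{fini} $G$. La donnée de $F$ équivaut alors à celle de $S$ muni d'une action de $G$, et $a(F)$ correspond, via le dictionnaire monodromique, à l'ensemble $S$ sur lequel $\pi_1(\widetilde{X_{et}},x)$ agit à travers le quotient fini $G$. Toutes les orbites étant finies, $a(F)$ est réunion disjointe d'objets localement constants \emph{finis}, i.e. appartient à $\SLCF(\widetilde{X_{et}})$ ; par passage aux sommes directes, $a$ induit bien $\SLC(\widetilde{X_{etf}})\to \SLCF(\widetilde{X_{et}})$. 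J'attends que ce soit l'étape délicate : il faut s'assurer que la finitude du revêtement trivialisant force la finitude des orbites (donc l'appartenance à $\SLCF$, et non seulement à $\SLC$), alors même qu'un objet de $\LC$ comme un faisceau constant $\underline S$ de valeur infinie n'est pas lui-même dans $\LCF$ — c'est la structure de somme disjointe qui sauve la mise.

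Pour l'équivalence réciproque, je vérifierais d'abord que $i$ envoie $\SLCF(\widetilde{X_{et}})$ dans $\SLC(\widetilde{X_{etf}})$ : un objet de $\LCF(\widetilde{X_{et}})$ est, par le théorème \ref{revsysloc}, un revêtement fini étale, donc trivialisé par un revêtement fini étale (sa clôture galoisienne), donc localement constant pour $X_{etf}$ ; de plus un tel faisceau, recollé le long d'un revêtement fini étale à partir d'un faisceau constant, est automatiquement un faisceau pour $X_{et}$, si bien que l'unité $F\to i\,a\,F$ est un isomorphisme sur $\LC(\widetilde{X_{etf}})$. Les deux compositions $a\circ i$ et $i\circ a$ étant ainsi isomorphes à l'identité sur les sous-catégories voulues (compte tenu de la compatibilité aux sommes disjointes en jeu), on obtient l'équivalence annoncée $\SLC(\widetilde{X_{etf}})\simeq \SLCF(\widetilde{X_{et}})$. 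Enfin, pour la dernière assertion, j'invoquerais le dictionnaire du \S\ref{sec:syst-loca-ensembl-1} : $\SLC(\widetilde{X_{etf}})\simeq \pi_1(\widetilde{X_{etf}},x)\text{-}\Ens$ et $\SLCF(\widetilde{X_{et}})\simeq \hat{\pi_1}(\widetilde{X_{et}},x)\text{-}\Ens$ ; l'équivalence ci-dessus étant compatible aux foncteurs fibres en $x$, elle identifie les deux pro-groupes, d'où $\pi_1(\widetilde{X_{etf}},x)\simeq \hat{\pi_1}(\widetilde{X_{et}},x)$. De façon équivalente, on pourrait observer que les objets de Galois de $\widetilde{X_{etf}}$ sont des revêtements finis étales, de groupes d'automorphismes finis, de sorte que $\pi_1(\widetilde{X_{etf}},x)$ est déjà profini et que $\SLC=\SLCF$ pour ce topos.
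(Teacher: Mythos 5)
Votre d\'emonstration est correcte et suit pour l'essentiel la m\^eme voie que celle de l'article : dans les deux cas, le point cl\'e est qu'un objet de $\LC(\widetilde{X_{etf}})$ est trivialis\'e par un rev\^etement galoisien fini de groupe fini $G$, donc correspond \`a un $G$-ensemble dont les orbites, finies, fournissent la d\'ecomposition en objets de $\LCF(\widetilde{X_{et}})$, la r\'eciproque reposant comme chez vous sur le th\'eor\`eme \ref{revsysloc} et le passage \`a la cl\^oture galoisienne. Votre pr\'esentation par l'adjonction $(a,i)$ n'est que la forme explicite de la restriction le long de $\pi_1(\widetilde{X_{et}},x)\rightarrow\pi_1(\widetilde{X_{etf}},x)$ utilis\'ee dans l'article ; notez seulement que celui-ci prend soin de r\'eduire d'abord la famille couvrante trivialisante \`a un seul rev\^etement, via la connexit\'e de $X$ et le fait que l'image d'un morphisme \'etale fini est ouverte et ferm\'ee.
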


\begin{proof}
	Le morphisme de sites $f:X_{etf}\rightarrow X_{et}$ induit un morphisme de topos $(f^*,f_*):\widetilde{X_{et}}\rightarrow\widetilde{X_{etf}}$ dont l'adjoint à gauche $f^*$ induit un foncteur fidèlement plein $\SLC(\widetilde{X_{etf}})\rightarrow\SLC(\widetilde{X_{et}})$, et dont on va montrer que l'image essentielle est $\SLCF(\widetilde{X_{et}})$. Via l'équivalence $\SLC(\mathcal T)\simeq {\pi_1}(\mathcal T,x)-\Ens$, ce foncteur s'interprète comme la restriction le long du morphisme ${\pi_1}(\widetilde{X_{et}},x)\rightarrow \pi_1(\widetilde{X_{etf}},x)$.
	
Soit d'abord $\bold E\in \obj\LC(\widetilde{X_{etf}})$. Il existe une famille couvrante $(T_i\rightarrow X)_{i\in I}$ constituée de morphismes représentables étales finis telle que pour tout $i\in I$, $\bold E_{|T_i}$ soit constant.
L'image de  $T_i\rightarrow X$ est ouverte car le morphisme est étale, et fermé car il est fini, c'est donc $\emptyset$ ou $X$. On peut donc se ramener à une famille couvrante à un élément $T\rightarrow X$, revêtement qu'on peut supposer de plus galoisien. Soit $G$ son groupe de Galois. Alors $\bold E$ correspond (via l'équivalence $ \pi_1(\widetilde{X_{etf}},x)-\Ens\simeq \SLC(\widetilde{X_{etf}})$) à un $G$-ensemble $E$, qui se décompose en $E=\coprod_{j\in J} E_j$, ses orbites sous $G$. Chaque ensemble $E_j$ est fini car $G$ l'est, donc correspond (via l'équivalence $\pi_1(\widetilde{X_{et}},x)-\Ens\simeq \SLC(\widetilde{X_{et}})$) à un $\bold E_j\in \obj\LCF(\widetilde{X_{et}})$. Donc $\bold E= \coprod_{j\in J}\bold E_j$ s'envoie sur un objet de $\SLCF(\widetilde{X_{et}})$.

Réciproquement si $\bold E\in \obj\LCF(\widetilde{X_{et}})$, alors le théorème \ref{revsysloc} montre que $\bold E$ définit un revêtement $Y\rightarrow X$, dont une clôture galoisienne trivialise $\bold E$, donc $\bold E$ vient bien d'un objet de $\LC(\widetilde{X_{etf}})$.

\end{proof}

\begin{cor}
\label{intetf} 
Si $X$ est un champ de Deligne-Mumford connexe, et $x:\spec \Omega \rightarrow X$ un point géométrique, on a un isomorphisme naturel
  $$\pi_1(\widetilde{X_{etf}},x)\simeq \pi_1(X,x)$$
\end{cor}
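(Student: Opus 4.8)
The plan is to obtain the isomorphism by simply composing the two natural isomorphisms already produced earlier in this subsection; essentially no further argument is required.

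First I would invoke Proposition~\ref{systloc}, which shows that the morphism of sites $f:X_{etf}\to X_{et}$ induces, through its pullback $f^*$, a fully faithful functor $\SLC(\widetilde{X_{etf}})\to\SLC(\widetilde{X_{et}})$ with essential image exactly $\SLCF(\widetilde{X_{et}})$. Under the equivalences $\SLC(\mathcal T)\simeq\pi_1(\mathcal T,x)-\Ens$ of \S\ref{sec:syst-loca-ensembl-1}, this functor is the restriction along the canonical morphism $\pi_1(\widetilde{X_{et}},x)\to\pi_1(\widetilde{X_{etf}},x)$, and identifying its image with the finite objects $\SLCF(\widetilde{X_{et}})\simeq\hat{\pi_1}(\widetilde{X_{et}},x)-\Ens$ is precisely the natural isomorphism $\hat{\pi_1}(\widetilde{X_{et}},x)\simeq\pi_1(\widetilde{X_{etf}},x)$ recorded at the end of that proposition.

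Next I would recall the corollary following Theorem~\ref{revsysloc}: combining Zoonekynd's equivalence $\Cat\Rev X\simeq\LCF(\widetilde{X_{et}})$ with the identification $(\LCF(\mathcal T),x^*)\simeq\hat{\pi_1}(\mathcal T,x)-\Ens$ of \S\ref{sec:syst-loca-ensembl-1}, one gets the natural isomorphism $\hat{\pi_1}(\widetilde{X_{et}},x)\simeq\pi_1(X,x)$ between the profinite fundamental group $\hat{\pi_1}(\widetilde{X_{et}},x)$ of the \'etale topos and the fundamental group $\pi_1(X,x)$ of the Galois category $\Cat\Rev X$. Chaining the two isomorphisms yields
$$\pi_1(\widetilde{X_{etf}},x)\simeq\hat{\pi_1}(\widetilde{X_{et}},x)\simeq\pi_1(X,x),$$
which is exactly the assertion.

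Since both ingredients are already established, there is no genuine obstacle; the single point deserving care is naturality. Both isomorphisms are built from the same fibre functor $x^*$ and from the canonical comparison between the topoi $\widetilde{X_{et}}$ and $\widetilde{X_{etf}}$, so the composite is again natural in $x$ and functorial in $X$. I would merely note that the two naturality data are compatible because each factors through the single morphism $\pi_1(\widetilde{X_{et}},x)\to\pi_1(\widetilde{X_{etf}},x)$, which makes the composition canonical.
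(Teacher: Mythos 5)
Your proposal is correct and is exactly the route the paper intends: corollary \ref{intetf} is stated without further proof precisely because it is the composition of the isomorphism $\hat{\pi_1}(\widetilde{X_{et}},x)\simeq \pi_1(\widetilde{X_{etf}},x)$ from proposition \ref{systloc} with the isomorphism $\hat{\pi_1}(\widetilde{X_{et}},x)\simeq \pi_1(X,x)$ from the corollary to theorem \ref{revsysloc}. Your added remark on the compatibility of the two naturality data is a reasonable (if brief) supplement to what the paper leaves implicit.
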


\subsection{La catégorie tannakienne des systèmes locaux de $k$-vectoriels }
\label{sec:la-categ-tann}

\begin{defi}[\cite{Saav}, chapitre VI, 1.1.2]
  Donné un topos $\mathcal T$ connexe localement connexe, et un corps $k$, on définit la catégorie $\LC(\mathcal T,k)$ des systèmes locaux de $k$-vectoriels de rang fini.
 \end{defi}

Si on choisit de plus un point géométrique $x$,
c'est une catégorie tannakienne. Son groupe de Tannaka est alors l'enveloppe $k$-algébrique du progroupe strict $\pi_1(\mathcal T,x)$, au sens suivant.

\begin{prop}[\cite{Saav}, chapitre VI, 1.1.2.1]
\label{grtann}
  Si $\pi_1(\mathcal T,x)=(G_i)_{i\in I}$ et $H_i$ est l'enveloppe $k$-algébrique de $G_i$, alors le groupe de Tannaka de  $(\LC(\mathcal T,k),x^*)$ est canoniquement isomorphe à $\varprojlim_{i \in I}H_i$.
\end{prop}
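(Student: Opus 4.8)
The plan is to reduce the statement to ordinary Tannakian duality applied to the representation categories of the discrete groups $G_i$, and then to let the fiber functor interchange a filtered colimit of these categories with the cofiltered limit of their duals. The first step is to linearize the basic equivalence $\SLC(\mathcal T)\simeq \pi_1(\mathcal T,x)\text{-}\Ens$ recalled in \S\ref{sec:syst-loca-ensembl-1}. A finite-rank local system of $k$-vector spaces $\mathcal V$ is sent by $x^*$ to a finite-dimensional $k$-vector space $x^*\mathcal V$ carrying a monodromy action of the strict pro-group $\pi_1(\mathcal T,x)=(G_i)_{i\in I}$, and conversely such a representation recovers $\mathcal V$. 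The crucial point is that, because $\mathcal V$ is locally constant of finite rank, this action is \emph{continuous}, i.e. factors through one of the quotients $G_i$: a trivializing covering of $\mathcal V$ corresponds to an object of $\SLC(\mathcal T)$ through whose automorphism group $G_i$ the monodromy factors. Since inflation along the surjections $G_j\twoheadrightarrow G_i$ is fully faithful, this identifies
\begin{displaymath}
\LC(\mathcal T,k)\simeq \varinjlim_{i\in I}\Rep_k(G_i),
\end{displaymath}
a filtered $2$-colimit along the inflation functors $\Rep_k(G_i)\to\Rep_k(G_j)$, compatibly with the forgetful fiber functors to $\Vect_k$.

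Next I would record that, by the very definition of the $k$-algebraic envelope, $H_i$ is the Tannaka dual of the neutral Tannakian category $(\Rep_k(G_i),\text{forget})$, so that $\Rep_k(G_i)\simeq\Rep_k(H_i)$; dualizing the inflation functors yields the transition morphisms $H_j\to H_i$ exhibiting $(H_i)_{i\in I}$ as a cofiltered system of affine $k$-group schemes. It then remains to invoke the Tannakian principle that the dual-group functor converts a filtered $2$-colimit of neutral Tannakian categories with compatible fiber functors into the cofiltered limit of the associated group schemes (this is exactly the content of \cite{Saav}, VI; see also Deligne--Milne). Applying this to the colimit of the previous step gives the canonical isomorphism between the Tannaka group of $(\LC(\mathcal T,k),x^*)$ and $\varprojlim_{i\in I}H_i$, the canonicity being inherited from that of each constituent identification.

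The main obstacle is the first step: one must verify that $\LC(\mathcal T,k)$ is \emph{exactly} the category of representations factoring through some $G_i$, neither larger nor smaller. This is where the hypotheses that $\mathcal T$ is connected and locally connected enter, guaranteeing a workable theory of locally constant objects and of connected coverings, and where the finite-rank assumption is essential to force continuity of the monodromy. A secondary point requiring care, in the last step, is to check that the colimit is genuinely filtered with fully faithful transition functors, so that the limit of affine group schemes is well behaved and the identification of fiber functors is respected throughout.
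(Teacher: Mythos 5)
The paper does not prove this proposition: it is quoted verbatim from \cite{Saav}, chapitre VI, 1.1.2.1, so there is no internal proof to compare against. Your reconstruction is the standard argument behind Saavedra's statement and is correct: the identification $\LC(\mathcal T,k)\simeq\varinjlim_i\Rep_k(G_i)$ along fully faithful inflation functors (full faithfulness coming from the surjectivity of the transition maps of the strict pro-group), the definitional equivalence $\Rep_k(G_i)\simeq\Rep_k(H_i)$ for the $k$-algebraic envelope, and the compatibility of $\Aut^\otimes(\omega)$ with filtered $2$-colimits of Tannakian categories equipped with compatible fibre functors. One small point of emphasis: what forces the monodromy of $\mathcal V$ to factor through some $G_i$ is local constancy itself (a trivializing covering is dominated by a connected pointed Galois object of $\SLC(\mathcal T)$, which exists because $\mathcal T$ is connected and locally connected), not the finite-rank hypothesis; finite rank is what is needed for rigidity and hence for the Tannakian formalism to apply. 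This does not affect the validity of your argument.
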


On déduit du corollaire \ref{intetf} et de la proposition \ref{grtann} :

\begin{cor}
\label{tanprof}
Soit $X$ un champ de Deligne-Mumford connexe, et $x$ un
point géométrique. Le groupe de Tannaka de $(\LC(\widetilde{X_{etf}},k),x^*)$ 
est canoniquement isomorphe au groupe fondamental profini $\pi_1(X,x)$.
\end{cor}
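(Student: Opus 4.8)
The plan is to read the statement off from Proposition \ref{grtann} and Corollary \ref{intetf}, the pair $(\LC(\widetilde{X_{etf}},k),x^*)$ being already known to be tannakian. The whole argument is a formal identification of group schemes; the only point carrying genuine content is the description of the $k$-algebraic envelope of a finite group, which is where one leaves the abstract machinery set up above.

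First I would apply Proposition \ref{grtann} to the topos $\mathcal T=\widetilde{X_{etf}}$. Writing the strict pro-group as $\pi_1(\widetilde{X_{etf}},x)=(G_i)_{i\in I}$ and letting $H_i$ denote the $k$-algebraic envelope of $G_i$, the proposition identifies the Tannaka group of $(\LC(\widetilde{X_{etf}},k),x^*)$ canonically with $\varprojlim_{i\in I}H_i$. Next, by Corollary \ref{intetf} the strict pro-group $\pi_1(\widetilde{X_{etf}},x)$ is canonically isomorphic to the profinite fundamental group $\pi_1(X,x)$. Since the latter is profinite, it is presented as a strict pro-group whose transition system runs over its finite quotients, so the groups $G_i$ may, and will, be taken finite.

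The key step is then the computation of the $k$-algebraic envelope of a finite group $G$: because the category of finite-dimensional $k$-representations of $G$ is the category of $k[G]$-modules, its Tannaka dual is the finite constant group scheme $\underline{G}_k=\spec(k^G)$, whence $H_i=\underline{(G_i)}_k$. I expect this identification to be the main (if modest) obstacle, as it is the one place where one invokes something beyond the formal apparatus recalled above. Granting it, $\varprojlim_{i\in I}H_i=\varprojlim_{i\in I}\underline{(G_i)}_k$ is, by definition, the pro-finite-constant $k$-group scheme attached to the profinite group $\varprojlim_{i\in I}G_i=\pi_1(X,x)$. Assembling these canonical isomorphisms identifies the Tannaka group of $(\LC(\widetilde{X_{etf}},k),x^*)$ with $\pi_1(X,x)$, and naturality in $x$ is inherited from Corollary \ref{intetf} and Proposition \ref{grtann}.
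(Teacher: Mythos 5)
Your argument is exactly the paper's: the corollary is deduced directly from Proposition \ref{grtann} applied to $\widetilde{X_{etf}}$ together with Corollary \ref{intetf}, and the identification of the $k$-algebraic envelope of a finite group with the constant group scheme is precisely what the paper's subsequent remark (``il vaudrait mieux ici parler du $k$-groupe proconstant associ\'e \`a $\pi_1(X,x)$'') acknowledges. Your write-up just makes explicit the details the paper leaves implicit.
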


\begin{rem}
  \begin{enumerate}
  \item Il vaudrait mieux ici parler du $k$-groupe proconstant associé à $\pi_1(X,x)$.
\item Le groupe de Tannaka de $(\LC(\widetilde{X_{et}},k),x^*)$ est, d'après ce qui précède, isomorphe l'enveloppe $k$-algébrique du groupe fondamental élargi de $X$.
  \end{enumerate}
   \end{rem}

 \begin{lem}
    
\label{lemutil} Soit $X$ champ de Deligne-Mumford connexe.
Si $\bold V\in \obj\LC(\widetilde{X_{etf}},k)$, alors il existe un revêtement $Y\rightarrow X$ de $X$ trivialisant $\bold V$. 
  \end{lem}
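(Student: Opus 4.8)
Le plan est de reproduire, au niveau des syst\`emes locaux de $k$-vectoriels, l'argument d\'ej\`a employ\'e pour les syst\`emes locaux ensemblistes dans la premi\`ere partie de la preuve de la proposition \ref{systloc}. L'observation d\'ecisive est que la topologie \'etale finie globale impose aux morphismes d'une famille couvrante d'\^etre \`a la fois \'etales, donc ouverts, et finis, donc ferm\'es ; conjugu\'ee \`a la connexit\'e de $X$, cette propri\'et\'e permettra de ramener une famille couvrante trivialisant $\bold V$ \`a un unique rev\^etement.

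Premi\`erement, je d\'eroulerais la d\'efinition de $\LC(\widetilde{X_{etf}},k)$ : $\bold V$ \'etant localement constant pour la topologie de $X_{etf}$, il existe une famille couvrante $(T_i\rightarrow X)_{i\in I}$ de cette topologie --- donc form\'ee de morphismes repr\'esentables \'etales finis, et \'epimorphique --- telle que $\bold V_{|T_i}$ soit constant pour tout $i\in I$. Deuxi\`emement, je remarquerais que l'image de chaque $T_i\rightarrow X$ est ouverte (le morphisme \'etant \'etale) et ferm\'ee (le morphisme \'etant fini), donc vaut $\emptyset$ ou $X$ puisque $X$ est connexe. La famille \'etant \'epimorphique et $X$ non vide, il existe un indice $i_0$ pour lequel $T_{i_0}\rightarrow X$ est surjectif.

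Troisi\`emement, je poserais $Y=T_{i_0}$ : ce morphisme $Y\rightarrow X$ est repr\'esentable, \'etale, fini et surjectif, c'est donc bien un rev\^etement de $X$, et $\bold V_{|Y}$ est constant par construction, ce qui exprime pr\'ecis\'ement que $Y\rightarrow X$ trivialise $\bold V$.

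Aucun obstacle v\'eritable n'est \`a redouter. Le seul point un peu substantiel est la r\'eduction d'une famille couvrante \`a un seul rev\^etement ; elle repose enti\`erement sur la finitude incorpor\'ee dans la d\'efinition de $X_{etf}$, ce qui illustre l'int\'er\^et de travailler avec la topologie \'etale finie globale plut\^ot qu'avec la topologie \'etale usuelle, o\`u la conclusion tomberait en d\'efaut.
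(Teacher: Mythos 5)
Votre preuve est correcte, mais elle ne suit pas le chemin du papier. Le texte d\'eduit le lemme \ref{lemutil} en une ligne du corollaire \ref{tanprof} : le groupe de Tannaka de $(\LC(\widetilde{X_{etf}},k),x^*)$ \'etant le groupe profini $\pi_1(X,x)$, l'objet $\bold V$, de rang fini, correspond \`a une repr\'esentation qui se factorise par un quotient fini, et le rev\^etement galoisien associ\'e \`a ce quotient trivialise $\bold V$. Vous proc\'edez au contraire par un d\'evissage direct de la d\'efinition de la constance locale pour la topologie \'etale finie globale, en recopiant l'argument de la premi\`ere moiti\'e de la preuve de la proposition \ref{systloc} (image \`a la fois ouverte et ferm\'ee de chaque $T_i\rightarrow X$, connexit\'e de $X$, r\'eduction \`a un seul \'el\'ement couvrant). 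Les deux approches sont valables et sans circularit\'e ; la v\^otre est plus \'el\'ementaire et autonome --- elle n'utilise ni la dualit\'e tannakienne ni m\^eme la finitude du rang de $\bold V$, seulement le fait que les familles couvrantes de $X_{etf}$ sont form\'ees de morphismes finis \'etales --- tandis que celle du papier est plus courte une fois le corollaire \ref{tanprof} acquis, et fournit en prime que le rev\^etement trivialisant peut \^etre pris galoisien, fait utilis\'e ensuite dans la preuve de la proposition \ref{proprRH} (mais qui s'obtient de toute fa\c{c}on en passant \`a une cl\^oture galoisienne).
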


  \begin{proof}
 C'est immédiat à partir du corollaire \ref{tanprof}.
  \end{proof}

 \subsection{Foncteur à la Riemann-Hilbert}
\label{sec:foncteur-la-riemann}

\subsubsection{Définition}
\label{sec:definition-1}

Donné un champ de Deligne-Mumford $X$, on peut définir la catégorie $\Vect X$
des fibrés vectoriels sur $X$ comme la catégorie $[X,\Vect]$ des morphismes de
champs de $X$ vers le champ $\Vect$ des fibrés vectoriels, parfois appelés
représentations du champ $X$, c'est le point de vue que l'on a adopté jusqu'à présent.

La théorie de la descente des fibrés vectoriels (et plus
généralement des faisceaux quasi-cohérents \cite{SGA1}) fournit un point de vue
alternatif, en effet les faisceaux de $\mathcal O_X$-modules $\mathcal F$ sur
$X_{et}$, tels qu'il existe un atlas étale $X'\rightarrow X$, tel que $\mathcal
F_{|X'}$ est libre, forment une catégorie équivalente (\cite{LMB}, chapitre 13). On utilisera librement cette équivalence par la suite. 
La définition suivante est inspirée de \cite{Saav} VI 1.2.4.

\begin{defi}
  Soit $X$ un champ de Deligne-Mumford localement noethérien sur un corps $k$.
On définit le foncteur à la Riemann-Hilbert 

$$\RH:\LC(\widetilde{X_{etf}},k)\rightarrow   \Vect X$$

comme le foncteur composé du foncteur canonique $\LC(\widetilde{X_{etf}},k)\rightarrow\LC(\widetilde{X_{et}},k)$ et du foncteur $\LC(\widetilde{X_{et}},k)\rightarrow  \Vect X$ donné sur les objets par $\bold V\rightarrow \mathcal O_X\otimes_k \bold V$.

\end{defi}

\subsubsection{Propriétés du foncteur $\RH$}
\label{sec:propr-du-fonct}

\begin{prop}
\label{proprRH}  
Soit $X$ un champ de Deligne-Mumford localement noethérien sur un corps $k$.

  \begin{enumerate}
  \item Le foncteur $\RH$ est fidèle.
\item Si $X$ est de plus complet, réduit, et $k$ est algébriquement clos, il est fidèlement plein.
  \end{enumerate}
\end{prop}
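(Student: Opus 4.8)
The plan is to reduce both assertions to a single statement about global sections and then to compute those global sections by faithfully flat descent along a trivialising cover. Since $\RH(\bold V)=\mathcal O_X\otimes_k\bold V$ is manifestly a $k$-linear symmetric monoidal functor preserving duals of finite-dimensional objects, and both $\LC(\widetilde{X_{etf}},k)$ and $\Vect X$ are rigid tensor categories, for all $\bold V,\bold W$ one has natural identifications $\Hom(\bold V,\bold W)\simeq\Hom(\bold 1,\bold V^\vee\otimes\bold W)$ and $\Hom(\RH\bold V,\RH\bold W)\simeq\Hom(\mathcal O_X,\RH(\bold V^\vee\otimes\bold W))$, compatible with $\RH$. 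Writing $\bold U=\bold V^\vee\otimes\bold W$, everything comes down to understanding, for every $\bold U\in\obj\LC(\widetilde{X_{etf}},k)$, the natural comparison map
\[
\gamma_{\bold U}\colon\ \Hom_{\LC}(\bold 1,\bold U)\longrightarrow \Hom_{\Vect X}(\mathcal O_X,\RH\bold U)=\Gamma(X,\mathcal O_X\otimes_k\bold U),
\]
faithfulness of $\RH$ being equivalent to injectivity of every $\gamma_{\bold U}$, and full faithfulness to bijectivity (taking $\bold V=\bold 1$ shows every $\bold U$ occurs).

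For (1) I would prove injectivity directly. Choosing an \'etale atlas $X'\to X$ over which $\bold U$ becomes constant, say ${\bold U}_{|X'}\simeq\underline U$ for a finite-dimensional $k$-vector space $U$, the map $\gamma_{\bold U}$ restricts on $X'$ to the fibrewise inclusion $u\mapsto 1\otimes u$ of $\underline U$ into $\mathcal O_{X'}\otimes_k U$. As $X$ is nonempty over $k$, the unit $k\to\mathcal O_{X'}$ is injective, so $\underline U\to\mathcal O_{X'}\otimes_k U$ is a monomorphism of sheaves; since $X'\to X$ is an epimorphism, $\gamma_{\bold U}$ is injective. This uses nothing beyond $X$ being a nonempty locally noetherian Deligne--Mumford stack over $k$.

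For (2) the issue is bijectivity of $\gamma_{\bold U}$. We may assume $X$ connected, the general case following by treating connected components separately. By Lemma \ref{lemutil} there is a connected finite \'etale Galois cover $f\colon Y\to X$, of group $G$, trivialising $\bold U$, so that $f^*\bold U\simeq\underline U$ with $U=\bold U_x$ carrying the monodromy action of $G$. I would then compute both sides by descent along $f$. On the local system side, connectedness of $Y$ together with the fact that $Y$ trivialises $\bold U$ identifies the global flat sections with the monodromy invariants, $\Hom_{\LC}(\bold 1,\bold U)=U^G$. On the bundle side, since $f$ is finite \'etale Galois one has the descent identity $\Gamma(X,\mathcal F)=\Gamma(Y,f^*\mathcal F)^G$ for quasi-coherent $\mathcal F$ (from $\mathcal O_X\simeq(f_*\mathcal O_Y)^G$, the affineness of $f$, and the projection formula); applying it to $\mathcal F=\RH\bold U$ and using $f^*\RH\bold U=\RH(f^*\bold U)=\mathcal O_Y\otimes_k U$ gives $\Gamma(X,\RH\bold U)=(\Gamma(Y,\mathcal O_Y)\otimes_k U)^G$, with $\gamma_{\bold U}$ induced by $U=\Gamma(Y,\underline U)\to\Gamma(Y,\mathcal O_Y)\otimes_k U$, $u\mapsto 1\otimes u$.

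The crux, and the step I expect to be the main obstacle, is then to show $\Gamma(Y,\mathcal O_Y)=k$; this is exactly where the hypotheses of (2) are used. Here $Y$ is proper (finite over the proper $X$), reduced (\'etale over the reduced $X$), and connected, and $k$ is algebraically closed. Passing to the coarse moduli space $\bar Y$, which is a proper reduced connected algebraic space over $k$ with $\Gamma(Y,\mathcal O_Y)=\Gamma(\bar Y,\mathcal O_{\bar Y})$, the classical fact that the global sections of the structure sheaf of a proper reduced connected scheme or algebraic space over an algebraically closed field reduce to $k$ yields $\Gamma(Y,\mathcal O_Y)=k$. Consequently $\Gamma(X,\RH\bold U)=(k\otimes_k U)^G=U^G$, and under these identifications $\gamma_{\bold U}$ becomes the identity of $U^G$, hence an isomorphism. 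The delicate points to verify are that the descent and base-change statements remain valid for the possibly stacky $Y$, and that the comparison map genuinely is the natural inclusion $u\mapsto 1\otimes u$; granting these, full faithfulness follows.
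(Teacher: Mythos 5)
Your proof is correct and follows essentially the same route as the paper's: both reduce (via rigidity/internal Hom) to comparing global sections of $\bold U$ and of $\mathcal O_X\otimes_k\bold U$, prove injectivity from left-exactness of $H^0$ and the injectivity of $k\to\mathcal O_X$, and prove surjectivity by descending along a trivialising Galois cover $Y\to X$ supplied by Lemma \ref{lemutil} and invoking $H^0(Y,\mathcal O_Y)=k$ for $Y$ proper, reduced and connected over the algebraically closed field $k$. The only cosmetic differences are that you phrase the descent step via $G$-invariants where the paper writes the equalizer diagram for $Y\rightrightarrows Y\times_XY$, and that you make explicit the passage to the coarse moduli space for the final computation of $H^0(Y,\mathcal O_Y)$, which the paper leaves implicit.
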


\begin{proof}
Pour $\bold V\in \obj  \LC(\widetilde{X_{etf}},k)$, on note $\phi_{X,\bold V}: \bold V\rightarrow \mathcal O_X\otimes_k \bold V$ le morphisme de faisceaux sur $X_{etf}$ défini à partir du morphisme canonique $\bold k \rightarrow \mathcal O_X$.
Quitte à remplacer $\bold V$ par $\Homb(\bold V,\bold W)$, il suffit de voir :

\begin{enumerate}
\item $H^0(X,\phi_{X,\bold V})$ est injectif.
\item Si $X$ est complet et réduit sur $k$ algébriquement clos, $H^0(X,\phi_{X,\bold V})$ est bijectif.
\end{enumerate}

Le premier point est évident car $\bold k \rightarrow \mathcal O_X$ est injectif, et les foncteurs $\cdot\otimes_k \bold V$ et
  $H^0(X,\cdot)$ sont exacts à gauche.

Pour le second point, $X$ étant localement noethérien (ce qui assure que les composantes connexes sont ouvertes), 
on peut supposer $X$ connexe. Le lemme \ref{lemutil} donne l'existence d'un revêtement $\pi :Y\rightarrow X$ trivialisant $\bold V$. On peut supposer $\pi$ galoisien de groupe $G$. 

On a alors un diagramme commutatif :

\xymatrix@R=12pt{
0\ar[r]&H^0(X,\bold V)\ar[r]^{\pi^{-1}}\ar[dd]^{H^0(X,\phi_{X,\bold V})}& H^0(Y,\pi^{-1}\bold V)\ar[dd]^{H^0(Y,\phi_{Y,\pi^{-1}\bold V})}\ar@<1ex>[r]^{pr_1^{-1}}\ar@<-1ex>[r]_{pr_2^{-1}}& H^0(Y\times_X Y,p^{-1}\bold V)\ar[dd]^{H^0(Y\times_X Y,\phi_{Y,p^{-1}\bold V})}\\
&&&\\
0\ar[r]&H^0(X,\mathcal O_X\otimes_k \bold V)\ar[r]^{\pi^*}& H^0(Y,\mathcal O_Y\otimes_k  \pi^{-1}\bold V)\ar@<1ex>[r]^{pr_1^*}\ar@<-1ex>[r]_{pr_2^*}&H^0(Y\times_X Y,\mathcal O_{Y\times_X Y}\otimes_k  p^{-1}\bold V)
} 

où $p$ désigne le morphisme canonique $p:Y\times_X Y \rightarrow X$.

$Y$ étant encore propre (car fini sur $X$) et réduit (\cite{SGA1} I Proposition 9.2) on s'est donc ramené au cas où $\bold V$ est trivial. 
On peut à nouveau supposer $X$ connexe, et donc $\bold V=s_X^{-1}V$, où $s_X: X\rightarrow \spec k$ est le morphisme structurel, et $V$ un $k$-vectoriel de rang fini. On est alors immédiatement ramené à $V=k$, et il s'agit de voir que le morphisme naturel $k\rightarrow H^0(X,\mathcal O_X)$ est un isomorphisme, mais ça résulte du fait que $X$ est propre, réduit, connexe, et $k$ algébriquement clos.   

\end{proof}

\subsection{Fibrés finis}
\label{sec:fibres-finis-1}

\begin{defi}
\label{schtord}
  On appellera \emph{schéma tordu} un champ de Deligne-Mumford $X$ admettant pour espace des modules un schéma $M$, tel qu'il existe un ouvert dense 
$U$ de $M$, tel que $X\rightarrow M$ soit un isomorphisme en restriction à $U$. 
\end{defi}

On adapte les définitions de \cite{NoriRFG}, \cite{NoriFGS} au cas d'un schéma
tordu $X$ modéré (au sens de \cite{AV}, définition 2.3.2) réduit sur un corps $k$, dont l'espace des modules $M$ est propre et connexe sur $k$.

\begin{defi}[\cite{NoriRFG}, \cite{NoriFGS}]
Un faisceau localement libre $\mathcal E$ sur $X$ est dit \emph{fini}
s'il existe deux polynômes distincts $P,Q$ à coefficients entiers positifs
tels que $P(\mathcal E)\simeq Q(\mathcal E)$. 
\end{defi}

Pour identifier l'image essentielle du foncteur $RH$, on va suivre la
stratégie de Nori, qui consiste à plonger la catégorie des fibrés finis dans
la catégorie abélienne des fibrés semi-stables sur $X$.

\begin{defi}
	\label{orbicourbe}
  Une \emph{orbicourbe} dans $X$ est un morphisme birationnel
  sur son image $\sqrt[\bold r]{\bold D/C}\rightarrow X$, où $C$ est une
  courbe projective, connexe, et lisse sur $k$, $\bold D=(D_i)_{i\in I}$ un
  ensemble de diviseurs de Cartier effectifs réduits sur $C$.
\end{defi}

\begin{defi}[\cite{NoriRFG}, \cite{NoriFGS}]
\label{defsemistab}
Un faisceau localement libre $\mathcal E$ sur $X$ est dit \emph{semi-stable}
s'il est semi-stable de degré $0$ en restriction à toute orbicourbe dans $X$. 
On notera $\SS_0 X$ la sous-catégorie pleine de $\Vect X$ des faisceaux localement libres semi-stables sur $X$. 
\end{defi}

\begin{prop}
\label{ssab}
 La catégorie $\SS_0 X$ est une catégorie abélienne.
\end{prop}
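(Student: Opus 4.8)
The plan is to show that for any morphism $f:\mathcal E\to\mathcal F$ in $\SS_0 X$, the kernel, image and cokernel formed in the (abelian) category of coherent sheaves on $X$ are again locally free and semistable of degree $0$, and that the canonical map $\operatorname{coim} f\to\operatorname{im} f$ is an isomorphism; the abelian axioms then follow formally, $\Vect X$ being additive. This follows the strategy of Nori (\cite{NoriRFG}), everything being reduced to orbicurves (Definition~\ref{orbicourbe}), where the statement is classical: via Theorem~\ref{fibparfibchamp} a bundle on an orbicurve $\sqrt[\bold r]{\bold D/C}$ is a parabolic bundle on the smooth projective curve $C$, and the semistable parabolic bundles of degree $0$ form an abelian category in which kernels, images and cokernels are locally free, semistable of degree $0$, and computed in coherent sheaves (\cite{Seshadri}). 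I would first record two elementary facts on a single orbicurve $Z$: given a morphism of semistable degree-$0$ bundles, the image $I_Z=\operatorname{im}(f|_Z)$ is a quotient of a semistable degree-$0$ bundle, so $\mu(I_Z)\ge 0$, and a subsheaf of one, so $\mu(I_Z)\le 0$, whence $\deg I_Z=0$; and since its saturation $\overline{I_Z}\subseteq\mathcal F|_Z$ then has degree $0$ and the same rank, $I_Z=\overline{I_Z}$ is a subbundle, so that $f|_Z$ has constant fibre rank equal to its generic rank.

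The heart of the argument is to promote this to a global constant-rank statement for $f$. Let $\rho$ be the generic (hence maximal) fibre rank of $f$, so that the rank-$\rho$ locus is open and dense. Suppose some point $x$ had fibre rank ${}<\rho$. I would then choose an orbicurve $Z\to X$ through $x$ whose generic point lands in the dense open locus, so that $f|_Z$ has generic rank $\rho$; the restriction $f|_Z$ would then have generic rank $\rho$ but rank ${}<\rho$ at $x$, contradicting the fibrewise constant-rank fact just established. Hence $f$ has constant fibre rank $\rho$ everywhere. The geometric input here, namely producing through any prescribed point of the twisted scheme $X$ (Definition~\ref{schtord}) a curve meeting a prescribed dense open and carrying the required root-stack structure, is exactly the step I expect to be the main obstacle: it should follow from the properness (and suitable projectivity) of the moduli space $M$ by a Bertini-type construction adapted to the branch divisors $\bold D$, but it must be done with care in the stacky setting.

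Once $f$ has constant fibre rank, a standard argument shows that $\ker f$, $\operatorname{im} f$ and $\operatorname{coker} f$ are locally free, that $0\to\ker f\to\mathcal E\to\operatorname{im} f\to 0$ and $0\to\operatorname{im} f\to\mathcal F\to\operatorname{coker} f\to 0$ are (locally split) short exact sequences of vector bundles, and that their formation commutes with arbitrary base change. Restricting to any orbicurve $Z$ therefore identifies $(\ker f)|_Z$, $(\operatorname{im} f)|_Z$, $(\operatorname{coker} f)|_Z$ with $\ker(f|_Z)$, $\operatorname{im}(f|_Z)$, $\operatorname{coker}(f|_Z)$, which are semistable of degree $0$ by the orbicurve case. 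By Definition~\ref{defsemistab} this means $\ker f,\ \operatorname{im} f,\ \operatorname{coker} f\in\obj\SS_0 X$.

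Finally I would assemble the abelian axioms. $\SS_0 X$ is a full additive subcategory of $\Vect X$, closed under finite direct sums and containing $0$. The objects produced above serve as kernel and cokernel of $f$ inside $\SS_0 X$; constant rank gives $\mathcal E/\ker f\simeq\operatorname{im} f$ in coherent sheaves, i.e. the canonical morphism $\operatorname{coim} f\to\operatorname{im} f$ is an isomorphism. Moreover every monomorphism (resp. epimorphism) of $\SS_0 X$ has zero kernel (resp. cokernel), hence by constant rank is a fibrewise injective (resp. surjective) bundle map, i.e. the inclusion of a subbundle (resp. projection onto a quotient bundle) whose cokernel (resp. kernel) again lies in $\SS_0 X$; thus every mono is a kernel and every epi a cokernel. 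This verifies all the axioms and shows that $\SS_0 X$ is abelian.
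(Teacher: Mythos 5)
Your proposal follows the same strategy as the paper's proof (itself modelled on Nori's Lemma 3.6(b)): reduce to orbicurves, where semistable degree-zero (parabolic) bundles form an abelian category with locally free kernels and cokernels, deduce that $f$ has constant fibre rank along every orbicurve, and then globalize the constancy of rank. The one step you explicitly flag as the main obstacle --- producing through a prescribed point an orbicurve meeting a prescribed dense open --- is precisely where the paper puts its only real work, and it is resolved differently from the stacky Bertini-type construction you anticipate. First, the paper only needs the weaker statement that the equivalence relation generated by ``$x\sim x'$ if some orbicurve has both in its image'' has a single orbit: the rank is constant on each orbicurve, hence on each equivalence class, hence globally, and the constant value is then forced to equal the generic rank. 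Second, this connectedness is established not on the stack but on the moduli space $M$: one reduces to the projective case by Chow's lemma, where joining two points by a curve is classical, and then lifts a curve $C\subset M$ to an orbicurve by observing that $(C\times_M X)_{red}$ has moduli space $C$ and is, by Cadman's theorem, a root stack $\sqrt[\bold r]{\bold D/C}$ birational onto its image (after passing to $\sqrt[\bold r]{\bold D_{red}/C}$). With that lemma in place your argument closes as written; the remainder of your write-up --- local freeness from constant rank, compatibility of kernel, image and cokernel with restriction to orbicurves, and the verification of the abelian axioms including $\operatorname{coim} f\simeq\operatorname{im} f$ --- matches what the paper does, the paper leaving the torsion-freeness on orbicurves to the references (Simpson, Seshadri) just as you do.
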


\begin{proof}
La preuve est identique à celle de \cite{NoriRFG}, Lemma 3.6, (b) : étant
donné un morphisme $f:\mathcal E\rightarrow \mathcal E'$ dans $\SS_0 X$, le
point clé est de voir que $\ker f$ et $\coker f$ sont localement
libres. Il est aisé de voir qu'ils sont sans torsion \footnote{les arguments généraux de \cite{Simpson}, \S 3 s'appliquent ici, à l'aide de \cite{Borne}, \S 5 pour les adapter au cas des orbicourbes ; comme c'est par ailleurs bien connu dans le cadre -équivalent- des fibrés paraboliques sur les courbes (voir \cite{Seshadri}), nous ne rentrons pas dans les détails }, et donc localement libres si $X$ est une orbicourbe. Dans le cas général, $X$ étant réduit cela revient à voir que la fonction qui à un point géométrique $x:\spec \overline k\rightarrow X$ associe le rang de $x^*f :x^* \mathcal E\rightarrow x^*\mathcal E'$ est constant sur $X$. Or, le cas particulier envisagé ci-dessus montre que cette fonction est constante sur toute orbicourbe dans $X$. On peut donc conclure à l'aide du lemme suivant :

\begin{lem}
  La relation d'équivalence sur les points $x:\spec \overline
  k\rightarrow X$ engendrée par $x\sim x'$ s'il existe une orbicourbe dans
  $X$ dont l'image contient $x$ et $x'$ admet une unique orbite.
\end{lem}

\begin{proof}
  Dans le cas où $X$ est un schéma, on se ramène au cas où $X$ est projectif
  sur $k$ grâce au lemme de Chow (\cite{SGA2}, 5.6), où c'est un fait classique.

Dans le cas général, on note $U$ comme dans la définition \ref{schtord} un ouvert de l'espace de modules $M$ de $X$ tel que 
la flèche $X\rightarrow M$ de $X$ vers son espace de
modules $M$ soit un isomorphisme en restriction à $U$. Soit $y$, $y'$ les images respectives de $x$, $x'$ dans $M$, on
peut supposer que $y\in U$. D'après le cas particulier ci-dessus, il existe
une courbe $C$ dans $M$ (au sens de \cite{NoriRFG}, ou de la définition \ref{orbicourbe}) contenant $y$ et $y'$.
Le champ de Deligne-Mumford $C\times_M X$ admet $C$ pour espace des modules, et il en est de même de  $(C\times_M X)_{red}$ (\cite{AV} Lemma 2.3.3 ou \cite{AOV} Corollary 3.3). D'après \cite{Cadman}, Theorem 4.1, on a un isomorphisme $(C\times_M X)_{red}\simeq \sqrt[\bold r]{\bold D/C}$ sur $X$ pour un choix convenable d'une famille $\bold D$ de diviseurs effectifs et d'une famille d'entiers naturels $\bold r$. Comme on dispose d'un morphisme birationnel surjectif $ \sqrt[\bold r]{\bold D_{red}/C}\rightarrow \sqrt[\bold r]{\bold D/C}$, comme de plus $(C\times_M X)_{red}$ est birationnel sur son image dans $X$ et que celle-ci contient $x$ et $x'$, on a terminé.

\end{proof}
 
 \end{proof}

 \begin{prop}
   Tout fibré fini sur $X$ est semi-stable. 
 \end{prop}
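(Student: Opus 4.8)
Le plan est de se ramener, gr\^ace \`a la d\'efinition \ref{defsemistab}, au cas d'une orbicourbe, puis d'adapter l'argument classique de Nori. Par d\'efinition, un fibr\'e $\mathcal E$ est semi-stable sur $X$ si et seulement si sa restriction le long de toute orbicourbe $j:\sqrt[\bold r]{\bold D/C}\rightarrow X$ (d\'efinition \ref{orbicourbe}) est semi-stable de degr\'e $0$. Or le foncteur image inverse $j^*$ est mono\"idal, donc $P(j^*\mathcal E)\simeq j^*P(\mathcal E)$ pour tout polyn\^ome $P$ \`a coefficients entiers positifs ; l'isomorphisme $P(\mathcal E)\simeq Q(\mathcal E)$ donne alors $P(j^*\mathcal E)\simeq Q(j^*\mathcal E)$, de sorte que $j^*\mathcal E$ est encore un fibr\'e fini, et localement libre, sur l'orbicourbe. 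On est ainsi ramen\'e \`a montrer qu'un fibr\'e fini $\mathcal E$ sur une orbicourbe $\sqrt[\bold r]{\bold D/C}$ est semi-stable de degr\'e $0$.

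Sur une telle orbicourbe, les notions de degr\'e, de pente, de pente maximale $\mu_{\max}$ et de sous-faisceau d\'estabilisant maximal ont un sens, soit via la correspondance avec les fibr\'es paraboliques du th\'eor\`eme \ref{fibparfibchamp}, soit directement comme indiqu\'e dans les r\'ef\'erences cit\'ees plus haut. Le point de d\'epart est que la finitude de $\mathcal E$ \'equivaut \`a la finitude de l'ensemble des classes d'isomorphisme des $P(\mathcal E)$ (fait remontant \`a Weil et Nori, valable ici car la cat\'egorie des fibr\'es sur l'orbicourbe propre v\'erifie Krull--Schmidt) ; en particulier les degr\'es et les pentes maximales de la famille des puissances tensorielles $(\mathcal E^{\otimes n})_{n\geq 1}$ sont born\'es. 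Comme $\deg(\mathcal E^{\otimes n})=n\,\rg(\mathcal E)^{n-1}\deg\mathcal E$, quantit\'e born\'ee en $n$ seulement si $\deg\mathcal E=0$, on obtient d\'ej\`a que $\mathcal E$ est de degr\'e $0$.

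Il reste \`a voir que $\mathcal E$ est semi-stable. Supposons le contraire et soit $\mathcal S\subset\mathcal E$ le sous-faisceau d\'estabilisant maximal, de sorte que $\mu(\mathcal S)=\mu_{\max}(\mathcal E)>\mu(\mathcal E)=0$. Pour tout $n\geq 1$, le produit tensoriel $\mathcal S^{\otimes n}$ est un sous-faisceau de $\mathcal E^{\otimes n}$, de pente $\mu(\mathcal S^{\otimes n})=n\,\mu(\mathcal S)$, d'o\`u $\mu_{\max}(\mathcal E^{\otimes n})\geq n\,\mu_{\max}(\mathcal E)$. Comme $\mu_{\max}(\mathcal E)>0$, ceci contredit le caract\`ere born\'e de la suite $\mu_{\max}(\mathcal E^{\otimes n})$. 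Donc $\mu_{\max}(\mathcal E)\leq 0$ ; comme par ailleurs $\mu_{\max}(\mathcal E)\geq\mu(\mathcal E)=0$, on conclut $\mu_{\max}(\mathcal E)=\mu(\mathcal E)=0$, c'est-\`a-dire que $\mathcal E$ est semi-stable de degr\'e $0$. Il est \`a noter que cet argument vaut en toute caract\'eristique, puisqu'il n'utilise que l'inclusion $\mathcal S^{\otimes n}\subset\mathcal E^{\otimes n}$, et jamais la semi-stabilit\'e d'un produit tensoriel.

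L'obstacle principal n'est donc pas l'in\'egalit\'e de pentes, qui est \'el\'ementaire, mais la mise en place du formalisme de Harder--Narasimhan (existence du sous-faisceau d\'estabilisant maximal, additivit\'e du degr\'e) sur les orbicourbes : c'est pr\'ecis\'ement ce qui est assur\'e par la th\'eorie des fibr\'es paraboliques sur les courbes et son adaptation champ\^etre rappel\'ees plus haut.
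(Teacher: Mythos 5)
Votre strat\'egie est exactement celle du texte : la restriction d'un fibr\'e fini le long d'une orbicourbe est encore finie (le foncteur $j^*$ \'etant mono\"idal), ce qui ram\`ene \`a l'\'enonc\'e sur une orbicourbe, o\`u l'on adapte l'argument de Nori ; le texte se contente de renvoyer \`a \cite{Borne}, Proposition 6, pour cette adaptation, que vous explicitez. La seconde moiti\'e de votre argument ($\mathcal S^{\otimes n}\subset \mathcal E^{\otimes n}$ donne $\mu_{\max}(\mathcal E^{\otimes n})\geq n\,\mu_{\max}(\mathcal E)$, suite born\'ee seulement si $\mu_{\max}(\mathcal E)\leq 0$) est correcte et, comme vous le signalez, n'utilise pas la semi-stabilit\'e des produits tensoriels. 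Deux impr\'ecisions toutefois dans l'\'etape de bornitude. D'abord, l'ensemble des classes d'isomorphisme des $P(\mathcal E)$ n'est \emph{pas} fini (prendre $P$ constant, qui donne les $\mathcal O^{a_0}$ deux \`a deux non isomorphes) ; l'\'enonc\'e correct, via Krull--Schmidt, est que l'ensemble des facteurs ind\'ecomposables apparaissant dans les $\mathcal E^{\otimes n}$ est fini. Ensuite, et c'est le point le plus s\'erieux, cette finitude ne borne pas les \emph{degr\'es} des $\mathcal E^{\otimes n}$ : les multiplicit\'es croissent, et affirmer que $\deg(\mathcal E^{\otimes n})=n\,\rg(\mathcal E)^{n-1}\deg\mathcal E$ est born\'e revient essentiellement \`a supposer la conclusion $\deg\mathcal E=0$. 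Ce qui est born\'e, ce sont les \emph{pentes} : $\mu(\mathcal E^{\otimes n})$ est une moyenne pond\'er\'ee des pentes d'un ensemble fini d'ind\'ecomposables, donc $|\mu(\mathcal E^{\otimes n})|\leq M$, et comme $\mu(\mathcal E^{\otimes n})=n\,\mu(\mathcal E)$ on conclut bien $\deg\mathcal E=0$. La r\'eparation est imm\'ediate, mais telle quelle votre phrase sur les degr\'es n'est pas justifi\'ee.
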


 \begin{proof}
   Comme la restriction d'un fibré fini l'est encore, il suffit de le vérifier
   sur les orbicourbes. Mais on peut alors adapter la preuve de \cite{NoriRFG}
 au cas des orbicourbes : voir \cite{Borne}, Proposition 6. 
 \end{proof}

\begin{defi}[\cite{NoriRFG}, \cite{NoriFGS}]
Un faisceau localement libre $\mathcal E$ sur $X$ est dit
\emph{essentiellement fini} si c'est un quotient de deux sous-fibrés semi-stables d'un fibré fini. On notera $\EF X$ la sous-catégorie pleine de $\SS_0 X$ des
faisceaux localement libres essentiellement finis sur $X$. 
\end{defi}

\begin{thm}
\label{eftan}
Soit $X$ un schéma tordu modéré et réduit sur un corps $k$, dont l'espace des modules $M$ est propre et connexe sur $k$, et $x\in X(k)$ un point rationnel. 
La paire $(\EF X, x^*)$ est une catégorie tannakienne.
\end{thm}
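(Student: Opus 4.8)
Le plan est de v\'erifier que $(\EF X, x^*)$ satisfait les axiomes d'une cat\'egorie tannakienne neutre au sens de \cite{Saav} : cat\'egorie ab\'elienne $k$-lin\'eaire, rigide pour le produit tensoriel, avec $\operatorname{End}(\mathbf 1)=k$, munie d'un foncteur tensoriel $k$-lin\'eaire exact et fid\`ele $x^*$ vers les $k$-espaces vectoriels de dimension finie. L'ensemble suit de pr\`es la preuve de Nori pour un sch\'ema usuel (\cite{NoriRFG}), la nouveaut\'e \'etant de transf\'erer chaque \'etape au sch\'ema tordu $X$, ce qui se fait syst\'ematiquement par restriction aux orbicourbes (d\'efinitions \ref{orbicourbe} et \ref{defsemistab}), cas o\`u l'on dispose d\'ej\`a du th\'eor\`eme de Nori adapt\'e dans \cite{Borne}.

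Je commencerais par la structure ab\'elienne. La proposition \ref{ssab} fournit d\'ej\`a une cat\'egorie ab\'elienne de fibr\'es semi-stables de degr\'e $0$, dont les noyaux et conoyaux sont localement libres, et dont $\EF X$ est une sous-cat\'egorie pleine ; il suffit donc de la montrer stable par sous-objets, quotients et sommes directes finies. Les fibr\'es finis \'etant stables par somme directe, produit tensoriel et dual --- ce qui r\'esulte de la caract\'erisation d'un fibr\'e fini par la finitude de l'ensemble des classes d'isomorphisme de facteurs ind\'ecomposables de ses puissances tensorielles --- un fibr\'e essentiellement fini est exactement un sous-quotient, dans cette cat\'egorie ab\'elienne, d'un fibr\'e fini. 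L'argument combinatoire de Nori, selon lequel un sous-quotient d'un sous-quotient est encore un sous-quotient, donne alors la stabilit\'e recherch\'ee, et donc le caract\`ere ab\'elien de $\EF X$.

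Viendrait ensuite la structure tensorielle rigide. Le dual d'un fibr\'e fini \'etant fini, et le produit tensoriel de deux fibr\'es finis \'etant fini donc semi-stable de degr\'e $0$, tout revient \`a voir que $\mathcal E\otimes\mathcal E'$ reste semi-stable de degr\'e $0$ lorsque $\mathcal E,\mathcal E'$ sont essentiellement finis : il sera alors automatiquement sous-quotient du fibr\'e fini $F\otimes F'$, donc essentiellement fini. Pour cela je v\'erifierais la semi-stabilit\'e directement sur les orbicourbes comme le permet la d\'efinition \ref{defsemistab} : la restriction \`a une orbicourbe transforme fibr\'es finis en fibr\'es finis et pr\'eserve les sous-quotients, et sur une orbicourbe le th\'eor\`eme de Nori (\cite{NoriRFG}, \cite{Borne}) assure que le produit tensoriel de deux fibr\'es essentiellement finis est semi-stable de degr\'e $0$. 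La rigidit\'e (dualit\'e interne, \'evaluation, co\'evaluation) s'en d\'eduit formellement.

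Il resterait le foncteur fibre. Comme $x\in X(k)$, le foncteur $x^*$ est $k$-lin\'eaire, tensoriel et \`a valeurs dans les $k$-espaces vectoriels de dimension finie ; il est exact car dans $\EF X$ toute suite exacte est une suite de fibr\'es localement libres, dont le tir\'e en arri\`ere par un point est exact, et fid\`ele car si $x^*f=0$ pour $f\neq 0$, l'image de $f$ est un sous-fibr\'e localement libre non nul, de fibre non nulle ($X$ \'etant connexe), ce qui contredit $x^*f=0$. Enfin $\operatorname{End}(\mathbf 1)=H^0(X,\mathcal O_X)=H^0(M,\mathcal O_M)=k$, car $M$ est propre, connexe et r\'eduit sur $k$ et que $x$ en fournit un point rationnel ; le th\'eor\`eme de reconnaissance de Saavedra--Deligne permettrait alors de conclure. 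La principale difficult\'e attendue est le contr\^ole de la semi-stabilit\'e de degr\'e $0$ sous produit tensoriel : en caract\'eristique positive elle est fausse pour des fibr\'es semi-stables arbitraires, et c'est pr\'ecis\'ement le passage aux orbicourbes et au cadre fini \`a la Nori qui la sauve, raison pour laquelle il faut \'eviter de raisonner sur la cat\'egorie de tous les fibr\'es semi-stables et rester dans celle des fibr\'es essentiellement finis.
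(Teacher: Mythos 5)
Your proposal is correct and follows essentially the same route as the paper, which simply invokes Proposition \ref{ssab} and then transposes Nori's argument from \cite{NoriRFG}, \S 3, to the twisted setting; your expansion of the tensor-product step (checking semi-stability of degree $0$ by restriction to orbicurves, where the finite case is already available) and of the fiber-functor axioms is exactly the content being deferred to Nori. No genuinely different idea is introduced, and no step of your sketch conflicts with the paper's intended argument.
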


\begin{proof}
  Compte tenu de la proposition \ref{ssab}, la preuve est la même que celle
  donnée dans \cite{NoriRFG}, \S3.
\end{proof}

\begin{cor}
\label{coreftan}
 Si on suppose, en plus des hypothèses du théorème \ref{eftan}, que $k$ est 
algébriquement clos de caractéristique $0$, alors tout fibré essentiellement
fini est fini, et le foncteur $\RH$ induit une équivalence de catégories
tensorielles entre  $\LC(\widetilde{X_{etf}},k)$ et $\F X$. En particulier $(\F X,x^*)$ est
une catégorie tannakienne dont le groupe est canoniquement isomorphe à $\pi_1(X,x)$.
\end{cor}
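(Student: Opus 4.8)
The plan is to show that, under these stronger hypotheses, the Riemann--Hilbert functor $\RH$ realizes an equivalence onto the finite bundles, following Nori's strategy combined with Cartier's theorem.

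First I would check that $\RH$ takes values in $\F X$. Given $\bold V\in\obj\LC(\widetilde{X_{etf}},k)$, Lemma \ref{lemutil} provides a covering $\pi:Y\rightarrow X$ trivializing $\bold V$, which one may take Galois with group $G$ (a finite constant group, since $k$ is algebraically closed). Then $\bold V$ corresponds to a $k$-representation $W$ of $G$, and $\RH(\bold V)=\mathcal O_X\otimes_k\bold V$ is canonically the descent $\pi_*^G(\mathcal O_Y\otimes_k W)$ of the trivial bundle along $Y\rightarrow X$ --- this is Weil's construction. As $G$ is finite, $W$ satisfies a relation $P(W)\simeq Q(W)$ in $\Rep(G)$ with $P,Q$ distinct polynomials with positive integer coefficients, and the tensor functor $W\mapsto\pi_*^G(\mathcal O_Y\otimes_k W)$ transports this to the same relation for $\RH(\bold V)$; hence $\RH(\bold V)$ is finite. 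Moreover, $X$ being complete (proper over the proper $M$) and reduced with $k$ algebraically closed, Proposition \ref{proprRH} shows that $\RH$ is fully faithful.

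Next I would prove that every essentially finite bundle is finite, which is the crux. By Theorem \ref{eftan} the pair $(\EF X,x^*)$ is tannakian; following Nori's construction its Tannaka group $\pi(X,x)$ is pro-finite, i.e.\ an inverse limit $\varprojlim_i G_i$ of finite group schemes, each $G_i$ being the group of the finite tensor subcategory generated by a single essentially finite bundle. Here characteristic $0$ enters decisively: by Cartier's theorem every finite group scheme over $k$ is \'etale, hence --- $k$ being algebraically closed --- a finite constant group. Thus any $\mathcal E\in\obj\EF X$ is a representation of some finite constant group $G_{\mathcal E}$, which is the Galois group of a finite \'etale covering $Y\rightarrow X$ trivializing $\mathcal E$; by the Weil construction of the previous step, $\mathcal E$ is then finite. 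This gives $\F X=\EF X$.

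Finally I would assemble the equivalence. The functor $\RH$ is fully faithful and lands in $\F X$; it is also essentially surjective, since by the previous paragraph any finite bundle $\mathcal E$ is the Weil bundle attached to a representation of the Galois group $G_{\mathcal E}$ of a covering $Y\rightarrow X$, hence equal to $\RH$ of the corresponding object of $\LC(\widetilde{X_{etf}},k)$. Therefore $\RH:\LC(\widetilde{X_{etf}},k)\rightarrow\F X$ is an equivalence of tensor categories, compatible with the fiber functors $x^*$ on both sides. Consequently $(\F X,x^*)$ is tannakian, and transporting the identification of Corollary \ref{tanprof} through this equivalence identifies its Tannaka group canonically with $\pi_1(X,x)$. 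The main obstacle is the middle step: establishing that the Nori group $\pi(X,x)$ is pro-finite for the twisted scheme $X$ (the adaptation of Nori's finiteness argument underlying Theorem \ref{eftan}) and then invoking Cartier to force it pro-\'etale, which is precisely what makes essential finiteness collapse to finiteness in characteristic $0$.
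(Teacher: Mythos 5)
Your proposal is correct and follows essentially the same route as the paper: Lemma \ref{lemutil} plus the Weil construction $\pi_*^G(\mathcal O_Y\otimes_k V)$ to show $\RH$ lands in $\F X$, Proposition \ref{proprRH} for full faithfulness, and Nori's finiteness of the holonomy group scheme combined with Cartier's theorem (so that it is \'etale, hence constant over the algebraically closed $k$) to show every essentially finite bundle arises from a representation of a finite Galois covering and is therefore finite. The only cosmetic difference is in the first step, where the paper deduces finiteness of the $G$-representation by embedding it in a free $k[G]$-module and invoking characteristic $0$, while you assert the polynomial relation directly; both are valid.
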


\begin{proof}
	Soit $\bold V\in \obj \LC(\widetilde{X_{etf}},k)$. Le fait que $\RH(\bold V)$ soit fini résulte du lemme \ref{lemutil} : si $\pi : Y\rightarrow X$ est un revêtement galoisien de groupe $G$ trivialisant $\bold V$, il existe une représentation $V$ de $G$ sur le corps $k$ telle que $\pi^{-1}\bold V=V_Y$. On suit alors l'argument de \cite{NoriRFG} Proposition 3.8 : cette représentation se plonge dans un $k[G]$-module libre et est donc essentiellement finie, comme la caractéristique de $k$ est $0$, elle en en fait finie. Or le morphisme naturel $\RH(\bold V)\rightarrow \pi_*^G(\mathcal O_Y\otimes_k V)$ est un isomorphisme, et donc   $\RH(\bold V)$ est lui-même fini. 

La proposition \ref{proprRH} montre que le foncteur $RH$ est fidèlement plein.

Soit à présent $\mathcal E$ un fibré essentiellement fini. Soit $<\mathcal E>$ la sous-catégorie tannakienne engendrée, et $G$ son groupe de Tannaka (i.e. le
groupe d'holonomie de $\mathcal E$). Comme $\mathcal E$ est essentiellement
fini, $G$ est un schéma en groupe fini sur $k$ (\cite{NoriRFG} Theorem 1.2), comme ce corps est de caractéristique
$0$, $G$ est réduit d'après un théorème de Cartier (\cite{Water}, Chapter 11),
donc étale, et $k$ étant de plus algébriquement clos, $G$ est donc constant. 

Le foncteur tensoriel $G\Rep\rightarrow \Vect X$ correspond d'après
\cite{NoriRFG} Proposition 2.9 \footnote{qui, du fait de sa fonctorialité, vaut aussi pour les champs de Deligne-Mumford, voir  sur le sujet \cite{Lurie}} à un $G$-revêtement $\pi : Y\rightarrow X$, et $\mathcal E$
s'identifie via l'équivalence  $<\mathcal E>\simeq G\Rep$ à une
représentation $V$ de $G$. Si $\bold V$ est le système local correspondant, on a vu ci-dessus que  $\RH(\bold V)$ est isomorphe à $\pi_*^G(\mathcal O_Y\otimes_k V)$, lui-même isomorphe à $\mathcal E$. D'où les deux premières assertions.

  La dernière résulte alors du corollaire \ref{tanprof}.
\end{proof}

\section{Théorème de Weil-Nori}
\label{sec:theoreme-de-weil}

\subsection{Fibrés paraboliques modérés}
\label{sec:fibr-parab-moderes} 

Soit $X$ un schéma localement noethérien sur un corps $k$, $\bold D$ une famille de diviseurs irréductibles à croisements normaux simples sur $X$. 

\subsubsection{Fibrés paraboliques finis}
\label{sec:fibr-parab-finis}

\begin{defi}

  \begin{enumerate}
  \item On définit la catégorie $\Par(X,\bold D)$ des fibrés paraboliques modérés sur $(X,\bold D)$ par :
$$\Par(X,\bold D)= \varinjlim_{\bold r} \Par_{\frac{1}{\bold r}}(X,\bold D)$$
où les multi-indices varient parmi les familles $\bold r=(r_i)_{i\in I}$ d'entiers non divisibles par la caractéristique $p$ de $k$.
\item $\Par(X,\bold D)$ est munie d'un produit tensoriel vérifiant, pour $\mathcal E_\cdot, \mathcal E'_\cdot \in \obj \Par_{\frac{1}{\bold r}}(X,\bold D)$, la formule de convolution suivante :

$$(\mathcal E_\cdot\otimes\mathcal E'_\cdot)_{\bold m}=\int^{\bold l\in \frac{1}{\bold r}\mathbb Z} \mathcal E_{\bold l}\otimes \mathcal E'_{\bold m-\bold l}$$

où $\int$ désigne la cofin (coend), voir \S \ref{sec:oper-elem-sur}.

\item  Un fibré parabolique modéré $\mathcal E_\cdot$ 
sur $(X,\bold D)$ est dit \emph{fini} s'il existe deux polynômes distincts $P,Q$ à coefficients entiers positifs tels que $P(\mathcal E_\cdot)\simeq Q(\mathcal E_\cdot)$. On notera $\FPar(X,\bold D)$ la catégorie des fibrés paraboliques modérés sur $(X,\bold D)$.

\end{enumerate}
\end{defi}

\begin{rem}
\label{comp}
  Ces notions sont compatibles, via l'équivalence $\Vect(\sqrt[\bold r]{\bold D/X})\simeq \Par_{\frac{1}{\bold r}}(X,\bold D)$ du théorème \ref{fibparfibchamp}, avec les notions champêtres du \S \ref{sec:fibres-finis-1}, voir \cite{Borne}.
\end{rem}

\subsubsection{Fibrés paraboliques essentiellement finis}
\label{sec:fibr-parab-essent-1}

\begin{defi}

  \begin{enumerate}
  \item

Un fibré parabolique modéré $\mathcal E_\cdot$ sur $(X,\bold D)$ à poids
multiples de ${\frac{1}{\bold r}}$ est dit
semi-stable si le faisceau localement libre sur $\sqrt[\bold r]{\bold D/X}$
associé par la correspondance du théorème \ref{fibparfibchamp} est semi-stable
au sens de la définition \ref{defsemistab}. On notera $\SS_0\Par(X,\bold D)$ la sous-catégorie pleine de $\Par(X,\bold D)$ dont les objets sont semi-stables. 

\item Un fibré parabolique modéré semi-stable $\mathcal E_\cdot$ est dit 
\emph{essentiellement fini} si c'est un quotient de deux sous-fibrés paraboliques modérés semi-stables d'un fibré parabolique modéré fini. On notera $\EFPar(X,\bold D)$ la sous-catégorie pleine de $\SS_0\Par(X,\bold D)$ dont les objets sont essentiellement finis.
 
  \end{enumerate}
\end{defi}

\begin{rem}
  \begin{enumerate}
  \item La définition de semi-stabilité est indépendante du choix de $\bold
    r$.
\item Il serait intéressant de donner une définition de la semi-stabilité 
ne faisant intervenir que la topologie de Zariski.
  \end{enumerate}
\end{rem}

\subsection{Lien avec le groupe fondamental}
\label{sec:lien-avec-le}

\subsubsection{Énoncé}
\label{sec:enonce}

\begin{thm}
\label{thmfinal}
  Soit $X$ un schéma propre, normal, connexe sur un corps $k$, $\bold D$ une famille de diviseurs irréductibles à croisements normaux simples sur $X$, $D=\cup_{i\in I}D_i$, $x\in X(k)$ un point rationnel, $x\notin D$.

  \begin{enumerate}
  \item[(i)] La paire $(\EFPar(X,\bold D),x^*)$ est une catégorie tannakienne.
  \item[(ii)] Si $k$ est algébriquement clos de caractéristique $0$, tout fibré parabolique modéré essentiellement fini est fini, et le groupe de Tannaka de 
$(\FPar(X,\bold D),x^*)$ est canoniquement isomorphe au groupe fondamental $\pi_1(X-D,x)$.
 \end{enumerate}
 
\end{thm}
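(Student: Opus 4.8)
The plan is to reduce both assertions, at each finite level of weights, to the corresponding statements for the root stacks $X_{\bold r}:=\sqrt[\bold r]{\bold D/X}$, and then to pass to the colimit over $\bold r$. First I would check that each $X_{\bold r}$ is a tame, reduced twisted scheme in the sense of Definition \ref{schtord} whose moduli space is the proper connected $X$. Indeed $X$ is normal, hence reduced and irreducible, so $D$ is a proper closed subset and $X-D$ is dense; the simple normal crossings hypothesis forces $\mathcal O_{X,x}$ to be regular at the points of $D$, so the local description $[\SPEC(\mathcal O_X[(t_i)]/(t_i^{r_i}-s_i))|\boldsymbol{\mu}_{\bold r}]$ of Corollaire \ref{champracchampquot2} exhibits $X_{\bold r}$ as reduced, as tame (the $\boldsymbol{\mu}_{r_i}$ being diagonalizable and the $r_i$ prime to $p$), with coarse space $X$ and with $X_{\bold r}\to X$ an isomorphism over $X-D$. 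Since $x\notin D$, the point $x$ lifts canonically to $X_{\bold r}(k)$, and $x^*$ makes sense directly on parabolic bundles.

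For (i), Th\'eor\`eme \ref{eftan} applies to each $X_{\bold r}$ and shows that $(\EF X_{\bold r},x^*)$ is a neutral Tannakian category. The tensor equivalence $\Vect X_{\bold r}\simeq \Par_{\frac{1}{\bold r}}(X,\bold D)$ of Th\'eor\`eme \ref{fibparfibchamp}, together with the compatibility of the notions of finite, semi-stable and essentially finite bundle recorded in Remarque \ref{comp}, transports this to an equivalence $\EF X_{\bold r}\simeq \EFPar_{\frac{1}{\bold r}}(X,\bold D)$ carrying $x^*$ to $x^*$; hence $(\EFPar_{\frac{1}{\bold r}}(X,\bold D),x^*)$ is Tannakian, with Tannaka group $G_{\bold r}$. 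To pass to $\EFPar(X,\bold D)=\varinjlim_{\bold r}\EFPar_{\frac{1}{\bold r}}(X,\bold D)$, I would note that for $\bold r\mid \bold r'$ the transition functor corresponds, under Th\'eor\`eme \ref{fibparfibchamp}, to pullback along the faithful flat morphism $X_{\bold r'}\to X_{\bold r}$, so it is a faithful exact $k$-linear tensor functor strictly compatible with the fiber functors $x^*$. A $2$-filtered colimit of Tannakian categories along such functors is again Tannakian, with fiber functor $\varinjlim x^*$ and Tannaka group $\varprojlim_{\bold r}G_{\bold r}$; this yields (i) and the tame fundamental group scheme $\pi^D(X,x)$ of the introduction.

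For (ii) I would invoke Corollaire \ref{coreftan}, applicable to each $X_{\bold r}$ since $k$ is algebraically closed of characteristic $0$: every essentially finite bundle on $X_{\bold r}$ is finite, so through the equivalence above every essentially finite tame parabolic bundle of weights in $\frac{1}{\bold r}\mathbb Z^I$ is finite; passing to the colimit gives $\EFPar(X,\bold D)=\FPar(X,\bold D)$. Moreover Corollaire \ref{coreftan} identifies $G_{\bold r}$ with the (constant $k$-group scheme attached to the) profinite group $\pi_1(X_{\bold r},x)$. Taking the inverse limit and applying Proposition \ref{grfundmodgrfundchamp}, the Tannaka group of $(\FPar(X,\bold D),x^*)$ is
$$\varprojlim_{\bold r}\pi_1(X_{\bold r},x)\simeq \pi_1^D(X,x).$$
Finally, in characteristic $0$ there is no wild ramification, so every finite \'etale cover of $X-D$ is automatically tamely ramified along $D$ and extends by purity; hence $\pi_1^D(X,x)\simeq \pi_1(X-D,x)$, completing the identification.

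The main obstacle I anticipate is precisely the passage to the limit: making rigorous the sense in which the $2$-filtered colimit $\varinjlim_{\bold r}$ of Tannakian categories is again Tannakian, and why its Tannaka group is the strict inverse limit $\varprojlim_{\bold r}G_{\bold r}$. One must verify that the transition functors are fully faithful exact tensor functors strictly respecting the chosen neutral fiber functors, and that the colimit fiber functor remains exact, faithful and $\Vect_k$-valued. This is where the structure built into $\Par(X,\bold D)$ — the convolution tensor product and the pullback description of the transitions — does the real work. The remaining ingredients, namely the twisted-scheme hypotheses for the $X_{\bold r}$ and the characteristic-zero equality $\pi_1^D(X,x)=\pi_1(X-D,x)$, are comparatively routine, resting respectively on the simple normal crossings hypothesis via Corollaire \ref{champracchampquot2} and on the absence of wild ramification.
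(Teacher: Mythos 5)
Your proposal is correct and follows essentially the same route as the paper: normality/reducedness of the root stacks gives the hypotheses of Th\'eor\`eme \ref{eftan}, the equivalence of Th\'eor\`eme \ref{fibparfibchamp} transports everything to $\EFPar_{\frac{1}{\bold r}}(X,\bold D)$, and (ii) combines Corollaire \ref{coreftan} with Proposition \ref{grfundmodgrfundchamp} and the characteristic-zero identification $\pi_1^D(X,x)\simeq\pi_1(X-D,x)$ via normalization. The only difference is one of explicitness: you spell out the passage to the $2$-filtered colimit over $\bold r$ (fully faithful exact tensor transition functors given by pullback along $X_{\bold r'}\to X_{\bold r}$, Tannaka group the inverse limit), a step the paper dispatches with ``$\pi\simeq\varprojlim_{\bold r}\pi_{\bold r}$, avec des notations \'evidentes''.
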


\begin{proof}
  On commence par remarquer que $\sqrt[\bold r]{\bold D/X}$ est normal (\cite{GM}, Proposition 1.8.5). 
 
  \begin{enumerate}
  \item[{\it(i)}] Ceci résulte alors des théorèmes \ref{fibparfibchamp} et \ref{eftan}.
  \item[{\it(ii)}] La première assertion découle du théorème \ref{fibparfibchamp} et du corollaire \ref{coreftan}. 
Pour la seconde, notons $\pi$ ce schéma en groupe. Alors $\pi\simeq
\varprojlim_{\bold r}\pi_{\bold r}$, où $\pi_{\bold r}$ est le groupe de
Tannaka de la catégorie $(\FPar_{\frac{1}{\bold r}}(X,\bold D),x^*)$, avec des
notations évidentes. D'après la proposition \ref{grfundmodgrfundchamp}, il
suffit de voir qu'on a des isomorphismes naturels $\pi_{\bold
  r}\simeq\pi_1(\sqrt[\bold r]{\bold D/X},x)$, compatibles avec les systèmes
projectifs (vu qu'on est en caractéristique zéro, on a un isomorphisme naturel $\pi_1(X-D,x)\simeq
\pi_1^D(X,x)$ donné, au niveau des revêtements, par le foncteur de normalisation). On conclut donc en appliquant à nouveau le théorème
\ref{fibparfibchamp} et le corollaire \ref{coreftan}.
  \end{enumerate}

\end{proof}

\subsubsection{Schéma en groupe fondamental modéré}
\label{sec:schema-en-groupe}

On est naturellement conduit à poser :

\begin{defi}
  Avec les notations du théorème \ref{thmfinal}, on appellera schéma en groupe fondamental modéré de $(X,D)$ le groupe fondamental $\pi^D(X,x)$ de la catégorie tannakienne $(\EFPar(X,\bold D),x^*)$.
\end{defi}

\begin{rem}
  \begin{enumerate}
  \item Ce schéma en groupe $\pi^D(X,x)$ est une limite inverse de schémas en groupes finis, se spécialise sur le schéma en groupe fondamental de Nori (\cite{NoriRFG}) lorsque $D=\emptyset$, et sur le groupe fondamental modéré de Grothendieck-Murre (\cite{GM}) lorsque $k$ est algébriquement clos de caractéristique $0$, d'où son nom.
  \item Lorsque $k$ est quelconque, les arguments de corollaire \ref{coreftan} montrent qu'on a un morphisme $\pi^D(X,x)\rightarrow \pi_1^D(X,x)$ qui est un épimorphisme lorsque $k$ est algébriquement clos.
\item Toutefois, il conviendrait de préciser la nature des ``torseurs modérément ramifiés'' que $\pi^D(X,x)$ classifie.
  \end{enumerate}
\end{rem}

\section{Application au calcul de fibrés paraboliques finis de groupe d'holonomie résoluble}
\label{calholres}
\subsection{Introduction et notations}
\label{intrnotcal}

On reprend les hypothèses de la partie \ref{sec:theoreme-de-weil}: $X$ est un schéma propre, normal, connexe sur un corps $k$, qu'on suppose de plus algébriquement clos de caractéristique $0$, $\bold D$ une famille de diviseurs irréductibles à croisements normaux simples sur $X$, $D=\cup_{i\in I}D_i$, $x\in X(k)$ un point rationnel, $x\notin D$. Le but de cette partie est d'utiliser le théorème \ref{thmfinal} pour construire explicitement certains objets de $\FPar(X,\bold D)$. 

Puisqu'on est en fait intéressé par le groupe fondamental modéré de $X-D$, on évite bien sûr de construire un tel fibré parabolique fini à partir d'un revêtement de $Y\rightarrow X$ modérément ramifié le long de $D$ le trivialisant. L'idée de la méthode présentée est de n'utiliser que des sous-revêtements d'un tel $Y\rightarrow X$, et repose essentiellement sur la proposition \ref{imfibrfin}. Cette observation est inspirée par une transcription directe de la méthode des petits groupes de Wigner et Mackey de la théorie des représentations à la théorie des revêtements, rendue possible grâce au théorème \ref{thmfinal}. 

Par la suite, on note $\mathcal X=\sqrt[\bold r]{\bold D/X}$ le champ des racines.

\subsection{Compléments sur les fibrés finis}

On commence par quelques remarques générales concernant les fibrés finis. Comme la structure parabolique n'entre pas vraiment en jeu, ce qui va être dit est aussi valable dans la situation classique où $\mathcal X$ est un schéma, propre, réduit et connexe sur un corps $k$ algébriquement clos de caractéristique $0$. Dans cette situation, il suffit de remplacer l'utilisation du théorème \ref{thmfinal} par le théorème de Nori originel \cite{NoriRFG}.

\subsubsection{Image directe d'un fibré fini}

La base de la méthode pour construire des fibrés finis est la remarque élémentaire suivante :
\begin{prop}
	\label{imfibrfin}
	Soit $p:\mathcal Y\rightarrow \mathcal X$ un revêtement étale. Si $\mathcal F\in \obj F\mathcal Y$, alors $p_*\mathcal F \in \obj F\mathcal X$.
\end{prop}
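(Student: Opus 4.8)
The plan is to pass through the Tannakian dictionary of Corollary \ref{coreftan}, under which finite bundles on a tordu scheme correspond to the local systems of $\LC(\widetilde{\mathcal X_{etf}},k)$, that is (by Lemme \ref{lemutil}) to the finite-dimensional $k$-representations of $\pi_1(\mathcal X,x)$ that factor through a finite quotient, and then to show that $p_*$ becomes the induction functor, which manifestly preserves finiteness. First I would record two easy reductions. Since $p$ is finite and flat, $p_*\mathcal F$ is again locally free, hence at least an object of $\Vect\mathcal X$; and since $\mathcal X$ is a proper reduced tordu scheme and $p$ is representable finite etale, the cover $\mathcal Y$ inherits the same hypotheses, so that Corollary \ref{coreftan} applies to $\mathcal Y$ as well as to $\mathcal X$. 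Decomposing $\mathcal Y=\coprod_a \mathcal Y_a$ into connected components and using that a finite direct sum of finite bundles is finite, I may assume $\mathcal Y$ connected, so that $p$ corresponds to an open subgroup $H\leq G:=\pi_1(\mathcal X,\bar x)$ of finite index.

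Under this dictionary $\mathcal F$ corresponds to a representation $\mathbf V$ of $H$, and the statement reduces to two points: that $p_*$ corresponds to induction $\Ind_H^G$, and that $\Ind_H^G\mathbf V$ again factors through a finite quotient of $G$. The latter is the elementary core of the matter. Choosing, by Lemme \ref{lemutil}, a finite etale cover of $\mathcal Y$ trivializing $\mathbf V$ produces an open subgroup $L\trianglelefteq H$ acting trivially on $\mathbf V$; its normal core $N=\bigcap_{g\in G}gLg^{-1}$ is open and normal in $G$, and a one-line computation in the standard model of $\Ind_H^G\mathbf V$ (using that $gng^{-1}\in N\subset L$ for $n\in N$) shows that $N$ acts trivially. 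Hence $\Ind_H^G\mathbf V$ factors through the finite group $G/N$, is trivialized by the associated cover of $\mathcal X$, and thus defines an object of $\LC(\widetilde{\mathcal X_{etf}},k)$; its image under $\RH$ is finite by Corollary \ref{coreftan}.

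The hard part will be the first point, namely the compatibility asserting that the coherent pushforward $p_*\colon\Vect\mathcal Y\to\Vect\mathcal X$ is intertwined by $\RH$ with induction of local systems. Rather than proving this abstractly, I would establish it by a Weil-type computation that also makes finiteness transparent. Realizing $\mathbf V$ through a connected Galois cover $s\colon\mathcal W\to\mathcal X$ (a Galois closure of a cover trivializing $\mathbf V$), with $G_0=\Aut(\mathcal W/\mathcal X)$ and $H_0=\Aut(\mathcal W/\mathcal Y)$, one has $\mathcal F\simeq r_*^{H_0}(\mathcal O_{\mathcal W}\otimes_k V)$ for $r\colon\mathcal W\to\mathcal Y$ and $V$ the inflation of $\mathbf V$ to $H_0$, exactly as in the formula $\RH(\mathbf V)\simeq\pi_*^{G}(\mathcal O_Y\otimes_k V)$ used in the proof of Corollary \ref{coreftan}. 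Applying $p_*$, the projection formula together with the Mackey isomorphism $k[G_0]\otimes_{k[H_0]}V\simeq\Ind_{H_0}^{G_0}V$ yields $p_*\mathcal F\simeq s_*^{G_0}(\mathcal O_{\mathcal W}\otimes_k\Ind_{H_0}^{G_0}V)$, the finite bundle on $\mathcal X$ associated to the $G_0$-representation $\Ind_{H_0}^{G_0}V$; this exhibits $p_*\mathcal F$ as trivialized by $\mathcal W\to\mathcal X$, hence finite, and confirms the intertwining. The one delicate point is the equivariant base-change and projection formula along the cartesian square relating $\mathcal W$, $\mathcal Y$ and $\mathcal X$, where the finiteness and flatness of $p$ and the good behaviour of the relevant automorphism groups enter, just as in the proof of Proposition \ref{revKumfibpar}.
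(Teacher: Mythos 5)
Your proposal is correct and follows essentially the same route as the paper: reduce to $\mathcal Y$ connected, identify $\mathcal F$ with a representation $W$ of the intermediate group via a common Galois cover $\mathcal Z\to\mathcal Y\to\mathcal X$ (your normal-core argument is exactly what guarantees such a $\mathcal Z$ Galois over $\mathcal X$ exists, a point the paper leaves implicit), and show that $p_*$ corresponds to $\Ind$. The only divergence is in the proof of that last compatibility: the paper deduces $p_*(\mathcal O_{\mathcal Y}\otimes_k\bold W)\simeq\mathcal O_{\mathcal X}\otimes_k\Ind_A^G W$ in one stroke as a base-change formula along the cartesian square $\mathcal Y\to B_kA$, $\mathcal X\to B_kG$, whereas you verify it by hand on the cover $\mathcal W$ via the equivariant projection formula and the Mackey isomorphism $k[G_0]\otimes_{k[H_0]}V\simeq\Ind_{H_0}^{G_0}V$ --- a slightly longer but equally valid computation.
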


\begin{proof}
	On peut choisir $\mathcal Y$ connexe, et aussi un point $y\in \mathcal Y(k)$ au dessus de $x$.

	Soit $G$ un groupe fini, $B_kG=[\spec k| G]$ le champ classifiant, $\pi_1^{et}(\mathcal X,x)\rightarrow G$ un morphisme, $\mathcal Z\rightarrow \mathcal X$ le revêtement galoisien associé, correspondant aussi à un morphisme $m: \mathcal X\rightarrow B_kG$. Alors le foncteur $\RH$ induit une équivalence entre la catégorie $G\Rep$ des représentations de $G$ et la catégorie $F_{\mathcal Z}\mathcal X$ des fibrés finis sur $\mathcal X$ trivialisés par $\mathcal Z\rightarrow \mathcal X$. Si $V$ est une représentation de $G$, et $\bold V$ est le système local sur $\mathcal X_{et}$ associé, alors cette correspondance associe à $V$ le fibré $\mathcal O_{\mathcal X}\otimes_k\bold V$, qui est canoniquement isomorphe à $m^* V$.
	
On choisit à présent un morphisme $\pi_1^{et}(\mathcal Y,y)\rightarrow A$, tel que le revêtement galoisien associé $\mathcal Z\rightarrow \mathcal Y$ trivialise $\mathcal F$, et tel que le revêtement composé $\mathcal Z\rightarrow \mathcal X$ soit galoisien, de groupe $G$. D'après ce qui précède, il existe une représentation $W$ du groupe $A$ telle que $\mathcal F\simeq \mathcal O_{\mathcal Y}\otimes_k\bold W$. La proposition résulte alors du 

\begin{lem}
	$p_* (\mathcal O_{\mathcal Y}\otimes_k\bold W)\simeq  \mathcal O_{\mathcal X}\otimes_k \bold V $ où $V=\Ind_A^G W$.
\end{lem}

\begin{proof}
	Il s'agit d'une formule de changement de base dans le diagramme cartésien :
		\begin{displaymath}
			\xymatrix{
			\mathcal Y \ar[r]\ar[d] & B_k A \ar[d] \\
			\mathcal X\ar[r] & B_k G}
                \end{displaymath}

\end{proof}

\end{proof}

Dans la pratique, il est utile de savoir calculer le produit tensoriel de deux fibrés finis obtenus par la méthode de la proposition \ref{imfibrfin}. Pour cela, il suffit d'adapter à ce contexte les formules classiques, dues à Mackey, donnant le produit tensoriel de deux représentations induites comme somme directe de représentations induites (voir par exemple \cite{CurtisReiner} \S 44). On obtient ainsi :

\begin{lem}
	\label{prodtensimfibrfin}
	Soit $\mathcal Z \rightarrow \mathcal X$ un revêtement galoisien connexe, $G$ le groupe de Galois, $H_1$, $H_2$ deux sous-groupes. Pour $i\in \{ 1,2\}$, on note $p_i:\mathcal Y_i\rightarrow \mathcal X$ le revêtement intermédiaire correspondant au sous-groupe $H_i$, et $\mathcal F_i\in \obj F_{\mathcal Z}\mathcal Y_i$. De plus pour $g\in G$, on note $p_g : \mathcal Y_g\rightarrow \mathcal X$ le revêtement correspondant au sous-groupe $H_1\cap g H_2 g^{-1}$ et $q_{g,i}:\mathcal Y_g\rightarrow \mathcal Y_i$ le morphisme naturel. Alors
	\begin{displaymath}
		{p_1}_*\mathcal F_1 \otimes_{\mathcal O_{\mathcal X}} {p_2}_*\mathcal F_2 \simeq \oplus_{g\in H_1\backslash G/H_2} {p_g}_*(q_{g,1}^* \mathcal F_1 \otimes_{\mathcal O_{\mathcal X_g}} q_{g,2}^* \mathcal F_2)\end{displaymath}
	 
\end{lem}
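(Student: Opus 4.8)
The plan is to reduce the statement to the classical Mackey tensor-product formula for induced representations, via the Riemann--Hilbert correspondence, exactly in the spirit of the proof of Proposition \ref{imfibrfin}. First I would fix a geometric point and choose the $k$-rational point $x$ of $\mathcal X$ together with compatible lifts to all the intermediate coverings, so that the various fibre functors are identified. Since $\mathcal Z\rightarrow \mathcal X$ is Galois with group $G$ and $\mathcal Z\rightarrow \mathcal Y_i$ is Galois with group $H_i$, Corollary \ref{coreftan} yields equivalences $F_{\mathcal Z}\mathcal Y_i\simeq H_i\Rep$ under which $\mathcal F_i$ corresponds to a representation $W_i$ of $H_i$, with $\mathcal F_i\simeq \mathcal O_{\mathcal Y_i}\otimes_k\mathbf W_i$ for the associated local system $\mathbf W_i$.

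By the base-change lemma inside Proposition \ref{imfibrfin}, one has $p_{i*}\mathcal F_i\simeq \mathcal O_{\mathcal X}\otimes_k\mathbf V_i$ with $V_i=\Ind_{H_i}^G W_i$. Because $\RH$ is a tensor functor, fully faithful on the relevant subcategory by Proposition \ref{proprRH}, the left-hand side ${p_1}_*\mathcal F_1\otimes_{\mathcal O_{\mathcal X}}{p_2}_*\mathcal F_2$ corresponds to the $G$-representation $\Ind_{H_1}^G W_1\otimes_k\Ind_{H_2}^G W_2$. The classical Mackey formula (e.g.\ \cite{CurtisReiner}, \S 44) then gives a canonical $G$-isomorphism
$$\Ind_{H_1}^G W_1\otimes_k\Ind_{H_2}^G W_2\simeq \bigoplus_{g\in H_1\backslash G/H_2}\Ind_{H_1\cap gH_2g^{-1}}^G\left(\mathrm{Res}\,W_1\otimes_k\mathrm{Res}\,{}^gW_2\right),$$
where $H_1\cap gH_2g^{-1}$ acts on $W_1$ by restriction along $H_1\cap gH_2g^{-1}\subset H_1$ and on the conjugate ${}^gW_2$ by restriction along $H_1\cap gH_2g^{-1}\subset gH_2g^{-1}$.

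It then remains to translate each summand back into geometry, where the tower of coverings does the bookkeeping of double cosets and conjugations automatically: the covering $\mathcal Y_g$ is precisely the one attached to $H_1\cap gH_2g^{-1}$, and the natural maps $q_{g,1}:\mathcal Y_g\rightarrow \mathcal Y_1$ and $q_{g,2}:\mathcal Y_g\rightarrow \mathcal Y_2$ correspond, respectively, to the honest inclusion $H_1\cap gH_2g^{-1}\subset H_1$ and to the $g$-twisted inclusion $H_1\cap gH_2g^{-1}\hookrightarrow H_2$. Consequently $q_{g,1}^*\mathcal F_1$ corresponds to $\mathrm{Res}\,W_1$ and $q_{g,2}^*\mathcal F_2$ to $\mathrm{Res}\,{}^gW_2$, while ${p_g}_*$ realizes $\Ind_{H_1\cap gH_2g^{-1}}^G$ once more by the base-change lemma of Proposition \ref{imfibrfin}. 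Applying $\RH$ and invoking its full faithfulness to match the summands concludes the proof.

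The one point needing genuine care---and the main obstacle---is verifying that $q_{g,2}$ really induces the $g$-\emph{conjugate} ${}^gW_2$ rather than a plain restriction of $W_2$. This forces me to set up, for each double-coset representative $g$, the identification of $\mathcal Y_g$ with the covering of $H_1\cap gH_2g^{-1}$ \emph{through the twisted embedding} into $gH_2g^{-1}$, and to check that this choice is compatible with the base points fixed at the outset; getting this conjugation-versus-restriction dictionary right is exactly the subtle step, whereas the reduction to Mackey's formula and the translations by $\RH$ are formal given the earlier results.
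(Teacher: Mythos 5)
Your proof is correct, but it takes a genuinely different route from the paper's. You push everything through the Riemann--Hilbert equivalence of Corollary \ref{coreftan}, reduce to the classical Mackey tensor-product formula for induced representations, and then translate back; the paper instead proves the geometric statement directly in two lines: the isomorphism of $B_kG$-groupoids $B_kH_1\times_{B_kG}B_kH_2\simeq\coprod_{H_1\backslash G/H_2}B_k(H_1\cap gH_2g^{-1})$, pulled back along the morphism $\mathcal X\to B_kG$ classifying $\mathcal Z\to\mathcal X$, decomposes the fibre product as $\mathcal Y_1\times_{\mathcal X}\mathcal Y_2\simeq\coprod_g\mathcal Y_g$, after which the base-change formula ${p_1}_*\mathcal F_1\otimes{p_2}_*\mathcal F_2\simeq p_*(\pr_1^*\mathcal F_1\otimes\pr_2^*\mathcal F_2)$ for $p:\mathcal Y_1\times_{\mathcal X}\mathcal Y_2\to\mathcal X$ gives the result. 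The comparison is instructive. The paper's argument sits upstream of the Tannakian formalism: it uses only flat base change for finite \'etale morphisms, needs no base points and no full faithfulness of $\RH$, and therefore no appeal to Proposition \ref{proprRH}. More to the point, the step you correctly single out as the genuine difficulty --- verifying that $q_{g,2}^*\mathcal F_2$ realizes the restriction of the \emph{conjugate} ${}^gW_2$ compatibly with the chosen lifts of the base point --- is exactly the bookkeeping that the stacky decomposition of $\mathcal Y_1\times_{\mathcal X}\mathcal Y_2$ performs automatically: the double cosets and the twisted embeddings $H_1\cap gH_2g^{-1}\hookrightarrow H_2$ are encoded once and for all in the isomorphism of classifying groupoids, so no representation-theoretic dictionary has to be checked by hand. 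Your approach would be complete once that dictionary is written out (and it is the geometric decomposition above that is the cleanest way to write it out), so you end up proving the same fibre-product statement as an intermediate step; the paper simply observes that this intermediate step already implies the lemma.
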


\begin{proof}
	On a un isomorphisme naturel de $B_k G$-groupoïdes $B_kH_1\times_{B_k G} B_k H_2 \simeq \coprod_{H_1\backslash G/H_2} B_k(H_1\cap g H_2 g^{-1})$. En le tirant par le morphisme $\mathcal X \rightarrow B_k G$ définissant $\mathcal Z \rightarrow \mathcal X$, on obtient un $\mathcal X$-isomorphisme $\mathcal Y_1\times_{\mathcal X} \mathcal Y_2\simeq  \coprod_{H_1\backslash G/H_2} \mathcal Y_g$. De plus si $p: \mathcal Y_1\times_{\mathcal X} \mathcal Y_2 \rightarrow \mathcal X$ est le morphisme canonique, la formule de changement de base donne 
${p_1}_*\mathcal F_1 \otimes {p_2}_*\mathcal F_2\simeq p_*(\pr_1^*\mathcal F_1\otimes \pr_2^*\mathcal F_2)$, d'où la formule annoncée.  
\end{proof}

\subsubsection{La méthode des petits groupes de Wigner et Mackey}

Comme application de la partie précédente, on décrit les fibrés finis associés à une extension triviale d'un revêtement galoisien par un groupe abélien.

On se donne donc un revêtement galoisien connexe $\mathcal Z\rightarrow \mathcal X$ de groupe $G=A\ltimes H$, où $H$ est quelconque, et $A$ est abélien, d'exposant $n$ premier à l'ordre de $H$. On note $\mathcal Y\rightarrow \mathcal X$ le revêtement intermédiaire correspondant à $A$, et on fixe un point $z\in \mathcal Z(k)$ au dessus de $x$, d'image $y$ dans $\mathcal Y(k)$. D'après la dualité de Tannaka, on a une équivalence naturelle $\F_\mathcal Z \mathcal Y \simeq A\Rep$. En particulier le groupe $\Pic_\mathcal Z \mathcal Y$ des classes d'isomorphisme de fibrés inversibles sur $\mathcal Y$ trivialisés par $\mathcal Z$ est canoniquement isomorphe au groupe $\hat A$ des caractères de $A$, et détermine complètement $\F_\mathcal Z \mathcal Y$.

Le but est de décrire complètement la catégorie tannakienne $\F_\mathcal Z \mathcal X$ en fonction de $\F_\mathcal Y \mathcal X'$ (pour les extensions $\mathcal X'$ intermédiaires entre $\mathcal Y$ et $\mathcal X$) et de $\Pic_\mathcal Z \mathcal Y$. On commence par décrire la structure additive.

On remarque qu'on a une action naturelle de $H$ sur $\hat A\simeq \Pic_\mathcal Z \mathcal Y$. Alors l'inclusion $\Pic_\mathcal Z \mathcal Y \subset H^1_{et}(\mathcal Y, \boldsymbol{\mu}_n)$ est $H$-équivariante. Soit $\mathcal L$ un fibré inversible sur $\mathcal Y$ trivialisé par $\mathcal Z$ . On note $H_{\mathcal L}$ le stabilisateur de sa classe dans $\Pic_\mathcal Z \mathcal Y$, et $\pi_{\mathcal L} : \mathcal Y\rightarrow \mathcal Y/ H_{\mathcal L}$ le morphisme quotient. Comme celui-ci est étale, on dispose de la suite spectrale de Hochschild-Serre, qui s'écrit ici : $$H^p(H_{\mathcal L},H^q(\mathcal Y,\boldsymbol{\mu}_n))\Longrightarrow H^{p+q}(\mathcal Y/ H_{\mathcal L}, \boldsymbol{\mu}_n)$$ L'hypothèse que $n$  est premier à l'ordre de $H$ et la suite exacte des termes de bas degré associée à la suite spectrale montrent que $H^1_{et}(\mathcal Y/ H_{\mathcal L},\boldsymbol{\mu}_n)\simeq H^1_{et}(\mathcal Y, \boldsymbol{\mu}_n)^{H_{\mathcal L}}$, et donc il existe, à isomorphisme près, un unique fibré inversible de $n$-torsion $\tilde{\mathcal L}$ sur $\mathcal Y/ H_{\mathcal L}$ tel que $\mathcal L \simeq \pi_{\mathcal L}^*   \tilde{\mathcal L} $. On note de plus $p_{\mathcal L}: \mathcal Y/ H_{\mathcal L}\rightarrow \mathcal X$  le morphisme canonique.

\begin{prop}
	\begin{enumerate}
		\item  Soit $\mathcal L$ un fibré inversible sur $\mathcal Y$ trivialisé par $\mathcal Z$ et $\mathcal E \in \obj\F_\mathcal Y (\mathcal Y/H_{\mathcal L})$. Le fibré ${p_{\mathcal L}}_*(\tilde{\mathcal L}\otimes \mathcal E)$ sur $\mathcal X$ est fini.
		\item Lorsque $\mathcal L$ varie dans un système de représentants de $(\Pic_\mathcal Z \mathcal Y)/H$ et $\mathcal E$ varie dans une base de générateurs additifs de $\F_\mathcal Y (\mathcal Y/ H_{\mathcal L})$, ces faisceaux forment une base de générateurs additifs de $\F_\mathcal Z \mathcal X$. 

\end{enumerate}

	\end{prop}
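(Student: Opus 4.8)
The plan is to reduce everything to the representation theory of $G=A\ltimes H$ through the Tannaka--Riemann-Hilbert dictionary and then to invoke the classical little-groups method of Wigner and Mackey. Throughout I identify $\F_{\mathcal Z}\mathcal X$ with $G\Rep$ via the equivalence used in the proof of Proposition \ref{imfibrfin}, and likewise $\F_{\mathcal Z}\mathcal Y\simeq A\Rep$ and $\F_{\mathcal Y}(\mathcal Y/H_{\mathcal L})\simeq H_{\mathcal L}\Rep$. Under these dictionaries the character $\chi$ attached to $[\mathcal L]\in\Pic_{\mathcal Z}\mathcal Y\simeq\hat A$ has $H$-stabilizer $H_{\mathcal L}$, and the intermediate covering $\mathcal Y/H_{\mathcal L}$ is $\mathcal Z/G_\chi$, where $G_\chi=A\rtimes H_{\mathcal L}$ is the preimage of $H_{\mathcal L}$ under $G\to H$.

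First I would dispatch assertion (1). Since $\tilde{\mathcal L}$ is an $n$-torsion invertible sheaf it is finite, and a tensor product of finite bundles is finite, so $\tilde{\mathcal L}\otimes\mathcal E$ is finite on $\mathcal Y/H_{\mathcal L}$. As $p_{\mathcal L}\colon\mathcal Y/H_{\mathcal L}\to\mathcal X$ is a representable \'etale covering (a quotient of quotients of \'etale coverings), Proposition \ref{imfibrfin} gives at once that ${p_{\mathcal L}}_*(\tilde{\mathcal L}\otimes\mathcal E)$ is finite on $\mathcal X$, which is (1).

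For assertion (2) I would translate the three factors. The sheaf $\tilde{\mathcal L}$, being trivialised by $\mathcal Z$ over $\mathcal Y/H_{\mathcal L}=\mathcal Z/G_\chi$, corresponds to a character $\tilde\chi$ of $G_\chi$ whose restriction to $A$ is $\chi$ (this is exactly the relation $\pi_{\mathcal L}^*\tilde{\mathcal L}\simeq\mathcal L$), i.e. an extension of $\chi$ to $G_\chi$; and $\mathcal E$ corresponds to a representation $\rho$ of $H_{\mathcal L}$ inflated to $G_\chi$. Exactly as in the base-change lemma inside Proposition \ref{imfibrfin} (the cartesian square relating $\mathcal Y/H_{\mathcal L}\to\mathcal X$ to $B_kG_\chi\to B_kG$), pushforward along $p_{\mathcal L}$ matches induction, so
\[
{p_{\mathcal L}}_*(\tilde{\mathcal L}\otimes\mathcal E)\ \longleftrightarrow\ \Ind_{G_\chi}^{G}(\tilde\chi\otimes\rho).
\]
I would then invoke the Wigner--Mackey theorem (e.g. Serre, \emph{Linear Representations of Finite Groups}, \S 8.2): as $\chi$ runs over representatives of the orbits of $H$ on $\hat A=\Pic_{\mathcal Z}\mathcal Y$ and $\rho$ over the irreducible representations of $H_\chi=H_{\mathcal L}$, the $\Ind_{G_\chi}^{G}(\tilde\chi\otimes\rho)$ are precisely the irreducible representations of $G$, each occurring once. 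Since $k$ is algebraically closed of characteristic $0$, $G\Rep$ is semisimple and its irreducibles form a basis of additive generators; transporting back through the additive equivalence yields (2).

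The main obstacle I anticipate lies not in this structure but in two points of bookkeeping. First, one must verify that the $H$-action on $\Pic_{\mathcal Z}\mathcal Y$ defining $H_{\mathcal L}$ agrees, under $\Pic_{\mathcal Z}\mathcal Y\simeq\hat A$, with the conjugation action of $H$ on $\hat A$ appearing in Wigner--Mackey; here the identification $\mathcal Y/H_{\mathcal L}=\mathcal Z/G_\chi$ and the $H$-equivariance of $\Pic_{\mathcal Z}\mathcal Y\hookrightarrow H^1_{et}(\mathcal Y,\boldsymbol{\mu}_n)$ are what is needed. Second, the sheaf $\tilde{\mathcal L}$ produced by Hochschild--Serre need not be the canonical extension of $\chi$ trivial on $H_{\mathcal L}$; but any two extensions of $\chi$ to $G_\chi$ differ by a character inflated from $H_{\mathcal L}$, so replacing $\tilde\chi$ by another extension merely permutes the family $\{\tilde\chi\otimes\rho\}_\rho$ as $\rho$ ranges over all irreducibles of $H_{\mathcal L}$. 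Hence the generating family in (2) is insensitive to this choice, and the argument goes through for the particular $\tilde{\mathcal L}$ at hand.
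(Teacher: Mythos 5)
Your proof is correct and follows essentially the same route as the paper: part (1) is, as you say, a direct application of Proposition \ref{imfibrfin} (after noting that $\tilde{\mathcal L}\otimes\mathcal E$ is finite), and part (2) is reduced via the Tannaka/Riemann--Hilbert dictionary to the Wigner--Mackey little-groups theorem, for which the paper likewise simply cites Serre, \S 8.2, Proposition 25. Your write-up merely makes explicit the bookkeeping (the identification $\mathcal Y/H_{\mathcal L}=\mathcal Z/G_\chi$, pushforward matching induction, and the independence of the choice of extension $\tilde\chi$) that the paper leaves to the reader.
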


	\begin{proof}
		\begin{enumerate}
			\item C'est une application directe de la proposition \ref{imfibrfin}.
			\item C'est, en fait, via la correspondance de Tannaka entre fibrés finis et représentations du groupe fondamental (corollaire \ref{coreftan}), un problème de théorie des groupes, pour lequel on renvoie à \cite{SerreRLGF}, 8.2, Proposition 25. 
		\end{enumerate}
	
	\end{proof}

	La structure tensorielle de $\F_\mathcal Z \mathcal X$ est alors complètement déterminée par le lemme \ref{prodtensimfibrfin}.

	\subsubsection{Fibrés finis de groupe d'holonomie résoluble}

En poursuivant la même idée, on voit que la méthode conduit au calcul des fibrés finis dont le groupe d'holonomie (i.e. le groupe de Tannaka de la catégorie tannakienne engendrée) est résoluble.

En effet, soit $p:\mathcal Y \rightarrow \mathcal X$ un revêtement connexe, étale, galoisien de groupe d'automorphismes $H$, et $y\in \mathcal Y(k)$ un point au dessus de $x$. On note $\PIC(\mathcal Y)[n]$ la sous-catégorie pleine de $F\mathcal Y$ dont les objets sont les fibrés inversibles de $n$-torsion, où $n\geq 1$ est un entier. 

Donné un groupe (abstrait, ou profini) $\pi$, on note $D_n(\pi)$ le noyau du morphisme de groupe $\pi\rightarrow \frac{\pi^{ab}}{n}$, c'est un sous-groupe caractéristique.

\begin{lem}
	\label{holres}
	Le groupe de Tannaka de la catégorie tannakienne engendrée par l'image du foncteur $p_*: \PIC(\mathcal Y)[n]\rightarrow F\mathcal X$ est canoniquement isomorphe à $\frac{\pi_1(\mathcal X,x)}{D_n(\pi_1(\mathcal Y,y))}$. 
\end{lem}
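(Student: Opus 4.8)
The plan is to translate everything through the Tannakian duality of Corollary \ref{coreftan}, which applies to both $\mathcal X=\sqrt[\bold r]{\bold D/X}$ and $\mathcal Y$ (each is a tame, reduced twisted scheme with proper connected coarse space over the algebraically closed field $k$ of characteristic $0$). This duality identifies $F\mathcal X$ with $\Rep G$ and $F\mathcal Y$ with $\Rep N$, where I set $G=\pi_1(\mathcal X,x)$ and $N=\pi_1(\mathcal Y,y)$; since $p$ is \'etale and Galois with group $H$, the subgroup $N$ is normal in $G$ with $G/N\simeq H$. Under this duality an invertible sheaf of $n$-torsion on $\mathcal Y$ corresponds to a character $\chi:N\rightarrow\mu_n$ of order dividing $n$, so that $\PIC(\mathcal Y)[n]$ is identified with the set of such characters, and $D_n(N)=\ker(N\rightarrow N^{ab}/n)$ is their common kernel.

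Next I would identify the functor $p_*$ with induction. By Proposition \ref{imfibrfin} and the lemma that follows it, applied to a finite intermediate Galois cover trivializing a given $\chi$, and using that induction is compatible with inflation, one gets $p_*\mathcal L_\chi\simeq\mathcal O_\mathcal X\otimes_k\Ind_N^G\chi$ as a representation of $G$. I then invoke the standard Tannakian fact that the Tannakian subcategory of $\Rep G$ generated by a family of objects $(W_i)$ is $\Rep(G/K)$, where $K=\bigcap_i\ker(G\rightarrow GL(W_i))$. Hence the sought Tannaka group is $G/K$ with $K=\bigcap_\chi\ker(\Ind_N^G\chi)$, the intersection ranging over all characters $\chi$ of $N$ of order dividing $n$. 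It remains to prove the purely group-theoretic identity $K=D_n(N)$.

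For the inclusion $D_n(N)\subseteq K$, recall that $(\Ind_N^G\chi)|_N\simeq\bigoplus_{g\in G/N}\chi^g$ with $\chi^g(\cdot)=\chi(g^{-1}\cdot g)$. If $d\in D_n(N)$ then, because $D_n(N)$ is characteristic in $N$ and $N$ is normal in $G$, we have $g^{-1}dg\in D_n(N)$, whence $\chi(g^{-1}dg)=1$ for every character of order dividing $n$; thus $\Ind_N^G\chi(d)=\mathrm{id}$ and $d\in K$. For the reverse inclusion, the trivial character yields $\Ind_N^G\mathbf 1$, the permutation representation on $G/N$, whose kernel is $\bigcap_g gNg^{-1}=N$; therefore $K\subseteq N$. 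Finally, for $g\in K\subseteq N$, taking the summand $g=e$ above gives $\chi(g)=1$ for every character of $N$ of order dividing $n$, so $g$ lies in the common kernel of these characters. By Pontryagin duality for profinite abelian groups this common kernel is exactly $\ker(N\rightarrow N^{ab}/n)=D_n(N)$, so $g\in D_n(N)$. Combining the two inclusions gives $K=D_n(N)$, and the Tannaka group is $G/D_n(N)=\pi_1(\mathcal X,x)/D_n(\pi_1(\mathcal Y,y))$.

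I expect the main obstacle to be the careful identification of $p_*$ with induction in the proalgebraic/profinite setting, gluing the finite-level description of Proposition \ref{imfibrfin} across the finite Galois quotients via the induction--inflation compatibility, together with the profinite-duality computation of the common kernel of the order-$n$ characters; once these are in place, the remaining verifications are formal.
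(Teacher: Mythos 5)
Your argument is correct, but it takes a genuinely different route from the paper's. You dualize everything into representation theory at the outset: you identify $p_*$ with $\Ind_N^G$ (via the lemma inside Proposition \ref{imfibrfin}, inflated to the profinite level), observe that the Tannakian subcategory generated by a family is $\Rep(G/K)$ for $K$ the common kernel, and then compute $K=\bigcap_\chi\ker(\Ind_N^G\chi)=D_n(N)$ by Mackey restriction to $N$, the permutation representation on $G/N$, and Pontryagin duality for $N^{ab}/n$. The paper instead never computes a kernel of an induced representation: it names the candidate group in advance, realizes $\Rep\bigl(\pi_1(\mathcal X,x)/D_n(\pi_1(\mathcal Y,y))\bigr)$ as the category $F_{\mathcal Y'}\mathcal X$ of finite bundles trivialized by the cover $\mathcal Y'$ corresponding to that quotient, and shows every such $\mathcal E$ lies in the generated subcategory by the averaging trick: $p^*\mathcal E$ has abelian $n$-torsion holonomy, hence splits as $\bigoplus_i\mathcal L_i$ with $\mathcal L_i\in\PIC(\mathcal Y)[n]$, and in characteristic $0$ the unit $\mathcal E\rightarrow p_*p^*\mathcal E$ splits, so $\mathcal E$ is a direct summand of $\bigoplus_i p_*\mathcal L_i$. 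Your approach buys an explicit description of the generated category as $\Rep(G/K)$ together with an identification of its generators as induced characters, at the cost of the profinite bookkeeping (induction--inflation compatibility, duality for $N^{ab}/n$) that you rightly flag; the paper's approach is shorter because the splitting of $\mathcal E\rightarrow p_*p^*\mathcal E$ replaces the kernel computation entirely, but it leaves the easy containment (that each $p_*\mathcal L$ is trivialized by $\mathcal Y'$, i.e.\ that $D_n(N)$ acts trivially on $\Ind_N^G\chi$) implicit, which is precisely the inclusion $D_n(N)\subseteq K$ that you prove. Both arguments are sound.
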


\begin{proof}
	Le morphisme canonique $\pi_1(\mathcal X,x)\rightarrow \frac{\pi_1(\mathcal X,x)}{D_n(\pi_1(\mathcal Y,y))}$ correspond à un nouveau revêtement étale, galoisien, connexe $\mathcal Y'\rightarrow \mathcal X$, muni d'un point géométrique $y'\in \mathcal Y'(k)$ au dessus de $x$, et dominant $\mathcal Y \rightarrow \mathcal X$. Par dualité de Tannaka, la catégorie $\Rep\frac{\pi_1(\mathcal X,x)}{D_n(\pi_1(\mathcal Y,y))}$ est canoniquement isomorphe à la catégorie $F_{\mathcal Y'}\mathcal X$ des fibrés finis sur $\mathcal X$ trivialisés par $\mathcal Y'$. Or, soit $\mathcal E$ un tel fibré, $p^*\mathcal E$ est un objet de $F_{\mathcal Y'}\mathcal Y$, dont le groupe de Tannaka est isomorphe à $\frac{\pi_1(\mathcal Y,y)^{ab}}{n}$, donc est abélien et de $n$-torsion. On peut donc écrire 
	$p^*\mathcal E\simeq \oplus_{i=1}^N \mathcal L_i$, où les $\mathcal L_i$ sont dans $\PIC(\mathcal Y)[n]$. Mais comme $k$ est supposé de caractéristique $0$, $\mathcal E\simeq p_*^Hp^*\mathcal E$ est un facteur direct de $p_*p^*\mathcal E\simeq \oplus_{i=1}^N p_*\mathcal L_i$.
\end{proof}

On garde les notations de la preuve, en particulier $\mathcal Y'$ est le plus grand revêtement abélien $n$-élémentaire de $\mathcal Y$. Soit de plus $A$ le dual de Cartier du groupe $\Pic^0(\mathcal Y)[n]$. La théorie de Kummer usuelle affirme que $\frac{\pi_1(\mathcal Y,y)^{ab}}{n}\simeq A$. L'avantage de la méthode utilisée ici, qui peut être vue comme une version relative de la théorie de Kummer, est qu'elle donne une interprétation tannakienne du groupe $G=\frac{\pi_1(\mathcal X,x)}{D_n(\pi_1(\mathcal Y,y))}$ : si l'on sait calculer la catégorie tannakienne engendrée par l'image du foncteur $p_*: \PIC(\mathcal Y)[n]\rightarrow F\mathcal X$, on sait déterminer $G$ comme extension de $H$ par $A$.

On peut en particulier itérer le procédé, en partant de $(\mathcal Y,y)=(\mathcal X,x)$, et en choisissant une suite d'entiers $n_1,\cdots,n_m$, ce qui conduit au calcul des quotients $\frac{\pi_1(\mathcal X,x)}{D_{n_m}\cdots D_{n_1}(\pi_1(\mathcal X,x))}$. La limite naturelle de la méthode est bien le plus grand quotient pro-résoluble $\pi_1^{res}(\mathcal X,x)$ de $\pi_1(\mathcal X,x)$, vu qu'on obtient ainsi un sous-ensemble cofinal de l'ensemble de ses quotients finis. 

	\subsection{Fibrés paraboliques finis de groupe d'holonomie résoluble}

	\subsubsection{Fibrés paraboliques finis obtenus comme image directe le long d'un morphisme modérément ramifié}

On conserve les notations de la partie \ref{intrnotcal}.

Soit $p:Y\rightarrow X$ dans $\obj \Rev^D (X)$, avec $Y$ connexe. On note $(E_j)_{j\in J}$ la famille des composantes irréductibles (munies de la structure réduite) de $p^{-1}(D)$. Soit $Z\rightarrow X$ une clôture galoisienne, $(r_i)_{i\in I}$ (respectivement $(s_j)_{j\in J}$) la famille des indices de ramification de la famille $(D_i)_{i\in I}$ (respectivement $(E_j)_{j\in J}$) dans $Z$.

\begin{lem}
	Le morphisme naturel $q: \sqrt[\bold s]{\bold E|Y} \rightarrow \sqrt[\bold r]{\bold D|X}$ est fini étale.
\end{lem}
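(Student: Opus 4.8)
The plan is to present both root stacks as quotient stacks of the Galois closure $Z$, and then to recognise $q$ as the morphism induced by a group inclusion, for which finiteness and étaleness are formal.

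First I would fix notation for the ramification data. Write $G=\Gal(Z|X)$ and $H=\Gal(Z|Y)$, so that $Y=Z/H$ and $H\subset G$. Since $\bold D$ is a simple normal crossings family and $Z\rightarrow X$ is tame, the intermediate cover $Y\rightarrow X$ is tame as well, and by the local Kummer structure underlying Lemma \ref{abh} (étale-locally $Y$ is, by Abhyankar, a Kummer cover $R[(t_i)]/(t_i^{e_i}-s_i)$ of the regular $X$), $Y$ is normal with $\bold E=(E_j)_{j\in J}$ a simple normal crossings family. Hence Lemma \ref{abh} applies both to $Z\rightarrow X$ (group $G$, multi-index $\bold r$) and to $Z\rightarrow Y$ (group $H$, multi-index $\bold s$), giving canonical isomorphisms of $X$-stacks
$$\sqrt[\bold r]{\bold D|X}\simeq [Z|G],\qquad \sqrt[\bold s]{\bold E|Y}\simeq [Z|H].$$

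The heart of the argument is to check that, under these isomorphisms, $q$ becomes the canonical morphism $[Z|H]\rightarrow [Z|G]$ attached to $H\subset G$. By the characterisation recalled in \S\ref{imdirfibpar}, $q$ is the unique $X$-morphism with $q^*\mathcal M_i\simeq\otimes_{j\in\alpha^{-1}(i)}\mathcal N_j$, compatibly with sections, where $\mathcal M_i$ (resp. $\mathcal N_j$) is the tautological $r_i$-th (resp. $s_j$-th) root. The proof of Lemma \ref{abh} identifies $\mathcal M_i$ with the descent to $[Z|G]$ of $\mathcal O_Z(F_i)$, $F_i=(\text{image réduite de }D_i)_{red}$ on $Z$, and likewise $\mathcal N_j$ with the descent to $[Z|H]$ of $\mathcal O_Z(G_j)$, $G_j=(\text{image réduite de }E_j)_{red}$. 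Pulling the relation $p^*D_i=\sum_{j\in\alpha^{-1}(i)}\frac{r_i}{s_j}E_j$ back along $Z\rightarrow Y$, and using that $Z\rightarrow Y$ has ramification index $s_j$ along $E_j$ while $Z\rightarrow X$ has ramification index $r_i$ along $D_i$, one obtains $r_iF_i=r_i\sum_{j\in\alpha^{-1}(i)}G_j$, hence $F_i=\sum_{j\in\alpha^{-1}(i)}G_j$ and $\mathcal O_Z(F_i)\simeq\otimes_{j\in\alpha^{-1}(i)}\mathcal O_Z(G_j)$. Pulling $\mathcal M_i$ back along $[Z|H]\rightarrow[Z|G]$ amounts to re-descending $\mathcal O_Z(F_i)$ from $G$ to $H$, which yields exactly $\otimes_{j\in\alpha^{-1}(i)}\mathcal N_j$; since both morphisms also induce $p$ on moduli spaces, the universal property of root stacks forces the canonical map to coincide with $q$. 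This compatibility check is where I expect the only genuine work to lie.

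It then remains to note that $[Z|H]\rightarrow[Z|G]$ is representable finite étale. This is formal: one has a cartesian square $[Z|H]\simeq [Z|G]\times_{B_kG}B_kH$ (a $G$-torsor with $G$-equivariant map to $Z$ together with a reduction of structure group to $H$ is the same datum as an $H$-torsor with $H$-equivariant map to $Z$), and $B_kH\rightarrow B_kG$ is representable finite étale of degree $[G:H]$, classified by the finite $G$-set $G/H$. As finite étale morphisms are stable under base change, $q$ is representable finite étale of degree $[G:H]=\deg(Y|X)$, which is precisely the assertion.
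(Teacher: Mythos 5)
Your proof is correct and follows essentially the same route as the paper: identify both root stacks with quotient stacks $[Z|H]$ and $[Z|G]$ via Lemma \ref{abh}, recognise $q$ as the induced morphism, and conclude by pulling back $B_kH\rightarrow B_kG$ along $[Z|G]\rightarrow B_kG$. The only difference is that you spell out details the paper leaves implicit (that $\bold E$ is again a simple normal crossings family on the normal $Y$ so that Lemma \ref{abh} applies to $Z\rightarrow Y$, and the verification via the universal property of root stacks that the canonical map $[Z|H]\rightarrow[Z|G]$ really is $q$).
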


\begin{proof}
	Ceci résulte du lemme d'Abhyankar. En effet si $G$ (respectivement $A$) est le groupe de Galois de $Z\rightarrow X$ (respectivement $Z\rightarrow Y$) le lemme \ref{abh} montre que $q$ s'identifie au morphisme entre champs quotients $[Z|A]\rightarrow [Z|G]$, qui est fini étale, car obtenu par changement de base à partir du morphisme des champs classifiants $B_kA\rightarrow B_kG$ par le morphisme $[Z|G]\rightarrow B_k G$ correspondant au $G$-torseur $Z\rightarrow [Z|G]$.
\end{proof}

On peut donc utiliser la proposition \ref{imfibrfin} pour construire des fibrés paraboliques finis. De plus, ces fibrés finis sont explicitement calculables\footnote{dans la mesure où l'on sait déterminer l'image directe d'un fibré vectoriel usuel par $p$, mais ce problème peut-être pris en charge par le théorème de Grothendieck-Riemann-Roch.} grâce à la proposition \ref{imdirfibpar}.

Plus précisément, si on veut calculer des fibrés paraboliques finis de groupe d'holonomie résoluble, on peut appliquer le lemme \ref{holres} aux champs des racines. Ainsi, il est naturel d'essayer de déterminer la $n$-torsion $\Pic(\sqrt[\bold r]{\bold D/X})[n]$ du groupe de Picard des champ des racines, pour $n\geq 1$ entier, ce qui est l'objet de la partie suivante.

\subsubsection{Fibrés inversibles de torsion sur les champs des racines}

Le foncteur de Picard des champs algébriques a été récemment étudié par S.Brochard (voir \cite{Brochard}). Il a en particulier montré qu'on pouvait en étudier la composante neutre comme dans le cas classique des schémas. On rappelle brièvement les définitions dont on aura besoin.

\begin{defi}
	\begin{enumerate}
		\item Deux faisceaux inversibles $\mathcal L$ et $\mathcal L'$ sur le champ $\mathcal X$ sont dits \emph{algébriquement équivalents} s'ils sont équivalents pour la relation d'équivalence engendrée par la relation : $\mathcal L \sim \mathcal L'$ s'il existe un $k$-schéma connexe de type fini $T$, des points géométriques $t,t':\spec \Omega \rightarrow T$, un faisceau inversible $\mathcal M$ sur $\mathcal X\times_k T$, et des isomorphismes $(\mathcal L\times_k T)_{|\mathcal X_t}\simeq \mathcal M_{|\mathcal X_t}$, $(\mathcal L'\times_k T)_{|\mathcal X_{t'}}\simeq \mathcal M_{|\mathcal X_{t'}}$.
		\item On note $\Pico \mathcal X$ le sous-groupe des éléments $[\mathcal L]$ de $\Pic \mathcal X$ tels que $\mathcal L$ est algébriquement équivalent à $\mathcal O_\mathcal X$.
		\item On appelle groupe de Néron-Severi le groupe $\NS(\mathcal X)= \Pic \mathcal X/\Pico \mathcal X$.
		\item Si $T(A)$ désigne la torsion du groupe abélien $A$, on note $$\Pict\mathcal X=\ker(\Pic\mathcal X\rightarrow \NS(\mathcal X)/T(\NS(\mathcal X))$$
	\end{enumerate}
\end{defi}

\begin{lem}
	\begin{enumerate}
		\item $\Pic X\cap \Pico \sqrt[\bold r]{\bold D|X}=\Pico X$
                \item $\Pic X\cap \Pict \sqrt[\bold r]{\bold D|X}=\Pict X$
\end{enumerate}
\end{lem}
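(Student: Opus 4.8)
The plan is to reduce both statements to the exact sequence of Corollary~\ref{picchamprac}, which exhibits $\pi^*:\Pic X\to\Pic\mathcal X$ (with $\mathcal X=\sqrt[\bold r]{\bold D|X}$ and $\pi$ the canonical morphism) as injective with \emph{finite} cokernel $Q:=\prod_{i\in I}H^0(D_i,\frac{\mathbb Z}{r_i})$. The whole leverage is the tension between this finite $Q$ and the fact that $\Pico\mathcal X$ is \emph{divisible}: being the group of $k$-points of the identity component of the Picard group scheme $\SCPIC_{\mathcal X/k}$ of Brochard, a connected commutative group scheme of finite type over the algebraically closed field $k$, the group $\Pico\mathcal X$ is divisible.

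First I would prove \emph{(i)}. The inclusion $\Pico X\subseteq\Pic X\cap\Pico\mathcal X$ is the easy half: pulling back a family $\mathcal M$ on $X\times_k T$ along $\pi\times\mathrm{id}_T$ shows that $\pi^*$ preserves algebraic equivalence. For the reverse inclusion the key step is that the composite $\Pico\mathcal X\hookrightarrow\Pic\mathcal X\twoheadrightarrow Q$ has divisible image inside a finite group, hence vanishes; therefore $\Pico\mathcal X\subseteq\ker(\Pic\mathcal X\to Q)=\pi^*\Pic X$. Writing $P:=\Pic X\cap\Pico\mathcal X$, this reads $\pi^*P=\Pico\mathcal X$, so $P$ is divisible since $\pi^*$ is injective. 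Then $P/\Pico X$ is a divisible subgroup of $\NS(X)=\Pic X/\Pico X$; as $X$ is proper, the theorem of the base makes $\NS(X)$ finitely generated, hence free of nonzero divisible subgroups, so $P=\Pico X$, which is precisely \emph{(i)}.

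For \emph{(ii)} I would feed \emph{(i)} back in: combining $\Pico\mathcal X=\pi^*P$ with $P=\Pico X$ shows that $\pi^*$ restricts to an \emph{isomorphism} $\Pico X\xrightarrow{\sim}\Pico\mathcal X$. Applying the snake lemma to the morphism of short exact sequences $0\to\Pico{\cdot}\to\Pic{\cdot}\to\NS(\cdot)\to0$ for $X$ and for $\mathcal X$ — whose left vertical arrow is now an isomorphism and whose middle vertical arrow is $\pi^*$ with cokernel $Q$ — yields at once that $\NS(X)\to\NS(\mathcal X)$ is injective (with cokernel $Q$). An injective map of abelian groups reflects torsion, so $\pi^*\mathcal L$ is torsion in $\NS(\mathcal X)$ if and only if $\mathcal L$ is torsion in $\NS(X)$; this is exactly $\Pic X\cap\Pict\mathcal X=\Pict X$.

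The real content is concentrated in the single observation that a divisible group admits no nonzero homomorphism to a finite group, together with the finite generation of $\NS(X)$; once these are in place both assertions are formal consequences of Corollary~\ref{picchamprac} and the snake lemma. The one point that genuinely deserves care — and which I expect to be the main obstacle — is the identification of $\Pico\mathcal X$ (defined here through algebraic equivalence) with the $k$-points of the connected component of $\SCPIC_{\mathcal X/k}$, hence its divisibility, for the \emph{stack} $\mathcal X$ rather than a scheme; for this I would quote the representability and connectedness results of Brochard \cite{Brochard}.
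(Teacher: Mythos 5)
Your argument is correct in outline, but it takes a genuinely different --- and noticeably heavier --- route than the paper. The paper's proof of \emph{(1)} is a direct descent of the witnessing family: if $\pi^*\mathcal L\sim\pi^*\mathcal L'$ via an invertible sheaf $\mathcal M$ on $\sqrt[\bold r]{\bold D|X}\times_k T$, one pushes $\mathcal M$ forward along $\pi\times_k T$ (finite, flat, generically an isomorphism, so the direct image is again invertible) and checks by base change that this family interpolates between $\mathcal L$ and $\mathcal L'$ on $X\times_k T$; point \emph{(2)} is then immediate since $\Pict$ is by definition the preimage of the torsion of $\NS$, so $\pi^*\mathcal L\in\Pict\sqrt[\bold r]{\bold D|X}$ means $\pi^*(\mathcal L^{\otimes n})\in\Pico\sqrt[\bold r]{\bold D|X}$ for some $n$, and one applies \emph{(1)} to $\mathcal L^{\otimes n}$ --- no snake lemma needed. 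Your proof instead plays the finiteness of $\Pic(\sqrt[\bold r]{\bold D|X})/\Pic X$ (corollaire \ref{picchamprac}) against the divisibility of $\Pico\sqrt[\bold r]{\bold D|X}$ and the theorem of the base for $\NS(X)$. What this buys is strictly more information: you obtain along the way that $\pi^*\colon\Pico X\to\Pico\sqrt[\bold r]{\bold D|X}$ is surjective and that $\NS(X)\hookrightarrow\NS(\sqrt[\bold r]{\bold D|X})$ with cokernel $\prod_i H^0(D_i,\mathbb Z/r_i)$, facts which are genuinely useful for the proposition \ref{torspicorb} that follows. What it costs is the step you yourself flag: the divisibility of $\Pico\sqrt[\bold r]{\bold D|X}$ requires the representability, quasi-compactness of the identity component, and the identification of algebraic equivalence with membership in that component, for the Picard functor of the \emph{stack} --- all of which must be extracted from \cite{Brochard}. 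This is a real dependency, not a formality: the paper only ever invokes such facts for the scheme $X$ (in the proof of la proposition \ref{torspicorb}, where $\Pic^0(X)$ is shown divisible via \cite{Murre} and \cite{FGA}), and its proof of the present lemma avoids the Picard scheme of the stack entirely. In an algebraically closed field of characteristic $0$, as assumed in this section, the divisibility does hold once those representability results are granted, so your argument goes through; but the paper's two-line pushforward argument is both more elementary and valid under weaker hypotheses.
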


\begin{proof}
	\begin{enumerate}
		\item On note comme d'habitude $\pi :  \sqrt[\bold r]{\bold D|X}\rightarrow X$ le morphisme vers l'espace des modules, et $\mathcal X=\sqrt[\bold r]{\bold D|X}$. 
			Soient $\mathcal L$, $\mathcal L'$ deux faisceaux inversibles sur $X$ tels que $\pi^* \mathcal L \sim \pi^*\mathcal L'$, $T$ un $k$-schéma connexe de type fini, $t,t':\spec \Omega \rightarrow T$ des points géométriques, $\mathcal M$ un faisceau inversible sur $\mathcal X\times_k T$, et des isomorphismes $(\pi^*\mathcal L\times_k T)_{|\mathcal X_t}\simeq \mathcal M_{|\mathcal X_t}$, $(\pi^*\mathcal L'\times_k T)_{|\mathcal X_{t'}}\simeq \mathcal M_{|\mathcal X_{t'}}$. Alors $(\pi\times_k T)_* \mathcal M$ est un faisceau inversible sur $X\times_k T$ : en effet il est localement libre comme image directe d'un faisceau localement libre par un morphisme fini et plat, et comme $\pi$ est génériquement un isomorphisme, son rang est $1$. En appliquant les formules de changement de base pour un morphisme affine ou le long d'un morphisme plat on obtient des isomorphismes naturels	$(\mathcal L\times_k T)_{|X_t}\simeq ((\pi\times_k T)_* \mathcal M)_{|X_t}$, $(\mathcal L'\times_k T)_{|X_{t'}}\simeq ((\pi\times_k T)_* \mathcal M)_{|X_{t'}}$, ce qui montre que $\mathcal L \sim \mathcal L'$.

		\item C'est une conséquence claire du premier point et des définitions. 
	\end{enumerate}
\end{proof}

On a donc des monomorphismes canoniques

\begin{displaymath}
	\frac{\Pico \sqrt[\bold r]{\bold D|X}}{\Pico X}\hookrightarrow 
	\frac{ \Pict \sqrt[\bold r]{\bold D|X}}{\Pict X}\hookrightarrow  
	\frac{\Pic\sqrt[\bold r]{\bold D|X}}{\Pic X}
\end{displaymath}

On rappelle (corollaire \ref{picchamprac}) qu'on a un isomorphisme canonique $\frac{\Pic (\sqrt[\bold r]{\bold D|X})}{\Pic X}\simeq \prod_{i\in I} \frac{\mathbb Z}{r_i}$.

On note 
\begin{displaymath}
	\left(\prod_{i\in I} \frac{\mathbb Z}{r_i}\right)^0\subset 
	\left(\prod_{i\in I} \frac{\mathbb Z}{r_i}\right)^{\tau}\subset 
	\prod_{i\in I} \frac{\mathbb Z}{r_i}
\end{displaymath}

les sous-groupes associés aux images des monomorphismes ci-dessus.

Comme $X$ est propre sur le corps algébriquement clos $k$, le groupe $\NS(X)$ est de type fini (\cite{SGA6}, XIII, Théorème 5.1), ce qui prouve qu'il en est de même pour $\NS(\sqrt[\bold r]{\bold D|X})$ et donne un sens à l'énoncé suivant.

\begin{prop}
	\label{torspicorb}
Soit $n$ premier à $\# T(\NS(X))$.
Il y a une suite exacte naturelle :
$$0\rightarrow \Pic(X)[n]\rightarrow \Pic(\sqrt[\bold r]{\bold D|X})[n]\rightarrow \left(\prod_{i\in I} \frac{\mathbb Z}{r_i}\right)^{\tau}[n]\rightarrow 0$$
\end{prop}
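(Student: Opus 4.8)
The plan is to realise the desired sequence as the $n$-torsion of the short exact sequence
$$0 \rightarrow \Pict X \rightarrow \Pict \sqrt[\bold r]{\bold D|X} \rightarrow \left(\prod_{i\in I} \frac{\mathbb Z}{r_i}\right)^{\tau}\rightarrow 0,$$
which is exact by the lemma just established (its kernel is $\Pic X \cap \Pict \sqrt[\bold r]{\bold D|X} = \Pict X$, and its image is $\left(\prod_{i\in I}\frac{\mathbb Z}{r_i}\right)^{\tau}$ by definition of that subgroup). First I would remark that an $n$-torsion invertible sheaf has $n$-torsion, hence torsion, class in the N\'eron--Severi group; therefore $\Pic(-)[n]=\Pict(-)[n]$ both for $X$ and for $\sqrt[\bold r]{\bold D|X}$, so that the sequence to be proved is exactly the one obtained by applying the left exact functor $(-)[n]$ to the displayed sequence.

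Left exactness is then automatic, and the entire content is the surjectivity on the right. I would deduce this from the snake lemma applied to multiplication by $n$ on the displayed sequence: the cokernel of $\Pict(\sqrt[\bold r]{\bold D|X})[n]\rightarrow \left(\prod_{i\in I}\frac{\mathbb Z}{r_i}\right)^{\tau}[n]$ is the image of the connecting homomorphism $\left(\prod_{i\in I}\frac{\mathbb Z}{r_i}\right)^{\tau}[n]\rightarrow \Pict(X)/n\Pict(X)$, so it suffices to show $\Pict(X)/n\Pict(X)=0$, i.e.\ that $\Pict X$ is $n$-divisible. This is precisely where the hypothesis on $n$ enters. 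One has an extension $0\rightarrow \Pico X \rightarrow \Pict X \rightarrow T(\NS X)\rightarrow 0$; since $n$ is prime to $\#T(\NS X)$, multiplication by $n$ is bijective on the finite group $T(\NS X)$, and $\Pico X$ is divisible because $k$ is algebraically closed of characteristic $0$ and $X$ is proper and normal, so that $\Pico X$ is the group of $k$-points of an abelian variety. A short chase then gives the $n$-divisibility: lift the $\NS$-part of a class of $\Pict X$ using bijectivity of $n$ on $T(\NS X)$, and correct the remaining $\Pico X$-part, which is $n$-divisible.

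I expect the main obstacle to be exactly this last point, namely combining the two hypotheses to obtain $\Pict(X)/n\Pict(X)=0$: it is the only step where both the characteristic-zero, algebraically-closed, proper-normal setting (guaranteeing that $\Pico X$ comes from an abelian variety and is therefore divisible) and the coprimality of $n$ with $\#T(\NS X)$ are genuinely used. Once this vanishing is granted, the snake lemma yields the announced exact sequence formally, and naturality in the data $(X,\bold D,\bold r)$ is clear, since every morphism involved (the inclusions $\Pico\subset\Pict\subset\Pic$, the pullback $\pi^{*}$, and the boundary map of the snake lemma) is natural.
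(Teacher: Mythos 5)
Your proof is correct and follows essentially the same route as the paper: both reduce to taking $n$-torsion of the $\Pict$-sequence after observing $\Pic(-)[n]=\Pict(-)[n]$, and both obtain surjectivity from the vanishing of $\Pict(X)/n\Pict(X)$ (your snake-lemma connecting map is the paper's $\Ext^1_{\mathbb Z}(\mathbb Z/n,\Pict(X))$), proved via the extension of the finite group $T(\NS(X))$ of order prime to $n$ by the divisible group $\Pico(X)$ coming from an abelian variety.
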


\begin{proof}

D'après les définitions, on a clairement $\Pict(X)[n]=\Pic(X)[n]$ et $\Pict(\sqrt[\bold r]{\bold D|X})[n]=\Pic(\sqrt[\bold r]{\bold D|X})[n]$. 

Pour conclure, on doit justifier que $\Ext^1_\mathbb Z(\mathbb Z/n,\Pict(X))=0$. Or (voir par exemple \cite{Weibel} 3.3.2) 
$\Ext^1_\mathbb Z(\mathbb Z/n,\Pict(X))\simeq \Pict(X)/n\Pict(X)$, et on peut conclure en appliquant le foncteur $\frac{\mathbb Z}{n}\otimes_\mathbb Z\cdot$ à la suite exacte :

\begin{displaymath}
0\rightarrow \Pico(X)\rightarrow	\Pict(X)\rightarrow T(\NS(X))\rightarrow 0
\end{displaymath}

En effet $\Pic^0(X)$ est un groupe divisible : comme $X/k$ est complet le foncteur de Picard $\SCPIC_{X/k}$ est représentable par un schéma (\cite{Murre}), comme $X$ est de plus normal $(\SCPIC^0_{X/k})_{red}$ est une variété abélienne sur $k$ (\cite{FGA}, 236, Corollaire 3.2), et on peut appliquer \cite{MumAV} II, \S 6, Application 2.

\end{proof}

\begin{rem}
	On a une interprétation assez directe en termes de revêtements : si $Y\rightarrow X$ est le revêtement étale galoisien de groupe abélien $n$-élémentaire maximal pour ces propriétés, et de même $Z\rightarrow X$ en remplaçant étale par modérément ramifié de multi-indice divisant $\bold r$, alors par dualité de Tannaka on voit que la suite exacte duale (pour la dualité de Cartier) de la suite exacte précédente est isomorphe à la suite exacte des groupes de Galois de la tour $Z\rightarrow Y\rightarrow X$.
\end{rem}

\subsubsection{Un exemple explicite}

L'exemple le plus simple possible de construction de fibré parabolique fini indécomposable de rang plus grand que $1$ est le suivant.

On considère le morphisme $p:Y=\mathbb P^1 \rightarrow X=\mathbb P^1$ donné par l'équation $y^2=\frac{x}{x-1}$, il est modérément ramifié le long du diviseur $(0)+(1)$, avec indices de ramification $2$. $p$ induit un morphisme fini étale $q:Y\rightarrow \sqrt[(2,2)]{(0,1)|X}$, et par changement de base on en déduit un morphisme fini étale $r: \sqrt[(3,3)]{(1,-1)|Y}\rightarrow \sqrt[(2,2,3)]{(0,1,\infty)|X}$.

A présent la proposition \ref{torspicorb} montre que 
$\Pico(\sqrt[(3,3)]{(1,-1)|Y})\simeq \left(\frac{\mathbb Z}{3}\times \frac{\mathbb Z}{3}\right)^{\tau}$, 
et on voit facilement que  
$$\left(\frac{\mathbb Z}{3}\times \frac{\mathbb Z}{3}\right)^0=
\left(\frac{\mathbb Z}{3}\times\frac{\mathbb Z}{3}\right)^{\tau}=
 \ker\left(\Sigma :\frac{\mathbb Z}{3}\times \frac{\mathbb Z}{3}\rightarrow \frac{\mathbb Q}{\mathbb Z}\right)$$
 où $\Sigma$ désigne la somme, donc $\Pic^0(\sqrt[(3,3)]{(1,-1)|Y})$ est cyclique d'ordre $3$. Pour $y\in \{1,-1\}$, soit $\mathcal N_y$ une racine cubique de $\mathcal O_Y((y))$ sur $\sqrt[(3,3)]{(1,-1)|Y}$, et soit $\mathcal L=\mathcal N_1^{\vee}\otimes \mathcal N_{-1}$. Alors le fibré $r_*(\mathcal L)$ est un fibré fini indécomposable de rang $2$ sur $\sqrt[(2,2,3)]{(0,1,\infty)|X}$.  

Le fibré parabolique correspondant a été considéré par L.Weng, voir \cite{LinWeng}, Appendix, \S 6, dans le langage du à Seshadri. Il nous semble que la description fournie ici en termes de champs des racines permet de préciser la description des drapeaux donnée par l'auteur, facilite le calcul de la structure tensorielle de la catégorie tannakienne engendrée (naturellement, dans ce cas précis, le groupe fondamental est isomorphe au groupe symétrique $\mathfrak S_3$), et enfin, donne une méthode de construction des fibrés paraboliques finis de groupe d'holonomie résoluble.

\subsubsection{Problème ouvert}

Dans \cite{BorneEmsalem} est donnée une preuve algébrique du théorème de structure du groupe fondamental pro-résoluble $\pi_1^{res}(X-D,x)$ pour $X$ une courbe projective et lisse sur un corps $k$ algébriquement clos de caractéristique $0$ , et $D$ un diviseur (non vide) sur celle-ci. Peut on utiliser le lemme \ref{holres} pour donner une preuve alternative ?

\appendix
\section{$2$-limite inductive filtrée de catégories}
\label{2limfil}
\subsection{$2$-limite}

On emprunte les définitions de \cite{hakim}. On note $\mathfrak{Cat}$ la $2$-catégorie dont les objets sont les catégories, les $1$-flèches les foncteurs, les $2$-flèches les transformations naturelles.

Soit $\mathfrak C$ une $2$-catégorie, $I$ une catégorie usuelle. 

Pour tout objet $\mathcal D$ de $\mathfrak C$, on note $c_{\mathcal D}$ 
le foncteur constant $I\rightarrow \mathfrak C$ envoyant tout objet de $I$ sur $\mathcal D$, et toute flèche de $I$ sur l'identité.

On note de plus $(I, \mathfrak C)$ la $2$-catégorie des pseudo-foncteurs de $I$ dans $\mathfrak C$.

Si $\mathcal F,\mathcal F' : I \rightarrow \mathfrak C$ sont deux pseudo-foncteurs, on note $(I,\mathfrak{C})(\mathcal F,\mathcal F')$ la catégorie des transformations naturelles de pseudo-foncteurs entre $\mathcal F$ et $\mathcal F'$.

\begin{defi}
	\label{deflimindcat}
Soit $\mathcal F : I \rightarrow \mathfrak C$ un pseudo-foncteur. On appelle \emph{$2$-limite inductive} de $\mathcal F$ le $2$-foncteur : 

\xymatrix@R=2pt{ 
&&&\mathfrak{C} \ar[r] & \mathfrak{Cat}\\
&&&\mathcal D \ar[r] & (I,\mathfrak{C})(\mathcal F,c_{\mathcal D})}

Si ce foncteur est $2$-représentable, on appelle aussi $2$-limite inductive et on note $\varinjlim_I \mathcal F(i)$ l'objet de $\mathfrak C$ le représentant.  
\end{defi}

Plus précisément, un représentant est un couple $(\mathcal C,\lambda)$, où $\mathcal C$ est un objet de $\mathfrak C$, et $\lambda : \mathcal F \rightarrow c_{\mathcal C}$ est une transformation naturelle entre pseudo-foncteurs, qui est $2$-universelle au sens suivant : pour tout objet $\mathcal D$ de $\mathfrak C$, et toute transformation naturelle $\mu : \mathcal F \rightarrow c_{\mathcal D}$, il existe un couple $(f,\theta)$ formé d'un $1$-morphisme $f:\mathcal C\rightarrow \mathcal D$ de $\mathfrak C$ et d'un $2$-isomorphisme $\theta$ de $(I,\mathfrak C)$ :

$$ \shorthandoff{;:!?}
  \xymatrix{ 
  \mathcal F \ar[rr]^\lambda \ar[rd]_\mu|*{}="A" && c_{\mathcal C} \ar@{=>}"A"_-\theta \ar[ld]^{c_f} \\ & c_{\mathcal D}
    }$$

qui est unique à $2$-isomorphisme unique près : si $(f',\theta')$ est un autre tel couple, il existe un unique $2$-isomorphisme $\rho$ de $\mathfrak C$ :

 \xymatrix{\relax
&&&&&     \mathcal C \UN[r]{f'}{f}{\rho} & \mathcal D}

tel que $\theta \circ (c_\rho\circ\lambda)=\theta'$.

\subsection{Cas des catégories}

On suppose désormais que la catégorie $I$ est filtrante et que $\mathfrak C=\mathfrak{Cat}$. Dans ce cas, on dispose d'une description naturelle et probablement bien connue de la $2$-limite d'un pseudo-foncteur $\mathcal F: I\rightarrow \mathfrak{Cat}$. Pour $f:i\rightarrow j$, on note $f_*:\mathcal F(i)\rightarrow \mathcal F(j)$, plutôt que $\mathcal C(f)$. 
Soit $\mathcal C$ la catégorie 

\begin{enumerate}

\item dont les objets sont les couples $(i,C)$, où $i$ est un objet de $I$, et $C$ est un objet de $\mathcal F(i)$,

\item dont les morphismes $(i,C)\rightarrow (j,D)$ sont les classes d'équivalence\footnote{on ignore délibérément les problèmes d'univers en omettant de vérifier qu'il s'agit d'un ensemble, pour être  correct, il faut probablement supposer la catégorie $I$ petite} de triplets $(f,g,\alpha)$ :

\xymatrix@R=2pt{ 
&&& i \ar[dr]^f &&&\\
&&&    &   k,  &    f_*C \ar[r]^{\alpha} & g_* D\\
&&& j \ar[ur]_g &&&
}

pour la relation

\xymatrix@R=2pt{ 
i \ar[dr]^f &&&&& i \ar[dr]^{f'} &&&\\
&   k,  &    f_*C \ar[r]^{\alpha} & g_* D&\sim &&   k',  &    f'_*C \ar[r]^{\alpha'} & g'_* D\\
j \ar[ur]_g &&&&& j \ar[ur]_{g'} &&&
}

s'il existe 

\xymatrix@R=2pt{ 
&&& k \ar[dr]^h &\\
&&&    &   l \\
&&& k' \ar[ur]_{h'} &
}

tel que $h\circ f=h'\circ f'$, $h\circ g=h'\circ g'$ et $h_*\alpha=h'_*\alpha'$.
\end{enumerate}
	
\begin{prop}
	La catégorie $\mathcal C$, munie de la transformation naturelle canonique $\mathcal F \rightarrow c_{\mathcal C}$, est une $2$-limite pour $\mathcal F$.
\end{prop}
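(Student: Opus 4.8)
Le plan est de montrer que la paire $(\mathcal C,\lambda)$ $2$-repr\'esente le $2$-foncteur $\mathcal D\mapsto (I,\mathfrak{Cat})(\mathcal F,c_{\mathcal D})$ de la d\'efinition \ref{deflimindcat}, c'est-\`a-dire que le foncteur de comparaison
$$\Phi_{\mathcal D}:\mathfrak{Cat}(\mathcal C,\mathcal D)\longrightarrow (I,\mathfrak{Cat})(\mathcal F,c_{\mathcal D}),\qquad G\longmapsto c_G\circ\lambda,$$
est une \'equivalence de cat\'egories, $2$-naturellement en $\mathcal D$. Il faut au pr\'ealable expliciter la transformation naturelle canonique $\lambda:\mathcal F\to c_{\mathcal C}$ : ses composantes sont les foncteurs $\lambda_i:\mathcal F(i)\to\mathcal C$, $C\mapsto(i,C)$ (un morphisme $u:C\to C'$ allant sur la classe de $(\mathrm{id}_i,\mathrm{id}_i,u)$), et, pour $f:i\to j$, le $2$-isomorphisme de pseudo-naturalit\'e $\lambda_f:\lambda_j\circ f_*\Rightarrow\lambda_i$ est donn\'e en $C$ par l'isomorphisme $(j,f_*C)\to(i,C)$ repr\'esent\'e par $(\mathrm{id}_j,f,\gamma)$, o\`u $\gamma$ est le compositeur de $\mathcal F$. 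On v\'erifie sans peine que ces $\lambda_f$ sont des isomorphismes naturels satisfaisant les conditions de coh\'erence d'une transformation de pseudo-foncteurs.

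Le point crucial est la construction d'un quasi-inverse $\Psi_{\mathcal D}$. \`A une transformation $\mu\in(I,\mathfrak{Cat})(\mathcal F,c_{\mathcal D})$, donn\'ee par des foncteurs $\mu_i:\mathcal F(i)\to\mathcal D$ et des isomorphismes $\mu_f:\mu_j\circ f_*\Rightarrow\mu_i$, on associe le foncteur $\tilde\mu:\mathcal C\to\mathcal D$ d\'efini sur les objets par $\tilde\mu(i,C)=\mu_i(C)$, et sur le morphisme repr\'esent\'e par $(f,g,\alpha)$ (avec $f:i\to k$, $g:j\to k$, $\alpha:f_*C\to g_*D$) par le compos\'e
$$\mu_i(C)\xrightarrow{(\mu_f)_C^{-1}}\mu_k(f_*C)\xrightarrow{\mu_k(\alpha)}\mu_k(g_*D)\xrightarrow{(\mu_g)_D}\mu_j(D).$$
La cat\'egorie $\mathcal C$ \'etant engendr\'ee par les images des $\lambda_i$ et les isomorphismes $\lambda_f$ --- tout morphisme $(f,g,\alpha)$ se factorisant canoniquement en $(\lambda_g)_D\circ\lambda_k(\alpha)\circ(\lambda_f)_C^{-1}$ ---, cette formule est la seule compatible avec $\tilde\mu\circ\lambda\cong\mu$, ce qui guide la construction.

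Il reste \`a v\'erifier que $\tilde\mu$ est bien d\'efini et fonctoriel. Pour l'ind\'ependance vis-\`a-vis du repr\'esentant, il suffit de constater que $\tilde\mu$ est inchang\'e lorsqu'on pousse $(f,g,\alpha)$ le long d'un morphisme $h:k\to l$ en $(h\circ f,h\circ g,h_*\alpha)$ : cette invariance est exactement la condition de coh\'erence de $\mu$ reliant $\mu_{h\circ f}$, $\mu_h$ et $\mu_f$ via le compositeur de $\mathcal F$, et comme deux repr\'esentants \'equivalents admettent par d\'efinition un tel facteur commun, elle entra\^ine la bonne d\'efinition. Le respect des identit\'es d\'ecoule de la condition d'unit\'e de $\mu$ ; celui de la composition utilise le caract\`ere filtrant de $I$ pour ramener deux morphismes composables au-dessus d'un m\^eme indice terminal, puis de nouveau la coh\'erence de $\mu$.

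Enfin on montre que $\Phi_{\mathcal D}$ et $\Psi_{\mathcal D}$ sont quasi-inverses. D'un c\^ot\'e, $\Phi_{\mathcal D}(\Psi_{\mathcal D}(\mu))$ a pour composantes $\tilde\mu\circ\lambda_i=\mu_i$ et restitue les $2$-isomorphismes de $\mu$ par construction, d'o\`u $\Phi_{\mathcal D}\circ\Psi_{\mathcal D}\cong\mathrm{id}$. De l'autre, la factorisation canonique rappel\'ee ci-dessus force tout foncteur $G:\mathcal C\to\mathcal D$ \`a \^etre canoniquement isomorphe \`a $\widetilde{c_G\circ\lambda}$, d'o\`u $\Psi_{\mathcal D}\circ\Phi_{\mathcal D}\cong\mathrm{id}$. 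La $2$-naturalit\'e en $\mathcal D$ est formelle, toutes les constructions commutant \`a la post-composition par un foncteur $\mathcal D\to\mathcal D'$. Le principal obstacle est la v\'erification, \'el\'ementaire mais lourde en notations, de la bonne d\'efinition et de la fonctorialit\'e de $\tilde\mu$ : c'est l\`a que se concentrent les conditions de coh\'erence des pseudo-foncteurs et le r\^ole essentiel de l'hypoth\`ese que $I$ est filtrante.
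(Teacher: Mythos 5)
Votre preuve est correcte et suit exactement la voie que l'article indique : celui-ci se contente d'affirmer qu'il s'agit d'une \emph{v\'erification longue mais directe} de la d\'efinition \ref{deflimindcat}, et vous avez effectivement men\'e cette v\'erification (construction de $\lambda$, du quasi-inverse $\Psi_{\mathcal D}$, bonne d\'efinition via les coh\'erences de $\mu$ et le caract\`ere filtrant de $I$). Il n'y a donc aucune divergence d'approche, seulement un niveau de d\'etail que l'auteur a choisi d'omettre.
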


\begin{proof}
	C'est une vérification longue mais directe de la définition \ref{deflimindcat} dans ce cas précis.

\end{proof}

\bibliography{SNC3}

\end{document}